\newcommand{\addperiod}[1]{#1.}
\titleformat{\section}[block]{\scshape\Large\filcenter}{\thesection.}{1em}{}
\titleformat{\subsection}[runin]{\normalfont\large\bfseries}{\thesubsection.}{1em}{\addperiod}
\titleformat{\subsubsection}[runin]{\normalfont\bfseries}{\thesubsubsection.}{1em}{\addperiod}
\numberwithin{equation}{section}
\let \savenumberline \numberline
\def \numberline#1{\savenumberline{#1.}}
\def\keywords{\xdef\@thefnmark{}\@footnotetext}
\theoremstyle{plain}
\newtheorem{thm}{Theorem}[section]
\newtheorem{cor}[thm]{Corollary}
\newtheorem{lem}[thm]{Lemma}
\newtheorem{prop}[thm]{Proposition}
\theoremstyle{definition}
\newtheorem{defi}[thm]{Definition}
\newtheorem{assum}[thm]{Assumption}
\newtheorem{rem}[thm]{Remark}
\newtheorem{exm}[thm]{Example}
\theoremstyle{remark}
\newtheorem*{conv}{Convention}
\newtheorem*{nota}{Notation}
\newenvironment{enumromanup}
{\begin{enumerate}[font=\upshape, labelindent=\parindent, label=(\roman*)]}
{\end{enumerate}}
\DeclareMathOperator*{\bmoplus}{\text{\raisebox{0.25ex}{\scalebox{0.7}{$\bigoplus$}}}}
\DeclareMathOperator*{\bmwedge}{\text{\raisebox{0.25ex}{\scalebox{0.7}{$\bigwedge$}}}}
\DeclareMathOperator*{\bmstar}{\text{\raisebox{0.25ex}{\scalebox{0.8}{$\bigstar$}}}}
\DeclareMathOperator*{\smstar}{\text{\raisebox{0.25ex}{\scalebox{0.65}{$\bigstar$}}}}
\DeclareMathAlphabet{\pazocal}{OMS}{zplm}{m}{n}
\DeclareMathAlphabet{\dutchcal}{U}{dutchcal}{m}{n}
\newcommand{\isomorphic}{\xrightarrow{\hspace{0.5mm} \sim \hspace{0.5mm}}}
\newcommand{\lisomorphic}{\xleftarrow{\hspace{0.5mm} \sim \hspace{0.5mm}}}
\newcommand{\cald}{\mathcal{D}}
\newcommand{\cale}{\mathcal{E}}
\newcommand{\calf}{\mathcal{F}}
\newcommand{\call}{\mathcal{L}}
\newcommand{\cals}{\mathcal{S}}
\newcommand{\calu}{\mathcal{U}}
\newcommand{\pazb}{\pazocal{B}}
\newcommand{\pazf}{\pazocal{F}}
\newcommand{\pazg}{\pazocal{G}}
\newcommand{\pazh}{\pazocal{H}}
\newcommand{\pazk}{\pazocal{K}}
\newcommand{\pazl}{\pazocal{L}}
\newcommand{\pazm}{\pazocal{M}}
\newcommand{\pazn}{\pazocal{N}}
\newcommand{\pazo}{\pazocal{O}}
\def\CC{\mathbb{C}}
\def\LL{\mathbb{L}}
\def\NN{\mathbb{N}}
\def\QQ{\mathbb{Q}}
\def\RR{\mathbb{R}}
\def\ZZ{\mathbb{Z}}
\newcommand{\mbfa}{\mathbf{A}}
\newcommand{\mbfb}{\mathbf{B}}
\newcommand{\mbfd}{\mathbf{D}}
\newcommand{\mbfe}{\mathbf{E}}
\newcommand{\mbfn}{\mathbf{N}}
\newcommand{\smbff}{\mathbf{f}}
\newcommand{\smbfg}{\mathbf{g}}
\newcommand{\smbfi}{\mathbf{i}}
\newcommand{\smbfk}{\mathbf{k}}
\newcommand{\smbfy}{\mathbf{y}}
\newcommand{\frakU}{\mathfrak{U}}
\newcommand{\frakX}{\mathfrak{X}}
\newcommand{\frakY}{\mathfrak{Y}}
\newcommand{\frakZ}{\mathfrak{Z}}
\newcommand{\Ainf}{\mbfa_{\inf}}
\newcommand{\Acrys}{\mbfa_{\textup{cris}}}
\newcommand{\OAcrys}{\pazo\mbfa_{\textup{cris}}}
\newcommand{\AFpi}{\mbfa_{F,\varpi}}
\newcommand{\AR}{\mbfa_R}
\newcommand{\BR}{\mbfb_R}
\newcommand{\ARpi}{\mbfa_{R,\varpi}}
\newcommand{\OARpi}{\pazo\mbfa_{R,\varpi}}
\newcommand{\ARbar}{\mbfa_{\overline{R}}}
\newcommand{\ASbar}{\mbfa_{\overline{S}}}
\newcommand{\ARinfty}{\mbfa_{R_{\infty}}}
\newcommand{\ASinfty}{\mbfa_{S_{\infty}}}
\newcommand{\BdR}{\mbfb_{\textup{dR}}}
\newcommand{\OBdR}{\pazo\mbfb_{\textup{dR}}}
\newcommand{\Bcrys}{\mbfb_{\textup{cris}}}
\newcommand{\OBcrys}{\pazo\mbfb_{\textup{cris}}}
\newcommand{\cont}{\textup{cont}}
\newcommand{\converge}{\textup{conv}}
\newcommand{\CR}{\textup{CR}}
\newcommand{\crys}{\textup{cris}}
\newcommand{\CRYS}{\textup{CRIS}}
\newcommand{\cycl}{\textup{cycl}}
\newcommand{\ODdR}{\pazo\mbfd_{\textup{dR}}}
\newcommand{\ODcrys}{\pazo\mbfd_{\textup{cris}}}
\newcommand{\dlog}{\hspace{1mm}d\textup{\hspace{0.5mm}log\hspace{0.3mm}}}
\newcommand{\Dpi}{D_{\varpi}}
\newcommand{\dr}{\textup{dR}}
\newcommand{\etale}{\textup{\'et}}
\newcommand{\ERpi}{\mbfe_{R, \varpi}}
\newcommand{\fERpi}{E_{R, \varpi}}
\newcommand{\fERbar}{E_{\overline{R}}}
\newcommand{\fESbar}{E_{\overline{S}}}
\newcommand{\fESbarn}{E_{\overline{S}, n}}
\newcommand{\Fil}{\textup{Fil}}
\newcommand{\Ftor}{\textup{F}^r}
\newcommand{\FM}{\textup{FM}}
\newcommand{\Fr}{\textup{Fr}\hspace{0.5mm}}
\newcommand{\free}{\hspace{0.5mm}\textup{free}}
\newcommand{\GL}{\textup{GL}}
\newcommand{\Gal}{\textup{Gal}}
\newcommand{\gr}{\textup{gr}}
\newcommand{\holim}{\textup{holim}_n\hspace{1mm}}
\newcommand{\Hom}{\textup{Hom}}
\newcommand{\Kphi}{\textup{K}_{\varphi}}
\newcommand{\Kdphi}{\textup{K}_{\partial, \varphi}}
\newcommand{\Kdphid}{\textup{K}_{\partial, \varphi, \partial_A}}
\newcommand{\Kphid}{\textup{K}_{\varphi, \partial_A}}
\newcommand{\kert}{\textup{Ker }}
\newcommand{\Kos}{\textup{Kos}}
\newcommand{\Laz}{\pazl\textrm{az}}
\newcommand{\Lie}{\textup{Lie }}
\newcommand{\MIC}{\textup{MIC}}
\newcommand{\MF}{\textup{MF}}
\newcommand{\Mat}{\textup{Mat}}
\newcommand{\Mpi}{M_{\varpi}}
\newcommand{\Npi}{N_{\varpi}}
\newcommand{\unrami}{\textup{ur}}
\newcommand{\padic}{p\textrm{-adic}}
\newcommand{\PD}{\textup{PD}}
\newcommand{\prm}{^{\prime}}
\newcommand{\Rbar}{\overline{R}}
\newcommand{\Rep}{\textup{Rep}}
\newcommand{\Rinfty}{R_{\infty}}
\newcommand{\Rup}{\textup{R}}
\newcommand{\RGamma}{\textup{R}\Gamma}
\newcommand{\Rpi}{R_{\varpi}}
\newcommand{\Rpin}{R_{\varpi, n}}
\newcommand{\Sbar}{\overline{S}}
\newcommand{\Spec}{\textup{Spec}\hspace{0.5mm}}
\newcommand{\Spf}{\textup{Spf}\hspace{0.6mm}}
\newcommand{\Sp}{\textup{Sp}\hspace{0.6mm}}
\newcommand{\syn}{\textup{syn}}
\newcommand{\Syn}{\textup{Syn}}
\newcommand{\textpd}{\textup{PD}}
\title{\vspace{-5mm}\textsc{Syntomic complex and $\padic$ nearby cycles}}
\author{\textsc{Abhinandan}}
\newcommand{\Addresses}{{
  \footnotesize

  \rule{2cm}{0.4pt}\vspace{2mm}

  \textsc{Abhinandan}\par\nopagebreak
  \textsc{IMJ-PRG, Sorbonne Universit\'e, 4 Place Jussieu, Paris, France}\par\nopagebreak\vspace{-0.7mm}
  \textit{E-mail}: \footnotesize{\href{abhinandan@imj-prg.fr}{abhinandan@imj-prg.fr}}, \textit{Web}: \footnotesize{\href{https://abhinandan.perso.math.cnrs.fr/}{https://abhinandan.perso.math.cnrs.fr/}}
}}
\date{ }
\begin{document}

\pagenumbering{arabic}

\sloppy

\goodbreak

\keywords{\textit{Keywords}: $\padic$ Hodge theory, crystalline cohomology, syntomic complex, $(\varphi, \Gamma)\textrm{-modules}$}
\keywords{\textit{2020 Mathematics Subject Classification}: 14F20, 14F30, 14F40, 11S25.}

\maketitle
{
	\vspace{-3mm}
	\textsc{Abstract.} In local relative $\padic$ Hodge theory, we show that the Galois cohomology of a finite height crystalline representation (up to a twist) is essentially computed via the (Fontaine--Messing) syntomic complex with coefficients in the associated $F\textrm{-isocrystal}$.
	In global applications, for smooth ($\padic$ formal) schemes, we establish a comparison between the syntomic complex with coefficients in a locally free Fontaine--Laffaille module and the $\padic$ nearby cycles of the associated étale local system on the (rigid) generic fibre.
}


\section{Introduction}

Let $p$ denote a fixed prime and $\kappa$ a perfect field of characteristic $p$.
Let $K$ be a mixed characteristic complete discrete valuation field with ring of integers $O_K$ and residue field $\kappa$ and $F := W(\kappa)[1/p]$ the fraction field of the ring of $p\textrm{-typical}$ Witt vectors with coefficients in $\kappa$.
Fontaine's \textit{crystalline conjecture} for a proper and smooth $O_K\textrm{-scheme}$ relates the $p\textrm{-adic}$ \'etale cohomology of its generic fibre to the crystalline cohomology of its special fibre.
In \cite{fontaine-messing-padic}, Fontaine and Messing initiated a program for proving the crystalline conjecture via \textit{syntomic} methods.
By subsequent works of \cite[Kato--Messing]{kato-messing-syntomic}, \cite[Kato]{kato-semistable-etale} and with the remarkable work of \cite[Tsuji]{tsuji-semistable-comparison}, the crystalline conjecture was shown to be true.
There have been several proofs and generalisations of the crystalline comparison theorem: \cite{tsuji-semistable-comparison, faltings-crystalline, faltings-almost-etale, niziol-crystalline-K-theory, beilinson-derived-derham, scholze-rigid, yamashita-yasuda-open, andreatta-iovita-comparison-smooth, colmez-niziol-nearby-cycles, bhatt-morrow-scholze-integral, diao-lan-liu-zhu, guo-reinecke-analytic}.

\subsection{\texorpdfstring{$\padic$}{-} nearby cycles}

Let $\frakX$ be a smooth ($\padic$ formal) $O_K\textrm{-scheme}$ with (rigid) generic fibre $X$ and special fibre $\frakX_{\kappa}$.
Let $j : X_{\etale} \rightarrow \frakX_{\etale}$ and $i : \frakX_{\kappa, \etale} \rightarrow \frakX_{\etale}$ denote natural morphisms of sites.
For $r \geqslant 0$, let $\cals_n(r)_{\frakX}$ denote the syntomic sheaf modulo $p^n$ on $\frakX_{\kappa, \etale}$ (see \S \ref{sec:crystals_syntomic_cohomology} and \S \ref{sec:padic_nearby_cycles} for the definition of the syntomic complex).
In \cite{fontaine-messing-padic}, Fontaine and Messing constructed a period morphism from the syntomic complex to the complex of $p\textrm{-adic}$ nearby cycles,
\begin{equation}\label{intro_eq:fm_period_map_triv}
	\alpha_{r, n}^{\FM} : \cals_n(r)_{\frakX} \longrightarrow i^{\ast} \textup{R} j_{\ast} \ZZ / p^n (r)\prm_X,
\end{equation}
where $\ZZ_p(r)\prm := \frac{1}{a(r)!p^{a(r)}} \ZZ_p(r)$, for $r = (p-1)a(r) + b(r)$ with $0 \leqslant b(r) < p-1$.
For $\frakX$ a smooth and proper $O_K\textrm{-scheme}$ and $0 \leqslant r \leqslant p-1$, by truncating \eqref{intro_eq:fm_period_map_triv} in degree $\leqslant r$, the map $\alpha_{r, n}^{\FM}$ is known to be a quasi-isomorphism by \cite[Kato]{kato-vanishing-cycles, kato-semistable-etale}, \cite[Kurihara]{kurihara-padic-etale} and \cite[Tsuji]{tsuji-semistable-comparison}.
In \cite{tsuji-syntomic-complex}, Tsuji generalised this result to proper and semistable schemes and non-trivial \'etale local systems arising from (the pullback of) Fontaine--Laffaille modules over $O_F$ (see \cite{fontaine-laffaille}).
Moreover, in \cite{colmez-niziol-nearby-cycles}, Colmez and Nizio{\l} proved a similar result for semistable ($\padic$ formal) schemes and constant coefficients case, without any restrictions on $r$.
In particular, for a smooth ($\padic$ formal) scheme we have the following:
\begin{thm}[{\cite[Theorem 1.1]{colmez-niziol-nearby-cycles}}]\label{intro_thm:nearby_cycles_triv}
	For $0 \leqslant k \leqslant r$, the natural map
	\begin{equation*}
		\alpha_{r, n}^{\textup{FM}} : \pazh^k(\cals_n(r)_{\frakX}) \longrightarrow i^{\ast} \textup{R}^k j_{\ast} \ZZ/p^n(r)\prm_{X},
	\end{equation*}
	is a $p^N\textrm{-isomorphism}$, i.e.\ its kernel and cokernel are killed by $p^N$, where $N = N(e, p, r) \in \NN$ depends on the absolute ramification index $e$ of $K$, prime $p$ and twist $r$ but not on $X$ or $n$.
\end{thm}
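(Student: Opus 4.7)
The plan is to reduce the statement to an explicit local computation, since both $\cals_n(r)_{\frakX}$ and $i^{\ast}\textup{R}j_{\ast}\ZZ/p^n(r)\prm_X$ are étale sheaves on $\frakX_{\kappa}$ and it suffices to check the $p^N$-isomorphism on stalks. Working locally, I would pick a small affine $\Spf R$ admitting an étale map to a torus $\Spf O_K\langle T_1^{\pm 1}, \ldots, T_d^{\pm 1}\rangle$. On such a chart, the syntomic cohomology admits a presentation as the cohomology of a Koszul-type complex $\Kphi$ built from the crystalline period ring $\ARpi$, equipped with its Nygaard filtration and Frobenius $\varphi$, while the $\padic$ nearby cycles compute $\RGamma(G_R, \ZZ/p^n(r)\prm)$, where $G_R$ is the geometric fundamental group of the generic fiber of $\Spf R$.

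Next, I would pass to the Kummer tower $R_{\infty}$ obtained by adjoining all $p\textrm{-power}$ roots of $T_1, \ldots, T_d$ together with $p\textrm{-power}$ roots of unity, with Galois group $\Gamma$ an extension of $\ZZ_p^{\times}$ by $\ZZ_p^d$. By Faltings' almost purity theorem, Galois cohomology of $G_R$ reduces (up to a uniformly bounded power of $p$) to continuous $\Gamma\textrm{-cohomology}$ of appropriate period rings such as $\ARinfty$. Using the fundamental exact sequence
\begin{equation*}
	0 \longrightarrow \ZZ_p(r)\prm \longrightarrow \Filr \Acrys \xrightarrow{\hspace{1mm} 1 - \varphi/p^r \hspace{1mm}} \Acrys \longrightarrow 0,
\end{equation*}
one identifies $\padic$ nearby cycles, after base change to $\ARinfty$, with a Koszul complex parallel to the one defining syntomic cohomology. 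The period map $\alpha_{r, n}^{\FM}$ is then induced by the evident morphism between these two Koszul presentations, now both written over period rings.

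The heart of the proof is to show that the comparison map between the Koszul complex over $\ARpi$ (computing syntomic cohomology) and the one over $\ARinfty$ (computing Galois cohomology) is a $p^N$\textrm{-quasi-isomorphism} in degrees $\leq r$. Two quantitative inputs are needed: first, almost étale descent from $\ARinfty$ to $\ARpi$, whose error is controlled by Faltings' bounds in terms of $e$; and second, an analysis of $1 - \varphi/p^r$ on $\Filr \Acrys$ showing that it admits an inverse on suitable quotients modulo some explicit power of $p$ depending only on $r$ and $p$ (via the divided-power denominators $a(r)! p^{a(r)}$ built into $\ZZ_p(r)\prm$). The main obstacle will be the quantitative tracking of the constant $N$: one must bound simultaneously the denominators coming from the divided-power structure on $\Acrys$, from the partial inverse of $1 - \varphi/p^r$ on the Nygaard quotients, and from the almost étale descent, and confirm that the resulting bound depends only on $e$, $p$, $r$. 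Independence from $n$ is obtained by performing the whole argument at the level of pro-systems and then reducing modulo $p^n$, using the compatibility of $\alpha_{r,n}^{\FM}$ with the transition maps.
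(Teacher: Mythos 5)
Your skeleton (localize to a small affine, pass to the tower $R_\infty$, use almost purity and the fundamental exact sequence) is the right one and matches Colmez--Nizio{\l}'s strategy, but there is a genuine gap at the central step. You assert that the syntomic complex ``admits a presentation as a Koszul-type complex over $\ARpi$'' and that $\alpha_{r,n}^{\FM}$ is ``the evident morphism between the two Koszul presentations.'' The syntomic complex is a filtered de Rham complex: its Koszul shape comes from the differential operators $\partial_i$ and the filtration by order of vanishing at $X_0=\varpi$ on $\Rpi^{\textpd}$, whereas the Galois-side Koszul complex is built from the group elements $\gamma_i-1$ acting on $(\varphi,\Gamma)$-modules. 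There is no evident map between these two objects, and constructing one is the actual content of the proof. It requires (a) a filtered Poincar\'e lemma over ``fat'' period rings to transport the de Rham complex over $\Rpi^{\textpd}$ to one over $\ARpi^{\textpd}$ and its analytic variants, (b) the convergence of $\nabla_i=\log\gamma_i$ on those rings and the relation $\nabla_i=t\partial_i$ (an integral Lazard isomorphism), with careful bookkeeping of powers of $t$ against the filtration and the Tate twist, and (c) a passage to the $\psi$-complex (left inverse of Frobenius) to change the radius of convergence and descend to the full ring $\ARpi$, where the Fontaine--Herr/Koszul complex actually computes $\RGamma_{\cont}(G_R,\ZZ_p(r))$. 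The output of this chain is a \emph{different} period map $\alpha_{r,n}^{\Laz}$, which is shown to be a $p^N$-quasi-isomorphism; one then needs a separate diagram chase, using the fundamental exact sequence and the Poincar\'e lemma again, to show that $\alpha_{r,n}^{\Laz}$ coincides with $\alpha_{r,n}^{\FM}$ up to a bounded power of $p$. Your proposal conflates these two maps and thereby skips the hardest part of the argument.

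Two further points affect the statement itself and are not addressed. First, the range $0\le k\le r$: without log structures the local comparison only holds in degrees $\le r-1$; to reach degree $r$ Colmez--Nizio{\l} work with log-syntomic and log-crystalline complexes (log structure along the arithmetic variable $X_0$, Kummer Frobenius), as noted in Remark \ref{intro_rem:syn_nearby_formal_log}. Second, the source of the $e$-dependence of $N$: the local quasi-isomorphism with a universal constant is proved only when $K$ contains enough roots of unity; for general $K$ one passes to $K(\zeta_{p^m})$ and applies Galois descent for the syntomic complex, and it is this descent step --- not the almost \'etale descent error you invoke --- that makes $N$ depend on $e$. Independence of $n$ does follow from running the argument integrally and reducing modulo $p^n$, as you say.
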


The proof of Theorem \ref{intro_thm:nearby_cycles_triv} in \cite{colmez-niziol-nearby-cycles} works by reducing the problem to the local setting, i.e.\ one works over the $\padic$ completion of an \'etale algebra over $O_K[X_1^{\pm 1}, \ldots, X_d^{\pm 1}]$, for some indeterminates $X_1, \ldots, X_d$.
Locally, Colmez and Nizio{\l} also show that it is enough to work with $\padic$ formal schemes and deduce the result for schemes by invoking Elkik's approximation theorem and a form of rigid GAGA (see \cite[\S 5.1]{colmez-niziol-nearby-cycles}).

For simplicity in the introduction, let $R$ be the $\padic$ completion of $O_F[X_1^{\pm 1}, \ldots, X_d^{\pm 1}]$ and $S := O_K \otimes_{O_F} R$ (see Assumption \ref{assum:small_algebra} for a more general setup).
Let $G_S := \pi_1^{\etale}(S[1/p], \overline{\eta})$, for a fixed geometric generic point of $\Sp (S[1/p])$.
Denote by $\Syn(S, r)$ the $r\textrm{-th}$ Tate twist of the (log-) syntomic complex (see \cite[\S 3.3]{colmez-niziol-nearby-cycles} for details).

\begin{thm}[{\cite[Theorem 1.6]{colmez-niziol-nearby-cycles}}]\label{intro_thm:colmez_niziol_lazard_iso}
	If $K$ contains enough roots of unity, then the maps
	\begin{align*}\label{eq:intro_colmez_niziol_lazard_iso}
		\begin{split}
			\alpha_r^{\Laz} &: \tau_{\leqslant r} \Syn(S, r) \longrightarrow \tau_{\leqslant r} \RGamma_{\cont}(G_S, \ZZ_p(r)),\\
			\alpha_{r, n}^{\Laz} &: \tau_{\leqslant r} \Syn(S, r)_n \longrightarrow \tau_{\leqslant r} \RGamma_{\cont}(G_S, \ZZ/p^n(r)) \longrightarrow \tau_{\leqslant r} \RGamma\big(\big(\textup{Sp } S[1/p])_{\etale}, \ZZ/p^n(r)\big),
		\end{split}
	\end{align*}
	are $p^{Nr}\textrm{-quasi-isomorphisms}$ for a universal constant $N$, i.e.\ $N$ does not depend on $p$, $X$, $K$, $n$ or $r$.
\end{thm}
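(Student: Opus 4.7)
The plan is to model each side of $\alpha_r^{\Laz}$ by a Koszul complex on $d{+}1$ commuting operators and compare the two models up to a universally bounded power of $p$. On the syntomic side, the coordinates $X_0,\ldots,X_d$ on $\Rpi^+$ provide commuting derivations $\partial_0,\ldots,\partial_d$ on the PD-envelope $\Rpi^{\textpd}$ that identify $\Omega^\bullet_{\Rpi^{\textpd}}$ with the Koszul complex on the $\partial_i$. The cone defining $\Syn(S,r)$ therefore becomes a double Koszul complex on the operators $(\varphi,\partial_0,\ldots,\partial_d)$ acting on the filtered PD-envelope, in the spirit of $\Kdphi$.

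On the Galois side, the assumption that $K$ contains enough roots of unity allows us to form the Kummer tower $\Sinfty/S$ obtained by adjoining all $p$-power roots of unity and all $p$-power roots of $X_1,\ldots,X_d$; its Galois group $\Gamma_S$ is topologically isomorphic to an open subgroup of $\ZZ_p^{d+1}$. Writing $H_S = \kert(G_S \twoheadrightarrow \Gamma_S)$, Faltings's almost purity together with $(\varphi,\Gamma)$-module theory identifies $\RGamma_\cont(H_S,\ZZ_p(r))$, up to a bounded power of $p$, with the $H_S$-invariants of the relevant Fontaine period ring, and Hochschild-Serre then reduces $\RGamma_\cont(G_S,\ZZ_p(r))$ to the $\Gamma_S$-cohomology of an explicit $(\varphi,\Gamma_S)$-module. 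Fixing topological generators $\gamma_0,\ldots,\gamma_d$ of $\Gamma_S$, an integral form of Lazard's theorem then replaces $\RGamma_\cont(\Gamma_S,-)$ by the Koszul complex on the Lie derivatives $\nabla_i := \log\gamma_i$, up to a uniformly bounded power of $p$. This is the step where the roots-of-unity hypothesis is indispensable, as it forces $\Gamma_S$ to be genuinely $\ZZ_p$-analytic of the expected rank.

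The map $\alpha_r^{\Laz}$ is then defined by matching the two Koszul models: one checks that, along the natural embedding of the PD-envelope into the ambient Fontaine period ring, the operator $\nabla_i$ acts as $t\,\partial_i$, where $t$ is Fontaine's element. The comparison of Koszul differentials thus amounts to multiplication by powers of $t$, and after the $r$-fold Tate twist these reduce to multiplication by a unit up to a denominator of size $p^{v_p(t^r)}$, linear in $r$, yielding the advertised $p^{Nr}$-bound. For $\alpha_{r,n}^{\Laz}$, the second arrow is a quasi-isomorphism because the enough-roots-of-unity hypothesis makes $\Sp S\pinverse$ a $K(\pi,1)$ for mod $p^n$ sheaves, so the full composition inherits the $p^{Nr}$-bound.

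The main obstacle is to keep the $p$-power denominators in the Lazard identification \emph{universal}, i.e.\ dependent only on $p$, $r$ and $d$ and not on $S$ or $n$. Denominators enter at three largely independent stages: the decompletion from perfectoid cohomology back to the small $(\varphi,\Gamma)$-module, the $\log/\exp$ identification of the $\gamma_i$ with the $\nabla_i$, and the PD-structures appearing in the embedding of $\Rpi^{\textpd}$ into the Fontaine ring. Each of these contributes a loss proportional to $r$, coming from the $r$-fold Tate twist and the $r$-th filtration step. The heart of the proof is to track these losses term by term in the Koszul double complex and verify that their composite is still bounded by $p^{Nr}$ for a single constant $N$ independent of the local algebra.
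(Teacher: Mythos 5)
Your overall strategy is the one actually used: this theorem is quoted from Colmez--Nizio{\l}, and both their proof and the generalization carried out in \S\ref{sec:syntomic_complex_finite_height}--\S\ref{sec:syntomic_galcoh} of this paper proceed exactly by modelling the syntomic side as a $(\varphi,\partial)$-Koszul complex, the Galois side as a $(\varphi,\Gamma_S)$-Koszul complex via almost purity and the relative Fontaine--Herr complex, and gluing them through the integral Lazard identification $\nabla_i = t\,\partial_i$ after twisting by $t^r$.

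There is, however, one concrete error that would derail a literal implementation. The group $\Gamma_S$ is \emph{not} topologically isomorphic to an open subgroup of $\ZZ_p^{d+1}$: it is the semidirect product $\Gamma_S' \rtimes \Gamma_K$ with $\gamma_0\gamma_i\gamma_0^{-1} = \gamma_i^{\chi(\gamma_0)}$, so on the Lie algebra one has $[\nabla_0,\nabla_i] = p^m\nabla_i$ for $1 \leq i \leq d$ (see \eqref{eq:gammar_semidirect_product} and \eqref{eq:lie_operator_commutativity}). Consequently the naive Koszul complex on $d+1$ commuting operators does not compute $\RGamma_{\cont}(\Gamma_S,-)$ or $\RGamma_{\cont}(\Lie\Gamma_S,-)$. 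The correct construction (Definitions \ref{defi:tau_0}, \ref{defi:koszul_complex_gamma} and \ref{defi:koszul_complex_lie_gammaR}) takes the mapping fiber of a \emph{twisted} comparison map: on the group side $\tau_0$ carries the correction factors $\delta_{i_j} = (\gamma_{i_j}^c-1)(\gamma_{i_j}-1)^{-1}$, and on the Lie algebra side the $k$-th vertical arrow is $\nabla_0 - kp^m$ rather than $\nabla_0$. The role of ``enough roots of unity'' is precisely to make these commutators divisible by a large power of $p$ so that the operators $\nabla_i/(\gamma_i-1)$ and the twisted $\tau_0$ converge integrally with controlled denominators (Lemmas \ref{lem:log_gamma_converges}, \ref{lem:nabla_quotient_tau_invertible} and Proposition \ref{prop:quasi_iso_Lie_relative}) --- not to make $\Sp S\pinverse$ a $K(\pi,1)$, which holds unconditionally by Scholze. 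Two smaller points: the quantity ``$p^{v_p(t^r)}$'' is not meaningful ($t$ is a period, and $t/\pi$ is a unit in the relevant rings by Lemma \ref{lem:t_over_pi_unit}); the actual $p$-power losses come from the fundamental exact sequence, the changes of disk and annulus of convergence, and the passage between $\varphi$- and $\psi$-complexes, each contributing a loss linear in $r$ as you anticipate.
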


One of our main goals in this article is to generalise Theorem \ref{intro_thm:colmez_niziol_lazard_iso} by studying syntomic complexes with coefficients.
Subsequently, by ``gluing'' the local results for relative Fontaine--Laffaille modules, we will obtain a global generalisation of Theorem \ref{intro_thm:nearby_cycles_triv}.
Note that in the local setting, on the \'etale side, by using a $K(\pi, 1)\textrm{-Lemma}$ (see \cite[Theorem 4.9]{scholze-rigid}), we can reduce to the setting of $\ZZ_p\textrm{-representations}$ of $G_R$.
Then, due to the ``crystalline'' nature of our goal, we will consider $G_R\textrm{-stable}$ $\ZZ_p\textrm{-lattices}$ inside ``finite height'' crystalline representations of $G_R$ and certain natural invariants attached to such representations as in \cite[\S 4]{abhinandan-crystalline-wach}.

\subsection{Finite height representations}

Fix $p \geqslant 3$, $m \in \NN_{\geqslant 2}$, $K = F(\zeta_{p^m})$ and $\varpi = \zeta_{p^m}-1$ (see Remark \ref{rem:cyclotomic_restriction} on the rationale behind our assumptions).
Fix an algebraically closed field $\overline{\Fr(R)}$ containing $\overline{F}$ an algebraic closure of $F$ and set $F_{\infty} := F(\mu_{p^{\infty}}) \subset \overline{F}$.
Let $\Rbar$ denote the union of finite $R\textrm{-subalgebras}$ $R' \subset \overline{\Fr(R)}$ such that $R'[1/p]$ is \'etale over $R[1/p]$.
Set $R_{\infty} := \cup_{n \in \NN} R\big[\mu_{p^n}, X_1^{1/p^n}, \ldots, X_d^{1/p^n}\big]$, $G_R := \Gal\big(\Rbar[1/p]/R[1/p]\big)$, $\Gamma_R := \Gal\big(\Rinfty[1/p]/R[1/p]\big)$, $H_R := \kert(G_R \twoheadrightarrow \Gamma_R)$ and note that we have $\Gamma_R = \Gamma_R' \rtimes \Gamma_F$, where $\Gamma_R' := \Gal\big(\Rinfty[1/p]/F_{\infty}R[1/p]\big) \isomorphic \ZZ_p(1)^d$ and $\Gamma_F := \Gal(F_{\infty}/F) \isomorphic \ZZ_p^{\times}$.

Recall that \cite{fontaine-festschrift} showed a categorical equivalence between $\ZZ_p\textrm{-representations}$ of $G_F$ and \'etale $(\varphi, \Gamma_F)\textrm{-modules}$ over a certain period ring $\mbfa_F$.
These results were generalised to the relative setting in \cite{andreatta-generalized-phiGamma}, to establish a categorical equivalence between $\ZZ_p\textrm{-representations}$ of $G_R$ and \'etale $(\varphi, \Gamma_R)\textrm{-modules}$ over a certain period ring $\AR$ (see \S \ref{subsec:relative_phi_gamma_mod}).
Moreover, Fontaine's work on crystalline representations of $G_F$, in \cite{fontaine-annals, fontaine-corps-des-periodes, fontaine-semistables}, was generalised to the relative case, in \cite{brinon-padicrep-relatif}, via the construction of a fully faithful functor $\ODcrys$ from the category of crystalline representations of $G_R$ to the category of filtered $(\varphi, \partial)\textrm{-modules}$ over $R[1/p]$ (see \S \ref{subsec:relative_padic_reps}).

Let $q = \varphi(\pi)/\pi$ in $\AR$, where $\pi$ is the usual element of Fontaine (see \S \ref{subsec:period_rings}).
In \cite{abhinandan-crystalline-wach}, we studied finite $q\textrm{-height}$ representations of $G_R$, a notion parallel to the arithmetic case, i.e.\ $R = O_F$ in \cite{wach-pot-crys, wach-cristallines-torsion, colmez-finite-height, berger-limites-cristallines} (see \cite[Remark 1.4]{abhinandan-crystalline-wach}).
A representation $T$ in $\Rep_{\ZZ_p, \free}(G_R)$ is of finite $q\textrm{-height}$ if it admits a unique $(\varphi, \Gamma_R)\textrm{-module}$ over a certain subring $\AR^+ \subset \AR$ satisfying certain conditions on the $(\varphi, \Gamma_R)\textrm{-action}$ (see Definition \ref{defi:wach_reps}); the aforementioned $\AR^+\textrm{-module}$ is called the \textit{Wach module} associated to $T$ and denoted as $\mbfn(T)$.
Moreover, we showed that finite $q\textrm{-height}$ representations are closely related to crystalline representations via a certain period ring $\OARpi^{\textpd} \subset \OAcrys(\Rbar)$, where the former is equipped with structures induced from the latter (see \cite[\S 4.3]{abhinandan-crystalline-wach}).

\begin{thm}[{\cite[Theorem 4.24, Proposition 4.27]{abhinandan-crystalline-wach}}]
	Let $T$ be a $\ZZ_p\textrm{-representation}$ of $G_{R}$ and assume that $T$ is of positive finite $q\textrm{-height}$.
	Then $V := T[1/p]$ is a positive crystalline representation and we have an isomorphism of $R[1/p]\textrm{-modules}$ $\ODcrys(V) \lisomorphic \big(\OARpi^{\textpd} \otimes_{\AR^+} \mbfn(T)\big)^{\Gamma_R}[1/p]$ compatible with the respective Frobenii, filtrations and connections.
\end{thm}

\subsection{Syntomic coefficients and \texorpdfstring{$(\varphi, \Gamma)$}{-}-modules}

In this subsection, we will assume the following:
Let $T$ be a $\ZZ_p\textrm{-representation}$ of $G_R$ of positive finite $q\textrm{-height}$ $s \in \NN$ and set $V := T[1/p]$ (see Definition \ref{defi:wach_reps}).
Assume that $\mbfn(T)$ is free of rank $= \textrm{rk}_{\ZZ_p} T$ over $\AR^+$ and $M \subset \ODcrys(V)$ is a finite free $R\textrm{-submodule}$ of rank $=\textrm{rk}_{\ZZ_p} T$, such that $M[1/p] \isomorphic \ODcrys(V)$ and satisfies Assumption \ref{assum:relative_crystalline_wach_free} (see Example \ref{exm:choice_odcrist} for obtaining $M$ from $\mbfn(T)$).

Our objective is to compute the continuous $G_R\textrm{-cohomology}$ of $T(r)$ using the syntomic complex for $R$ with coefficients in $M \subset \ODcrys(V)$.
Set $S = R[\varpi]$ and note that we have a divided power thickening $\Rpi^{\textpd} \twoheadrightarrow S$ (using an ``arithmetic'' varaibale $X_0$, see \S \ref{subsec:pd_envelope}) and the ring $\Rpi^{\textpd}$ is equipped with a Frobenius endomorphism $\varphi$; let $\Omega^1_{\Rpi^{\textpd}}$ denote the $\padic$ completion of the module of differentials of $\Rpi^{\textpd}$ with respect to $\ZZ$.
Set $\Mpi^{\textpd} := \Rpi^{\textpd} \otimes_R M$ equipped with the induced supplementary structures to obtain a filtered de Rham complex (see \S \ref{subsec:main_result}),
\begin{equation*}
	\Fil^r \cald_{S, M}^{\bullet} := \Fil^r \Mpi^{\textpd} \longrightarrow \Fil^{r-1} \Mpi^{\textpd} \otimes_{\Rpi^{\textpd}} \Omega^1_{\Rpi^{\textpd}} \longrightarrow \Fil^{r-2} \Mpi^{\textpd} \otimes_{\Rpi^{\textpd}} \Omega^2_{\Rpi^{\textpd}} \longrightarrow \cdots.
\end{equation*}

\begin{defi}\label{intro_defi:syntomic_complex_coeff}
	Define the \textit{syntomic complex} of $S$ with coefficients in $M$ and its modulo $p^n\textrm{-version}$ as $\Syn(S, M, r) := \big[\hspace{1mm} \Fil^r \cald_{S, M}^{\bullet} \xrightarrow{\hspace{1mm}p^r - p^{\bullet} \varphi\hspace{1mm}} \cald_{S, M}^{\bullet}\hspace{1mm}\big]$ and $\Syn(S, M, r)_n := \Syn(S, M, r) \otimes \ZZ/p^n$, for $n \geqslant 1$.
\end{defi}

\begin{thm}[{Theorem \ref{thm:syntomic_complex_galois_cohomology}}]\label{intro_thm:syntomic_complex_galois_cohomology}
	Let $T$ be a positive finite $q\textrm{-height}$ $\ZZ_p\textrm{-representation}$ of $G_R$ of height $s$ as above and take $r \in \NN$ such that $r \geqslant s + 1$.
	Then, there exist $p^N\textrm{-quasi-isomorphisms}$,
	\begin{align*}
		\alpha_r^{\Laz} : \tau_{\leqslant r-s-1} \Syn(S, M, r) &\simeq \tau_{\leqslant r-s-1} \RGamma_{\cont}(G_S, T(r)),\\
		\alpha_{r, n}^{\Laz} : \tau_{\leqslant r-s-1} \Syn(S, M, r)_n &\simeq \tau_{\leqslant r-s-1} \RGamma_{\cont}(G_S, T/p^n(r)),
	\end{align*}
	where $N = N(T, e, r) \in \NN$ depends on the representation $T$, $e = [K:F]$ and the twist $r$.
\end{thm}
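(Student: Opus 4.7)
The plan is to adapt the strategy of Colmez-Nizio\l{} for trivial coefficients by rewriting both sides of the claimed quasi-isomorphism as Koszul-type complexes on the Wach module $\mbfn(T)$, and then comparing them through a relative Lazard isomorphism. On the syntomic side, Theorem \ref{intro_thm:crys_wach_comparison}(iii) provides an isomorphism $\OARpi^{\textpd} \otimes_{R} M \isomorphic \OARpi^{\textpd} \otimes_{\AR^+} \mbfn(T)$ compatible with Frobenius, filtration and connection. Base-changing the filtered de Rham complex $\Fil^r \cald^{\bullet}_{S, M}$ to $\OARpi^{\textpd}$ and exploiting the Taylor-style decomposition of $\OARpi^{\textpd}$ over $\Rpi^{\textpd}$ identifies $\Syn(S, M, r)$ with a Frobenius-differential Koszul complex $K_{\varphi, \partial}(\mbfn(T), r)$, up to a controlled $p^N$-loss absorbed in the truncation.

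On the Galois side, Scholze's $K(\pi,1)$-lemma reduces \'etale cohomology of $\Sp S\pinverse$ to continuous $G_S$-cohomology, and the relative Fontaine-Herr formalism of Andreatta-Iovita presents $\RGamma_{\cont}(G_S, T(r))$ as a Koszul complex $K_{\varphi, \Gamma_S}(\mbfd(T)(r))$ built from topological generators of $\Gamma_F$ and the geometric $\ZZ_p^d$-part of $\Gamma_S$. Since $V$ has finite $q$-height $s$, the inclusion $\mbfn(T) \hra \mbfd(T)$ has cokernel annihilated by a power of $q$; this produces a $p^{cs}$-quasi-isomorphism between the Fontaine-Herr complex for $\mbfd(T)$ and its $\mbfn(T)$-analogue in degrees $\leq r - s - 1$, which accounts for both the height restriction $r \geq s+1$ and the shift in the truncation.

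It then remains to identify the two Koszul complexes built on $\mbfn(T)$. Over $\OARpi^{\textpd}$, a relative integral Lazard isomorphism compares the geometric operators $(\gamma_i - 1)/t$ with the derivations $\nabla_i$ defining the connection on $\mbfn(T)$, up to a universal power of $p$, while in the cyclotomic direction the action of a topological generator of $\Gamma_F$ is matched with the Frobenius twist $p^r - p^{\bullet}\varphi$ after a similar loss. Composing these identifications yields $\alpha_r^{\Laz}$ and, after reduction modulo $p^n$ together with the continuous-versus-\'etale comparison, the map $\alpha_{r,n}^{\Laz}$. Galois descent from $K = F(\zeta_{p^m})$ to the desired base transfers the estimate back to $S$.

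The hard part will be controlling the Lazard-type comparison uniformly in $n$ in the presence of non-trivial coefficients. In the trivial case, $\Gamma'_R$ acts trivially modulo $q$ on the relevant period ring, whereas here the action of $\Gamma'_S$ on $\mbfn(T)/q$ is non-trivial, and the failure of Frobenius to commute with the connection on the Wach module introduces denominators up to $q^s$. The central estimate to be established is that these contributions, together with the $p$-powers coming from dividing by $t$ in the Lazard map, are all absorbed into a single constant $N = N(T, e, r)$ after truncation in degrees $\leq r - s - 1$; this is the coefficient analogue of the universal bound in \cite[Theorem 1.6]{colmez-niziol-nearby-cycles} and constitutes the technical core of the argument.
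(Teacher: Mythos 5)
Your overall strategy is the one the paper follows (both are modelled on Colmez--Nizio\l{}): convert the syntomic complex into a Frobenius--differential Koszul complex on the Wach module via the comparison of Theorem \ref{intro_thm:crys_wach_comparison} and a filtered Poincar\'e lemma, present Galois cohomology by the relative Fontaine--Herr/Koszul complex, and join the two by a Lazard-type isomorphism followed by Galois descent. However, there is a genuine gap in the middle of your argument, at exactly the place you defer to "a controlled $p^N$-loss absorbed in the truncation." The comparison isomorphism lives over $\OARpi^{\textpd}$, but the Lazard isomorphism (convergence of $\nabla_i=\log\gamma_i$ and of $(\gamma_i-1)/\nabla_i$) and the eventual identification with the \'etale $(\varphi,\Gamma_S)$-module $\mbfd(T)(r)$ over $\ARpi$ cannot be performed over $\Rpi^{\textpd}$ or $\OARpi^{\textpd}$ directly. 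The paper's proof is forced to interpolate through a chain of analytic rings $\Rpi^{[u]}\subset\Rpi^{[u,v]}$ and $\ARpi^{(0,v]+}\subset\ARpi^{(0,v/p]+}\subset\ARpi$, and each change of disk or annulus of convergence is proved to be a $p$-power quasi-isomorphism by replacing the $\varphi$-complex with the corresponding $\psi$-complex (where $\psi$ is the left inverse of Frobenius, extended to $M$ via $p^sM\subset\varphi^{\ast}(M)$) and decomposing the $\psi=0$ eigenspace. Your sketch contains no substitute for this mechanism, and it is where almost the entire constant $N(T,e,r)$ is generated.

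Two specific assertions also need repair. First, the cokernel of $\mbfn(T)\hra\mbfd(T)$ is not killed by a power of $q$: one has $\mbfd(T)=\AR\otimes_{\AR^+}\mbfn(T)$, so this inclusion is a flat base change with no torsion quotient; the correct torsion statements are $\pi^{s}\,\mbfd^{+}(T)\subset\mbfn(T)\subset\mbfd^{+}(T)$ and $q^{s}\,\mbfn(T)\subset\varphi^{\ast}(\mbfn(T))$, and these only convert into $p$-power statements over the overconvergent rings where $\pi$ divides $p$ (Lemma \ref{lem:pi_m_divides_p}). Second, the truncation in degrees $\leq r-s-1$ does not come from that inclusion: it arises because on the quotient $\Mpi^{[u,v]}/\Mpi^{[u]}$ the operator $1-p^{i}\psi$ is invertible only for $i\geq s+1$, i.e.\ only for cohomological degrees $k\leq r-s-1$ (Lemma \ref{lem:syntomic_u_to_uv}), with the hypothesis $r\geq s+1$ reappearing in the injectivity argument of Lemma \ref{lem:psi_gamma_oc_radius_wach}. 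Without these steps your proposal reduces to a plausible outline whose technical core --- the uniform control of the passage between the crystalline and \'etale period rings --- is precisely what remains unproved.
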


Similarly, we have a filtered de Rham complex with coefficients in $M$ and one can also define the \textit{syntomic complex} of $R$ with coefficients in $M$.
Using Theorem \ref{intro_thm:syntomic_complex_galois_cohomology} for $\varpi = \zeta_{p^2}-1$ and Galois descent (see Lemma \ref{lem:syn_galois_descent}), we obtain the following:
\begin{cor}[{Corollary \ref{cor:syntomic_complex_galois_cohomology}}]\label{intro_cor:syntomic_complex_galois_cohomology}
	Let $T$ be a positive finite $q\textrm{-height}$ $\ZZ_p\textrm{-representation}$ of $G_R$ of height $s$ as above and take $r \in \NN$ such that $r \geqslant s + 1$.
	Then, there exist $p^N\textrm{-quasi-isomorphisms}$,
	\begin{align*}
		\alpha_r^{\Laz} : \tau_{\leqslant r-s-1} \Syn(R, M, r) &\simeq \tau_{\leqslant r-s-1} \RGamma_{\cont}(G_R, T(r)),\\
		\alpha_{r, n}^{\Laz} : \tau_{\leqslant r-s-1} \Syn(R, M, r)_n &\simeq \tau_{\leqslant r-s-1} \RGamma_{\cont}(G_R, T/p^n(r)),
	\end{align*}
	where $N = N(p, r, s) \in \NN$ depends on the prime $p$, twist $r$ and height $s$ of $T$.
\end{cor}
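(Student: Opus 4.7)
The plan is to deduce the corollary from Theorem \ref{intro_thm:syntomic_complex_galois_cohomology} by Galois descent along the finite extension $S/R$. Concretely, I would take $\varpi = \zeta_{p^2}-1$ so that $S = R[\varpi] = O_{K} \otimes_{O_F} R$ with $K = F(\zeta_{p^2})$, and set $\Delta := \Gal(K/F) = \Gal(S[\tfrac{1}{p}]/R[\tfrac{1}{p}])$, which fits into a short exact sequence $1 \to G_S \to G_R \to \Delta \to 1$. Applying Theorem \ref{intro_thm:syntomic_complex_galois_cohomology} to this choice of $\varpi$ yields $p^{N_1}$-quasi-isomorphisms
\begin{equation*}
	\alpha_r^{\Laz} : \tau_{\leq r-s-1} \Syn(S, M, r) \simeq \tau_{\leq r-s-1} \RGamma_{\cont}(G_S, T(r)),
\end{equation*}
and the analogous statement modulo $p^n$, where $N_1 = N_1(T, e, r)$ with $e = p(p-1)$ the ramification index of $K/\QQ_p$, which already only depends on $p$ once $m=2$ is fixed.

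Next, both sides of the above equivalence carry compatible $\Delta$-actions: on the syntomic side, $\Delta$ acts through its action on $S$ (and hence on $\Mpi^{\textpd}$ and $\Omega^{\bullet}_{\Rpi^{\textpd}}$), while on the Galois side it acts by outer conjugation on $G_S$ inside $G_R$. The Hochschild-Serre spectral sequence gives $\RGamma_{\cont}(G_R, T(r)) \simeq \RGamma(\Delta, \RGamma_{\cont}(G_S, T(r)))$ on the Galois side. For the syntomic side, Lemma \ref{lem:syn_galois_descent} provides the analogous descent statement $\Syn(R, M, r) \simeq \RGamma(\Delta, \Syn(S, M, r))$, at least up to controlled $p$-power torsion. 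Applying $\RGamma(\Delta, -)$ to the $\Delta$-equivariant quasi-isomorphism from Step~1 and identifying both sides via these descent isomorphisms produces the desired map over $R$.

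To control the $p$-power constant, I would exploit the structure of $\Delta$: since $|\Delta| = p(p-1)$ and $p-1$ is coprime to $p$, taking $\Delta$-cohomology of $p$-adic coefficients reduces to taking cohomology of the cyclic $p$-subgroup $\Delta_p \subset \Delta$, on which multiplication by $p$ kills $H^i$ for $i \geq 1$. Combining this with the $p^{N_1}$-quasi-isomorphism and tracking the cohomological shift caused by the spectral sequence against the truncation $\tau_{\leq r-s-1}$, one obtains a $p^N$-quasi-isomorphism for some $N$ depending only on $N_1$ and the cohomological degrees involved; since $e$ is now a function of $p$ alone and $N_1$ depends on $T$ only through bounds controlled by the height $s$, the resulting $N = N(p, r, s)$.

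The main obstacle will be verifying that Galois descent interacts correctly with the canonical truncation $\tau_{\leq r-s-1}$: descent along $\Delta$ shifts cohomological degrees, so one must check either that the truncation level is preserved up to an explicit loss of a fixed power of $p$, or redo the descent on the full (untruncated) complex and re-truncate at the end. The subtlety is to absorb the failure of exactness of $\Delta$-invariants into the universal $p^N$-constant without introducing a dependence on $T$ beyond its height $s$, which is precisely the content needed to invoke Lemma \ref{lem:syn_galois_descent} cleanly.
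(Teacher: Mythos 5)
Your proposal is correct and is essentially the paper's own argument: one applies Theorem \ref{intro_thm:syntomic_complex_galois_cohomology} with $\varpi = \zeta_{p^2}-1$ (so that the constant, via Example \ref{exm:choice_odcrist}(ii), depends only on $p$, $r$, $s$) and descends along $\Gal(K/F)$ using Lemma \ref{lem:syn_galois_descent} on the syntomic side and Hochschild--Serre on the Galois side. The $p$-Sylow heuristic and the truncation worry are both subsumed by what is already available: Lemma \ref{lem:syn_galois_descent} supplies the explicit constant $p^{4r+3e}$ with $e = p(p-1)$ for the untruncated complexes, and $H^k$ of finite-group cohomology depends only on $\tau_{\leq k}$ of the coefficients, so no degree shift is lost.
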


The proof of Theorem \ref{thm:syntomic_complex_galois_cohomology} is broadly divided in two main steps.
First, we modify the syntomic complex with coefficients in $M$ and relate it to a ``differential'' Koszul complex with coefficients in $\mbfn(T)$ (see Proposition \ref{prop:syntomic_to_phi_gamma}).
Next, we modify the Koszul complex from the first step to obtain a Koszul complex computing the continuous $G_S\textrm{-cohomology}$ of $T(r)$ (see Theorem \ref{thm:syntomic_complex_galois_cohomology} and Proposition \ref{prop:differential_koszul_complex_galois_cohomology}).
The key idea behind relating these two steps is the comparison isomorphism in \cite[Theorem 4.24]{abhinandan-crystalline-wach} and a Poincar\'e Lemma (see \S \ref{subsec:poincare_lemma_app}).
Our proof of Theorem \ref{thm:syntomic_complex_galois_cohomology} is inspired by \cite{colmez-niziol-nearby-cycles}, however our setting demands several non-trivial generalisations of the ideas in loc.\ cit.

\begin{rem}
	Setting $T = \ZZ_p$ in Theorem \ref{intro_thm:syntomic_complex_galois_cohomology}, we obtain a statement similar to Theorem \ref{intro_thm:nearby_cycles_triv} (note that we truncate in degree $\leqslant r-1$ as we are working with the syntomic complex instead of the log-syntomic complex as in \cite{colmez-niziol-nearby-cycles}).
\end{rem}

\begin{rem}\label{rem:cyclotomic_restriction}
	In Theorem \ref{intro_thm:syntomic_complex_galois_cohomology} we restrict to a finite cyclotomic $K/F$ because we are used the cyclotomic Frobenius ($X_0 \mapsto (1+X_0)^p-1$) in Definition \ref{intro_defi:syntomic_complex_coeff}, instead of the Kummer Frobenius ($X_0 \mapsto X_0^p$) as in \cite{colmez-niziol-nearby-cycles}.
	For $K/F$ finite, one should use Kummer Frobenius to define a log-syntomic complex (log-structure with respect to $X_0$).
	Then it should be possible to obtain Theorem \ref{intro_thm:syntomic_complex_galois_cohomology} for all finite extensions $K/F$ (with truncation in degree $\leqslant r-s$ as in \cite{colmez-niziol-nearby-cycles}).
	Furthermore, to obtain the statement over $\overline{F}$ one could pass to the limit over all finite extensions $K/F$.
	Alternatively, one could directly work over $\CC_p = \widehat{\overline{F}}$ as in \cite{gilles-morphismes} to avoid complications arising from Frobenius on $X_0$.
	In the latter case, our proofs can be adapted to obtain Theorem \ref{intro_thm:syntomic_complex_galois_cohomology} for $S = R \widehat{\otimes}_{O_F} O_{\CC_p}$ (with truncation in degrees $\leqslant r-s-1$).
\end{rem}

\begin{rem}
	The case $p=2$ is different from $p \geqslant 3$, as for $p=2$, the constant $N$ in Theorem \ref{intro_thm:syntomic_complex_galois_cohomology} also depends on the relative dimension of $R/O_F$ (see \cite[Lemma 3.11]{colmez-niziol-nearby-cycles}).
\end{rem}

Next, using the fundamental exact sequence in $\padic$ Hodge theory \eqref{eq:fes_acrys}, one can define a local Fontaine--Messing period map for $T$ as in Theorem \ref{intro_thm:syntomic_complex_galois_cohomology} (see \S \ref{subsec:fm_maps_comparison}).
Then, we show the following:
\begin{thm}[{Theorem \ref{thm:lazard_fmlocal_comparison}}]\label{intro_thm:lazard_fmlocal_comparison}
	The period map $\tilde{\alpha}_{r, n, S}^{\FM}$ is $p^{N(T, e, r)}\textrm{-equal}$ to $\alpha_{r, n}^{\Laz}$ from Theorem \ref{intro_thm:syntomic_complex_galois_cohomology}.
\end{thm}

\subsection{Fontaine--Laffaille modules and \texorpdfstring{$\padic$}{ } nearby cycles}

In this subsection, we will specialise Theorem \ref{intro_thm:syntomic_complex_galois_cohomology} to the case of global relative Fontaine--Laffaille modules introduced by Faltings in \cite[\S II]{faltings-crystalline}.
Let $R$ denote the $\padic$ completion of an \'etale algebra over $O_F[X_1^{\pm 1}, \ldots, X_d^{\pm 1}]$ with non-empty geometrically integral special fibre (see \S \ref{subsec:setup_nota} for details).
Note that Theorem \ref{intro_thm:syntomic_complex_galois_cohomology} and Corollary \ref{intro_cor:syntomic_complex_galois_cohomology} are true in this setting as well.
In \cite[\S 5]{abhinandan-crystalline-wach}, we considered the category $\MF_{[0, s], \free}(R, \Phi, \partial)$ of free relative Fontaine--Laffaille modules of level $[0, s]$ (see Remark \ref{rem:fl_wach_comparison} (i)) as a full subcategory of $\mathfrak{MF}_{[0, s]}^{\nabla}(R)$ in \cite[\S II]{faltings-crystalline}.
To any $M$ in $\MF_{[0, s], \free}(R, \Phi, \partial)$, one can functorially attach a representation $T_{\crys}(M)$ in $\Rep_{\ZZ_p, \free}(G_R)$, which admits a Wach module $\mbfn(T)$ (see \cite[Theorem 5.4]{abhinandan-crystalline-wach}) and satisfies Assumption \ref{assum:relative_crystalline_wach_free} (see Example \ref{exm:choice_odcrist} (iii)).
Next, let $\frakX$ be a smooth ($\padic$ formal) scheme defined over $O_F$ and cover it by affine ($\padic$ formal) schemes $\{\frakU_i\}_{i \in I}$, where for all $i \in I$, we have that $\frakU_i = \Spec A_i$ (resp.\ $\frakU_i = \Spf A_i$) such that its $\padic$ completion $\widehat{A}_i$ is an \'etale algebra as above; we also fix compatible Frobenius lifts $\varphi_i : \widehat{A}_i \rightarrow \widehat{A}_i$.
Take $\MF_{[0, s], \free}(\frakX, \Phi, \partial)$ to be the category of finite locally free filtered $\pazo_{\frakX}\textrm{-modules}$ $\pazm$ equipped with a quasi-nilpotent integrable connection satisfying Griffiths transversality such that there exists a covering $\{\frakU_i\}_{i \in I}$ of $\frakX$ as above with $\pazm_{\frakU_i} \in \MF_{[0, s], \free}(\widehat{A}_i, \Phi, \partial)$, for all $i \in I$ (see \S \ref{subsec:global_fm_mods}).

To state the main global result, let $\frakX$ be a smooth ($\padic$ formal) scheme defined over $O_F$ (for $\frakX$ a scheme, assume that it is proper or an open subscheme of a proper semistable scheme defined over $O_F$).
Let $\pazm$ be an object of $\MF_{[0, s], \free}(\frakX, \Phi, \partial)$ with $0 \leqslant s \leqslant p-2$ (for $\frakX$ an open scheme, further assume that $\pazm$ extends to the compatification of $\frakX$, see Remark \ref{rem:locsys_from_fl}).
Let $\LL$ denote the associated $\ZZ_p\textrm{-local system}$ on the (rigid) generic fibre $X$ of $\frakX$.
Then, we show the following:
\begin{thm}[{Theorem \ref{thm:syntomic_nearby_comparison}}]\label{intro_thm:syntomic_nearby_comparison}
	For $r \geqslant s+1$ and $0 \leqslant k \leqslant r-s-1$ the Fontaine--Messing period map
	\begin{equation*}
		\alpha_{r, n, \frakX}^{\FM} : \pazh^k\big(\cals_n(\pazm, r)_\frakX\big) \longrightarrow i^{\ast} \Rup^k j_{\ast} \LL/p^n(r)\prm_X,
	\end{equation*}
	is a $p^N\textrm{-isomorphism}$, where $N = N(p, r, s) \in \NN$ depends on $p$, $r$ and $s$ but not on $\frakX$ or $n$.
\end{thm}

The proof of Theorem \ref{intro_thm:syntomic_nearby_comparison} proceeds by reducing to the local setting, whence we may directly apply Theorem \ref{intro_thm:syntomic_complex_galois_cohomology}.

\begin{rem}
	In personal communications with Takeshi Tsuji, I learnt that in some unpublished work he obtained similar results over $\overline{F}$ and large enough $p$.
	However, our respective approaches are different and this article includes more general local results and the arithmetic case as well.
\end{rem}

\begin{rem}
	Note that from \cite[\S 10]{bhatt-morrow-scholze-topological}, we have a prismatic syntomic complex and it is known to compute $\padic$ nearby cycles in the case of constant coefficients.
	Using the results of \cite{morrow-tsuji-coeff} on coefficients in integral $\padic$ Hodge theory and prismatic cohomology, it should be possible to obtain an integral version of our results (in the geometric case, i.e.\ over $\overline{F}$).
	Moreover, using the theory of analytic prismatic $F\textrm{-crystals}$ on the absolute prismatic site from \cite{du-liu-moon-shimizu-completed, guo-reinecke-analytic}, we should be able to generalise those results to the arithmetic case as well.
	We will report on these ideas in future.
\end{rem}

\subsection{Outline of the paper}

Sections \ref{sec:relative_padic_Hodge_theory}-\ref{sec:syntomic_galcoh} comprise the local part of the paper, while sections \ref{sec:crystals_syntomic_cohomology}-\ref{sec:padic_nearby_cycles} consist of global applications.
In \S \ref{subsec:setup_nota} we describe our local setup, notations and some conventions.
In \S \ref{subsec:period_rings}, \S \ref{subsec:relative_padic_reps} and \S \ref{subsec:relative_phi_gamma_mod} we quickly recall basics of period rings, crystalline representations and relative \'etale $(\varphi, \Gamma)\textrm{-modules}$.
Subsection \ref{subsec:pd_envelope} introduces ``good'' crystalline coordinates and we define certain rings of analytic functions convergent on some annulus following \cite[\S 2]{colmez-niziol-nearby-cycles}; these rings are denoted as $\Rpi^{\bmstar}$, for $\smstar \in \{+, \textpd, [u], [u, v], (0, v]+\}$, where we can take $u= p/(p-1)$ and $v = p-1$.
In \S \ref{subsec:cyclotomic_frob}, we equip these rings with a Frobenius endomorphism and in \S \ref{subsec:cyclotomic_embeddings}, we consider their Frobenius-equivariant ``cyclotomic'' embedding $\iota_{\cycl}$ into period rings and define $\ARpi^{\bmstar}$ as the image of $\Rpi^{\bmstar}$ under $\iota_{\cycl}$.
The latter enables us to relate differential operators on the ring $\Rpi^{[u, v]}$ to the infinitesimal action of $\Gamma_S := \Gal(\Rinfty[1/p]/S[1/p])$ on its ``cyclotomic'' image, i.e.\ $\ARpi^{[u, v]}$.
Finally, in \S \ref{subsec:fat_period_rings}, we introduce certain big period rings, in particular, $\fERpi^{\bmstar}$ and $\fERbar^{\bmstar}$, study a natural filtration on the scalar extension of $M$ to these rings and prove a version of the filtered Poincar\'e Lemma.
The latter, together with the results of \S \ref{subsec:wachmod_poincare_lem}, are key ingredients in relating syntomic complexes with coefficients in $M$ to Koszul complexes with coefficients in $\mbfn(T)$.
The motivation for our approach comes from the computations of \cite[\S 2.6]{colmez-niziol-nearby-cycles}.

In \S \ref{subsec:relative_wach_modules} and \S \ref{subsec:wach_crystalline}, we recall the notion of finite height representations and their relationship to crystalline representations from \cite{abhinandan-crystalline-wach}, as well as, prove some useful technical lemmas.
In \S \ref{subsec:wachmod_poincare_lem}, we study a filtration on scalar extensions of Wach modules and prove another filtered Poincar\'e Lemma.
The local theory of relative Fontaine--Laffaille modules is recalled in \S \ref{subsec:fontaine_laffaile_to_wach}.
Section \S \ref{sec:galois_cohomology} recalls the definition of Koszul complexes computing continuous $\Gamma_S\textrm{-cohomology}$ (see \S \ref{subsec:gal_coho_kos_complex}) and $\Lie\Gamma_S\textrm{-cohomology}$ (see \S \ref{subsec:lie_algebra_coh}).

In \S \ref{sec:syntomic_complex_finite_height}, we formulate our main local result, Theorem \ref{intro_thm:syntomic_complex_galois_cohomology}, and carry out the local syntomic computations for its proof.
The aim of \S \ref{sec:syntomic_galcoh} is to carry out the $(\varphi, \Gamma)\textrm{-module}$ side computations for the proof of Theorem \ref{intro_thm:syntomic_complex_galois_cohomology}.
To explain the content of these two sections to the reader, we introduce the following commutative diagram of complexes (see the discussion after Theorem \ref{thm:lazard_fmlocal_comparison} for a more complete picture and explanations), where all isomorphisms are $p\textrm{-power}$ quasi-isomorphisms, i.e.\ the kernel and the cokernel of the induced map on cohomolgy are killed by a fixed bounded power of $p$.
\begin{center}
	\begin{tikzcd}[column sep=small]
		\Kdphi(\Ftor \Mpi^{\textpd}) \arrow[r] \arrow[d, "\tau_{\leqslant r}", "\wr"'] & C_G(\Kdphi(\Ftor \Delta^{\textpd})) & C_G(\Kphi(\Ftor \Delta^{\textpd, \partial})) \arrow[l, "\sim", "\textup{PL}"'] \arrow[r] & C_G(\Kphi(\Ftor TA_{\crys}))\\
		\Kdphi(\Ftor \Mpi^{[u, v]}) \arrow[d, "\wr"', "\textup{PL}"] & & & C_G(T(r)) \arrow[u, "\wr", "\textup{FES}"']\\
		\Kdphid(\Ftor \Delta_{\varpi}^{[u, v]}) & & & C_G(\Kphi(TA_{\Sbar}(r))) \arrow[u, "\wr", "\textup{AS}"']\\
		\Kphid(\Ftor \Npi^{[u, v]}) \arrow[u, "\wr", "\textup{PL}"'] \arrow[d, "t^{\bullet}", "\tau_{\leqslant r} \hspace{0.2mm} \wr"'] & & & C_{\Gamma}(\Kphi(D_{R_{\infty}}(r))) \arrow[u, "\wr"]\\
		\pazk_{\varphi, \Lie \Gamma}(\Ftor \Npi^{[u, v]}) & & & C_{\Gamma}(\Kphi(\Dpi(r))) \arrow[u, "\wr"]\\
		\pazk_{\varphi, \Lie \Gamma}(\Npi^{[u, v]}(r)) \arrow[u, "\wr", "t^r"'] & \pazk_{\varphi, \Gamma}(\Npi^{[u, v]}(r)) \arrow[l, "\sim"', "\Laz"] & \pazk_{\varphi, \Gamma}(\Npi^{(0, v]+}(r)) \arrow[l, "\sim"', "\textup{can}"] \arrow[r, "\sim"] & \textup{K}_{\varphi, \Gamma}(D_{\varpi}(r)) \arrow[u, "\wr"].
	\end{tikzcd}
\end{center}
In the diagram, we set $\Mpi^{\bmstar} = \Rpi^{\bmstar} \otimes_R M$, $\Npi^{\bmstar} = \ARpi^{\bmstar} \otimes_{\AR^+} \mbfn(T)$, $\Npi^{\bmstar}(r) = \ARpi^{\bmstar} \otimes_{\AR^+} \mbfn(T(r))$, $\Delta^{\textpd} = \fERbar^{\textpd} \otimes_R M$, $\Delta^{\textpd, \partial} = (\Delta^{\textpd})^{\partial=0}$, $\Delta_{\varpi}^{[u, v]} = \fERpi^{[u, v]} \otimes_R M$ and $TA_{\crys} = \Acrys(\Rbar) \otimes_{\ZZ_p} T$.
Moreover, using the rings from the theory of $(\varphi, \Gamma)\textrm{-modules}$ (see \S \ref{subsec:relative_phi_gamma_mod}), we set $TA^{[u, v]} = \ARbar^{[u, v]} \otimes_{\ZZ_p} T$, $TA_{\Rbar}(r) = \ARbar \otimes_{\ZZ_p} T(r)$, $\Dpi(r) = \ARpi \otimes_{\AR^+} \mbfn(T(r))$ (see \S \ref{subsec:cyclotomic_embeddings} for $\ARpi$), and $D_{R_{\infty}}(r) = \ARinfty \otimes_{\ARpi} \Dpi(r)$.
Furthermore, we have $G = G_S$, $\Gamma = \Gamma_S$ with $C_G$ and $C_{\Gamma}$ denoting the complex of continuous cochains for $G$ and $\Gamma$, respectively.
The letter ``K'' denotes the Koszul complex with subscripts: $\partial$ denotes the operators $((1+X_0) \frac{\partial}{\partial X_0}, \ldots, X_d \frac{\partial}{\partial X_d})$, the subscript $\Gamma$ denotes the operators $(\gamma_0-1, \ldots, \gamma_d-1)$ for our choice of topological generators of $\Gamma$, the subscript $\Lie \Gamma$ denotes the operators $(\nabla_0, \ldots, \nabla_d)$, with $\nabla_i = \log \gamma_i$ and the subscript $\partial_A$ denotes $((1+X_0)\frac{\partial}{\partial X_0}, X_1\frac{\partial}{\partial X_1}, \ldots, X_d \frac{\partial}{\partial X_d})$ as operators on $\AR^{[u, v]}$ and $E_R^{[u, v]}$ via the isomorphism $\iota_{\cycl} : \Rpi^{[u, v]} \isomorphic \ARpi^{[u, v]}$.
The letter ``$\pazk$'' denotes a certain subcomplex of the Koszul complex (see \S \ref{subsec:diff_to_lie}, \S \ref{subsec:lie_to_gamma}, \S \ref{subsec:change_annulus_1}, \S \ref{subsec:change_annulus_2}).

Let us now describe the maps in the diagram.
FES denotes a map coming from the fundamental exact sequences in \eqref{eq:fes_acrys} and \eqref{eq:fes_aruv}.
AS denotes a map originating from the Artin-Schreier theory in \eqref{eq:artin_schreier_arbar}.
PL denotes the maps coming from the filtered Poincar\'e Lemma of \S \ref{subsec:fat_period_rings}.
In the first column, the map from the first to the second row is induced by the inclusion $\Rpi^{\textpd} \subset \Rpi^{[u, v]}$ (the $p\textrm{-power}$ quasi-isomorphism is shown by using the operator $\psi$ - left inverse of $\varphi$ - and $p\textrm{-power}$ acyclicity of the $\psi=0$ eigencomplexes similar to \cite[\S 3]{colmez-niziol-nearby-cycles}, see \S \ref{subsec:change_disk_convergence} and \S \ref{subsec:change_annulus_converge}); the maps from the second to the third row and from the fourth to the third row are applications of the filtered Poincar\'e Lemma (see \S \ref{subsec:differential_koszul} and \S \ref{subsec:poincare_lemma_app}, in particular, Proposition \ref{prop:syntomic_to_phi_gamma}); the map from the fourth to the fifth row is given by multplication by suitable powers of $t$, exploiting the relation $\partial_i = (\log \gamma_i)/t$, and the map from the sixth to the fifth row is multiplication by $t^r$ (see \S \ref{subsec:diff_to_lie}).
In the fourth column, the map from the fourth to the third row is the inflation map from $\Gamma_S$ to $G_S$, using the inclusion $\ARinfty \subset \ARbar$ (one could use almost \'etale descent to obtain the quasi-isomorphism); the map from the fifth to the fourth row uses the inclusion $\ARpi \subset \ARinfty$ (the quasi-isomorphism is obtained by decompletion techniques); the map from the sixth to the fifth row is the comparison between the complex computing the continuous cohomology of $\Gamma_S$ and the Koszul complex as in \S \ref{subsec:gal_coho_kos_complex}.
The top two maps from the first to the second column are induced by the respective inclusions $\Rpi^{\textpd} \subset \fESbar^{\textpd}$ and $\Rpi^{[u, v]} \subset \fESbar^{[u, v]}$.
The bottom map $\Laz$ between the first and the second column is the Lazard isomorphism discussed in \S \ref{subsec:lie_to_gamma}.
The bottom map from the third to the second column is induced canonically from the inclusion $\ARpi^{(0, v]+} \subset \ARpi^{[u, v]}$ (see \S \ref{subsec:change_annulus_1}).
From the third to the fourth column, the top horizontal map is induced similar to \eqref{eq:pdpartil0_acrist} and the bottom horizontal map is induced by the inclusion $\ARpi^{(0, v]+} \subset \ARpi$ (the $p\textrm{-power}$ quasi-isomorphism is proven by using the operator $\psi$ - left inverse of $\varphi$ - and $p\textrm{-power}$ acyclicity of the $\psi=0$ eigencomplexes, a standard technique in the theory of $(\varphi, \Gamma)\textrm{-modules}$, see \S \ref{subsec:change_annulus_2} and \S \ref{subsec:change_disk}).

Composition of the left vertical, bottom horizontal and right vertical arrows produces the $p\textrm{-power}$ quasi-isomorphism $\alpha_r^{\Laz}$ of Theorem \ref{intro_thm:syntomic_complex_galois_cohomology}; composition of the top horizontal arrows gives the $\padic$ version of the map $\tilde{\alpha}_{r, n, S}^{\FM}$ of Theorem \ref{intro_thm:lazard_fmlocal_comparison}.
The proof of Theorem \ref{intro_thm:syntomic_complex_galois_cohomology} follows from the discussion above and the proof of Theorem \ref{intro_thm:lazard_fmlocal_comparison} is the content of \S \ref{subsec:fm_maps_comparison}.

In \S \ref{sec:crystals_syntomic_cohomology} we describe our global setup and define the syntomic complex with coefficients globally.
In \S \ref{subsec:global_fm_mods} and \S \ref{subsec:fm_period_map}, we describe global relative Fontaine--Laffaille modules and the global Fontaine--Messing period map as in \cite[\S 5]{tsuji-syntomic-complex} and \cite[\S 3.1]{tsuji-semistable-comparison}.
Finally, in \S \ref{subsec:global_result} we state and prove Theorem \ref{intro_thm:syntomic_nearby_comparison}, by first reducing the problem to the local setting via cohomological descent \cite{tsuji-syntomic-complex, tsuji-semistable-comparison} and then to the computation of Galois cohomology by a $K(\pi, 1)\textrm{-Lemma}$ \cite{scholze-rigid}, whence the claim follows from Corollary \ref{intro_cor:syntomic_complex_galois_cohomology}.

\begin{nota}
	Let $ f : C_1 \rightarrow C_2 $ be a morphism of complexes.
	The \textit{mapping cone} of $f$ is the complex $\textup{Cone}(f)$ whose degree $n$ part is given as $C_1^{n+1} \bmoplus C_2^{n}$ and the differential is given by $d(c_1, c_2) = (-d(c_1), d(c_2)-f(c_1))$.
	Furthermore, we denote the \textit{mapping fibre} of $f$ by $\big[C_1 \xrightarrow{\hspace{1mm} f \hspace{1mm}} C_2\big] := \textrm{Cone}(f)[-1]$.
	We also set
	\begin{displaymath}
		\left[
			\vcenter
			{
				\xymatrix
				{
					C_1 \ar[r]^{f} \ar[d] & C_2 \ar[d] \\
					C_3 \ar[r]^{g} & C_4
				}
			}
		\right] := \big[\big[C_1 \xrightarrow{\hspace{1mm} f \hspace{1mm}} C_2\big] \longrightarrow \big[C_3 \xrightarrow{\hspace{1mm} g \hspace{1mm}} C_4\big]\big].
	\end{displaymath}
	In other words, this amounts to taking the total complex of the associated double complex.
\end{nota}

\subsubsection*{Acknowledgements}

Local results of the paper were part of my PhD thesis at Université de Bordeaux.
I would like to thank my advisor Denis Benois for several discussions related to the project as well as guidance and motivation during my time in Bordeaux.
I would also like to thank Nicola Mazzari for helpful discussions related to crystalline cohomology and syntomic coefficients.
Ideas employed in this paper have been heavily influenced by the article \cite{colmez-niziol-nearby-cycles} of Colmez and Nizio{\l} and I would like to thank them for their work.
I would also like to thank Takeshi Tsuji for discussions concerning relative Fontaine--Laffaille modules.
Finally, I would like to thank the referee for their feedback and many valuable remarks and suggestions for improvements.
The last part of the project was carried out at Université de Lille where I was supported by ANR project GALF (ANR-18-CE40-0029) and I-SITE ULNE project PAFAGEO (ANR-16-IDEX-0004).


\section{Relative \texorpdfstring{$p$}{-}-adic Hodge theory}\label{sec:relative_padic_Hodge_theory}

In this section we will recall some constructions and results in local relative $\padic$ Hodge theory from \cite{andreatta-generalized-phiGamma, brinon-padicrep-relatif, andreatta-brinon-surconvergence} and describe some properties of the objects to be considered in sections \S \ref{sec:relative_finite_height} -- \S \ref{sec:syntomic_galcoh}.

\subsection{Setup and notations}\label{subsec:setup_nota}

Let $p \geqslant 3$ be a fixed prime, $\kappa$ a perfect field of characteristic $p$, set $O_F := W(\kappa)$ the ring of $p\textrm{-typical}$ Witt vectors with coefficients in $\kappa$ and $F := O_F[1/p]$.
Let $\overline{F}$ be a fixed algebraic closure of $F$ so that its residue field $\overline{\kappa}$ is an algebraic closure of $\kappa$ and set $G_F := \Gal(\overline{F}/F)$.

\begin{conv}
	We will work under the convention that $0 \in \NN$, the set of natural numbers.
\end{conv}

Let $Z = (Z_1, \ldots, Z_s)$ denote a set of indeterminates and for $\smbfk = (k_1, \ldots, k_s) \in \NN^s$ a multi-index, we will write $Z^{\smbfk} := Z_1^{k_1} \cdots Z_s^{k_s}$.
For a topological algebra $\Lambda$, we set $\Lambda\{Z\} := \big\{\sum_{\smbfk \in \NN^s} a_{\smbfk} Z^{\smbfk}, \hspace{1mm} \textrm{where} \hspace{1mm} a_{\smbfk} \in \Lambda \hspace{1mm} \textrm{and} \hspace{1mm} p\textrm{-adically} \hspace{1mm} a_{\smbfk} \rightarrow 0 \hspace{1mm} \textrm{as} \hspace{1mm} |\smbfk| = \sum k_i \rightarrow +\infty \big\}$.

\begin{assum}\label{assum:small_algebra}
	Fix $d \in \NN$ and $X = (X_1, X_2, \ldots, X_d)$ a set of indeterminates.
	Let $R$ be the $\padic$ completion of an \'etale algebra over $O_F\{X, X^{-1}\}$ with non-empty geometrically integral special fibre.
	In particular, we assume that $R = O_F\{X, X^{-1}\}\{Z_1, \ldots, Z_s\} / \hspace{0.5mm} (Q_1, \ldots, Q_s)$, where $Q_i(Z_1, \ldots, Z_s)$ is in $O_F\{X, X^{-1}\}[Z_1, \ldots, Z_s]$ for $1 \leqslant i \leqslant s$, are multivariate polynomials such that $\det \big(\frac{\partial Q_i}{\partial Z_j}\big)_{1 \leqslant i,j \leqslant s}$ is invertible in $R$.
\end{assum}

Fix an algebraic closure $\overline{\Fr(R)}$ of $\Fr(R)$ containing $\overline{F}$.
Let $\Rbar$ denote the union of finite $R\textrm{-subalgebras}$ $S \subset \overline{\Fr(R)}$, such that $S[1/p]$ is \'etale over $R[1/p]$.
Let $\overline{\eta}$ denote a geometric point of the generic fibre $\Sp(R[1/p])$ (corresponding to $\overline{\Fr(R)}$) and let $G_R := \pi_1^{\etale}\big(\Sp(R[1/p]), \overline{\eta}\big) = \Gal\big(\Rbar[1/p] / R[1/p]\big)$ denote the \'etale fundamental group.

For $n \in \NN$, let $F_n := F(\mu_{p^n})$.
Fix some $m \in \NN_{\geqslant 1}$ and set $K := F_m$, with ring of integers $O_K$.
The element $\varpi = \zeta_{p^m}-1$ in $O_K$ is a uniformiser of $K$ and its minimal polynomial $P_{\varpi}(X) := \big((1+X)^{p^m}-1\big)/\big((1+X)^{p^{m-1}}-1\big)$ is an Eisenstein polynomial in $O_F[X]$ of degree $e := [K:F] = p^{m-1}(p-1)$.
Moreover, $S = R[\varpi] = O_K \otimes_{O_F} R$ is totally ramified over the prime $(p) \subset R$.
Observe that, we have Galois groups $G_K \triangleleft G_F$ and $G_S \triangleleft G_R$ respectively, such that $G_R / G_S = G_F / G_K = \Gal(K/F)$.
Moreover, $R$ and $R[\varpi]$ are \textit{small} algebras in the sense of Faltings (see \cite[\S II 1(a)]{faltings-padic-hodge-theory}).

For $k \in \NN$, let $\Omega^k_R$ denote the $\padic$ completion of the module of $k\textrm{-differentials}$ of $R$ relative to $\ZZ$.
Then, we have $\Omega^1_R = \oplus_{i=1}^d R \dlog X_i$ and $\Omega^k_R = \bmwedge^k_R \Omega^1_R$.
More explicitly, for $1 \leqslant i \leqslant d$, let us set $\partial_i := X_i \frac{d}{dX_i}$ as an operator on $R$.
Then, for any $f$ in $R$, its differential can be written as $df = \sum_{i=1}^d \partial_i(f) \dlog X_i$ in $\Omega^1_R$.
Furthermore, note that $R/pR \isomorphic S/\varpi S$ and for any $n \in \NN$, $R / p^nR$ is a smooth $\ZZ / p^n\ZZ\textrm{-algebra}$.
Finally, we fix a lift $\varphi : R \rightarrow R$ of the absolute Frobenius $x \mapsto x^p$ over $R/p R$ such that $\varphi(X_i) = X_i^p$ for $1 \leqslant i \leqslant d$.

Note that to carry out some computations in later sections, we will need to extend our base field (hence the base ring) by adjoining a $p\textrm{-power}$ root of unity (see $K$ and $S = R[\varpi]$ above).
As a consequence, we will also require period rings defined for such rings.
However, we will only recall the results by fixing our base as $R$, because the period rings that we consider will only depend on $\Rbar$ and we have $\overline{S} = \Rbar \subset \overline{\Fr(R)} = \overline{\Fr(S)}$ (see \cite{andreatta-generalized-phiGamma, brinon-padicrep-relatif, andreatta-brinon-surconvergence} for general constructions).

\begin{conv}
	Let $A$ be a ring and $I \subsetneq A$ an ideal.
	An $A\textrm{-module}$ $M$ is said to be $I\textrm{-adically}$ complete if $M \isomorphic \lim_n M / I^n M$.
\end{conv}

\begin{nota}
	Let $A$ be a $\ZZ_p\textrm{-algebra}$.
	A morphism $f : M \rightarrow N$ of two $A\textrm{-modules}$ is said to be a \textit{$p^n\textrm{-isomorphism}$}, for some $n \in \NN$, if the kernel and cokernel of $f$ are killed by $p^n$.
\end{nota}

\subsection{Period rings}\label{subsec:period_rings}

Let $\CC_p$ denote the $\padic$ completion of $\overline{F}$.
Recall that $\Rbar$ is the union of finite $R\textrm{-subalgebras}$ $S \subset \overline{\Fr(R)} = \overline{\Fr(R[\varpi])}$, such that $S[1/p]$ is \'etale over $R[1/p]$.
Let $\CC^+(\Rbar)$ denote the $\padic$ completion of $\Rbar$ and $\CC(\Rbar) = \CC^+(\Rbar)[1/p]$.
We define the tilt $\CC^+(\Rbar)$ as $\CC^+(\Rbar)^{\flat} := \lim_{x \mapsto x^p} \CC^+(\Rbar) / p = \lim_{x \mapsto x^p} \Rbar/p$ and equip it with the inverse limit topology (where we equip $\Rbar/p$ with the discrete topology), and let $\CC(\Rbar)^{\flat} := \CC^+(\Rbar)^{\flat}[1/p^{\flat}]$, for $p^{\flat} := (p, p^{1/p}, p^{1/p^2}, \ldots) \in \CC^+(\Rbar)^{\flat}$, and equipped with the coarsest ring topology such that $\CC^+(\Rbar)^{\flat}$ is an open subring.
These rings admit a continuous action of $G_R$.

Let us fix $\varepsilon := (1, \zeta_p, \zeta_{p^2}, \ldots)$ in $\CC_p^{\flat}$ and $X_i^{\flat} := \big(X_i, X_i^{1/p}, X_i^{1/p^2}, \ldots\big)$ in $\CC^+(\Rbar)^{\flat}$, for $1 \leqslant i \leqslant d$.
Set $\Ainf(\Rbar) := W(\CC^+(\Rbar)^{\flat})$, the ring of $p\textrm{-typical}$ Witt vectors with coefficients in $\CC^+(\Rbar)^{\flat}$.
The absolute Frobenius on $\CC^+(\Rbar)^{\flat}$ lifts to an endomorphism $\varphi : \Ainf(\Rbar) \rightarrow \Ainf(\Rbar)$ and the $G_R\textrm{-action}$ extends to a continuous (for the weak topology, see \cite[\S 2.10]{andreatta-iovita-relative-phiGamma}) action on $\Ainf(\Rbar)$.
For $x \in \CC^+(\Rbar)^{\flat}$, let $[x] = (x, 0, 0, \ldots)$ in $\Ainf(\Rbar)$ denote its Teichm\"uller representative.
So we have $[\varepsilon]$ in $\Ainf(\Rbar)$ with $\varphi([\varepsilon]) = [\varepsilon]^p$ and $g[\varepsilon] = [\varepsilon]^{\chi(g)}$, for $g$ in $G_R$ and $\chi : G_R \rightarrow \ZZ_p^{\times}$ the $\padic$ cyclotomic character.
Furthermore, let $\pi := [\varepsilon] - 1, \pi_1 := \varphi^{-1}(\pi) = [\varepsilon^{1/p}] - 1$, and $\xi := \pi/\pi_1$.
Clearly, we have that $g(\pi) = (1 + \pi)^{\chi(g)} - 1$ for $g \in G_R$ and $\varphi(\pi) = (1+\pi)^p - 1$.

We will use the de Rham period rings $\BdR^+(\Rbar)$ and $\BdR(\Rbar)$ defined in \cite[Chapitre 5]{brinon-padicrep-relatif} and \cite[\S 2.1]{abhinandan-crystalline-wach}.
These are $F\textrm{-algebras}$ equipped with a natural action of $G_R$ and a $G_R\textrm{-stable}$ filtration.
We have that $t := \log[\varepsilon] = \log(1+\pi) = \sum_{k \in \NN} (-1)^k \frac{\pi^{k+1}}{k+1}$ converges in $\BdR^+(\Rbar)$ and the action on $t$ of any $g$ in $G_R$ can be described by the formula $g(t) = \chi(g) t$.
Moreover, we will use fat period rings $\OBdR^+(\Rbar)$ and $\OBdR(\Rbar)$ defined in \cite[Chapitre 5]{brinon-padicrep-relatif} and \cite[\S 2.1]{abhinandan-crystalline-wach}.
These are $R[1/p]\textrm{-algebras}$ equipped with a natural action of $G_R$, a $G_R\textrm{-stable}$ filtration and a $G_R\textrm{-equivariant}$ connection satisfying Griffiths transversality with respect to the filtration.
Furthermore, we have $\big(\OBdR^+(\Rbar)\big)^{\partial=0} = \BdR^+(\Rbar)$, $\big(\OBdR(\Rbar)\big)^{\partial=0} = \BdR(\Rbar)$ and $\big(\OBdR(\Rbar)\big)^{G_R} = R[1/p]$.

We will also use the crystalline period rings $\Acrys(\Rbar)$, $\Bcrys^+(\Rbar)$ and $\Bcrys(\Rbar)$, from \cite[Chapitre 6]{brinon-padicrep-relatif} and \cite[\S 2.2]{abhinandan-crystalline-wach}, as subrings of $\BdR(\Rbar)$.
The ring $\Acrys(\Rbar)$ is an $O_F\textrm{-algebra}$ and $\Bcrys^+(\Rbar)$ and $\Bcrys(\Rbar)$ are $F\textrm{-algebras}$.
These rings are equipped with a natural action of $G_R$, a $G_R\textrm{-stable}$ filtration (induced from the filtration on $\BdR(\Rbar)$) and a $G_R\textrm{-equivariant}$ Frobenius endomorphism $\varphi$.
Note that $t$ converges in $\Acrys(\Rbar)$ and $\varphi(t) = pt$.
Moreover, we will use fat period rings $\OAcrys(\Rbar)$, $\OBcrys^+(\Rbar)$ and $\OBcrys(\Rbar)$ defined in \cite[Chapitre 6]{brinon-padicrep-relatif} and \cite[\S 2.2]{abhinandan-crystalline-wach} as subrings of $\OBdR(\Rbar)$.
The ring $\OAcrys(\Rbar)$ is an $R\textrm{-algebra}$ and $\OBcrys^+(\Rbar)$ and $\OBcrys(\Rbar)$ are $R[1/p]\textrm{-algebras}$.
These rings are equipped with a natural action of $G_R$, a $G_R\textrm{-stable}$ induced filtration (from $\OBdR(\Rbar)$), a $G_R\textrm{-equivariant}$ Frobenius endomorphism $\varphi$ and a $G_R\textrm{-equivariant}$ induced connection (from $\OBdR(\Rbar)$), satisfying Griffiths transversality with respect to the filtration and commuting with $\varphi$.
Taking the horizontal sections for the connection, we get that $\big(\OAcrys(\Rbar)\big)^{\partial=0} = \Acrys(\Rbar)$, $\big(\OBcrys^+(\Rbar)\big)^{\partial=0} = \Bcrys^+(\Rbar)$, $\big(\OBcrys(\Rbar)\big)^{\partial=0} = \Bcrys(\Rbar)$, and by taking $G_R\textrm{-invariants}$ we get that $\big(\OAcrys(\Rbar)\big)^{G_R} = R$ and $\big(\OBcrys^+(\Rbar)\big)^{G_R} = \big(\OBcrys(\Rbar)\big)^{G_R} = R[1/p]$.

\subsubsection{Fundamental exact sequence}

From the Artin-Schrier theory in \cite[\S 8.1.1]{andreatta-iovita-relative-phiGamma}, we have an exact sequence
\begin{equation}\label{eq:artin_schreier_ainf}
	0 \longrightarrow \ZZ_p \longrightarrow \Ainf(\Rbar) \xrightarrow{\hspace{1mm} 1-\varphi \hspace{1mm}} \Ainf(\Rbar) \longrightarrow 0.
\end{equation}
Let $r \in \NN$ and write $r = (p-1)a(r) + b(r)$, with $0 \leqslant b(r) < p-1$, and set $\ZZ_p(r)\prm = \frac{1}{p^{a(r)}} \ZZ_p(r)$.
From \cite[Theorem A3.26]{tsuji-semistable-comparison} and \cite[Lemma 2.23]{colmez-niziol-nearby-cycles}, we have a $p^r\textrm{-exact}$ sequence called the fundamental exact sequence in $\padic$ Hodge theory:
\begin{equation}\label{eq:fes_acrys}
	0 \longrightarrow \ZZ_p(r)\prm \longrightarrow \Fil^r \Acrys(\Rbar) \xrightarrow{\hspace{1mm} p^r-\varphi \hspace{1mm}} \Acrys(\Rbar) \longrightarrow 0.
\end{equation}

\subsection{\texorpdfstring{$p$}{-}-adic Galois representations}\label{subsec:relative_padic_reps}

For the ring $B = \OBdR(\Rbar)$ and $\OBcrys(\Rbar)$, we will consider \textit{$B\textrm{-admissible}$} $\padic$ representations in the sense of \cite[Chapitre 8]{brinon-padicrep-relatif} and \cite[\S 2.3]{abhinandan-crystalline-wach}.
Note that $\OBdR(\Rbar)$ is a $G_R\textrm{-regular}$ $R[1/p]\textrm{-algebra}$.
Let $V$ be a $\padic$ representation of $G_R$ and we set $\ODdR(V) := \big(\OBdR(\Rbar) \otimes_{\QQ_p} V\big)^{G_R}$.
We say that $V$ is de Rham if it is $\OBdR(\Rbar)\textrm{-admissible}$.
The $R[1/p]\textrm{-module}$ $\ODdR(V)$ is equipped with a decreasing, separated and exhaustive filtration and an integrable connection satisfying Griffiths transversality with respect to the filtration (all induced from the corresponding structures on $\OBdR(\Rbar) \otimes_{\QQ_p} V$).
Furthermore, $\ODdR(V)$ is projective over $R[1/p]$ and of rank $\leqslant \dim(V)$.
If $V$ is de Rham, then for all $r \in \ZZ$, the $R[1/p]\textrm{-modules}$ $\Fil^r \ODdR(V)$ and $\gr^r \ODdR(V)$ are projective of finite type and the collection of integers $r_i$, for $1 \leqslant i \leqslant \dim_{\QQ_p}(V)$, such that $\gr^{-r_i} \ODdR(V) \neq 0$ are called the \textit{Hodge--Tate weights} of $V$ (see \cite[\S 8.3]{brinon-padicrep-relatif}).
Moreover, we say that $V$ is \textit{positive} if and only if $r_i \leqslant 0$, for all $1 \leqslant i \leqslant \dim_{\QQ_p}(V)$.

Next, we note that $\OBcrys(\Rbar)$ is also a $G_R\textrm{-regular}$ $R[1/p]\textrm{-algebra}$.
Let $V$ be a $\padic$ representation of $G_R$ and we set $\ODcrys(V) := \big(\OBcrys(\Rbar) \otimes_{\QQ_p} V\big)^{G_R}$.
We say that $V$ is crystalline if it is $\OBcrys(\Rbar)\textrm{-admissible}$.
The $R[1/p]\textrm{-module}$ $\ODcrys(V)$ is equipped with a Frobenius-semilinear operator $\varphi$ induced from the Frobenius on $\OBcrys(\Rbar) \otimes_{\QQ_p} V$, where we consider the $G_R\textrm{-equivariant}$ Frobenius on $\OBcrys(\Rbar)$.
Moreover, the inclusion $\OBcrys(\Rbar) \subset \OBdR(\Rbar)$ induces an $R[1/p]\textrm{-linear}$ inclusion $\ODcrys(V) \subset \ODdR(V)$ (see \cite[\S 8.2 and \S 8.3]{brinon-padicrep-relatif}), and we equip $\ODcrys(V)$ with (induced from $\ODdR(V)$) filtration and connection satisfying Griffiths transversality with respect to the filtration.
Additionally, we have $\partial \varphi = \varphi\partial$ over $\ODcrys(V)$.
The module $\ODcrys(V)$ is finite projective over $R[1/p]$ of rank $\leqslant \dim(V)$.
If $V$ is crystalline, then the $R[1/p]\textrm{-linear}$ homomorphism $1 \otimes \varphi : R[1/p] \otimes_{\varphi, R[1/p]} \ODcrys(V) \rightarrow \ODcrys(V)$ is an isomorphism and $\ODcrys(V)$ is called a filtered $(\varphi, \partial)\textrm{-module}$.

\subsection{\texorpdfstring{$(\varphi, \Gamma)$}{-}-modules}\label{subsec:relative_phi_gamma_mod}

In this subsection, we will briefly recall some results from the theory of relative \'etale $(\varphi, \Gamma)\textrm{-modules}$ (see \cite{andreatta-generalized-phiGamma, andreatta-brinon-surconvergence, andreatta-iovita-relative-phiGamma} for details).

\subsubsection{The Galois group \texorpdfstring{$\Gamma_R$}{-}}

Let $F_n = F(\mu_{p^n})$, for $n \in \NN$, and $F_{\infty} = \cup_n F_n$.
We take $R_n$ to be the integral closure of $R \otimes_{O_F[X^{\pm 1}]} O_{F_n}\big[X_1^{p^{-n}}, \ldots X_d^{p^{-n}}\big]$ inside $\Rbar[1/p]$ and set $R_{\infty} := \cup_{n \geqslant m} R_n$, noting that $F_{\infty} \subset R_{\infty}[1/p]$.
From \S \ref{subsec:period_rings} recall that $\CC(\Rbar) = \CC^+(\Rbar)[1/p]$ and $\CC(\Rbar)^{\flat}$ denotes its tilt.
The ring $\CC(\Rbar)^{\flat}$ is perfect of characteristic $p$ and we set $\ARbar := W(\CC(\Rbar)^{\flat})$, the ring of $p\textrm{-typical}$ Witt vectors with coefficients in $\CC(\Rbar)^{\flat}$, and endow it with the weak topology (see \cite[\S 2.10]{andreatta-iovita-relative-phiGamma}).
The absolute Frobenius over $\CC(\Rbar)^{\flat}$ lifts to an endomorphism $\varphi : \ARbar \rightarrow \ARbar$, which we again call the Frobenius.
The continuous action of $G_{R}$ on $\CC(\Rbar)^{\flat}$ extends to a continuous action on $\ARbar$ commuting with Frobenius.
The inclusion $\overline{F} \subset \Rbar[1/p]$ induces inclusions $\CC_p^{\flat} \subset \CC(\Rbar)^{\flat}$ and $\mbfa_{\overline{F}} \subset \ARbar$, and the inclusion $O_{\overline{F}} \subset \Rbar$ induces inclusions $O_{\CC_p}^{\flat} \subset \CC^+(\Rbar)^{\flat} \hspace{2mm} \textrm{and} \hspace{2mm} \Ainf(O_{\overline{F}}) \subset \Ainf(\Rbar)$.

The ring $R_{\infty}[1/p]$ is Galois over $R[1/p]$ with Galois group $\Gamma_{R} := \Gal\big(R_{\infty}[1/p] / R[1/p]\big)$.
Let $\Gamma_F := \Gal(F_{\infty}/F) \isomorphic \ZZ_p^{\times}$ and $\Gamma_{R}\prm := \Gal\big(R_{\infty}[1/p]/F_{\infty}R[1/p]\big) \isomorphic \ZZ_p(1)^d$ (see \cite[p.\ 9]{brinon-padicrep-relatif} and \cite[\S 2.4]{andreatta-generalized-phiGamma}), and note that we have an exact sequence,
\begin{equation}\label{eq:gammar_semidirect_product}
	1 \longrightarrow \Gamma_R\prm \longrightarrow \Gamma_R \longrightarrow \Gamma_F \longrightarrow 1.
\end{equation}
The group $\Gamma_F$ can be viewed as a subgroup of $\Gamma_{R}$, i.e.\ we can take a section of the projection map in \eqref{eq:gammar_semidirect_product} such that for $\gamma \in \Gamma_F$ and $g \in \Gamma_{R}\prm$, we have $\gamma g \gamma^{-1} = g^{\chi(\gamma)}$.
So we can choose topological generators $\{\gamma, \gamma_1, \ldots, \gamma_d\}$ of $\Gamma_R$, such that $\gamma_0 = \gamma^e$, with $\chi(\gamma_0) = \exp(p^m)$, is a topological generator of $\Gamma_K = \Gal(K_{\infty} / K)$, where $K_{\infty} = F_{\infty}$ and $e = [K:F]$.
It follows that $\{\gamma_1, \ldots, \gamma_d\}$ are topological generators of $ \Gamma_{R}\prm $ and $\gamma$ is a topological generator of $\Gamma_F$.
In particular, we have $\chi : \Gamma_K = \Gal(F_{\infty}/K) \isomorphic 1 + p^{m}\ZZ_p$.
The action of these generators on the elements of $\CC(\Rbar)^{\flat}$, fixed in \S \ref{subsec:period_rings}, is given as $\gamma(\varepsilon) = \varepsilon^{\chi(\gamma)}$ and $\gamma_i(\varepsilon) = \varepsilon$, for $1 \leqslant i \leqslant d$; $\gamma_i(X_i^{\flat}) = \varepsilon X_i^{\flat}$ and $\gamma_i(X_j^{\flat}) = X_j^{\flat}$, for $i \neq j$ and $1 \leqslant j \leqslant d$.

\subsubsection{\'Etale \texorpdfstring{$(\varphi, \Gamma_R)$}{-}-modules}

In \cite{andreatta-generalized-phiGamma}, Andreatta introduced the theory of \'etale $(\varphi, \Gamma_R)\textrm{-modules}$ for $\padic$ representations of $G_R$ (see \cite[\S 3.1]{abhinandan-crystalline-wach} for a quick recollection).
From loc.\ cit., let us recall that we have characteristic $p$ period rings $\mbfe^+ \subset \mbfe \subset \CC(\Rbar)^{\flat}$.
Let $\overline{\pi}$ denote the reduction modulo $p$ of $\pi \in \Ainf(O_{F_{\infty}})$.
Then, the characteristic $p$ period rings above are $\overline{\pi}\textrm{-adically}$ complete and equipped with a continuous $G_R\textrm{-action}$.
Furthermore, we have rings $\mbfe^+_R \subset \mbfe_R \subset \widehat{R}_{\infty}^{\flat}[1/p^{\flat}]$, complete for the $\overline{\pi}\textrm{-adic}$ topology and equipped with a continuous $G_R\textrm{-action}$.
Moreover, we have $\big(\CC^+(\Rbar)\big)^{H_R} = \widehat{R}_{\infty}$, $\big(\CC^+(\Rbar)^{\flat}\big)^{H_R} = \widehat{R}_{\infty}^{\flat}$, $\big(\CC(\Rbar)^{\flat}\big)^{H_R} = \widehat{R}_{\infty}^{\flat}[1/p^{\flat}]$, $(\mbfe^+)^{H_R} = \mbfe_R^+$ and $\mbfe^{H_R} = \mbfe_R$.

In mixed characteristic, we have period rings $\mbfa^+ \subset \mbfa \subset W(\CC(\Rbar)^{\flat})$ equipped with an induced weak topology, an induced Frobenius endomorphism $\varphi$ and a continuous $G_R\textrm{-action}$.
Furthermore, we have $\AR^+ = \AR \subset W\big(\widehat{R}_{\infty}^{\flat}[1/p^{\flat}]\big)$, complete for the induced weak topology and equipped with an induced Frobenius and a continuous $\Gamma_R\textrm{-action}$.
Additionally, from \cite{andreatta-iovita-relative-phiGamma} we have that $\mbfa^{H_R} = \AR$, $(\mbfa^+)^{H_R} = \AR^+$ and $\mbfa/p\mbfa = \mbfe$, and from \cite[Remark 3.7]{abhinandan-crystalline-wach} we have that $\mbfa^+/p\mbfa^+ = \mbfe^+$.

Let $D$ be a finitely generated $\AR\textrm{-module}$ equipped with a continuous (for the weak topology) and semilinear action of $\Gamma_R$ and a Frobenius-semilinear and $\Gamma_R\textrm{-equivariant}$ endomorphism $\varphi$.
\begin{defi}\label{defi:phigamma_modules}
	The $\AR\textrm{-module}$ $D$ is said to be \textit{\'etale} if the linearisation of Frobenius, i.e.\ the natural map $1 \otimes \varphi : \AR \otimes_{\varphi, \AR} D \rightarrow D$, is an isomorphism.
\end{defi}

Denote by $(\varphi, \Gamma_R)\textup{-Mod}_{\AR}^{\etale}$ the category of \'etale $(\varphi, \Gamma_R)\textrm{-modules}$ over $\AR$ with morphisms between objects being continuous and $(\varphi, \Gamma_R)\textrm{-equivariant}$ morphisms of $\AR\textrm{-modules}$.
Furthermore, denote by $\Rep_{\ZZ_p}(G_R)$ the category of finitely generated $\ZZ_p\textrm{-modules}$ equipped with a linear and continuous $G_R\textrm{-action}$ and morphisms between objects being continuous and $G_R\textrm{-equivariant}$ morphisms of $\ZZ_p\textrm{-modules}$.
Let $T$ denote a $\ZZ_p\textrm{-representation}$ of $G_R$, then $\mbfd(T) := (\mbfa \otimes_{\ZZ_p} T)^{H_R}$ is an \'etale $(\varphi, \Gamma_R)\textrm{-module}$ over $\AR$.
Furthermore, if $T$ is finite free over $\ZZ_p$, then $\mbfd(T)$ is finite projective over $\AR$, of rank $= \textup{rk}_{\ZZ_p} T$ (see \cite[Theorem 7.11]{andreatta-generalized-phiGamma}).
Finally, the functor $\mbfd : \Rep_{\ZZ_p}(G_R) \rightarrow (\varphi, \Gamma_R)\textup{-Mod}_{\AR}^{\etale}$, induces an equivalence of categories (see \cite[Theorem 7.11]{andreatta-generalized-phiGamma}).

\subsubsection{Overconvergent \'etale \texorpdfstring{$(\varphi, \Gamma_R)$}{-}-modules}\label{subsubsec:overconvergence}

In this subsection, we will quickly recall the theory of overconvergent relative \'etale $(\varphi, \Gamma)\textrm{-modules}$ from \cite{andreatta-brinon-surconvergence}, which generalises the classical results of \cite{cherbonnier-colmez-surconvergentes}.
Denote the natural valuation on $O_{\CC_p}^{\flat}$ by $\upsilon^{\flat}$ and extend it to a map $\upsilon^{\flat} : \CC^+(\Rbar)^{\flat} \rightarrow \RR \cup \{+\infty\}$ by setting $\upsilon^{\flat}(x) = \frac{p}{p-1}\max\{n \in \QQ, x \in \overline{\pi}^{-n} \CC^+(\Rbar)^{\flat}\}$.
Let $v >0$ and let $\alpha \in O_{\CC_p}^{\flat}$ such that $\upsilon^{\flat}(\alpha) = 1/v$.
Set
\begin{align*}
	\ARbar^{(0, v]} &:= \big\{\textstyle\sum_{k \in \NN} p^k [x_k] \in \ARbar, \hspace{1mm} v\upsilon^{\flat}(x_k) + k \rightarrow +\infty \hspace{1mm} \textrm{when} \hspace{1mm} k \rightarrow +\infty \big\},\\
	\ARbar^{(0, v]+} &:= \big\{\textstyle\sum_{k \in \NN} p^k [x_k] \in \ARbar^{(0, v]} \textrm{ with } v\upsilon^{\flat}(x_k) + k \geqslant 0 \big\} = p\textrm{-adic completion of} \hspace{1mm} \Ainf(\Rbar)\big[p/[\alpha]\big].
\end{align*}
Note that we have $\ARbar^{(0, v]} = \ARbar^{(0, v]+}[1/[p^{\flat}]]$.
The $G_R\textrm{-action}$ on $\Ainf(\Rbar)$ extends to these rings and it commutes with the induced Frobenius $\varphi$, where $\varphi\big(\ARbar^{(0, v]+}\big) = \ARbar^{(0, v/p]+}$ and $\varphi\big(\ARbar^{(0, v]}\big) = \ARbar^{(0, v/p]}$.
Moreover, we have that $\ARbar^{(0, v]+} \subset \BdR^+(\Rbar)$ and $\ARbar^{(0, v]} \subset \BdR(\Rbar)$ for $v \geqslant 1$ (see \cite[\S 2.4.2]{colmez-niziol-nearby-cycles}).
We use these embeddings to induce filtrations on $\ARbar^{(0, v]+}$ and $\ARbar^{(0, v]}$.

\begin{defi}
	Define the ring of \textit{overconvergent coefficients} as $\ARbar^{\dagger} := \cup_{v \in \QQ_{>0}} \ARbar^{(0, v]}$.
	Moreover, inside $\ARbar$, we set $\AR^{(0, v]} := \AR \cap \ARbar^{(0, v]}$ and $\mbfa^{(0, v]} := \mbfa \cap \ARbar^{(0, v]}$.
	Define $\AR^{\dagger} := \AR \cap \ARbar^{\dagger} = \cup_{v \in \QQ_{>0}} \AR^{(0, v]}$ and $\mbfa^{\dagger} := \mbfa \cap \ARbar^{\dagger} = \cup_{v \in \QQ_{>0}} \mbfa^{(0, v]}$.
\end{defi}

The rings defined above are equipped with a topology described in \cite[\S 4]{andreatta-brinon-surconvergence}.
We have an embedding $\ARbar^{\dagger} \subset \ARbar$ compatible with the weak topology on $\ARbar$.
Furthermore, $\ARbar^{\dagger}$ is stable under the induced Frobenius $\varphi$ and the $G_R\textrm{-action}$ which commutes with $\varphi$ (see \cite[Proposition 7.2]{andreatta-generalized-phiGamma}).
Finally, all rings appearing above are equipped with a $(\varphi, G_R)\textrm{-action}$ (induced from $\ARbar$) and from \cite[Lemma 2.11]{andreatta-iovita-relative-phiGamma} we have that $\big(\mbfa^{(0, v]}\big)^{H_R} = \AR^{(0, v]}$, $(\mbfa^{\dagger})^{H_R} = \AR^{\dagger}$ and $\AR^{\dagger}/p\AR^{\dagger} = \mbfe_R$.

Define $(\varphi, \Gamma_R)\textup{-Mod}_{\AR^{\dagger}}^{\etale}$ to be the category of \'etale $(\varphi, \Gamma_R)\textrm{-modules}$ over $\AR^{\dagger}$, similar to Definition \ref{defi:phigamma_modules}.
Let $T \in \Rep_{\ZZ_p}(G_R)$, then $\mbfd^{\dagger}(T) := (\mbfa^{\dagger} \otimes_{\ZZ_p} T)^{H_R}$ is an \'etale $(\varphi, \Gamma_R)\textrm{-module}$ over $\AR^{\dagger}$.
Moreover, if $T$ is finite free over $\ZZ_p$, then $\mbfd^{\dagger}(T)$ is finite projective over $\AR^{\dagger}$ of rank $= \textup{rk}_{\ZZ_p} T$.
The functor $\mbfd^{\dagger} : \Rep_{\ZZ_p}(G_R) \rightarrow (\varphi, \Gamma_R)\textup{-Mod}_{\AR^{\dagger}}^{\etale}$ induces an equivalence of categories (see \cite[Th\'eor\`eme 4.35]{andreatta-brinon-surconvergence}).
Moreover, extension of scalars along $\AR^{\dagger} \rightarrow \AR$ gives an isomorphism of \'etale $(\varphi, \Gamma_R)\textrm{-modules}$ over $\AR$ as $\AR \otimes_{\AR^{\dagger}} \mbfd^{\dagger}(T) \isomorphic \mbfd(T)$.

Finally, we will introduce the analytic rings to be used in \S \ref{sec:syntomic_complex_finite_height}.
Let $0 < u \leqslant v$ and $\alpha, \beta \in O_{\CC_p}^{\flat}$, such that $\upsilon^{\flat}(\alpha) = 1/v$ and $\upsilon^{\flat}(\beta) = 1/u$.
Set $\ARbar^{[u]} := p\textrm{-adic completion of} \hspace{1mm} \Ainf(\Rbar)\big[[\beta]/p]$ and $\ARbar^{[u, v]} := p\textrm{-adic completion of} \hspace{1mm} \Ainf(\Rbar)\big[p/[\alpha], [\beta]/p\big]$.
The $G_R\textrm{-action}$ on $\Ainf(\Rbar)$ extends to these rings and commutes with the extension of Frobenius to these rings, denoted again by $\varphi$.
For the homomorphism $\varphi$, we have that $\varphi\big(\ARbar^{[u]}\big) = \ARbar^{[u/p]}$ and $\varphi\big(\ARbar^{[u, v]}\big) = \ARbar^{[u/p, v/p]}$.
Moreover, we have inclusions $\ARbar^{[u]} \subset \BdR^+(\Rbar)$ for $u \leqslant 1$ and $\ARbar^{[u, v]} \subset \BdR^+(\Rbar)$ for $u \leqslant 1 \leqslant v$ (see \cite[\S 2.4.2]{colmez-niziol-nearby-cycles}).
We use these embeddings to induce filtrations on $\ARbar^{[u]}$ and $\ARbar^{[u, v]}$.

\subsubsection{Fundamental exact sequences}

The Artin-Schreier exact sequence in \eqref{eq:artin_schreier_ainf} can be upgraded to the following exact sequences (see \cite[\S 8.1]{andreatta-iovita-relative-phiGamma} and \cite[Lemma 2.23]{colmez-niziol-nearby-cycles}):
\begin{align}\label{eq:artin_schreier_arbar}
	\begin{split}
		0 &\longrightarrow \ZZ_p \longrightarrow \ARbar \xrightarrow{\hspace{1mm} 1-\varphi \hspace{1mm}} \ARbar \longrightarrow 0,\\
		0 &\longrightarrow \ZZ_p \longrightarrow \ARbar^{(0, v]+} \xrightarrow{\hspace{1mm} 1-\varphi \hspace{1mm}} \ARbar^{(0, v/p]+} \longrightarrow 0, \hspace{2mm} \textrm{for} \hspace{1mm} v > 0.
	\end{split}
\end{align}
Furthermore, for $0 < u \leqslant 1 \leqslant v$, the $p^r\textrm{-exact}$ sequence in \eqref{eq:fes_acrys} can be upgraded to a $p^{4r}\textrm{-exact}$ sequence (see \cite[Lemma 2.23]{colmez-niziol-nearby-cycles}):
\begin{equation}\label{eq:fes_aruv}
	0 \longrightarrow \ZZ_p(r) \longrightarrow \Fil^r \ARbar^{[u, v]} \xrightarrow{\hspace{1mm} p^r-\varphi \hspace{1mm}} \ARbar^{[u, v/p]} \longrightarrow 0.
\end{equation}

\subsubsection{The operator \texorpdfstring{$\psi$}{-}}\label{sec:operator_psi_phi_gamma}

Let us define a left inverse $\psi$ of the Frobenius operator $\varphi$ on the ring $\mbfa$.
From \cite[Corollaire 4.10]{andreatta-brinon-surconvergence} note that the $\mbfa\textrm{-module}$ $\varphi^{-1}(\mbfa)$ is free with a basis given as $u_{\alpha/p} = (1+\pi)^{\alpha_0/p} [X_1^{\flat}]^{\alpha_1/p} \cdots [X_d^{\flat}]^{\alpha_d/p}$, where $\alpha = (\alpha_0, \ldots, \alpha_d)$ is a $(d+1)\textrm{-tuple}$ with $\alpha_i \in \{0, 1, \ldots, p-1\}$ for each $0 \leqslant i \leqslant d$ (note that to get this statement from loc.cit., one should replace $\varphi^{-1}(\mbfa)$ by $\mbfa$ there and take $p\textrm{-th}$ root of the basis elements).
Define an operator (a left inverse of $\varphi$), denoted as $\psi : \mbfa \rightarrow \mbfa$ and given by the formula $x \mapsto \tfrac{1}{p^{d+1}} \circ \textup{Tr}_{\varphi^{-1}(\mbfa)/\mbfa}\circ \varphi^{-1}(x)$.

\begin{prop}[{\cite[\S 4.8]{andreatta-brinon-surconvergence}}]\label{prop:psi_oper}
	Let $x \in \mbfa$ and write $\varphi^{-1}(x) = \sum_{\alpha} x_{\alpha} u_{\alpha/p}$, then we have $\psi(x) = x_0$.
	Moreover, for the operator $\psi$ we have $\psi \circ \varphi = id$.
	Furthermore, $\psi$ commutes with the action of $G_R$, $\psi(\mbfa^+) \subset \mbfa^+$ and $\psi(\mbfa^{\dagger}) \subset \mbfa^{\dagger}$.
\end{prop}

\subsection{Crystalline coordinates}\label{subsec:pd_envelope}

In this subsection, we will introduce good ``crystalline'' coordinates (see \cite[\S 3.2]{abhinandan-crystalline-wach}).
Let $r_{\varpi}^+ = O_F \llbracket X_0 \rrbracket$ and $r_{\varpi} = O_F \llbracket X_0 \rrbracket\{X_0^{-1}\}$.
Sending $X_0$ to $\varpi = \zeta_{p^m}-1$ induces a surjective ring homomorphism $r_{\varpi}^+ \twoheadrightarrow O_K$, whose kernel is generated by a degree $e = [K:F] = p^{m-1}(p-1)$ Eisenstein polynomial $P_{\varpi} = P_{\varpi}(X_0)$.
Let $R^+_{\varpi, \square}$ denote the completion of $O_F[X_0, X, X^{-1}]$ for the $(p, X_0)\textrm{-adic}$ topology.
Sending $X_0$ to $\varpi$ induces a surjective ring homomorphism $R_{\varpi, \square}^+ \twoheadrightarrow O_K\{X, X^{-1}\}$, whose kernel is again generated by $P_{\varpi}$.
Recall that $R$ is \'etale over $O_F\{X, X^{-1}\}$ and we have multivariate polynomials $Q_i(Z_1, \ldots, Z_s) \in O_F\{X, X^{-1}\}[Z_1, \ldots, Z_s]$, for $1 \leqslant i \leqslant s$, such that $\det \big(\frac{\partial Q_i}{\partial Z_j}\big)$ is invertible in $R$.
Set $\Rpi^+$ to be the quotient of the $(p, X_0)\textrm{-adic}$ completion of $R_{\varpi, \square}^+[Z_1, \ldots, Z_s]$ by the ideal $(Q_1, \ldots, Q_s)$.
Again, we have that $\det \big(\frac{\partial Q_i}{\partial Z_j}\big)$ is invertible in $\Rpi^+$ (since $R \hookrightarrow \Rpi^+$).
Hence, $\Rpi^+$ is \'etale over $R_{\varpi, \square}^+$ and smooth over $O_F$.
Sending $X_0$ to $\varpi$ induces a surjective ring homomorphism $\Rpi^+ \twoheadrightarrow R[\varpi]$, whose kernel is again generated by $P_{\varpi}$.
Since $P_{\varpi} \equiv X_0^e \mod p$, we have that $\Rpi^+[P_{\varpi}^k/k!]_{k \in \NN} = \Rpi^+[X_0^k/[k/e]!]_{k \in \NN}$.
Set $\Rpi^{\PD} := p\textrm{-adic}$ completion of $\Rpi^+[P_{\varpi}^k/k!]_{k \in \NN}$.

Recall that $\Omega^1_R$ denotes the $p\textrm{-adic}$ completion of the module of differentials of $R$ relative to $\ZZ$ and we have $\Omega^1_R = \oplus_{i=1}^d R \dlog X_i$ and $\Omega^k_R = \wedge^k_R \Omega^1_R$.
Moreover, since $\Rpi^+$ is \'etale over $R_{\varpi, \square}^+$, therefore, for $S = R_{\varpi, \square}^+$ or $\Rpi^+$, we have that $\Omega^1_{S} = S \tfrac{dX_0}{1+X_0} \oplus \big(\oplus_{i=1}^d S \dlog X_i\big)$.

\begin{defi}\label{def:coordinate_rings}
	For $0 < u \leqslant v$, define $\Rpi^{(0, v]+}$ to be the $p\textrm{-adic}$ completion of $\Rpi^+[p^{\lceil vk/e \rceil}/X_0^k]_{k \in \NN}$ and set $\Rpi^{(0, v]} := \Rpi^{(0, v]+}[1/X_0]$.
	Furthermore, define $\Rpi^{[u]}$ to be the $p\textrm{-adic}$ completion of $\Rpi^+[X_0^k/p^{\lfloor uk/e \rfloor}]_{k \in \NN}$, define $\Rpi^{[u, v]}$ to be the $p\textrm{-adic}$ completion of $\Rpi^+[X_0^k/p^{\lfloor uk/e \rfloor},p^{\lceil vk/e \rceil}/X_0^k]_{k \in \NN}$ and set $\Rpi$ as the $p\textrm{-adic}$ completion of $\Rpi^+[1/X_0]$.
	We will write $\Rpi^{\bmstar}$ for $\smstar \in \{\hspace{1mm}, +, \textpd, [u], (0, v]+, [u, v]\}$ and for the arithmetic case, i.e.\ $R = O_F$, we will write $r_{\varpi}^{\bmstar}$ instead.
	Going from $\Rpi^+$ to $\Rpi^{\bmstar}$ only involves the arithmetic variable $X_0$, so we have $\Rpi^{\bmstar} = r_{\varpi}^{\bmstar} \widehat{\otimes}_{r_{\varpi}^+} \Rpi^+$, where $\widehat{\otimes}$ denotes the $p\textrm{-adic}$ completion of the usual tensor product.
\end{defi}

\begin{rem}
	Unless otherwise stated, we will assume $(p-1)/p \leqslant u \leqslant v/p < 1 < v < p$, for example, we can take $u = (p-1)/p$ and $v = p-1$.
\end{rem}

\begin{defi}\label{defi:filtration_vanishing_varpi}
	Define a filtration on the rings in Definition \ref{def:coordinate_rings} as follows:
	\begin{enumromanup}
		\item Let $S = \Rpi^{(0, v]+}$ ($v < 1$), $\Rpi^{(0, v]}$ ($v < 1$), $\Rpi^{[u, v]}$ ($1 \not\in [u, v]$) or $\Rpi$.
			As $P_{\varpi}$ is invertible in $S[1/p]$, we put the trivial filtration on $S$.

		\item Let $S$ be the placeholder for all the remaining rings in Definition \ref{def:coordinate_rings}, in particular, we have that $P_{\varpi}$ is not invertible in $S[1/p]$.
			Then, there is a natural embedding $S \rightarrow R[\varpi, 1/p] \llbracket P_{\varpi} \rrbracket = R[\varpi, 1/p] \llbracket X_0 - \varpi \rrbracket$, obtained by completing $S[1/p]$ for the $P_{\varpi}\textrm{-adic}$ topology and where we note that $P_{\varpi}$ and $X_0-\varpi$ generate the same ideal in $R[\varpi, 1/p] \llbracket P_{\varpi} \rrbracket$.
			We use this embedding to endow $S$ with a natural filtration $\Fil^k S := S \cap P_{\varpi}^k R[\varpi, 1/p] \llbracket P_{\varpi} \rrbracket$, for all $k \in \ZZ$.
	\end{enumromanup}
\end{defi}

\begin{rem}\label{rem:fil_pd_u}
	Let us describe the filtration on the rings of Definition \ref{defi:filtration_vanishing_varpi} (ii), more concretely.
	Note that $\Fil^k S = S$, for $k \leqslant 0$.
	For any $k \in \NN$, the ideal $\Fil^k \Rpi^{\textpd} \subset \Rpi^{\textpd}$ is topologically generated by the elements $P_{\varpi}^n/n!$, for $n \geqslant k$, i.e.\ $\Fil^k \Rpi^{\PD}$ is the closure of the ideal generated by such elements.
	Similarly, the ideal $\Fil^k \Rpi^{[u]} \subset \Rpi^{[u]}$ is topologically generated by the elements $P_{\varpi}^n/p^{\lfloor nu \rfloor}$, for $n \geqslant k$.
	Using this description, an easy computation shows that $\Fil^k \Rpi^{[u]} \subset (P_{\varpi}/p)^k \Rpi^{[u]}$.
	On the other hand, we have that $\Fil^k \Rpi^{(0, v]+} = P_{\varpi}^k \Rpi^{(0, v]+}$.
	By definition, note that $\Rpi^{[u, v]} = \Rpi^{[u]} + \Rpi^{(0, v]+}$, so we get that the ideal $\Fil^k \Rpi^{[u, v]} \subset \Rpi^{[u, v]}$ is topologically generated by $(\Fil^k \Rpi^{[u]} + \Fil^k \Rpi^{(0, v]+})\Rpi^{[u, v]}$.
\end{rem}

The following claim easily follows from Remark \ref{rem:fil_pd_u}:
\begin{lem}[{\cite[Lemma 2.6]{colmez-niziol-nearby-cycles}}]\label{lem:split_pd_elements}
	For any $k \in \NN$ and $f \in \Rpi^{[u]}$, we can write $f = f_1 + f_2$ with $f_1 \in \Fil^k \Rpi^{[u]}$ and $f_2 \in \frac{1}{p^{\lfloor ku \rfloor}} \Rpi^+$.
\end{lem}

\subsection{Cyclotomic Frobenius}\label{subsec:cyclotomic_frob}

In this subsection, we will define a (cyclotomic) Frobenius endomorphism and its left inverse on the rings studied in the previous section (see \cite[\S 3.3]{abhinandan-crystalline-wach}).
\begin{defi}
	Over $R_{\varpi, \square}^+$ define the (cyclotomic) Frobenius as a lift of the absolute Frobenius modulo $p$, denoted as $\varphi : R_{\varpi, \square}^+ \rightarrow R_{\varpi, \square}^+$ and sending $X_0 \mapsto (1+X_0)^p - 1$ and $X_i \mapsto X_i^p$, for $1 \leqslant i \leqslant d$.
	Clearly, we have that $\varphi(x) - x^p$ is in $p R_{\varpi, \square}^+$ for any $x$ in $R_{\varpi, \square}^+$.
	Using \cite[Proposition 2.1]{colmez-niziol-nearby-cycles}, the Frobenius extends to an endomorphism $\varphi : \Rpi^+ \rightarrow \Rpi^+$.
	Finally, by continuity, the Frobenius admits unique extensions $\Rpi^{\textpd} \rightarrow \Rpi^{\textpd}$, $\Rpi^{[u]} \rightarrow \Rpi^{[u]}$, $\Rpi^{(0, v]+} \rightarrow \Rpi^{(0, v/p]+}$, $\Rpi^{[u, v]} \rightarrow \Rpi^{[u, v/p]}$ and $\Rpi \rightarrow \Rpi$.
\end{defi}

Recall that $r_{\varpi}^{[u]} = \big\{\sum_{k \in \NN} a_k p^{-\lfloor \frac{ku}{e} \rfloor}X_0^k, \hspace{1mm} \textrm{such that} \hspace{1mm} a_k \in O_F \hspace{1mm} \textrm{goes to} \hspace{1mm} 0 \hspace{1mm} \textrm{as} \hspace{1mm} i \rightarrow +\infty\big\}$.
Denote by $\upsilon_{X_0} : r_{\varpi}^{[u]} \rightarrow \NN \cup \{+\infty\}$, the valuation relative to $X_0$, i.e.\ if $f = \sum b_k X_0^k$, then $\upsilon_{X_0}(f) = \inf \hspace{1mm} \{k \in \NN, b_k \neq 0\}$.
For $N \in \NN$, we set $r_{\varpi, N}^{[u]} := \{f \in r_{\varpi}^{[u]}, \hspace{1mm} \upsilon_{X_0}(f) \geqslant N\}$ and define $R_{\varpi, N}^{[u]}$ to be the topological closure of $r_{\varpi, N}^{[u]} \otimes_{r_{\varpi}^+} \Rpi^+ \subset \Rpi^{[u]}$.

\begin{lem}\label{lem:id_minus_frob_bijective}
	Let $s \in \ZZ$ and $N \in \NN_{\geqslant 1}$ such that $N \geqslant se/u(p-1)$, then $1-p^{-s}\varphi$ is bijective on $R_{\varpi,N}^{[u]}$.
\end{lem}
\begin{proof}
	The claim follows from \cite[Lemma 3.1]{colmez-niziol-nearby-cycles}, where by explicit computations, one shows that $p^{-ks}\varphi^k(R_{\varpi,N}^{[u]}) \subset p^{n(k)} R_{\varpi,N}^{[u]}$, where $n(k)$ depends on $k$ and goes to $+\infty$ as $k \rightarrow +\infty$.
	So it follows that the series of operators $\sum_{k \in \NN} p^{-ks}\varphi^k$ converge as an inverse to $1-p^{-s}\varphi$ on $R_{\varpi,N}^{[u]}$.
\end{proof}

\subsubsection{The operator \texorpdfstring{$\psi$}{-}}

Set $u_{\alpha} := (1 + X_0)^{\alpha_0} X_1^{\alpha_1} \cdots X_d^{\alpha_d}$, where $\alpha = (\alpha_0, \ldots, \alpha_d)$ is a $(d+1)\textrm{-tuple}$ with $\alpha_i \in \{0, \ldots, p-1\}$ for each $0 \leqslant i \leqslant d$.
Over the ring $\Rpi$, we have $O_F\textrm{-linear}$ differential operators $\partial_0 = (1+X_0) \frac{d}{d X_0}$ and $\partial_i = X_i \frac{d}{d X_i}$, for $1 \leqslant i \leqslant d$.
Therefore, for $0 \leqslant i \leqslant d$, we have that $\partial_i u_{\alpha} = \alpha_i u_{\alpha}$ and $\varphi(u_{\alpha}) = u_{\alpha}^p$.

\begin{lem}[{\cite[Proposition 2.15]{colmez-niziol-nearby-cycles}}]\label{lem:psi_action_structure_modp}
	Any $x$ in $\Rpi/p$ can be uniquely written as $x = \sum_{\alpha} c_{\alpha}(x)$, with $\partial_i \circ c_{\alpha}(x) = \alpha_i c_{\alpha}(x)$, for $0 \leqslant i \leqslant d$.
	Moreover, there exists a unique $x_{\alpha}$ in $\Rpi/p$, such that $c_{\alpha}(x) = x_{\alpha}^p u_{\alpha}$.
	Furthermore, if $x$ is in $\Rpi^+/p$, then $c_{\alpha}(x)$ belongs to $\Rpi^+/p$.
\end{lem}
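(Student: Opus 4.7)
The plan is to prove the three parts by realizing the operators $\partial_i$ modulo $p$ as commuting, simultaneously diagonalizable endomorphisms of $\Rpi/p$ with eigenvalues in $\FF_p$, and then to identify the joint kernel with the Frobenius image.

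The first step is to establish that $\partial_i^p = \partial_i$ on $\Rpi/p$ for each $0 \leq i \leq d$. On the base $B_0 := \kappa((X_0))[X_1^{\pm 1}, \ldots, X_d^{\pm 1}]$, a direct check on generators suffices: $\partial_i(X_i) = X_i$ for $i \geq 1$ and $\partial_0(X_0) = 1+X_0$, which give $\partial_i^k(X_i) = X_i$ and $\partial_0^k(X_0) = 1+X_0$ for all $k \geq 1$. Thus $\partial_i^p - \partial_i$ vanishes on the generators, and since it is itself a derivation in characteristic $p$, it vanishes on $B_0$. To transfer to $\Rpi/p$, which is étale over $B_0$, I would use uniqueness of extension of derivations along étale maps: $\partial_i^p - \partial_i$ is a derivation of $\Rpi/p$ restricting to zero on $B_0$, hence is zero. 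Since the $\partial_i$ pairwise commute and each satisfies $\prod_{\beta \in \FF_p}(\partial_i - \beta) = 0$, they are simultaneously diagonalizable over $\FF_p$, and the projectors
\begin{equation*}
	c_\alpha := \prod_{i=0}^d \prod_{\beta \in \FF_p,\, \beta \neq \alpha_i} \frac{\partial_i - \beta}{\alpha_i - \beta}
\end{equation*}
onto the joint eigenspace with eigenvalue $\alpha$ form a complete set of orthogonal idempotents, proving part (i).

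For part (ii), since $u_\alpha$ is a unit in $\Rpi^+/p$ by Remark \ref{rem:ualpha_unit} and $\partial_i u_\alpha = \alpha_i u_\alpha$, the Leibniz rule forces $y := c_\alpha(x)/u_\alpha \in \Rpi/p$ to satisfy $\partial_i y = 0$ for all $i$. The crucial step is to identify the joint kernel of all $\partial_i$ with the Frobenius image $F(\Rpi/p) = \{z^p : z \in \Rpi/p\}$. On the base $B_0$, an elementary recurrence on coefficients in the $X_0$-Laurent-series together with the monomial expansion in $X_1, \ldots, X_d$ yields $\bigcap_{i=0}^d \ker \partial_i = \kappa((X_0^p))[X_1^{\pm p}, \ldots, X_d^{\pm p}] = F(B_0)$. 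To extend this identification to $\Rpi/p$, I would use that $B_0$ is free of rank $p^{d+1}$ over $F(B_0)$ with basis $\{u_\alpha\}$, and étaleness of $\Rpi/p$ over $B_0$ transfers this structure to make $\Rpi/p$ free over $F(\Rpi/p)$ with the same basis. Hence $y$ lies in $F(\Rpi/p)$, so $y = z^p$ for a unique $z \in \Rpi/p$, uniqueness coming from injectivity of Frobenius on the reduced ring $\Rpi/p$; we then set $x_\alpha := z$. Finally, for part (iii), each $\partial_i$ arises from a derivation of $\Rpi^+$ that descends modulo $p$ to a derivation preserving $\Rpi^+/p$, and since $c_\alpha$ is a polynomial in the $\partial_i$ with coefficients in $\FF_p$, it too preserves $\Rpi^+/p$.

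The main obstacle I anticipate is justifying rigorously that $\Rpi/p$ is free over $F(\Rpi/p)$ with basis $\{u_\alpha\}$, equivalently that the joint kernel of all $\partial_i$ on $\Rpi/p$ equals $F(\Rpi/p)$. While intuitively clear from the étale structure, a careful verification requires exploiting that the relative Frobenius is an isomorphism on étale extensions in characteristic $p$, or alternatively arguing directly via explicit étale coordinates on $\Rpi^+/p$ over $\kappa[[X_0]][X_1^{\pm 1}, \ldots, X_d^{\pm 1}]$ and checking that the decomposition by exponents modulo $p$ descends correctly through the étale map.
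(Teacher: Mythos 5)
Your proof is correct, and for parts (i) and (iii) it follows the paper's route: establish that $\prod_{\beta \in \FF_p}(\partial_i - \beta) = 0$ on the base ring, transfer this along the étale extension, and conclude simultaneous diagonalizability over $\FF_p$ (your explicit Lagrange idempotents, and the remark that $\partial_i^p - \partial_i$ is itself a derivation and hence dies on an étale extension once it dies on the base, make the transfer step slightly more transparent than the paper's phrasing). The one genuine divergence is in how the key input for (ii) — that $\{u_\alpha\}$ is a basis of $\Rpi/p$ over its Frobenius image — is justified. The paper notes that the differentials of $\{1+X_0, X_1, \ldots, X_d\}$ form a basis of $\Omega^1$ of $\Rpi/p$ and cites Tyc's differential criterion to conclude these elements form a $p$-basis; you instead check the $p$-basis property by hand on the base ring and transfer it up using the isomorphy of the relative Frobenius for étale morphisms. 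The two justifications are standard and equivalent in strength: the paper's outsources the work to a citation, while yours is self-contained but rests on the relative-Frobenius fact, which — as you correctly single out — is the one point needing care, since $\Rpi^+/p$ is only the $X_0$-adic completion of an honest étale algebra over $\kappa[X_1^{\pm 1}, \ldots, X_d^{\pm 1}][X_0]$; the paper's own appeal to étaleness carries the same caveat, so this is not a gap. Your description of the base ring as $\kappa((X_0))[X_1^{\pm 1}, \ldots, X_d^{\pm 1}]$ omits this completion, but the coefficientwise regrouping argument establishing freeness over the Frobenius image is unaffected by it.
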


\begin{prop}\label{prop:psi_action_structure}
	Any $x$ in $\Rpi$ can be uniquely written as $x = \sum_{\alpha} c_{\alpha}(x)$, with $c_{\alpha}(x)$ in $\varphi\big(\Rpi\big) u_{\alpha}$.
	Moreover, if $x$ is in $\Rpi^+$ with $c_{\alpha}(x) = \varphi(x_{\alpha}) u_{\alpha}$, then $c_{\alpha}(x)$ belongs to $\Rpi^+$, for all $\alpha$, and $\partial_i c_{\alpha}(x) - \alpha_i c_{\alpha}(x)$ belongs to $p \Rpi^+$, for $0 \leqslant i \leqslant d$.
	Finally, if $x$ is in $\Rpi^{(0, v]+}$ then $c_{\alpha}(x)$ is in $\Rpi^{(0, v]+}$, for all $\alpha$.
\end{prop}
\begin{proof}
	The first two claims follow from Lemma \ref{lem:psi_action_structure_modp} and the last from \cite[Proposition 2.15]{colmez-niziol-nearby-cycles}.
\end{proof}

\begin{defi}
	Define the left inverse $\psi$ of the Frobenius $\varphi$ on $S = \Rpi^+$ or $S = \Rpi$, by the formula $\psi(x) = \varphi^{-1}\big(c_{0}(x)\big)$.
	Since $\Rpi$ is an extension of degree $p^{d+1}$ of $\varphi(\Rpi)$, with basis the $u_{\alpha}$'s, and since $\varphi(u_{\alpha}) = u_{\alpha}^p$, for all $\alpha$, therefore, we have that $\textrm{Tr}_{\Rpi/\varphi(\Rpi)}(u_{\alpha}\big) = 0$, if $\alpha \neq 0$, and we can define $\psi$ intrinsically as $\psi(x) := \tfrac{1}{p^{d+1}} \varphi^{-1} \circ \textrm{Tr}_{\Rpi/\varphi(\Rpi)}(x)$.
\end{defi}

The operator $\psi$ defined above is closely related to the operator defined in Proposition \ref{prop:psi_oper} (also denoted $\psi$; the relation will become clear in \S \ref{subsec:cyclotomic_embeddings}).
Note that $\psi$ is not a ring morphism; it is a left inverse to $\varphi$ and, more generally, we have that $\psi(\varphi(x)y) = x \psi(y)$.
Also, we have that $\partial_i \circ \varphi = p \varphi \circ \partial_i$ and $\partial_i \circ \psi = p^{-1} \psi \circ \partial_i$, for $i = 0, 1, \ldots, d$.
Indeed, the first equality can be obtained by checking on the basis elements $u_{\alpha}$ and the second equality is obtained by an easy computation using Proposition \ref{prop:psi_action_structure}.

For any $k \in \NN$, we can write $X_0^k = \sum_{j=0}^{p-1} \varphi(a_{j,k}) (1+X_0)^j$, for some $a_{j, k}$ in $\Rpi^+$.
Therefore, by continuity, we obtain the following:
\begin{lem}\phantomsection\label{lem:analytic_rings_psi_action}
	\begin{enumromanup}
	\item The definition of $\psi$ extends to surjective maps $\Rpi^{(0, v]+} \rightarrow \Rpi^{(0, pv]+}$, $\Rpi^{[u]} \rightarrow \Rpi^{[pu]}$ and $\Rpi^{[u, v]} \rightarrow \Rpi^{[pu, pv]}$.

	\item For the same reasons, the maps $x \mapsto c_{\alpha}(x)$ also extend and lead to decompositions $S = \oplus_{\alpha} S_{\alpha}$, where $S_{\alpha} = S \cap \varphi(R_{\varpi})u_{\alpha}$ for $S = \Rpi^{\bmstar}$, with $\smstar \in \{ , +, [u], (0, v]+, [u, v]\}$.
		Since $\psi(x) = \varphi^{-1}\big(c_{0}(x)\big)$, therefore, we have that $S^{\psi=0} = \oplus_{\alpha \neq 0} S_{\alpha}$.
	\end{enumromanup}
\end{lem}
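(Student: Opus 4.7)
The plan is to prove both parts by first extending each projection $c_\alpha$ from $\Rpi^+$ to the analytic rings $\Rpi^{(0,v]+}$, $\Rpi^{[u]}$ and $\Rpi^{[u,v]}$ by continuity, and then deducing the statements about $\psi$ from the identity $\psi = \varphi^{-1}\circ c_0$ together with the fact (built into Definition \ref{def:coordinate_rings}) that $\varphi$ restricts to injections $\Rpi^{(0,pv]+}\hookrightarrow \Rpi^{(0,v]+}$, $\Rpi^{[pu]}\hookrightarrow \Rpi^{[u]}$ and $\Rpi^{[pu,pv]}\hookrightarrow \Rpi^{[u,v]}$.

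The technical heart of the argument is a sharp divisibility estimate for the coefficients $a_{j,k}\in\Rpi^+$ in the expansion $X_0^k = \sum_{j=0}^{p-1}\varphi(a_{j,k})(1+X_0)^j$, together with the analogous (but easier) expansions for $X_i^{k_i}$, where $X_i$ is already a unit. Substituting $Y=1+X_0$ makes $\varphi$ act as $Y\mapsto Y^p$, so the problem reduces to expanding $(Y-1)^k$ in the basis $\{Y^j\}_{0\le j<p}$ of $\Rpi^+[Y^{-1}]$ over $\varphi(\Rpi^+[Y^{-1}])$. A direct computation using $\varphi(X_0) = pX_0+\binom{p}{2}X_0^2+\cdots+X_0^p$ shows that the $X_0$-adic valuation of $a_{j,k}$ grows at least like $\lceil k/p\rceil$, with controlled loss of $p$-denominators. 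These bounds let one apply each $c_\alpha$ term-by-term to the defining generators $p^{\lceil vk/e\rceil}X_0^{-k}$ and $X_0^k p^{-\lfloor uk/e\rfloor}$ and verify that the resulting series converges in the same analytic ring, producing continuous extensions $c_\alpha : \Rpi^{\bmstar} \to \Rpi^{\bmstar}$ for $\smstar\in\{\,,+,[u],(0,v]+,[u,v]\}$.

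Once the $c_\alpha$ are extended, decomposition (ii) is immediate: existence follows from the term-by-term identity $x=\sum_\alpha c_\alpha(x)$ with $c_\alpha(x)\in\varphi(\Rpi)u_\alpha$, while uniqueness descends from the decomposition over $\Rpi$. For (i), writing $c_\alpha(x)=\varphi(x_\alpha)u_\alpha$ and using that $u_\alpha$ is a unit in $\Rpi^+$ by Remark \ref{rem:ualpha_unit}, inspection of the action of $\varphi$ on the defining generators shows that $x_\alpha$ lies in $\Rpi^{(0,pv]+}$ (respectively $\Rpi^{[pu]}$, $\Rpi^{[pu,pv]}$). This establishes well-definedness of $\psi=\varphi^{-1}\circ c_0$ with target the scaled ring; surjectivity is then immediate from $\psi\circ\varphi=\mathrm{id}$ combined with the image statement for $\varphi$ above. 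Finally, the equality $S^{\psi=0}=\oplus_{\alpha\ne 0}S_\alpha$ is direct from $\psi=\varphi^{-1}\circ c_0$ and the injectivity of $\varphi$.

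The main obstacle is producing the divisibility estimate for $a_{j,k}$ in a form compatible simultaneously with the $p$-adic and $X_0$-adic scalings defining the three analytic rings; once that estimate is in place everything else is formal bookkeeping, consistent with the author's laconic ``by continuity'' preamble to the lemma.
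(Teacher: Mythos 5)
Your proposal follows the same route the paper (implicitly) takes: the paper offers no proof beyond the displayed identity $X_0^k = \sum_{j=0}^{p-1}\varphi(a_{j,k})(1+X_0)^j$ and the words ``by continuity'', and your fleshing-out of that continuity argument — estimate the $a_{j,k}$, apply $c_\alpha$ termwise to the generators of each analytic ring, then read off (i) and (ii) from $\psi=\varphi^{-1}\circ c_0$ and $\psi\circ\varphi=\mathrm{id}$ — is exactly what is intended (and is what \cite[Propositions 2.15, 2.16]{colmez-niziol-nearby-cycles} carry out). One correction to your key estimate: the $X_0$-adic valuation of $a_{j,k}$ does \emph{not} grow like $\lceil k/p\rceil$ (already for $k=1$ one has $a_{0,1}=-1$, a unit); the correct statement, obtained from $X_0^{pq+s}\in(p,\varphi(X_0))^{q}\,\Rpi^+$ and the freeness of $\Rpi^+$ over $\varphi(\Rpi^+)$, is $a_{j,k}\in(p,X_0)^{\lfloor k/p\rfloor}$, which is precisely the ``controlled loss of $p$-denominators'' you allude to and suffices for the convergence bookkeeping in all three rings.
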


\begin{lem}\label{lem:differential_mod_p}
	Let $S = \Rpi^{\bmstar}$, for $\smstar \in \{\hspace{1mm}, +, [u], (0, v]+, [u, v]\}$.
	Then, for $0 \leqslant i \leqslant d$, the operator $\partial_i$ on $S^{\bmstar}_{\alpha} / p S^{\bmstar}_{\alpha}$ is given by multiplication by $\alpha_i$, where $\alpha_i$ is the $i\textrm{-th}$ entry in $\alpha = (\alpha_0, \ldots, \alpha_d)$.
\end{lem}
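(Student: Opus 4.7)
The plan is to exploit the explicit description of $S_\alpha$ given by Proposition \ref{prop:psi_action_structure}(ii) and its extension to analytic rings in Lemma \ref{lem:analytic_rings_psi_action}. Namely, every element $y \in S_\alpha$ can be written uniquely in the form $y = \varphi(x)\, u_\alpha$ for some $x \in \Rpi^{\bmstar}$ (after noting that $u_\alpha$ is a unit in $\Rpi^+$ by Remark \ref{rem:ualpha_unit}, so this form extends to all the flavours of $S$ under consideration). Once we have this normal form, the computation reduces to a one-line application of the Leibniz rule.

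Concretely, I would compute
\begin{equation*}
\partial_i(\varphi(x)\, u_\alpha) \;=\; \partial_i(\varphi(x))\, u_\alpha + \varphi(x)\, \partial_i(u_\alpha) \;=\; p\, \varphi(\partial_i(x))\, u_\alpha + \alpha_i\, \varphi(x)\, u_\alpha,
\end{equation*}
using the two identities $\partial_i \circ \varphi = p\, \varphi \circ \partial_i$ and $\partial_i(u_\alpha) = \alpha_i u_\alpha$ recorded right after Proposition \ref{prop:psi_action_structure}. Reducing modulo $p$ kills the first summand and leaves $\partial_i(y) \equiv \alpha_i\, y \pmod{p S^{\bmstar}_\alpha}$, which is exactly the claim. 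Note that the congruence lives in $S^{\bmstar}_\alpha$, not merely in $S^{\bmstar}$, because $p\varphi(\partial_i(x)) u_\alpha$ is visibly in $p\, S^{\bmstar}_\alpha$.

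The only point requiring a sentence of care is that the decomposition $S = \oplus_\alpha S_\alpha$ and the identification $S_\alpha = \varphi(\Rpi^{\bmstar})\, u_\alpha$ hold not only for $\Rpi^+$ (Proposition \ref{prop:psi_action_structure}) but for each of the rings $\Rpi, \Rpi^{[u]}, \Rpi^{(0, v]+}, \Rpi^{[u, v]}$. This is precisely the content of Lemma \ref{lem:analytic_rings_psi_action}(ii), so the argument above applies uniformly in each case. There is no genuine obstacle; the lemma is really a consequence of having arranged the $p$-basis $\{1+X_0, X_1, \ldots, X_d\}$ to be simultaneously compatible with $\varphi$ and with the commuting derivations $\partial_i$.
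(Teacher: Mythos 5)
Your Leibniz computation is correct and is exactly what underlies Proposition \ref{prop:psi_action_structure}(ii), which is all the paper invokes for $\smstar \in \{\hspace{1mm}, +\}$; for those two cases your argument is fine. The gap is in the analytic cases, and it sits precisely where you wave your hands. Lemma \ref{lem:analytic_rings_psi_action}(ii) does \emph{not} say that every $y \in S^{\bmstar}_{\alpha}$ is $\varphi(x)u_{\alpha}$ with $x \in \Rpi^{\bmstar}$: it says $S^{\bmstar}_{\alpha} = S^{\bmstar} \cap \varphi(\Rpi)u_{\alpha}$, so a priori $x$ only lies in the big ring $\Rpi$. For $\smstar = [u]$ your stronger claim is actually false. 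Indeed, part (i) of that same lemma says $\psi$ maps $\Rpi^{[u]}$ \emph{onto} $\Rpi^{[pu]}$, and since $\psi = \varphi^{-1}\circ c_0$ this forces $S^{[u]}_0 = \varphi\big(\Rpi^{[pu]}\big)$, with $\Rpi^{[pu]} \supsetneq \Rpi^{[u]}$ (concretely, $X_0^{p^m}/p^{p}$ lies in $\Rpi^{[pu]}$ but not in $\Rpi^{[u]}$, while its image under $\varphi$ lies in $\Rpi^{[u]}$). The same problem occurs for $[u, v]$, where the preimages live in $\Rpi^{[pu, pv]}$, which neither contains nor is contained in $\Rpi^{[u, v]}$. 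Consequently ``$p\varphi(\partial_i(x))u_{\alpha}$ is visibly in $pS^{\bmstar}_{\alpha}$'' is not visible: you must know in which ring $\partial_i(x)$ lives and that $\varphi$ carries it back into $S^{\bmstar}$, and neither point is addressed.

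The argument can be repaired, but the repair is exactly the bookkeeping you declared unnecessary: with $x \in \Rpi^{[pu]}$ one still has $\varphi(\Rpi^{[pu]}) \subset \Rpi^{[pu/p]} = \Rpi^{[u]}$ (and likewise $\varphi(\Rpi^{[pu,pv]}) \subset \Rpi^{[u,v]}$), so the error term does land in $pS^{\bmstar}_{\alpha}$ \emph{provided} you also check that $\partial_i$ preserves $\Rpi^{[pu]}$ — which for $\partial_0$ is again a nontrivial interplay between the factor $k$ produced by differentiating $X_0^k$ and the denominators $p^{\lfloor puk/e\rfloor}$. The paper sidesteps all of this: it expands an element of $S^{\bmstar}_{\alpha}$ as a convergent series $\sum_k p^{r_k}X_0^k x_k$ with $x_k \in S^+$, notes that for $1 \leq i \leq d$ the operator $\partial_i$ commutes with multiplication by $X_0^k$ so the claim reduces to the already-settled case $\smstar = +$, and treats $i = 0$ separately by expanding $X_0^k$ in the basis $\varphi(a_{j,k})(1+X_0)^j$ and using $\partial_0\circ\varphi = p\,\varphi\circ\partial_0$, deducing $(0, v]+$ from the case of $\Rpi$ via $p\Rpi \cap \Rpi^{(0,v]+} = p\Rpi^{(0,v]+}$ and then $[u, v]$ by combining. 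If you want to keep the normal-form argument, you should at minimum replace the appeal to Lemma \ref{lem:analytic_rings_psi_action}(ii) by the correct containments (as in Proposition \ref{prop:ring_psi_eigenspace_decomp}(iii) for the overconvergent ring) and supply the two stability statements above.
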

\begin{proof}
	If $\smstar \in \{\hspace{1mm}, +\}$, then the claim was already shown in Proposition \ref{prop:psi_action_structure}.
	For $\smstar \in \{[u], (0, v]+, [u, v]\}$, the elements of $S^{\bmstar}_{\alpha}$ are those of the form $\sum_{k \in \ZZ} p^{r_k} X_0^k x_k$, where $x_k \in S^+$ goes to $0$ when $k \rightarrow +\infty$ and $r_k$ is determined by ``$\smstar$''.
	Let $x = \sum_{k \in \ZZ} p^{r_k} X_0^k x_k$.
	Then, note that for $1 \leqslant i \leqslant d$, we have that $\partial_{i}(X_0^k a_k) - \alpha_i X_0^k a_k = X_0^k\big(\partial_{i}(a_k) - \alpha_i a_k\big)$ belongs to $p S^+$ by Proposition \ref{prop:psi_action_structure}.
	Therefore, the claim follows for all $1 \leqslant i \leqslant d$ and $\smstar \in \{\hspace{1mm}, +, [u], (0, v]+, [u, v]\}$.
	Next, we will look at the case of $i = 0$.
	We first assume that $x$ is in $S^{[u]}$ and write $x = \sum_{k \in \NN} p^{r_k} x_k \sum_{j=0}^{p-1}\varphi(a_{j, k})(1+X_0)^j$, for some $a_{j, k}$ in $S^+$.
	Then, $c_{\alpha}(x) = \sum_{j=0}^{p-1}\sum_{k \in \NN} p^{r_k}\varphi(a_{j, k}) c_{(\alpha_0-j, \alpha_1, \cdots, \alpha_d)}(x_k)(1+X_0)^j$, where $\alpha_0-j$ denotes its value modulo $p$.
	Since $\partial_0\big(c_{(\alpha_0-j, \alpha_1, \cdots, \alpha_d)}(x_k)\big) - (\alpha_0-j)c_{(\alpha_0-j, \alpha_1, \cdots, \alpha_d)}(x_k)$ belongs to $pS^+$ and $\partial_0 \circ \varphi = p \varphi \circ \partial_{0}$, therefore, we get the desired conclusion for $i =0$ and $x$ in $S^{[u]}$.
	Next, assume that $x$ is in $S^{(0, v]+}$ and using the result for $S$, we get that $\partial_{0}(x) - \alpha_0 x$ belongs to $pS \cap S^{(0, v]+} = pS^{(0, v]+}$.
	Finally, by combining the results for $S^{[u]}$ and $S^{(0, v]+}$, we get the conclusion for any $x$ in $S^{[u, v]}$.
	This allows us to conclude.
\end{proof}

\begin{prop}\label{prop:ring_psi_eigenspace_decomp}
	Assume that $v < p$.
	\begin{enumromanup}
		\item Let $x$ in $\Rpi^{\psi=0}$, then $X_0^k \psi(x) = \psi(\varphi(X_0)^k x)$, for all $k \in \ZZ$.

		\item $\psi\big(X_0^{-pN} \Rpi^{(0, v/p]+}\big) \subset X_0^{-N} \Rpi^{(0, v]+}$, for all $N \in \NN$.

		\item The natural map $\oplus_{\alpha \neq 0} \varphi\big(\Rpi^{(0, v]+}\big)u_{\alpha} \rightarrow \big(\Rpi^{(0, v/p]+}\big)^{\psi=0}$ is an isomorphism.
	\end{enumromanup}
\end{prop}
\begin{proof}
	The claim in (i) follows from an elementary computation.
	Claims in (ii) and (iii) follow from \cite[Proposition 2.16]{colmez-niziol-nearby-cycles}.
\end{proof}

\subsection{Cyclotomic embedding}\label{subsec:cyclotomic_embeddings}

In this subsection, we will describe the relationships between period rings discussed in \S \ref{subsec:period_rings} and \S \ref{subsec:relative_phi_gamma_mod}, as well as, for the ring $\Rpi^{\bmstar}$, where $\smstar \in \{\hspace{1mm}, +, \textpd\}$.
Define a morphism of rings $\iota_{\cycl} : R_{\varpi, \square}^+ \rightarrow \Ainf(\Rbar)$, by sending $X_0 \mapsto \pi_{m} = \varphi^{-m}(\pi)$ and $X_i \mapsto [X_i^{\flat}]$, for $1 \leqslant i \leqslant d$.
The map $\iota_{\cycl}$ admits a unique extension to an embedding $\Rpi^+ \rightarrow \Ainf(\Rbar)$ such that $\theta \circ \iota_{\cycl}$ is the projection $\Rpi^+ \twoheadrightarrow R[\varpi]$ (see \cite[Lemma 3.12]{abhinandan-crystalline-wach}).
This embedding commutes with the respective Frobenii, i.e.\ $\iota_{\cycl} \circ \varphi = \varphi \circ \iota_{\cycl}$.
By continuity, the morphism $\iota_{\cycl}$ extends to embeddings $\Rpi^{\textpd} \subset \Acrys(\Rbar)$, $\Rpi^{[u]} \subset \ARbar^{[u]}$, $\Rpi^{(0, v]+} \subset \ARbar^{(0, v]+}$, $\Rpi^{[u, v]} \subset \ARbar^{[u, v]}$ and $\Rpi \subset \ARbar$.
Denote by $\ARpi^{\bmstar}$ the image of $\Rpi^{\bmstar}$ under $\iota_{\cycl}$.
These rings are stable under the action of $G_R$ and the action factors through $\Gamma_R$; we equip these rings with the induced action of $\Gamma_R$.
Moreover, for $\smstar \in \{+, \textpd, [u], [u, v], (0, v]+\}$, we equip $\ARpi^{\bmstar}$ with a filtration using Definition \ref{defi:filtration_vanishing_varpi} and $\iota_{\cycl}$.
It is easy to see that for $u \leqslant 1 \leqslant v$, the filtration on $\ARpi^{\star}$ coincides with the filtration induced via the embedding $\ARpi^{\bmstar} \subset \BdR^+(\Rbar)$, where we consider the natural filtration on $\BdR^+(\Rbar)$ (see \S \ref{subsec:period_rings}).
From \cite[\S 2.4.2]{colmez-niziol-nearby-cycles}, note that we have $(\varphi, \Gamma_R)\textrm{-equivariant}$ inclusions $\ARpi^{[u\prm]} \subset \ARpi^{\textpd} \subset \ARpi^{[u]}$, for $u \geqslant \frac{1}{p-1}$ and $u\prm \leqslant \frac{1}{p}$.

Note that the preceding discussion works well for $R[\varpi]$, where $\varpi = \zeta_{p^m}-1$ with $m \geqslant 1$.
For $R$, one can repeat the constructions above to obtain the period ring $\AR^+ \subset \ARpi^+$ (see \cite[\S 3.3.2]{abhinandan-crystalline-wach}), equipped with an induced filtration $\Fil^k \AR^+ = \AR^+ \cap \Fil^k \ARpi^+ = \pi^k \AR^+$ (see \cite[Lemma 3.17]{abhinandan-crystalline-wach}).
Recall that,
\begin{lem}[{\cite[Lemma 3.14]{abhinandan-crystalline-wach}}]\label{lem:t_over_pi_unit}
	The element $t/\pi$ is a unit in $\AFpi^{\textpd} \subset \ARpi^{\textpd} \subset \ARpi^{[u]} \subset \ARpi^{[u, v]}$.
\end{lem}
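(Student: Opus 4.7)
The plan is to realize $t/\pi$ as a convergent series in $\AFpi^{\textpd}$ congruent to $1$ modulo the PD ideal, and then to exploit topological nilpotence of that ideal in the $p$-adic topology to conclude invertibility. The key identity is $P_\varpi(\pi_m) = \xi$: from $(1+\pi_m)^{p^m} = [\varepsilon] = 1+\pi$ and $(1+\pi_m)^{p^{m-1}} = [\varepsilon^{1/p}] = 1+\pi_1$, one computes $P_\varpi(\pi_m) = \pi/\pi_1 = \xi$. Since $\AFpi^{\textpd}$ is the $p$-adically completed PD envelope of $\AFpi^+$ with respect to $\ker(\theta\colon \AFpi^+\to O_K) = (\xi)$, the divided powers $\xi^{[n]}$ lie in $\AFpi^{\textpd}$ for all $n$, and therefore so does $\pi^{[n]} := \pi^n/n! = \pi_1^n\,\xi^{[n]}$, using $\pi = \pi_1\xi$.

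Dividing the defining series $t = \log(1+\pi) = \sum_{n \geq 1}(-1)^{n-1}\pi^n/n$ formally by $\pi$ yields
\[
	t/\pi \;=\; \sum_{n \geq 0}(-1)^n\,\tfrac{n!}{n+1}\,\pi^{[n]},
\]
and since $\nu_p(n!/(n+1)) = \nu_p(n!) - \nu_p(n+1)$ tends to $+\infty$, this series converges $p$-adically in $\AFpi^{\textpd}$; multiplying back by $\pi$ and reindexing recovers $t$. The $n=0$ term equals $1$, while for $n \geq 1$ the factor $\xi^{[n]}$ places the term in the PD ideal $\bar J$ of $\AFpi^{\textpd}$. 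Thus $t/\pi = 1+y$ with $y \in \bar J$. By the PD axioms, any $y \in \bar J$ satisfies $y^n = n!\,\gamma_n(y)$ with $\gamma_n(y) \in \bar J \subset \AFpi^{\textpd}$, so $\nu_p(y^n) \geq \nu_p(n!) \to +\infty$, making $y$ topologically $p$-adically nilpotent. Consequently the geometric series $(1+y)^{-1} = \sum_{k \geq 0}(-y)^k$ converges in $\AFpi^{\textpd}$ and inverts $t/\pi$. Since the cyclotomic maps $\AFpi^{\textpd} \hookrightarrow \ARpi^{\textpd} \hookrightarrow \ARpi^{[u]} \hookrightarrow \ARpi^{[u,v]}$ are ring homomorphisms, $t/\pi$ remains a unit in each.

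The main technical subtlety is that both the convergence of the series for $t/\pi$ and the topological nilpotence of elements of $\bar J$ depend critically on forming the PD envelope before the $p$-adic completion, so that the identity $y^n = n!\,\gamma_n(y)$ genuinely compensates for the factorial denominators appearing throughout; once the identification $P_\varpi(\pi_m) = \xi$ is in place, every remaining step is a straightforward consequence of the PD axioms and the $p$-adic completeness of $\AFpi^{\textpd}$.
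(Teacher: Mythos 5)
The paper does not prove this lemma itself but cites \cite[Lemma 3.14]{abhinandan-crystalline-wach}, and your argument is exactly the standard one used there: write $t/\pi=\sum_{n\geq 0}(-1)^n\tfrac{n!}{n+1}\pi^{[n]}$ with $\pi^{[n]}=\pi_1^n\xi^{[n]}\in\AFpi^{\textpd}$ via $P_{\varpi}(\pi_m)=\xi$, observe this is $1$ plus an element of the PD ideal, and invert using topological nilpotence coming from $y^n=n!\,\gamma_n(y)$. The proof is correct and complete as written.
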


\begin{lem}\label{lem:arpi_fil_pi_cap}
	For $k \in \ZZ$ and $\smstar \in \{+, \textpd, [u], [u, v]\}$, we have $\Fil^k \ARpi^{\bmstar} \cap \pi \ARpi^{\bmstar} = \pi\Fil^{k-1} \ARpi^{\bmstar}$, as submodules of $\ARpi^{\bmstar}$.
\end{lem}
\begin{proof}
	Let $A = \ARpi^{\bmstar}$ and $B = R[\varpi, 1/p] \llbracket P_{\varpi} \rrbracket = R[\varpi, 1/p] \llbracket X_0-\varpi \rrbracket$ (see Definition \ref{defi:filtration_vanishing_varpi} for the latter ring), where $\varpi = \zeta_{p^m}-1$.
	Using the inverse of the isomorphism $\iota_{\cycl} : \Rpi^{\bmstar} \isomorphic \ARpi^{\bmstar} = A$, we may regard $A$ as a subring of $B$.
	Now, we will prove the claim by induction on $k$.
	Note that the claim is trivial for $k \leqslant 0$ and for $k = 1$ we have that $\Fil^k A \cap \pi A = \pi A$.
	So, let $k \in \NN_{\geqslant 2}$ and assume that the claim is true for $k-1$, i.e.\ $\Fil^{k-1} A \cap \pi A = \pi \Fil^{k-2} A$.
	Now, note that $\Fil^k A \cap \pi A = \Fil^k A \cap \Fil^{k-1} A \cap \pi A =  \Fil^k A \cap \pi \Fil^{k-2} A$.
	In particular, to get the claim, it is enough to show that $\Fil^k A \cap \pi \Fil^{k-2} A = \pi \Fil^{k-1} A$.
	Let $x$ be an element of $\Fil^k A \cap \pi \Fil^{k-2} A$ and write $x = \pi y$, for some $y$ in $\Fil^{k-2} A$.
	From the description of the filtration on $A$ in Definition \ref{defi:filtration_vanishing_varpi}, it follows that we can write $x = \xi^k x'$ and $y = \xi^{k-2} y'$, for some $x'$ and $y'$ in $B$ (note that $\iota_{\cycl}(P_{\varpi}) = \xi$).
	Since $B$ is $\xi\textrm{-torsion free}$ and $\pi = \xi \pi_1$, we get that $\xi x' = \pi_1 y'$ in $B$.
	But we have $\pi_1 = (1+\pi_m)^{p^{m-1}}-1 = (\pi_m - \varpi + \zeta_{p^m})^{p^{m-1}}-1 = (\pi_m - \varpi) z + \zeta_p-1$, for some $z$ in $B$ and $\zeta_p = \zeta_{p^m}^{p^{m-1}}$ (note that $\pi_m = \iota_{\cycl}(X_0)$).
	Moreover, from Definition \ref{defi:filtration_vanishing_varpi}, recall that $\xi$ and $\pi_m - \varpi$ generate the same ideal in $B$.
	Therefore, we obtain that $(\zeta_p-1)y' = \xi x' - (\pi_m-\varpi)zy'$ is an element of $\xi B$.
	As $(\zeta_p-1)$ is a unit in $B$, it follows that we have $y' = \xi y''$, for some $y''$ in $B$.
	So, we can write $y = \xi^{k-2} y' = \xi^{k-1} y''$, and see that it belongs to $\xi^{k-1} B \cap A = \Fil^{k-1} A$.
	Hence, $x = \pi y$ is an element of $\pi \Fil^{k-1} A$, in particular, $\Fil^k A \cap \pi \Fil^{k-2} A \subset \pi \Fil^{k-1} A$.
	The other inclusion, i.e.\ $\pi \Fil^{k-1} A \subset \Fil^k A \cap \pi \Fil^{k-2} A$, is obvious.
\end{proof}

\begin{lem}[{\cite[Lemma 2.35]{colmez-niziol-nearby-cycles}}]\label{lem:pi_m_divides_p}
	If $v < p$, then
	\begin{enumromanup}
	\item The element $\pi_m^{-p^{m-1}} \pi_1$ is a unit in $\ARpi^{(0, v]+}$;

	\item In $\ARpi^{(0, v]+}$, the element $p$ is divisible by $\pi_m^{\lfloor (p-1)p^{m-1}/v \rfloor}$, hence also by $\pi_m^{(p-1)p^{m-2}}$;

	\item Let $v = p-1$, then $\pi_m^{-p^m}\pi$ is a unit in $\ARpi^{(0, v/p]+}$ and $p/\pi \in \ARpi^{(0, v/p]+}$.
	\end{enumromanup}
\end{lem}

Next, we prove some claims for the action of $\Gamma_R$.
\begin{lem}\label{lem:gamma_minus_1_pd}
	Let $k \in \NN$ and note that for $\smstar \in \{+, \textpd, [u]\}$ and $i \in \{0, 1, \ldots, d\}$, we have that $(\gamma_i-1) \big(p^m, \pi_m^{p^m}\big)^k \ARpi^{\bmstar} \subset \big(p^m, \pi_m^{p^m}\big)^{k+1} \ARpi^{\bmstar}$ .
\end{lem}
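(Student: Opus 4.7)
The plan is to induct on $k$. Denote $I := (p^m, \pi_m^{p^m})\ARpi^{\bmstar}$. The inductive step is driven by the Leibniz identity $(\gamma_i - 1)(ab) = a(\gamma_i - 1)(b) + (\gamma_i - 1)(a)\gamma_i(b)$: applied iteratively, this shows that provided $\gamma_i$ preserves $I$ and the base case $(\gamma_i - 1)\ARpi^{\bmstar} \subset I$ holds, one obtains $(\gamma_i - 1)I^k \subset I^{k+1}$ for every $k$. Stability of $I$ under $\gamma_i$ is straightforward once the base case is known: $\gamma_i(p^m) = p^m$, and $\gamma_i(\pi_m^{p^m}) = (\pi_m + (\gamma_i-1)\pi_m)^{p^m}$ lies in $\pi_m^{p^m} + I$ by the binomial theorem.

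The problem thus reduces to the base case $(\gamma_i - 1)\ARpi^+ \subset I$; the cases $\bmstar \in \{\textpd,[u]\}$ then follow by continuity, using density of $\ARpi^+$ in $\ARpi^{\bmstar}$ for the $p$-adic topology together with the continuity of $\gamma_i$. By construction, $\ARpi^+$ is topologically generated over $\ZZ_p$ by $\pi_m$ and the Teichmüller lifts $[X_j^{\flat}]$ of the toric coordinates of $R$ (the remaining étale coordinates are obtained via the implicit function theorem of Proposition \ref{prop:IFT_etale} and inherit the $\gamma_i$-action from these generators). So it suffices to check the inclusion on these generators. For $1 \leq i \leq d$, $\gamma_i$ fixes $\pi_m$ and $[X_j^{\flat}]$ for $j \neq i$, while $\gamma_i[X_i^{\flat}] = (1+\pi)[X_i^{\flat}]$, so $(\gamma_i - 1)[X_i^{\flat}] = \pi \cdot [X_i^{\flat}]$. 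For $i = 0$, the identity $\chi(\gamma_0) = 1 + p^m u$ with $u \in \ZZ_p^{\times}$ combined with $(1+\pi_m)^{p^m} = 1 + \pi$ gives
\[
	(\gamma_0 - 1)\pi_m \;=\; (1+\pi_m)\bigl((1+\pi)^u - 1\bigr),
\]
which also lies in $\pi \cdot \ARpi^+$. In both cases the problem is reduced to the arithmetic claim that $\pi$ itself belongs to $I$.

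The main obstacle is precisely this last verification. The plan is to expand
\[
	\pi \;=\; (1+\pi_m)^{p^m} - 1 \;=\; \pi_m^{p^m} + \sum_{k=1}^{p^m - 1}\binom{p^m}{k}\pi_m^k,
\]
and use the valuation formula $v_p\binom{p^m}{k} = m - v_p(k)$ to bookkeep each summand. The term $\pi_m^{p^m}$ is visibly in $I$; for the other terms, the ``problematic'' cases are $k$ with $v_p(k) > 0$, where the bare $p$-adic valuation of the binomial coefficient is strictly smaller than $m$. Here one must exploit the additional structure of the enveloping ring $\ARpi^{\bmstar}$: the divided power generators $\xi^{[j]}$ in $\ARpi^{\textpd}$ and the rescaled elements $\pi_m^j/p^{\lfloor uj/e\rfloor}$ in $\ARpi^{[u]}$ provide the extra divisibility needed to realize these intermediate summands inside $I$. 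This careful $p$-adic accounting, uniform across $\bmstar \in \{+,\textpd,[u]\}$, is the technical heart of the lemma, and an analogous style of computation will be re-used in the Koszul-complex calculations of the following sections.
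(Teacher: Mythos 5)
Your overall architecture --- induction on $k$ driven by the twisted-derivation identity, reduction to the topological generators, and the explicit computations $(\gamma_i-1)[X_i^{\flat}] = \pi[X_i^{\flat}]$ and $(\gamma_0-1)\pi_m = (1+\pi_m)\big((1+\pi)^a-1\big)$ --- is exactly the paper's, and you have correctly located the arithmetic crux: everything reduces to showing $\pi \in (p^m, \pi_m^{p^m})\ARpi^{\bmstar}$. But your proposal does not close that crux, and the mechanism you offer for closing it cannot work. For $\bmstar = +$ there is no divided-power or rescaling structure at all, so the ``additional structure of the enveloping ring'' is simply absent in the one case to which the other two are supposed to reduce. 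Even for $\textpd$ and $[u]$, the extra elements $\pi_m^{j}/\lfloor j/e\rfloor!$ and $\pi_m^{j}/p^{\lfloor uj/e\rfloor}$ make \emph{high} powers of $\pi_m$ more divisible; they do nothing to place a term such as $\binom{p^m}{p^j}\pi_m^{p^j} = p^{m-j}(\textrm{unit})\,\pi_m^{p^j}$ with $1 \le j \le m-1$ inside the ideal $(p^m, \pi_m^{p^m})$, since $\pi_m^{p^j}$ is a \emph{low} power. (For what it is worth, the paper's own proof elides the same point: it writes $\pi = \pi_m^{p^m} + p^m\pi_m^{p^m-1} + \cdots + p^m\pi_m$, i.e.\ treats every middle binomial coefficient as divisible by $p^m$, which holds only for $m=1$; for $m \ge 2$ one has $\upsilon_p\binom{p^m}{k} = m - \upsilon_p(k)$, exactly as you observe. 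So you have put your finger on a genuine subtlety --- but identifying it is not the same as resolving it, and your proposal leaves what you yourself call the ``technical heart'' of the lemma unproved.)

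A second, independent problem is your reduction of $\bmstar \in \{\textpd, [u]\}$ to $\bmstar = +$ ``by continuity and density''. The ring $\ARpi^+$ is not $p$-adically dense in $\ARpi^{\textpd}$ or $\ARpi^{[u]}$: the divided powers $\pi_m^{j}/\lfloor j/e\rfloor!$ (and the rescaled elements $\pi_m^{j}/p^{\lfloor uj/e\rfloor}$) are genuinely new elements that cannot be approximated $p$-adically from $\ARpi^+$. The paper instead expands a general element of $\ARpi^{\textpd}$ as $\sum_n f_n\,\pi_m^{n}/\lfloor n/e\rfloor!$ with $f_n \in \ARpi^+$ tending to zero and computes $(\gamma_0-1)(\pi_m^{je}/j!)$ explicitly, checking that the resulting term $(\pi x)^{je}/j!$ still lands in the target ideal. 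That extra computation is not optional, and it is missing from your argument.
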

\begin{proof}
	Let $i = 0$ and note that we have $(\gamma_0-1)\pi_m = \pi x$, for some $x \in \ARpi^+$.
	Since $\pi = (1 + \pi_m)^{p^m}-1$, we get that $(\gamma_0-1)\pi_m$ belongs to $\big(p^m\pi_m, \pi_m^{p^m}\big) \ARpi^+$.
	Moreover, $(\gamma_0-1) \pi_m^{p^m} = (\pi x + \pi_m)^{p^m} - \pi_m^{p^m}$ belongs to $\big(p^m\pi_m, \pi_m^{p^m}\big)^2 \ARpi^+$.
	Proceeding by induction on $k \geqslant 1$ and using the fact that $\gamma_0-1$ acts as a twisted derivation (i.e.\  for all $x, y$ in $\ARpi^+$, we have $(\gamma_0-1)xy = (\gamma_0-1)x \cdot y + \gamma_0(x)(\gamma_0-1)y)$, we conclude that $(\gamma_0 - 1) \big(p^m\pi_m, \pi_m^{p^m}\big)^{k} \ARpi^+ \subset \big(p^m\pi_m, \pi_m^{p^m}\big)^{k+1} \ARpi^+$.
	Furthermore, any $f$ in $\ARpi^{\textpd}$ can be written as $f = \sum_{n \in \NN} f_n \pi_m^n/(\lfloor n/e \rfloor!)$, such that $f_n$ is in $\ARpi^+$ and goes to $0$ $p\textrm{-adically}$ as $n \rightarrow +\infty$.
	For notational convenience, we take $n = je$, for some $j$ in $\NN$, and see that $(\gamma_0-1)\pi_m^{je}/j!$ is in $\big(p^m, \pi_m^{p^m}\big) \ARpi^{\textpd}$.
	Proceeding by induction on $k \geqslant 1$ and using that $\gamma_0-1$ acts as a twisted derivation, we conclude that $(\gamma_0-1) \big(p^m, \pi_m^{p^m}\big)^{k} \ARpi^{\textpd} \subset \big(p^m, \pi_m^{p^m}\big)^{k+1} \ARpi^{\textpd}$.
	Next, for $1 \leqslant i \leqslant d$, note that we have $(\gamma_i-1)[X_i^{\flat}] = \pi [X_i^{\flat}]$ is in $\big(p^m, \pi_m^{p^m}\big) \ARpi^+$ and $(\gamma_i-1)\big([X_i^{\flat}]^{-1}\big) = -\pi(1+\pi)^{-1}[X_i^{\flat}]^{-1}$ belongs to $\big(p^m\pi_m, \pi_m^{p^m}\big) \ARpi^+$.
	Proceeding by induction on $k \geqslant 0$ and using the fact that $\gamma_i-1$ also acts as a twisted derivation, we conclude that $(\gamma_i-1) \big(p^m\pi_m, \pi_m^{p^m}\big)^{k} \ARpi^+ \subset \big(p^m\pi_m, \pi_m^{p^m}\big)^{k+1} \ARpi^+$.
	Again, by the description of elements of $\ARpi^{\textpd}$, using the discussion for $\ARpi^+$ and the fact that $\gamma_i-1$ acts as a twisted derivation, we conclude that $(\gamma_i-1) \big(p^m, \pi_m^{p^m}\big)^k \ARpi^{\textpd} \subset \big(p^m, \pi_m^{p^m}\big)^{k+1} \ARpi^{\textpd}$.
	Finally, the claim for $\ARpi^{[u]}$ follows in a similar manner.
\end{proof}

\begin{lem}\phantomsection\label{lem:gamma_minus_1_oc}
	We have that $(\gamma_0-1) \ARpi^{(0, v]+} \subset \big(p^m \pi_m, \pi_m^{p^m}\big) \ARpi^{(0, v]+}$ and $(\gamma_i-1) \ARpi^{(0, v]+} \subset \pi \ARpi^{(0, v]+}$, for $i \in \{1, \ldots, d\}$.
	Moreover, for $i \in \{0, 1, \ldots, d\}$ and $k \in \NN$, we have that $(\gamma_i-1)\big(p^m, \pi_m^{p^m}\big)^k \ARpi^{[u, v]} \subset \big(p^m, \pi_m^{p^m}\big)^{k+1} \ARpi^{[u, v]}$.
\end{lem}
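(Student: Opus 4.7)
The plan is to mimic the proof of Lemma \ref{lem:gamma_minus_1_pd}: the twisted derivation identity $(\gamma_i-1)(fg)=(\gamma_i-1)(f)\cdot g+\gamma_i(f)\cdot(\gamma_i-1)(g)$ reduces each claim to a computation on a generating set of $\ARpi^{(0,v]+}$ (resp.\ $\ARpi^{[u,v]}$) over $\ARpi^+$, namely $p^{\lceil vk/e\rceil}/\pi_m^k$ with $k\geq 1$ (and additionally $\pi_m^k/p^{\lfloor uk/e\rfloor}$ in the case of $\ARpi^{[u,v]}$), combined with the already-established action on $\ARpi^+$ from Lemma \ref{lem:gamma_minus_1_pd}.

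I would first dispatch the easier second half of (i), the statement for $i\in\{1,\ldots,d\}$. Each such $\gamma_i$ fixes $p$ and $\pi_m$, hence fixes every adjoined generator $p^{\lceil vk/e\rceil}/\pi_m^k$; so it suffices to analyze $\gamma_i-1$ on $\ARpi^+$, where $(\gamma_i-1)[X_i^{\flat}]=\pi[X_i^\flat]$ and $(\gamma_i-1)[X_i^\flat]^{-1}=-\pi(1+\pi)^{-1}[X_i^\flat]^{-1}$, while $\gamma_i$ fixes all other generators. The twisted derivation property and continuity then yield $(\gamma_i-1)\ARpi^{(0,v]+}\subseteq\pi\ARpi^{(0,v]+}$.

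For the first half of (i), I would begin from the computation $(\gamma_0-1)\pi_m=(1+\pi_m)\big((1+\pi)^a-1\big)=\pi\cdot x$ already performed in Lemma \ref{lem:gamma_minus_1_pd}, where $a\in\ZZ_p^{\times}$ is determined by $\chi(\gamma_0)=1+p^m a$ and $x\in\ARpi^+$ is a unit. The key step is then to show $\pi\in(p^m\pi_m,\pi_m^{p^m})\ARpi^{(0,v]+}$: expanding $\pi=\sum_{j=1}^{p^m}\binom{p^m}{j}\pi_m^j$, the extreme terms $p^m\pi_m$ and $\pi_m^{p^m}$ are in the target ideal by definition, the middle terms with $\gcd(j,p)=1$ lie in $p^m\pi_m\ARpi^+$, and the remaining middle terms with $v_p(j)\geq 1$ are absorbed into $(\pi_m^{p^m})\ARpi^{(0,v]+}$ using the relation $p=u\pi_m^e+\xi v$ in $\ARpi^+$ (coming from $\theta(\pi_m)=\varpi$ and the Eisenstein equation for $\varpi$) together with the extra generators $p^{\lceil v\ell/e\rceil}/\pi_m^\ell$ of $\ARpi^{(0,v]+}$. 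For the action on $p^{\lceil vk/e\rceil}/\pi_m^k$, I would write $\gamma_0(\pi_m^{-k})=\pi_m^{-k}(1+\pi x/\pi_m)^{-k}$, noting that $\pi x/\pi_m\in\ARpi^+$, and expand as a $p$-adically convergent geometric series; each term then carries a factor of $\pi$, so that, balanced against $p^{\lceil vk/e\rceil}$, it lands in the target ideal under the standing assumption $\tfrac{p-1}{p}\leq u\leq v/p<1<v$.

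For (ii), I would induct on $k$, with the base case $k=0$ reducing, as in (i), to a direct computation on the generators of $\ARpi^{[u,v]}$ over $\ARpi^+$. For $i\in\{1,\ldots,d\}$ the extra generators $\pi_m^k/p^{\lfloor uk/e\rfloor}$ and $p^{\lceil vk/e\rceil}/\pi_m^k$ are fixed by $\gamma_i$, so the claim follows from Lemma \ref{lem:gamma_minus_1_pd} for $\ARpi^+$ extended by linearity. For $i=0$, the action on $\pi_m^k/p^{\lfloor uk/e\rfloor}$ is handled by expanding $(\pi_m+\pi x)^k$ via the binomial theorem (each contribution acquiring a factor $\pi\in(p^m,\pi_m^{p^m})\ARpi^+$), while the action on $p^{\lceil vk/e\rceil}/\pi_m^k$ is treated exactly as in (i); the inductive step is then immediate from the twisted derivation identity. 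The main obstacle throughout is the careful accounting of the denominators introduced in passing from $\ARpi^+$ to $\ARpi^{(0,v]+}$ and $\ARpi^{[u,v]}$, and verifying that the resulting $\gamma_i$-images remain compatible with the target ideals $(p^m\pi_m,\pi_m^{p^m})$ and $(p^m,\pi_m^{p^m})^{k+1}$ under the hypothesis on $u$ and $v$.
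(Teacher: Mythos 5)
Your overall strategy---reduce via the twisted-derivation identity to the generators of $\ARpi^{(0,v]+}$ and $\ARpi^{[u,v]}$ over $\ARpi^+$, dispatch $i\in\{1,\ldots,d\}$ by noting that $\gamma_i$ fixes $p$ and $\pi_m$, and start the $i=0$ case from $(\gamma_0-1)\pi_m=\pi x$---is exactly the paper's, and your treatment of $i\geq 1$ and of part (ii) matches the paper's proof. The genuine gap is in the $i=0$ case, in the action on the generators $p^{\lceil vk/e\rceil}/\pi_m^k$. Writing $\gamma_0(\pi_m^{-k})=\pi_m^{-k}(1+\pi x/\pi_m)^{-k}$ and recording only that "each term carries a factor of $\pi$" is not enough: each such factor arrives divided by an extra $\pi_m$, and $\pi/\pi_m=p^m+\binom{p^m}{2}\pi_m+\cdots+\pi_m^{p^m-1}$ is not in $(p^m\pi_m,\pi_m^{p^m})$, so already the contribution $-kx\,\pi_m^{p^m-1}\cdot p^{\lceil vk/e\rceil}\pi_m^{-k}$ of the leading term does not visibly land in $(p^m\pi_m,\pi_m^{p^m})\ARpi^{(0,v]+}$ after "balancing against $p^{\lceil vk/e\rceil}$". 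What the paper uses at this point, and what your sketch discards, is the congruence $\chi(\gamma_0)\equiv 1\pmod{p^m}$: writing $\gamma_0(\pi_m)=\chi(\gamma_0)\pi_m f$ with $f$ a unit and $1-\chi(\gamma_0)f=p^m z$, one gets $(\gamma_0-1)\pi_m^{-1}=\tfrac{p^m z}{\chi(\gamma_0)f\pi_m}$, i.e.\ an explicit factor of $p^m$ in the numerator, which combined with $\tfrac{p}{\pi_m}\in\ARpi^{(0,v]+}$ (and $\tfrac{p}{\pi_m^2}\in\ARpi^{(0,v]+}$) is what produces membership in the ideal $(p^m\pi_m,\pi_m^{p^m})$. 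A geometric-series expansion that only tracks a factor of $\pi$ per term loses precisely this $p^m$.

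A second, smaller point: your proposed absorption of the binomial terms $\binom{p^m}{j}\pi_m^j$ with $v_p(j)=t\geq 1$ into $\pi_m^{p^m}\ARpi^{(0,v]+}$ does not go through as stated. Such a term is $p^{m-t}u\pi_m^{j}$ with $u$ a unit, and membership in $\pi_m^{p^m}\ARpi^{(0,v]+}$ requires $m-t\geq\lceil v(p^m-j)/e\rceil$; for $j=p^{m-1}$ (so $t=m-1$), $v=p-1$ and $e=p^{m-1}(p-1)$ this reads $1\geq p-1$, which fails for $p\geq 3$ and $m\geq 2$, and neither the Eisenstein relation nor the extra generators repair it. (The paper asserts $\pi\in(p^m\pi_m,\pi_m^{p^m})\ARpi^+$ without addressing these middle terms either, so this defect is shared with the source; for $m=1$ there is no issue since every $\binom{p}{j}$ with $1\leq j\leq p-1$ has valuation exactly $1$.)
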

\begin{proof}
	Let $i = 0$ and from the proof of Lemma \ref{lem:gamma_minus_1_pd}, we have that $(\gamma_0-1)\pi_m$ is in $\big(p^m \pi_m, \pi_m^{p^m}\big) \ARpi^+$.
	So we conclude that $(\gamma_0-1)\ARpi^+$ belongs to $\big(p^m \pi_m, \pi_m^{p^m}\big) \ARpi^+$.
	Observe that $\gamma_0(\pi_m) = \chi(\gamma_0)\pi_m a$, where $\chi(\gamma_0) = \exp(p^m)$ is in $\ZZ_p^{\times}$ and $a$ is a unit in $\ARpi^+$.
	So, we can write $(\gamma_0-1)\pi_m^{-1} = p^mz/(\chi(\gamma_0)a\pi_m)$ and, therefore, $(\gamma_0-1)(p/\pi_m)$ belongs to $\big(p^m \pi_m, \pi_m^{p^m}\big) \ARpi^{(0, v]+}$.
	Proceeding by induction on $k \geqslant 1$ and using the fact that $\gamma_0-1$ acts as a twisted derivation, we conclude that $(\gamma_0-1) \big(p^m \pi_m, \pi_m^{p^m}\big)^k \ARpi^{(0, v]+} \subset \big(p^m \pi_m, \pi_m^{p^m}\big)^{k+1} \ARpi^{(0, v]+}$.	
	Next, for $1 \leqslant i \leqslant d$, from the analysis for $\ARpi^+$ in Lemma \ref{lem:gamma_minus_1_pd}, we already have that $(\gamma_i-1) \ARpi^+ \subset \pi \ARpi^+$.
	Since passing from $\ARpi^+$ to $\AR^{(0, v]+}$ involves only the arithmetic variable $\pi_m$, on which $\gamma_i$ acts trivially.
	Therefore, we conclude that $(\gamma_i-1) \ARpi^{(0, v]+} \subset \pi \ARpi^{(0, v]+}$, and proceeding by induction on $k \geqslant 1$ and using that $\gamma_i-1$ acts as a twisted derivation, we get that $(\gamma_i-1) \big(p^m \pi_m, \pi_m^{p^m}\big)^k \ARpi^{(0, v]+} \subset \big(p^m \pi_m, \pi_m^{p^m}\big)^{k+1} \ARpi^{(0, v]+}$.
	This shows the first claim.
	Finally, the claim for $\ARpi^{[u, v]}$ follows by combining discussion above with Lemma \ref{lem:gamma_minus_1_pd} for $\ARpi^{[u]}$.
\end{proof}

\subsection{Filtered Poincar\'e Lemma}\label{subsec:fat_period_rings}

In this subsection we will state and prove a filtered version of the PD-Poincar\'e Lemma which will be useful for \S \ref{sec:syntomic_complex_finite_height}.

\subsubsection{Fat period rings}\label{subsubsec:obese_rings}

We recall the definition from \cite[\S 2.6]{colmez-niziol-nearby-cycles} and \cite[\S 3.4]{abhinandan-crystalline-wach}.
Let $A$ and $B$ be two $p\textrm{-adically}$ complete filtered $O_F\textrm{-algebras}$.
Let $\iota : B \rightarrow A$ be a continuous injective homomorphism of filtered $O_F\textrm{-algebras}$ and let $f : B \otimes_{O_F} A \rightarrow A$ denote the ring homomorphism sending $x \otimes y \mapsto \iota(x)y$.

\begin{defi}\label{defi:fat_ring_const}
	Define $E$ to be the $\padic$ completion of the divided power envelope of $B \otimes_{O_F} A$, with respect to $\kert f$.
\end{defi}

For consistency in notations, in the following definition, we write $\Acrys(\Rbar)$ as $\ARbar^{\textpd}$.
\begin{defi}\label{defi:obese_rings}
	In the notation of Definition \ref{defi:fat_ring_const}, we record the following:
	\begin{enumromanup}
	\item Let $\smstar \in \{\textpd, [u], [u, v]\}$ and define $\fERpi^{\bmstar} = E$, for $B = \Rpi^{\bmstar}$, $A = \ARpi^{\bmstar}$ and $\iota = \iota_{\cycl}$ (see \S \ref{subsec:cyclotomic_embeddings}).

	\item Let $\smstar \in \{\textpd, [u], [u, v]\}$ and define $\fERbar^{\bmstar} = E$, for $B = \Rpi^{\bmstar}$, $A = \ARbar^{\bmstar}$ and $\iota = \iota_{\cycl}$ (see \S \ref{subsec:cyclotomic_embeddings}).
	\end{enumromanup}
\end{defi}

\begin{rem}\label{rem:elements_of_pd_ring}
	Let us note some properties of the ring $E$ in Definition \ref{defi:obese_rings}:
	\begin{enumromanup}
	\item The ring $E$ is the $\padic$ completion of $B \otimes_{O_F} A$ adjoin $(x \otimes 1 - 1 \otimes \iota(x))^{[k]}$, for all $x$ in $B$ and $n \in \NN$, and $(V_i - 1)^{[k]}$ for $0 \leqslant i \leqslant d$ and $k \in \NN$, where $V_i = \frac{X_i \otimes 1}{1 \otimes \iota(X_i)}$ for $1 \leqslant i \leqslant d$ and $V_0 = \frac{1 + (X_0 \otimes 1)}{1 + (1 \otimes \iota(X_0))}$.
		The morphism $f : B \otimes_{O_F} A \rightarrow A$ extends uniquely to a continuous morphism $f : E \rightarrow A$.

	\item The ring $E$ is equipped with an $\ZZ\textrm{-indexed}$ decreasing filtration, which we define to be $\Fil^r E := E$ for $r \leqslant 0$, and for $r \geqslant 0$, define $\Fil^r E$ to be the topological closure of the ideal generated by elements of the form $x_1 x_2 \prod_{i=0}^d (V_i - 1)^{[k_i]}$, with $x_1$ in $\Fil^{r_1}B$, $x_2$ in $\Fil^{r_2}A$ and $r_1 + r_2 + \sum_{i=0}^d k_i \geqslant r$.

	\item From \cite[Lemma 2.36]{colmez-niziol-nearby-cycles}, we have that any element $x$ in $E$ can be uniquely written as $x = \sum_{\smbfk \in \NN^{d+1}} x_{\smbfk}(1-V_0)^{[k_0]} \cdots (1-V_d)^{[k_d]}$, with $x_{\smbfk}$ in $A$ for all $\smbfk = (k_0, k_1, \ldots, k_d) \in \NN^{d+1}$ and $x_{\smbfk} \rightarrow 0$ as $|\smbfk| = \sum_{i=0}^d k_i \rightarrow +\infty$.
		Moreover, $x$ is in $\Fil^r E$ if and only if $x_{\smbfk}$ is in $\Fil^{r - |\smbfk|} A$, for all $\smbfk \in \NN^{d+1}$.

	\item The ring $E$ is equipped with a natural $A\textrm{-linear}$ continuous de Rham differential operator $d : E \rightarrow \Omega^1_{E/A}$.
		Moreover, by the description of the filtration on $E$ in (iii), it is easy to see that the differential operator satisfies Griffiths transversality with respect to the filtration, i.e.\ we have $d : \Fil^r E \rightarrow \Fil^{r-1} E \otimes_E \Omega^1_{E/A}$.
		In the special case that $\iota : B \isomorphic A$, we see that $E$ is further equipped with a natural $B\textrm{-linear}$ continuous de Rham differential operator $d : E \rightarrow \Omega^1_{E/B}$ satisfying Griffiths transversality with respect to the filtration.
	\end{enumromanup}
\end{rem}

\begin{lem}\label{lem:obese_rings_struct}
	Rings in Definition \ref{defi:obese_rings} have desirable properties:
	\begin{enumromanup}
	\item In Definition \ref{defi:obese_rings} (i), the tensor product Frobenii $\varphi \otimes \varphi$ on $\Rpi^{\bmstar} \otimes_{O_F} \ARpi^{\bmstar}$, for $\smstar \in \{\textpd, [u], [u, v]\}$, extend respectively uniquely to continuous morphisms $\fERpi^{\textpd} \rightarrow \fERpi^{\textpd}$, $\fERpi^{[u]} \rightarrow \fERpi^{[u]}$ and $\fERpi^{[u, v]} \rightarrow \fERpi^{[u, v/p]}$.
		Moreover, the actions of $G_R$ on $\ARpi^{\bmstar}$ extend respectively uniquely to continuous actions of $G_R$ on $\fERpi^{\textpd}$, $\fERpi^{[u]}$ and $\fERpi^{[u, v]}$, which commute with the respective Frobenii.
		Furthermore, we have $(\varphi, G_R)\textrm{-equivariant}$ inclusions $\fERpi^{\textpd} \subset \fERpi^{[u]} \subset \fERpi^{[u, v]}$.

	\item In Definition \ref{defi:obese_rings} (ii), the tensor product Frobenii $\varphi \otimes \varphi$ on $\Rpi^{\bmstar} \otimes_{O_F} \ARbar^{\bmstar}$, for $\smstar \in \{\textpd, [u], [u, v]\}$, extend respectively uniquely to continuous morphisms $\fERbar^{\textpd} \rightarrow \fERbar^{\textpd}$, $\fERbar^{[u]} \rightarrow \fERbar^{[u]}$ and $\fERbar^{[u, v]} \rightarrow \fERbar^{[u, v/p]}$.
		Moreover, the actions of $G_R$ on $\ARbar^{\bmstar}$ extend respectively uniquely to continuous actions of $G_R$ on $\fERbar^{\textpd}$, $\fERbar^{[u]}$ and $\fERbar^{[u, v]}$, which commute with the respective Frobenii.
		Furthermore, we have $(\varphi, G_R)\textrm{-equivariant}$ inclusions $\fERbar^{\textpd} \subset \fERbar^{[u]} \subset \fERbar^{[u, v]}$.

	\item The natural $(\varphi, \Gamma_R)\textrm{-equivariant}$ inclusion of rings $\ARpi^{\bmstar} \subset \ARbar^{\bmstar}$ induces a natural $(\varphi, \Gamma_R)\textrm{-equivariant}$ injective homomorphism of rings $\fERpi^{\bmstar} \subset \fERbar^{\bmstar}$.
		Moreover, the filtration and the $\ARpi^{\bmstar}\textrm{-linear}$ connection on $\fERpi^{\bmstar}$ are respectively induced from the filtration and $\ARbar^{\bmstar}\textrm{-linear}$ connection on $\fERbar^{\bmstar}$, in particular, $\Fil^r \fERpi^{\bmstar} = \fERpi^{\bmstar} \cap \Fil^r \fERbar^{\bmstar} \subset \fERbar^{\bmstar}$, for all $r \in \ZZ$.
	\end{enumromanup}
\end{lem}
\begin{proof}
	The first two claims follow from \cite[Lemma 2.38]{colmez-niziol-nearby-cycles}.
	The last claim follows from the description of $\ERpi^{\bmstar}$ and $\fERbar^{\bmstar}$ in Remark \ref{rem:elements_of_pd_ring} and the fact that $\ARpi^{\bmstar} \cap \Fil^r \ARbar^{\bmstar} = \ARpi^{\bmstar} \cap \Fil^r \BdR(\Rbar) = \Fil^r \ARpi^{\bmstar}$.
\end{proof}

\begin{rem}\label{rem:oarpd_erpd}
	From Definition \ref{defi:obese_rings} and Lemma \ref{lem:obese_rings_struct}, note that we have a natural embedding $\OAcrys(\Rbar) \subset \fERbar^{\textpd}$ compatible with the respective Frobenii, $\Acrys(\Rbar)\textrm{-linear}$ connections and actions of $G_R$, and the natural filtration on the former is induced from the filtration on the latter.
	Furthermore, from \S \ref{subsec:wach_crystalline} recall that we have the ring $\OARpi^{\textpd} \subset \OAcrys(\Rbar)$ and from \cite[Remark 4.20]{abhinandan-crystalline-wach} we have an alternative construction of $\OARpi^{\textpd}$ using the embedding $R \subset \Rpi^{\textpd} \isomorphic \ARpi^{\textpd}$ (the last morphism is $\iota_{\cycl}$ in \S \ref{subsec:cyclotomic_embeddings}).
	This induces an embedding $\OARpi^{\textpd} \subset \fERpi^{\textpd}$ compatible with the respective Frobenii and actions of $\Gamma_R$, and the natural filtration on the former is induced from the filtration on the latter.
	Denote the $O_F\textrm{-linear}$ differential operator over $\ARpi^{\textpd}$ as $\partial_A$ and the $O_F\textrm{-linear}$ differential operator over $\Rpi^{\textpd}$ (as well as over $R$) as $\partial_R$.
	Then, the induced differential operators $\partial_R \otimes 1 + 1 \otimes \partial_A$ over $\OARpi^{\textpd}$ and $\fERpi^{\textpd}$ are compatible.
\end{rem}

\begin{lem}\label{lem:erpi_fil_pi_cap}
	For $r \in \ZZ$ and $\smstar \in \{+, \textpd, [u], [u, v]\}$, we have $\Fil^r \fERpi^{\bmstar} \cap \pi \fERpi^{\bmstar} = \pi\Fil^{r-1} \fERpi^{\bmstar}$, as submodules of $\fERpi^{\bmstar}$.
\end{lem}
\begin{proof}
	Let $E := \fERpi^{\bmstar}$ and $A := \ARpi^{\bmstar}$, for $\smstar \in \{+, \textpd, [u], [u, v]\}$.
	The claim is trivial for $r \leqslant 0$, so assume that $r \geqslant 1$.
	Note that we have $\pi \Fil^{r-1} E \subset \Fil^r E \cap \pi E$, so we need to show the reverse inclusion.
	Let $x$ be any element of $\Fil^r E \cap \pi E$, and write $x = \pi y$, for some $y$ in $E$.
	From the description of the filtration on $E$ in Remark \ref{rem:elements_of_pd_ring} (iii), we have a unique presentation of $x$ as $\sum_{\smbfk \in \NN^{d+1}} x_{\smbfk}(1-V_0)^{[k_0]} \cdots (1-V_d)^{[k_d]}$, with $x_{\smbfk}$ in $\Fil^{r-|\smbfk|} A$ for all $\smbfk \in \NN^{d+1}$.
	Moreover, we have a unique presentation of $y$ as $\sum_{\smbfk \in \NN^{d+1}} y_{\smbfk}(1-V_0)^{[k_0]} \cdots (1-V_d)^{[k_d]}$, with $y_{\smbfk}$ in $A$ for all $\smbfk \in \NN^{d+1}$.
	Then, using the equality $x = \pi y$, we get that $x_{\smbfk} = \pi y_{\smbfk}$, for all $\smbfk \in \NN^{d+1}$.
	Now, from Lemma \ref{lem:arpi_fil_pi_cap} and the fact that $A$ is $\pi\textrm{-torsion free}$, it follows that $x_{\smbfk}$ is an element of $\pi \Fil^{r-|\smbfk|-1} A$, hence, $x$ is an element of $\pi \Fil^{r-1} E$.
\end{proof}

Finally, to work with various filtered modules later, we define a filtered ring (analogous to $\OBdR(\Rbar)$) containing all the rings described so far and inducing the same filtrations as described above.
From \cite[Proposition 5.2.2]{brinon-padicrep-relatif}, recall that the natural inclusion $\BdR^+ \subset \OBdR^+(\Rbar)$ extends to a $\BdR^+\textrm{-linear}$ isomorphism of rings $\BdR^+\llbracket T_1, \ldots, T_d \rrbracket \isomorphic \OBdR(\Rbar)$, by sending the indeterminate $T_i \mapsto X_i - [X_i^{\flat}]$, for each $1 \leqslant i \leqslant d$.
We enlarge $\OBdR(\Rbar)$ by setting,
\begin{equation*}
	\pazb^+ := \BdR^+\llbracket T_0, T_1, \ldots, T_d \rrbracket, \hspace{2mm} \textrm{and} \hspace{2mm} \pazb := \pazb^+[1/t],
\end{equation*}
in particular, we have natural inclusions of rings $\OBdR^+(\Rbar) \subset \pazb^+$ and $\OBdR(\Rbar) \subset \pazb$.
We equip the latter rings with filtrations similar to \cite[p.\ 52]{brinon-padicrep-relatif}.
Set $\Fil^r \pazb^+ := (t, T_0, \ldots, T_d)^r \pazb^+$, for all $r \in \NN$, and $\Fil^r \pazb^+ = \pazb^+$, for $r < 0$.
Moreover, set $\Fil^0 \pazb := \sum_{n \in \NN} t^{-n} \Fil^n \pazb^+$ and $\Fil^r \pazb := t^r \Fil^0 \pazb$, for all $r \in \ZZ$.
Similar rings were studied in \cite[\S 3.2.1]{andreatta-iovita-semistable}, in the more general setting of semistable schemes.
Now, employing arguments similar to \cite[Propositions 5.2.5, 5.2.6 \& 5.2.8]{brinon-padicrep-relatif}, the following is clear:
\begin{lem}\label{lem:fil_B}
	Let $x_i$ denote the image of $T_i$ in $\gr^1 \pazb^+$ and $y_i$ denote the image of $T_i/t$ in $\gr^0 \pazb$, for $0 \leqslant i \leqslant d$.
	Then, we have that $\gr^{\bullet} \pazb^+ \isomorphic \CC(\Rbar)[t, x_0, \ldots, x_d]$, where the grading is given by the degree of the polynomial in $t, x_0, \ldots, x_d$, and $\gr^{\bullet} \pazb \isomorphic \CC(\Rbar)[t, t^{-1}, y_0, \ldots, y_d]$, where the grading is given by the degree of $t$, in particular, we have that $\gr^0 \pazb^+ \isomorphic \CC(\Rbar)[y_0, \ldots, y_d]$.
	Moreover, the filtration on $\pazb^+$ is the same as the induced filtration from $\pazb$, i.e.\ $\Fil^r \pazb^+ = \Fil^r \pazb \cap \pazb^+ \subset \pazb$, for all $r \in \ZZ$.
\end{lem}

\begin{rem}\label{rem:fil_B_BdR}
	From Lemma \ref{lem:fil_B} and the description of the filtration on $\OBdR^+(\Rbar)$ in \cite[p.\ 52]{brinon-padicrep-relatif}, we see that the filtration on $\OBdR^+(\Rbar)$ is induced from the filtration on $\pazb^+$, i.e.\ $\Fil^r \OBdR^+(\Rbar) = \OBdR^+(\Rbar) \cap \Fil^r \pazb^+ \subset \pazb^+$, for $r \in \ZZ$.
	Then it also follows that $\Fil^r \OBdR(\Rbar) = \OBdR(\Rbar) \cap \Fil^r \pazb \subset \pazb$, for $r \in \ZZ$.
\end{rem}

Now, recall that we have an inclusion of rings $\ARbar^{[u, v]} \subset \BdR^+(\Rbar)$ (since $u \leq 1 \leq v$), and the former is equipped with a filtration induced from the latter (see \S \ref{subsubsec:overconvergence}).
Then, upon using the description of $\fERpi^{[u, v]}$ from Remark \ref{rem:elements_of_pd_ring} (i), we see that the preceding embedding naturally extends to an injective ring homomorphism $\fERbar^{[u, v]} \rightarrow \pazb^+$, via $V_i-1 \mapsto T_i/[X_i^{\flat}]$, for $1 \leqslant i \leqslant d$, and $V_0-1 \mapsto T_0/(1+\pi_m)$.
Using the description of the filtration on $\fERpi^{[u, v]}$ from Remark \ref{rem:elements_of_pd_ring} and the filtration on $\pazb^+$ from above, we see that,
\begin{lem}\label{lem:fil_B_er}
	The filtration on $\fERbar^{[u, v]}$ is induced from the filtration on $\pazb^+$, i.e.\ $\Fil^r \fERbar^{[u, v]} = \fERbar^{[u, v]} \cap \Fil^r \pazb^+ \subset \pazb^+$, for all $r \in \ZZ$.
\end{lem}

\begin{rem}\label{rem:fil_induced_B}
	Let $S$ be any ring out of $\Acrys(\Rbar)$, $\OAcrys(\Rbar)$, $\OARpi^{\PD}$, $\Rpi^{\bmstar}$, $\fERpi^{\bmstar}$, $\fERbar^{\bmstar}$, for $\smstar \in \{\textpd, [u], [u, v]\}$.
	Then, by Remark \ref{rem:oarpd_erpd}, Remark \ref{rem:fil_B_BdR} and Lemma \ref{lem:fil_B_er}, it is easy to see that $\Fil^r S = S[1/p] \cap \Fil^r \pazb \subset \pazb$ and $\Fil^r(S[1/p]) := S[1/p] \cap \Fil^r \pazb = (\Fil^r S)[1/p] \subset \pazb$, for all $r \in \ZZ$.
\end{rem}

\subsubsection{Filtered modules}

Let $V$ be a de Rham representation of $G_R$ and set $D := \ODdR(V)$ as a finite projective $R[1/p]\textrm{-module}$.
From \S \ref{subsec:relative_padic_reps}, recall that $D$ is equipped with a decreasing, separated and exhaustive filtration by $R[1/p]\textrm{-submodules}$ $\{\Fil^r D\}_{r \in \ZZ}$, such that $\Fil^a D = D$ and $\Fil^b D = 0$, for some $a, b \in \ZZ$, and for each $r \in \ZZ$, the $R[1/p]\textrm{-modules}$ $\Fil^r D$ and $\gr^r D$ are finite projective.
Recall that by the definition of de Rham representations, we have a natural $\OBdR(\Rbar)\textrm{-linear}$ isomorphism $\alpha_{\dr} : \OBdR(\Rbar) \otimes_{R[1/p]} D \isomorphic \OBdR(\Rbar) \otimes_{\QQ_p} V$.
Extending scalars of the isomorphism $\alpha_{\dr}$ along the natural map $\OBdR(\Rbar) \rightarrow \pazb$ from \S \ref{subsubsec:obese_rings}, we obtain the following $\pazb\textrm{-linear}$ isomorphism,
\begin{equation}\label{eq:alpha_B}
	\alpha_{\pazb} : \pazb \otimes_{R[1/p]} D \isomorphic \pazb \otimes_{\QQ_p} V.
\end{equation}

Next, let $S \subset \pazb$ be an $R\textrm{-subalgebra}$ equipped with a filtration induced from the filtration on $\pazb$ (see after Lemma \ref{lem:fil_B}), such that the natural map $R \rightarrow S$ is injective and $p$ is not invertible in $S$.
Now, consider the $S[1/p]\textrm{-module}$ $D_S := S \otimes_R D$ and we equip $D_S$ with the induced filtration, for each $r \in \ZZ$:
\begin{equation}\label{eq:filr_ds}
	\Fil^r D_S := D_S \cap \alpha_{\pazb}^{-1}(\Fil^r \pazb \otimes_{\QQ_p} V).
\end{equation}

\begin{prop}\label{prop:filr_ds_tensorprod}
	The filtration on $D_S$ in \eqref{eq:filr_ds} coincides with the tensor product filtration, i.e.\ for each $r \in \ZZ$, we have that
	\begin{equation}\label{eq:filr_ds_tensorprod}
		F^r D_S := \textstyle\sum_{i+j=r}^r \Fil^i S \otimes_R \Fil^j D \isomorphic \Fil^r D_S.
	\end{equation}
\end{prop}
\begin{proof}
	From Lemma \ref{lem:fil_alpha_B}, we have that $F^r (\pazb \otimes_{R[1/p]} D) \isomorphic \Fil^r (\pazb \otimes_{R[1/p]} D)$, for each $r \in \ZZ$.
	Then, by using Lemma \ref{lem:fil_gr_ds} and Lemma \ref{lem:filr_induced_rat} below, with $S' = \pazb$, we obtain the isomorphism in \eqref{eq:filr_ds_tensorprod}.
\end{proof}

\begin{rem}\label{prop:filr_ds_tensorprod_rat}
	If $p$ is invertible in $S$, then by using $R[1/p]$ in place of $R$ in Proposition \ref{prop:filr_ds_tensorprod}, we get that $\textstyle\sum_{i+j=r}^r \Fil^i S \otimes_{R[1/p]} \Fil^j D \isomorphic \Fil^r D_S$, for each $r \in \ZZ$.
\end{rem}

Following observations were used above:
\begin{lem}\label{lem:fil_gr_ds}
	For each $i, j, r \in \ZZ$ such that $i+j = r$, the natural map $\Fil^i S \otimes_R \Fil^j D \rightarrow D_S$ is injective.
	In particular, the filtration $\{F^r D_S\}_{r \in \ZZ}$ is a well-defined $\ZZ\textrm{-indexed}$ decreasing filtration on $D_S$ by $S[1/p]\textrm{-submodules}$.
	Moreover, we have that $\gr_F^r D_S = \oplus_{i+j=r} \gr^i S \otimes_R \gr^j D$.
\end{lem}
\begin{proof}
	For each $j \in \ZZ$, let us consider the following exact sequence of finite projective $R[1/p]\textrm{-modules}$, in particular, flat $R\textrm{-modules}$,
	\begin{equation}\label{eq:grj_d}
		0 \longrightarrow \Fil^{j+1} D \longrightarrow \Fil^j D \longrightarrow \gr^j D \longrightarrow 0.
	\end{equation}
	Extending scalars in \eqref{eq:grj_d} along the natural map $R \rightarrow S$ and by decreasing induction on $j \geqslant a$, it is easy to see that the natural map $S \otimes_R \Fil^j D \rightarrow S \otimes_R \Fil^a D = S \otimes_R D$ is injective.
	Therefore, for any $i + j = r$, it follows that the natural map $\Fil^i S \otimes_R \Fil^j D \hookrightarrow S \otimes_R \Fil^j D \rightarrow D_S$ is injective, where the first arrow is obtained by tensoring the $R\textrm{-linear}$ inclusion $\Fil^i S \subset S$ with the flat $R\textrm{-module}$ $\Fil^j D$ and the second arrow is as above.
	Hence, for each $r \in \ZZ$, we get that $F^r D_S := \sum_{i+j=r} \Fil^i S \otimes_R \Fil^j D$ is an $S[1/p]\textrm{-submodule}$ of $D_S$.
	It is clear that the filtration is decreasing.
	Next, let us note that upon tensoring \eqref{eq:grj_d} with $\Fil^i S$ and $\gr^i S$, we obtain the following $R\textrm{-linear}$ commutative diagram:
	\begin{equation}\label{eq:grij_ds}
		\begin{tikzcd}[column sep=small, row sep=small]
			& 0 \arrow[d] & 0 \arrow[d] & 0 \arrow[d]\\
			0 \arrow[r] & \Fil^{i+1} S \otimes_R \Fil^{j+1} D \arrow[r] \arrow[d] & \Fil^{i+1} S \otimes_R \Fil^j D \arrow[r] \arrow[d] & \Fil^{i+1} S \otimes_R \gr^j D \arrow[r] \arrow[d] & 0\\
			0 \arrow[r] & \Fil^i S \otimes_R \Fil^{j+1} D \arrow[r] \arrow[d] & \Fil^i S \otimes_R \Fil^j D \arrow[r] \arrow[d] & \Fil^i S \otimes_R \gr^j D \arrow[r] \arrow[d] & 0\\
			0 \arrow[r] & \gr^i S \otimes_R \Fil^{j+1} D \arrow[r] \arrow[d] & \gr^i S \otimes_R \Fil^j D \arrow[r] \arrow[d] & \gr^i S \otimes_R \gr^j D \arrow[r] \arrow[d] & 0\\
			& 0 & 0 & 0.
		\end{tikzcd}
	\end{equation}
	Since $\Fil^j D$ and $\gr^j D$ are finite projective modules over $R[1/p]$, in particular, flat modules over $R$, we get that all rows and columns of \eqref{eq:grij_ds} are exact.
	From the diagram, it easily follows that we have $\gr_F^r D_S = \oplus_{i+j=r} \gr^i S \otimes_R \gr^j D$, for each $r \in \ZZ$.
\end{proof}

\begin{rem}\label{rem:fil_gr_ds_rat}
	If $p$ is invertible in $S$, then by employing arguments similar to the proof of Lemma \ref{lem:fil_gr_ds} (use $R[1/p]$ in place of $R$), we see that the $S\textrm{-module}$ $D_S := S \otimes_{R[1/p]} D$ is equipped with a well-defined $\ZZ\textrm{-indexed}$ decreasing tensor product filtration by $S\textrm{-submodules}$ given as $F^r D_S := \sum_{i+j=r} \Fil^i S \otimes_{R[1/p]} \Fil^j D$.
	Moreover, for each $r \in \ZZ$, we have that $\gr_F^r D_S = \oplus_{i+j=r} \gr^i S \otimes_{R[1/p]} \gr^j D[1/p]$.
\end{rem}

Next, let $S \subset S' \subset \pazb$ be two $R\textrm{-subalgebras}$ equipped with the respective induced filtrations such that the natural map $R \rightarrow S$ is injective.
Set $D_S := S \otimes_R D$ and $D_{S'} := S' \otimes_R D$, equipped with the tensor product filtration as in Lemma \ref{lem:fil_gr_ds}.
Then, we claim the following:
\begin{lem}\label{lem:filr_induced_rat}
	For each $r \in \ZZ$, we have that $F^r D_{S'} \cap D_S = F^r D_S$, as submodules of $D_{S'}$.
\end{lem}
\begin{proof}
	We will prove the claim by assuming that $p$ is not invertible in $S'$; in the case that $p$ is invertible in either $S$ or $S'$ the same argument works by using Remark \ref{rem:fil_gr_ds_rat} and replacing $R$ with $R[1/p]$.
	Now, let us first note that an easy induction on $r$ shows that proving the equality $F^{r+1} D_{S'} \cap D_S = F^{r+1} D_S$ is equivalent to proving the equality $F^{r+1} D_{S'} \cap F^r D_S = F^{r+1} D_S$.
	Next, consider the following diagram with $R\textrm{-linear}$ exact rows,
	\begin{equation}\label{eq:grr_ds}
		\begin{tikzcd}[row sep=small]
			0 \arrow[r] & F^{r+1} D_S \arrow[r] \arrow[d] & F^r D_S \arrow[r] \arrow[d] & \gr^r_F D_S \arrow[r] \arrow[d] & 0\\
			0 \arrow[r] & F^{r+1} D_{S'} \arrow[r] & F^r D_{S'} \arrow[r] & \gr_F^r D_{S'} \arrow[r] & 0.
		\end{tikzcd}
	\end{equation}
	Note that proving the equality $F^{r+1} D_{S'} \cap F^r D = F^{r+1} D_S$ is equivalent to showing that the right vertical arrow in the diagram \eqref{eq:grr_ds} is injective.
	Now, by Lemma \ref{lem:fil_gr_ds} we have that $\gr_F^r D_S = \oplus_{i+j=r} \gr^i S \otimes_R \gr^j D$, for each $r \in \ZZ$.
	Similarly, we also have that $\gr_F^r D_{S'} = \oplus_{i+j=r} \gr^i S' \otimes_R \gr^j D$, for each $r \in \ZZ$.
	Since $\Fil^i S' \cap S = \Fil^i S$, therefore, by using a diagram similar to \eqref{eq:grr_ds}, it follows that the natural $R\textrm{-linear}$ map $\gr^i S \rightarrow \gr^i S'$ is injective, for all $i \in \ZZ$.
	Furthermore, as $\gr^i D$ is flat over $R$, it follows that the natural map $\gr^i S \otimes_R \gr^j D \rightarrow \gr^i S' \otimes_R \gr^j D$ is also injective.
	Hence, we get that the right vertical arrow in \eqref{eq:grr_ds} is injective, allowing us to conclude.
\end{proof}

Now, by using \cite[Proposition 8.4.3]{brinon-padicrep-relatif}, note that the isomorphism $\alpha_{\dr}$ is compatible with the tensor product filtration of Remark \ref{rem:fil_gr_ds_rat} on the source and the filtration on the target is induced by the natural filtration on $\OBdR(\Rbar)$.
As the natural filtration on $\OBdR(\Rbar)$ coincides with the induced filtration via the inclusion $\OBdR(\Rbar) \subset \pazb$ (see Remark \ref{rem:fil_B_BdR}), it follows that we have $F^r (\OBdR(\Rbar) \otimes_{R[1/p]} D) \isomorphic \Fil^r (\OBdR(\Rbar) \otimes_{R[1/p]} D)$, for all $r \in \ZZ$.
By using Lemma \ref{lem:fil_B} in an argument similar to the proof of \cite[Proposition 8.3.2]{brinon-padicrep-relatif}, we obtain the following:
\begin{lem}\label{lem:fil_alpha_B}
	The isomorphism in \eqref{eq:alpha_B} induces an isomorphism $\alpha_{\pazb}(F^r(\pazb \otimes_{R[1/p]} D)) \isomorphic \Fil^r \pazb \otimes_{\QQ_p} V$, for all $r \in \ZZ$.
	In particular, we get that $F^r(\pazb \otimes_{R[1/p]} D) \isomorphic \Fil^r(\pazb \otimes_{R[1/p]} D)$.
\end{lem}
\begin{proof}
	Note that \eqref{eq:alpha_B} is an isomorphism and the filtation on $D$ is exhaustive, so it is enough to show that the maps on the associated graded pieces, induced by \eqref{eq:alpha_B}, are bijective.
	For each $r \in \ZZ$, consider the following diagram:
	\begin{center}
		\begin{tikzcd}[row sep=small]
			\oplus_{i+j=r} \gr^i \OBdR(\Rbar) \otimes_{R[1/p]} \gr^j M[1/p] \arrow[r, "\sim"] \arrow[d] & \gr^r \OBdR(\Rbar) \otimes_{\QQ_p} V \arrow[d]\\
			\oplus_{i+j=r} \gr^i \pazb \otimes_{R[1/p]} \gr^j M[1/p] \arrow[r] & \gr^r \pazb \otimes_{\QQ_p} V,
		\end{tikzcd}
	\end{center}
	where the top horizontal arrow is the isomorphism induced by the filtration compatible $\OBdR(\Rbar)\textrm{-linear}$ isomorphism $\alpha_{\dr}$, the left vertical arrow is induced by the compatibility of filtrations on the source of $\alpha_{\dr}$ and $\alpha_{\pazb}$ (see Lemma \ref{lem:filr_induced_rat}) and the right vertical arrow is induced by the compatibility of filtrations on the target of $\alpha_{\dr}$ and $\alpha_{\pazb}$ (see Lemma \ref{lem:fil_B} and Remark \ref{rem:fil_B_BdR}).
	Now, recall that from Lemma \ref{lem:fil_B} we have $\gr^i \pazb \isomorphic t^i \CC(\Rbar)[y_0, \ldots, y_d]$ and from \cite[Proposition 5.2.6]{brinon-padicrep-relatif} we have that $\gr^i \OBdR(\Rbar) \isomorphic t^i \CC(\Rbar)[y_1, \ldots, y_d]$.
	In particular, we see that $\gr^i \pazb \isomorphic \ZZ[y_0] \otimes_{\ZZ} \gr^i \OBdR(\Rbar)$.
	Therefore, it follows that the bottom horizontal arrow of the diagram above is given as the extension of scalars along $\ZZ \rightarrow \ZZ[y_0]$ of the top horizontal arrow, hence, it is also an isomorphism.
	This allows us to conclude.
\end{proof}

Next, let us note some applications of Proposition \ref{prop:filr_ds_tensorprod}, which will be used in \S \ref{sec:syntomic_complex_finite_height}.
\begin{lem}\label{lem:ms_fil_pi_cap}
	Let $S = \fERpi^{[u, v]}$ and set $D_S := \fERpi^{[u, v]} \otimes_R D$, equipped with the tensor product filtration as in Lemma \ref{lem:fil_gr_ds}.
	Assume that $\Fil^0 D = D$.
	Then, for any $r \in \NN$, we have that $\Fil^r D_S \cap \pi D_S = \pi \Fil^{r-1} D_S$, as submodules of $D_S$.
\end{lem}
\begin{proof}
	The claim is trivial for $r = 0$, so assume that $r \geqslant 1$.
	We will prove the claim by induction on $r$.
	Note that for $r = 1$, we have that $\Fil^r D_S \cap \pi D_S = \pi D_S$.
	So, let $r \in \NN_{\geqslant 2}$ and assume that the claim is true for $r-1$, i.e.\ $\Fil^{r-1} D_S \cap \pi D_S = \pi \Fil^{r-2} D_S$.
	Then, we see that,
	\begin{equation*}
		\Fil^r D_S \cap \pi D_S = \Fil^r D_S \cap \Fil^{r-1} D_S \cap \pi D_S =  \Fil^r D_S \cap \pi \Fil^{r-2} D_S.
	\end{equation*}
	In particular, to get the claim, it is enough to show that $\Fil^r D_S \cap \pi \Fil^{r-2} D_S = \pi \Fil^{r-1} D_S$.
	Now, consider the following diagram with exact rows,
	\begin{equation}\label{eq:ds_fil_pi_cap}
		\begin{tikzcd}[row sep=small]
			0 \arrow[r] & \Fil^{r-1} D_S \arrow[r] \arrow[d, "\pi"] & \Fil^{r-2} D_S \arrow[r] \arrow[d, "\pi"] & \gr^{r-2} D_S \arrow[r] \arrow[d, "\pi"] & 0\\
			0 \arrow[r] & \Fil^r D_S \arrow[r] & \Fil^{r-1} D_S \arrow[r] & \gr^{r-1} D_S \arrow[r] & 0,
		\end{tikzcd}
	\end{equation}
	where the left and middle vertical arrows are $\textrm{multiplication-by-}\pi$ and the right vertical arrow is the induced map, which we again denote as $\textrm{multiplication-by-}\pi$.
	Note that that all the vertical arrows in \eqref{eq:ds_fil_pi_cap} are $R\textrm{-linear}$.
	Moreover, from the diagram \eqref{eq:ds_fil_pi_cap}, we see that showing the equality $\Fil^r D_S \cap \pi \Fil^{r-2} D_S = \pi \Fil^{r-1} D_S$ is equivalent to showing that the right vertical arrow in \eqref{eq:ds_fil_pi_cap} is injective.
	Note that by using Lemma \ref{lem:fil_gr_ds} and Remark \ref{rem:fil_gr_ds_rat}, we have that $\gr^{r-2} D_S = \oplus_{i+j=r-2} \gr^i S \otimes_R \gr^j D$ and similarly for $\gr^{r-1} D_S$.
	Therefore, the right vertical arrow in \eqref{eq:ds_fil_pi_cap} induces $R\textrm{-linear}$ maps $\gr^i S \otimes_R \gr^j D \xrightarrow{\pi} \gr^{i-1} S \otimes_R \gr^j D$, for $i+j = r-2$.
	As $\gr^j D$ is a flat $R\textrm{-module}$ and the preceding map is $R\textrm{-linear}$, it is enough to show that the map $\gr^i S \xrightarrow{\pi} \gr^{i+1} S$, induced from the $\textrm{multiplication-by-}\pi$ map $\Fil^i S \xrightarrow{\pi} \Fil^{i+1} S$, is injective.
	This follows from Lemma \ref{lem:erpi_fil_pi_cap}.
	Hence, we obtain that the right vertical arrow in \eqref{eq:ds_fil_pi_cap} is injective, in particular, $\Fil^r D_S \cap \pi D_S = \pi \Fil^{r-1} D_S$, for each $r \in \NN$.
\end{proof}

Now, let us assume that $D$ is finite free over $R[1/p]$, we have $\Fil^0 D = D$ and there exists a finite free $R\textrm{-submodule}$ $M \subset D$ such that $M[1/p] = D$.
Let $S$ and $S'$ be as in Lemma \ref{lem:filr_induced_rat} and equip $M_S$ and $M_{S'}$ with induced filtrations, i.e.\ $\Fil^r M_S := M_S \cap \Fil^r D_S \subset D_S$ and $\Fil^r M_{S'} := M_{S'} \cap \Fil^r D_{S'} \subset D_{S'}$.
As $M$ is free over $R$, the natural map $M_S \rightarrow M_{S'}$ is injective and we note the following:
\begin{lem}\label{lem:filr_induced}
	For each $r \in \NN$, we have $\Fil^r M_S = \Fil^r M_{S'} \cap M_S$, as submodules of $M_{S'}$.
	Moreover, if $S = \fERpi^{[u, v]}$, then we have $\Fil^r M_S \cap \pi M_S = \pi \Fil^{r-1} M_S$, as submodules of $M_S$.
\end{lem}
\begin{proof}
	The first claim follows from the definition of filtration on $M_S$ and $M_{S'}$ and Lemma \ref{lem:filr_induced_rat}.
	For the second claim, by Lemma \ref{lem:ms_fil_pi_cap}, we have $\Fil^r M_S \cap \pi M_S = \pi \Fil^{r-1} D_S \cap \pi M_S = \pi \Fil^{r-1} M_S$.
\end{proof}

\subsubsection{Poincar\'e Lemma}\label{subsubsec:crys_fil_poincare_lem}

In the notation of Definition \ref{defi:fat_ring_const}, let us set $A = \ARpi^{\bmstar}$, $B = \Rpi^{\bmstar}$ and $E = \fERpi^{\bmstar}$, for $\smstar \in \{\textpd, [u], [u, v]\}$.
Let $\omega_0 := \frac{d[X_0^{\flat}]}{1+[X_0^{\flat}]}$ and $\omega_i := \frac{d[X_i^{\flat}]}{[X_i^{\flat}]}$, for $1 \leqslant i \leqslant d$.
Set $\Omega^1 := \oplus_{i=1}^d \ZZ \omega_i$ and $\Omega^k := \bmwedge^k \Omega^1$, for all $k \in \NN$.
Then, we have $\Omega^k_{E/B} = E \otimes_{\ZZ} \Omega^k$ and from Remark \ref{rem:elements_of_pd_ring} (iv), note that for $r \in \ZZ$, we have the following filtered de Rham complex of $E$ relative to $B$,
\begin{equation*}
	\Fil^r \Omega^{\bullet}_{E/B} := \Fil^r E \longrightarrow \Fil^{r-1} E \otimes_{\ZZ} \Omega^1 \longrightarrow \Fil^{r-2} E \otimes_{\ZZ} \Omega^2 \longrightarrow \cdots.
\end{equation*}

From the discussion before Lemma \ref{lem:filr_induced}, let $M$ be a finite free $R\textrm{-module}$ such that $M[1/p] = \ODcrys(V)$, where $V$ is a positive crystalline representation of $G_R$.
Moreover, we set $M_B := B \otimes_R M$, equipped with a filtration induced from the tensor product filtration on $M_B[1/p]$, and similarly, we set $M_E := E \otimes_R M$, equipped with a filtration induced from the tensor product filtration on $M_E[1/p]$.
Furthermore, the $B\textrm{-linear}$ differential operator on $E$ induces a quasi-nilpotent integrable connection $\partial : M_E \rightarrow M_E \otimes_E \Omega_{E/B}^1$ satisfying Griffiths transversality with respect to the filtration (since $\partial(\Fil^r E) \subset \Fil^{r-1} E$).
In particular, for each $r \in \ZZ$, we have the following filtered de Rham complex,
\begin{align*}
	\Fil^r M_E \otimes \Omega^{\bullet}_{E/B} &:= \Fil^r M_E \longrightarrow \Fil^{r-1} M_E \otimes_E \Omega_{E/B}^1 \longrightarrow \Fil^{r-2} M_E \otimes_E \Omega_{E/B}^2 \longrightarrow \cdots \\
	&\hspace{2mm}= \Fil^r M_E \longrightarrow \Fil^{r-1} M_E \otimes_{\ZZ} \Omega^1 \longrightarrow \Fil^{r-2} M_E \otimes_{\ZZ} \Omega^2 \longrightarrow \cdots.
\end{align*}
Using the equality $M_B = M_E^{\partial=0}$ and Lemma \ref{lem:filr_induced}, let us note that we have $\Fil^r M_B = \Fil^r M_E \cap M_E^{\partial = 0} = (\Fil^r M_E)^{\partial=0}$ and we obtain the following filtered Poincar\'e Lemma:

\begin{lem}\label{lem:fil_poincare_lem_mb}
	The natural map $\Fil^r M_B \rightarrow \Fil^r M_E \otimes \Omega^{\bullet}_{E/B}$ is a quasi-isomorphism.
\end{lem}
\begin{proof}
	We have a natural injection $\epsilon : \Fil^r M_B \rightarrow \Fil^r M_E$, so we give a contracting ($B\textrm{-linear}$) homotopy.
	Note that $M$ is a finite free $R\textrm{-module}$, so we may choose $\{f_1, \ldots, f_h\}$ as an $R\textrm{-basis}$ of $M$.
	Now define a $B\textrm{-linear}$ map $h^0 : M_E \rightarrow M_B$ as $\sum_{j=1}^h a_j f_j \mapsto \sum_{j=1}^h a_{j,0} f_j$, where $a_j$ is in $E$ and $a_{j,0}$ is the projection to the $0\textrm{-th}$ coordinate (see Remark \ref{rem:elements_of_pd_ring} (iii), where 0 corresponds to the coordinate $(0, \ldots, 0)$).
	Moreover, note that after inverting $p$ and using the tensor product filtration on $M_E[1/p]$, we get that $h^0$ induces a $B[1/p]\textrm{-linear}$ map $h^0 : \Fil^r M_E[1/p] \rightarrow \Fil^r M_B[1/p]$.
	In particular, we obtain an induced $B\textrm{-linear}$ map $h^0 : \Fil^r M_E \rightarrow M_B \cap \Fil^r M_B[1/p] = \Fil^r M_B$.
	It is clear that we have $h^0 \epsilon = id$.

	Next, for $q > 0$, define a $B\textrm{-linear}$ map $h^q : M_E \otimes_{\ZZ} \Omega^q \rightarrow M_E \otimes_{\ZZ} \Omega^{q-1}$, given by the formula $h^q\Big(f_j a_j \prod_{i=0}^d (V_i-1)^{[k_i]} V_{i_1} \omega_{i_1} \wedge \cdots \wedge V_{i_q} \omega_{i_q}\Big) = f_j a_j \prod_{i=0}^d (V_i-1)^{[k_i+\delta_{ji_1}]} V_{i_2} \omega_{i_2} \wedge \cdots \wedge V_{i_q} \omega_{i_q}$, if $k_j=0$ and 0 otherwise (here $\delta$ denotes the Kronecker $\delta\textrm{-symbol}$).
	Moreover, note that after inverting $p$ and using the tensor product filtration on $M_E[1/p]$, we get that $h^q$ induces a $B[1/p]\textrm{-linear}$ map $h^q : \Fil^{r-q} M_E[1/p] \otimes_{\ZZ} \Omega^q \rightarrow \Fil^{r-q+1} M_E[1/p] \otimes_{\ZZ} \Omega^{q-1}$.
	In particular, we obtain an induced $B\textrm{-linear}$ map $h^q : \Fil^{r-q} M_E \otimes_{\ZZ} \Omega^q \rightarrow \Fil^{r-q+1} M_E \otimes_{\ZZ} \Omega^{q-1}$.
	It is easy to see $\epsilon h^0 + h^1d = id$ and $d h^q + h^{q+1}d = id$.
	Hence, we obtain the desired $B\textrm{-linear}$ homotopy, proving the claim.
\end{proof}


\section{Finite height \texorpdfstring{$p\textrm{-adic}$}{-} representations}\label{sec:relative_finite_height}

In this section, we will recall the notion of relative Wach modules from \cite{abhinandan-crystalline-wach} and prove some lemmas that will be used later.
We will use the setup and notations of \S \ref{subsec:setup_nota}, in particular, we fix some $m \in \NN_{\geqslant 1}$.

\begin{nota}
	For an algebra $S$ admitting a Frobenius endomorphism $\varphi$ and an $S\textrm{-module}$ $M$ admitting a Frobenius-semilinear endomorphism $\varphi : M \rightarrow M$, we will denote by $\varphi^{\ast}(M) \subset M$ the $S\textrm{-submodule}$ generated by the image of $\varphi$.
\end{nota}

\subsection{Relative Wach modules}\label{subsec:relative_wach_modules}

Set $q := \varphi(\pi)/\pi$ in $\AR^+$ and let $T$ be a free $\ZZ_p\textrm{-representation}$ of $G_R$.
Then, note that we have an $\AR^+\textrm{-submodule}$ $\mbfd^+(T) := (\mbfa^+ \otimes_{\QQ_p} T)^{H_{R}} \subset \mbfd(T)$, equipped with induced commuting actions of $(\varphi, \Gamma_R)$.

\begin{defi}[{\cite[Definition 4.8]{abhinandan-crystalline-wach}}]\label{defi:wach_reps}
	A $\ZZ_p\textrm{-representation}$ $T$ is said to be \textit{positive} and of \textit{finite $q\textrm{-height}$} if there exists a finite projective $\AR^+\textrm{-submodule}$ $\mbfn(T) \subset \mbfd^+(T)$ of rank $=\textrm{rk}_{\ZZ_p} T$, stable under the action of $\varphi$ and $\Gamma_{R}$ and satisfying the following conditions:
	\begin{enumromanup}
		\item The natural $\AR\textrm{-linear}$ map $\AR \otimes_{\AR^+} \mbfn(T) \isomorphic \mbfd(T)$ is a $(\varphi, \Gamma_R)\textrm{-equivariant}$ isomorphism, where $\mbfn(T)$ is equipped with the induced action of $(\varphi, \Gamma_R)$;
		
		\item The $\AR^+\textrm{-module}$ $\mbfn(T) / \varphi^{\ast}(\mbfn(T))$ is killed by $q^{s}$ for some $s \in \NN$;
		
		\item The induced action of $\Gamma_R$ on $\mbfn(T) / \pi \mbfn(T)$ is trivial;
		
		\item There exists $R\prm \subset \Rbar$ finite \'etale over $R$ such that $\mbfa_{R\prm}^+ \otimes_{\AR^+} \mbfn(T)$ is free over $\mbfa_{R\prm}^+$.
	\end{enumromanup}
	The \textit{height} of $T$ is defined to be the smallest $s \in \NN$ satisfying (ii) above.
	Furthermore, a positive finite $q\textrm{-height}$ $\padic$ representation $V$ of $G_R$ is a representation admitting a positive finite $q\textrm{-height}$ $\ZZ_p\textrm{-lattice}$ $T \subset V$ and we set $\mbfn(V) := \mbfn(T)[1/p]$, satisfying properties analogous to (i)-(iv) above.
	The height of $V$ is defined to be the height of $T$.
	For $k \in \ZZ$, let $T(k) := T \otimes_{\ZZ_p} \ZZ_p(k)$, $V(k) := T(k)[1/p]$, define $\mbfn(T(k)) := \frac{1}{\pi^k}\mbfn(T)(k)$ and $\mbfn(V(k)) := \frac{1}{\pi^k} \mbfn(V)(k)$ and set $\textrm{height of } T(k) = (\textrm{height of } T) - k$.
	We call these twists as representations of \textit{finite $q\textrm{-height}$}.
\end{defi}

For general properties of Wach modules, we refer the reader to \cite[\S 4.2]{abhinandan-crystalline-wach}.
Let us note that there is a natural filtration on Wach modules attached to finite $q\textrm{-height}$ representations.
\begin{defi}\label{defi:wach_mod_fil}
	Let $V$ be a finite $q\textrm{-height}$ represenation of $G_R$.
	For each $r \in \ZZ$, set $\Fil^r \mbfn(V) := \{x \textrm{ in } \mbfn(V), \hspace{1mm} \textrm{such that} \hspace{1mm} \varphi(x) \textrm{ is in } q^r \mbfn(V)\}$ and $\Fil^r \mbfn(T) := \Fil^r \mbfn(V) \cap \mbfn(T) \subset \mbfn(V)$.
\end{defi}

\begin{lem}\label{lem:wach_mod_twist_fil}
	We have $\Fil^r \mbfn(T) = \{x \textrm{ in } \mbfn(T), \hspace{1mm} \textrm{such that} \hspace{1mm} \varphi(x) \textrm{ is in } q^r \mbfn(T)\}$.
	Moreover, we have that $\Fil^r \mbfn(T(k)) = \pi^{-k} \Fil^{r+k} \mbfn(T)(k)$ and $\Fil^r \mbfn(V(k)) = \pi^{-k} \Fil^{r+k} \mbfn(V)(k)$.
\end{lem}
\begin{proof}
	The first claim is true because $q^r \mbfn(V) \cap \mbfn(T) = (q^r \BR^+ \cap \AR^+) \otimes_{\AR^+} \mbfn(T) = q^r \mbfn(T)$.
	To show the second claim, let $\pi^{-k} x \otimes \epsilon^{\otimes k}$ be an element of $\Fil^r \pi^{-k} \mbfn(T)(k)$, with $x \in \mbfn(T)$ and $\epsilon^{\otimes k}$ a $\ZZ_p\textrm{-basis}$ of $\ZZ_p(k)$.
	By assumption, $\varphi(\pi^{-k} x \otimes \epsilon^{\otimes k}) = (q\pi)^{-k} \varphi(x) \otimes \epsilon^{\otimes k}$ belongs to $q^r \pi^{-k} \mbfn(T)(k)$.
	Therefore, we see that $\varphi(x)$ belongs to $q^{r+k} \mbfn(T)$, i.e.\ $x$ is in $\Fil^{r+k} \mbfn(T)$.
	The converse is obvious.
\end{proof}

\begin{rem}\label{rem:wach_mod_fil_ainf}
	Set $\Fil^r \Ainf(\Rbar) := \xi^r \Ainf(\Rbar)$ and $\Fil^r \mbfa := \mbfa \cap \Fil^r \Ainf(\Rbar) \subset \Ainf(\Rbar)$, for each $r \in \NN$.
	If $T$ is a positive finite $q\textrm{-height}$ $\ZZ_p\textrm{-representation}$ of $G_R$, then from \cite[Lemma 4.53]{abhinandan-crystalline-wach} note that, for the filtration on Wach modules as in Definition \ref{defi:wach_mod_fil}, we have $\Fil^r \mbfn(T) = \mbfn(T) \cap \Fil^r \Ainf(\Rbar) \otimes_{\ZZ_p} T = \mbfn(T) \cap \Fil^r \mbfa \otimes_{\ZZ_p} T \subset \Ainf(\Rbar) \otimes_{\ZZ_p} T$, for each $r \in \NN$.
\end{rem}

The operator $\psi$ defined in \S \ref{subsec:relative_phi_gamma_mod} commutes with the action of $G_R$, so by linearity, it extends to a map $\psi : \mbfd(T) \rightarrow \mbfd(T)$ and from Proposition \ref{prop:psi_oper} we get that $\psi(\mbfd^+(T)) \subset \mbfd^+(T)$.

\begin{lem}\label{lem:psi_wach_nonpositive}
	Let $T$ be positive finite $q\textrm{-height}$ $\ZZ_p\textrm{-representation}$ of $G_R$ of height $s$.
	Then for $k \geqslant s$, we have $\psi(\mbfn(T(k))) \subset \mbfn(T(k))$.
\end{lem}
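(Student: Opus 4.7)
The strategy is to exploit the definition $\mbfn(T(r)) = \pi^{-r}\mbfn(T)(r)$ together with the identity $\varphi(\pi) = q\pi$ in order to reduce the claim to a computation inside $\mbfn(T)$. Since the underlying $\AR$-module of $\mbfd(T(r))$ coincides with that of $\mbfd(T)$ (the twist $\ZZ_p(r)$ only alters the $\Gamma_R$-action and leaves $\varphi$ unchanged), the operator $\psi$ on $\mbfd(T(r))$ agrees with $\psi$ on $\mbfd(T)$, and it suffices to show $\psi(\pi^{-r}x) \in \pi^{-r}\mbfn(T)$ for every $x \in \mbfn(T)$.

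Using $\varphi(\pi^{-r}) = q^{-r}\pi^{-r}$, I would rewrite $\pi^{-r}x = \varphi(\pi^{-r}) \cdot q^r x$ and apply the identity $\psi(\varphi(a)z) = a\psi(z)$ (valid for $a \in \AR$, $z \in \mbfd(T)$, as in the discussion following Proposition \ref{prop:psi_oper}) to obtain
\[
\psi(\pi^{-r}x) = \pi^{-r}\psi(q^r x).
\]
The task is thus reduced to proving $\psi(q^r x) \in \mbfn(T)$ for all $x \in \mbfn(T)$.

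For this, I would invoke the height hypothesis (Definition \ref{defi:wach_reps}(ii)), which gives $q^s\mbfn(T) \subset \varphi^\ast(\mbfn(T))$; multiplying by $q^{r-s} \in \AR^+$ (using $r \geq s$) yields a decomposition $q^r x = \sum_i b_i\,\varphi(y_i)$ with $b_i \in \AR^+$ and $y_i \in \mbfn(T)$. A short computation using the $\mbfa$-basis $\{u_{\alpha/p}\}$ of $\varphi^{-1}(\mbfa)$ (see \S\ref{sec:operator_psi_phi_gamma}) together with $\psi(u_\alpha) = \delta_{\alpha,0}$ extends the identity above to the formula $\psi(b\,\varphi(y)) = \psi(b)\,y$ for arbitrary $b \in \AR$ and $y \in \mbfd(T)$. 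Hence
\[
\psi(q^r x) = \sum_i \psi(b_i)\,y_i,
\]
and since $\psi(\AR^+) \subset \AR^+$ (from Proposition \ref{prop:psi_oper}(iii) together with the $G_R$-equivariance of $\psi$ and Remark \ref{rem:ring_intersect}), this element lies in $\AR^+\cdot\mbfn(T) \subset \mbfn(T)$, finishing the proof.

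No substantial obstacle is expected; the argument is a bookkeeping exercise combining the definitions of the Wach module and the $q$-height with the explicit description of $\psi$. The only mild care needed is in promoting the identity $\psi(\varphi(a)z)=a\psi(z)$ to the slightly more general form $\psi(b\varphi(y))=\psi(b)y$ via the basis decomposition, and in observing that the Tate twist does not interfere with the argument since $\varphi$ is trivial on $\ZZ_p(r)$.
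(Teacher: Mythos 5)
Your proof is correct and follows essentially the same route as the paper: both arguments multiply by $\varphi(\pi^{\pm r})$ to reduce to showing $q^r\mbfn(T)\subset\varphi^\ast(\mbfn(T))$ (from the height hypothesis and $r\geq s$), and then apply the projection formula for $\psi$ together with $\psi(\AR^+)\subset\AR^+$. The paper's version is merely more compressed, leaving the identity $\psi(b\,\varphi(y))=\psi(b)\,y$ and the stability of $\AR^+$ under $\psi$ implicit.
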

\begin{proof}
	Note that we have $q^s \mbfn(T) \subset \varphi^*(\mbfn(T))$.
	So, for $k \geqslant s$ and $x$ in $\mbfn(T(k))$, we must have that $\varphi(\pi^k) x = (q\pi)^k x$ is in $\varphi^{\ast}(\mbfn(T)(k))$.
	Therefore, $\psi(x)$ belongs to $\frac{1}{\pi^k} \mbfn(T)(k) = \mbfn(T(k))$.
\end{proof}

\subsection{Wach modules and crystalline representations}\label{subsec:wach_crystalline}

From \cite[\S 4.3.1]{abhinandan-crystalline-wach}, we have an $R\textrm{-algebra}$ $\OARpi^{\textpd} \subset \OAcrys(\overline{R})$ equipped with a Frobenius endomorphism $\varphi$, a continuous action of $\Gamma_R$, a $\Gamma_R\textrm{-stable}$ filtration and an $\ARpi^{\textpd}\textrm{-linear}$ integrable connection satisfying Griffiths transversality with respect to the filtration and commuting with the action of $\varphi$ and $\Gamma_R$.

\begin{thm}[{\cite[Theorem 4.24, Proposition 4.27, Corollary 4.26]{abhinandan-crystalline-wach}}]\label{thm:crys_wach_comparison}
	Let $V$ be a finite $q\textrm{-height}$ representation of $G_{R}$, then $V$ is crystalline.
	Moreover, if $V$ is positive then we have an isomorphism of $R[1/p]\textrm{-modules}$ $M[1/p] := \big(\OARpi^{\textpd} \otimes_{\AR^+} \mbfn(V)\big)^{\Gamma_R} \isomorphic \ODcrys(V)$, compatible with respective Frobenii, filtrations and connections.
	Furthermore, we have a natural $\OARpi^{\textpd}\textrm{-linear}$ isomorphisms 
	\begin{equation}\label{eq:crys_wach_comparison}
		\OARpi^{\textpd} \otimes_{\AR^+} \mbfn(V) \lisomorphic \OARpi^{\textpd} \otimes_R M[1/p] \isomorphic \OARpi^{\textpd} \otimes_R \ODcrys(V),
	\end{equation}
	compatible with the respective Frobenii, filtrations, connections and the actions of $\Gamma_R$.
\end{thm}

\begin{rem}\label{rem:crys_wach_comparison_struct}
	In Theorem \ref{thm:crys_wach_comparison}, the $\OARpi^{\textpd}\textrm{-module}$ $\OARpi^{\textpd} \otimes_{\AR^+} \mbfn(V)$ is equipped with the following structures:
	a Frobenius endomorphism, given as $\varphi \otimes \varphi$;
	an $\ARpi^{\textpd}\textrm{-linear}$ connection, given by the natural $\ARpi^{\textpd}\textrm{-linear}$ differential operator $\partial_R \otimes 1$ (see Remark \ref{rem:oarpd_erpd} for notations);
	an action of $\Gamma_R$, where any $g$ in $\Gamma_R$ acts as $g \otimes g$;
	an $\NN\textrm{-indexed}$ decreasing filtration given as the tensor product filtration, i.e.\ $\Fil^r \big(\OARpi^{\textpd} \otimes_{\AR^+} \mbfn(V)\big) = \sum_{i+j=r} \Fil^i \OARpi^{\textpd} \otimes_{\AR^+} \Fil^j \mbfn(V)$, which is well defined because each term of the summation is an $\OARpi^{\textpd}\textrm{-submodule}$ of $\OARpi^{\textpd} \otimes_{\AR^+} \mbfn(V)$ (use that as $\AR^+\textrm{-modules}$ $\mbfn(V)$ is finite projective and $\Fil^i \OARpi^{\PD}$ is flat, see \cite[Remark 3.33]{abhinandan-crystalline-wach-ii}).
	The module $M[1/p]$ is equipped with induced structures, in particular, the filtration on $M[1/p]$ is given as $\Fil^r M[1/p] = \big(\Fil^r \big(\OARpi^{\textpd} \otimes_{\AR^+} \mbfn(V)\big)\big)^{\Gamma_R}$ and its compatibility with the Hodge filtration on $\ODcrys(V)$ follows from \cite[\S 4.5.1]{abhinandan-crystalline-wach}.
	Then, in \eqref{eq:crys_wach_comparison}, the middle and right-hand terms are equipped with the following structures: 
	a Frobenius endomorphism, given as $\varphi \otimes \varphi$;
	an $\ARpi^{\textpd}\textrm{-linear}$ connection, given as $\partial_R \otimes 1 + 1 \otimes \partial_D$, where $\partial_D$ is the connection on $\ODcrys(V)$ (see \S \ref{subsec:relative_padic_reps});
	an action of $\Gamma_R$, where any $g$ in $\Gamma_R$ acts as $g \otimes 1$;
	an $\NN\textrm{-indexed}$ decreasing filtration given as the tensor product filtration (see Lemma \ref{lem:fil_gr_ds}), where we use the filtration on $M[1/p]$ as above and the Hodge filtration on $\ODcrys(V)$.
	As the respective connections on $\OARpi^{\textpd}$ and $\ODcrys(V)$ satisfy Griffiths transversality with respect to their respective filtrations, therefore, it follows that the connection on $\OARpi^{\textpd} \otimes_R \ODcrys(V)$ also satisfies Griffiths transversality with respect to the tensor product filtration.
	Then, by the compatibility of the isomorphisms in \eqref{eq:crys_wach_comparison} with connections and filtrations, we see that the respective connection on each term of \eqref{eq:crys_wach_comparison} satisfies Griffiths transversality with respect to the filtration.
	Finally, note that the left-hand isomorphism in \eqref{eq:crys_wach_comparison} is given as $ab \otimes x \mapsfrom a \otimes b \otimes x$.
\end{rem}

The proof of Theorem \ref{thm:crys_wach_comparison} depends on the following important observation:

\begin{lem}[{\cite[Proposition 4.27]{abhinandan-crystalline-wach}}]\label{lem:crys_from_wach_mod}
	Let $V$ be a positive finite $q\textrm{-height}$ representation of $G_R$ such that the $\AR^+\textrm{-module}$ $\mbfn(T)$ is finite free of rank $= \dim_{\QQ_p} V$.
	Then there exists a finite free $R\textrm{-module}$ $M_0 \subset M := \big(\OARpi^{\textpd} \otimes_{\AR^+} \mbfn(T)\big)^{\Gamma_R}$, stable under the Frobenius and such that $M_0[1/p] = M[1/p] \isomorphic \ODcrys(V)$ are free $R[1/p]\textrm{-modules}$ of rank $=\dim_{\QQ_p} V$.
\end{lem}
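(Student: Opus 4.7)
The plan is to use the comparison isomorphism from Theorem \ref{thm:crys_wach_comparison} to transport a suitable $R\pinverse$-basis of $\ODcrys(V)$ into $M = (\OARpi^{\textpd} \otimes_{\AR^+} \mbfn(T))^{\Gamma_R}$ and then clear denominators to produce $M_0$.

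First I would establish that $\ODcrys(V)$ is free of rank $h := \dim_{\QQ_p} V$ over $R\pinverse$. Since $\mbfn(T)$ is free over $\AR^+$ of rank $h$ by hypothesis, $\OARpi^{\textpd} \otimes_{\AR^+} \mbfn(V)$ is free of rank $h$ over $\OARpi^{\textpd}\pinverse$. The second isomorphism of Theorem \ref{thm:crys_wach_comparison},
\begin{equation*}
\OARpi^{\textpd} \otimes_{R} \ODcrys(V) \isomorphic \OARpi^{\textpd} \otimes_{\AR^+} \mbfn(V),
\end{equation*}
then shows that $\ODcrys(V)$ becomes free of rank $h$ after base change to $\OARpi^{\textpd}\pinverse$. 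To descend freeness to $R\pinverse$ itself I would exploit the Frobenius structure on $\ODcrys(V)$: the $\varphi$-semilinear isomorphism $1\otimes\varphi$ on $\ODcrys(V)$ combined with compatibility of the comparison with $\varphi$ gives an explicit change of basis matrix valued in $\GL_h(\OARpi^{\textpd}\pinverse)$, which thanks to the positive Hodge--Tate weights of $V$ can be arranged to be congruent to the identity modulo the PD-ideal, forcing the underlying $R\pinverse$-module to be free.

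Second, by the first isomorphism of Theorem \ref{thm:crys_wach_comparison}, I identify $M\pinverse \isomorphic \ODcrys(V)$ and pick an $R\pinverse$-basis $y_1', \ldots, y_h'$ of $\ODcrys(V)$. Under the embedding $M\pinverse \hookrightarrow \OARpi^{\textpd}\pinverse \otimes_{\AR^+} \mbfn(T)$, each $y_i'$ expands in the $\OARpi^{\textpd}\pinverse$-basis of $\OARpi^{\textpd} \otimes_{\AR^+} \mbfn(T)$ arising from an $\AR^+$-basis of $\mbfn(T)$. Clearing common denominators yields integers $N_i \in \NN$ with $y_i := p^{N_i} y_i' \in \OARpi^{\textpd} \otimes_{\AR^+} \mbfn(T)$; the $y_i$ remain $\Gamma_R$-invariant, hence $y_i \in M$.

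Finally I set $M_0 := R y_1 + \cdots + R y_h \subset M$. The family $\{y_i\}$ is $R\pinverse$-linearly independent since it differs from the basis $\{y_i'\}$ by the units $p^{N_i} \in R\pinverse$; as $R \hookrightarrow R\pinverse$, the $y_i$ are also $R$-linearly independent, so $M_0$ is free of rank $h$ over $R$. By construction $M_0\pinverse = \sum R\pinverse y_i = \sum R\pinverse y_i' = M\pinverse \simeq \ODcrys(V)$. The main obstacle is the freeness of $\ODcrys(V)$ over $R\pinverse$ in the first step: over a general small $R$ the Picard group of $R\pinverse$ need not be trivial, so freeness is not automatic from projectivity of constant rank $h$, and the argument must genuinely use the Frobenius and filtered structure together with the very specific form of the comparison isomorphism to rule out non-trivial line-bundle twists.
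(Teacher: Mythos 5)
There is a genuine gap, and it sits exactly where you flag it yourself: the freeness of $\ODcrys(V)$ over $R\pinverse$. Your step 2 (transporting a basis through the comparison isomorphism and clearing powers of $p$) is fine once such a basis exists, but your step 1 does not actually produce one. Freeness of a finite projective module is not a property that descends along the extension $R\pinverse \rightarrow \OARpi^{\textpd}\pinverse$: knowing that $\OARpi^{\textpd} \otimes_R \ODcrys(V)$ is free, even with a change-of-basis matrix "congruent to the identity modulo the PD-ideal", gives no mechanism for trivializing the projective $R\pinverse$-module $\ODcrys(V)$ itself. You neither construct a descent datum nor exhibit any element of $\ODcrys(V)$, so the non-trivial line-bundle twist you worry about in your last sentence is never actually ruled out. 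Note also that the freeness of $\ODcrys(V) \simeq M_0\big[\frac{1}{p}\big]$ is part of the \emph{conclusion} of the lemma, so assuming it as your first step inverts the logical order.

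The argument of \cite[Proposition 4.28]{abhinandan-crystalline-wach}, which is what the paper relies on (see the proof of Proposition \ref{prop:odcris_phi_coker}, which reuses its notation), runs in the opposite direction and is constructive: starting from an $\AR^+$-basis $\smbff = \{f_1, \ldots, f_h\}$ of $\mbfn(T)$, one produces the basis $\smbfg = \varphi^m(\smbff)\varphi^m(A)$ of $M_0$ for an explicit matrix $A \in \GL(h, \pazo\widehat{S}_m^{\textpd})$. The point is that applying $\varphi^m$ contracts the period-ring variables enough that the twisted elements $g_i$ become $\Gamma_R$-invariant and land in an $R$-module inside $M$; they are visibly linearly independent, so $M_0 = \oplus_i R g_i$ is free, and the freeness of $\ODcrys(V) = M_0\big[\frac{1}{p}\big]$ is then an output of the construction rather than an input. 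If you want to salvage your proposal, you would need to replace step 1 by an argument of this kind — i.e.\ actually produce $h$ linearly independent $\Gamma_R$-invariant elements spanning $M\big[\frac{1}{p}\big]$ — rather than appeal to a descent of freeness that does not hold.
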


\begin{prop}\label{prop:odcris_phi_coker}
	Let $V$ be a positive finite $q\textrm{-height}$ representation of $G_R$ of height $s$ such that $\mbfn(T)$ is free over $\AR^+$.
	Let $M_0 \subset M := \big(\OARpi^{\textpd} \otimes_{\AR^+} \mbfn(T)\big)^{\Gamma_R}$ be the free $R\textrm{-module}$ obtained in Lemma \ref{lem:crys_from_wach_mod}.
	Then the $R\textrm{-module}$ $M_0/\varphi^{\ast}(M_0)$ is killed by $p^{ms}$.
\end{prop}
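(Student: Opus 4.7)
The plan is to combine the $q$-height condition of the Wach module $\mbfn(T)$ from Definition \ref{defi:wach_reps}(ii) with an $m$-fold Frobenius iteration, and then descend to the lattice $M_0$ using the crystalline comparison of Theorem \ref{thm:crys_wach_comparison} together with the $\varphi$-stability of $M_0$.

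Starting from the inclusion $q^s \mbfn(T) \subset \varphi^*(\mbfn(T))$, base change along $\AR^+ \hookrightarrow \OARpi^{\textpd}$ yields $q^s N \subset \varphi^*(N)$, where $N := \OARpi^{\textpd}\otimes_{\AR^+} \mbfn(T)$ and $\varphi^*(N)$ is the $\OARpi^{\textpd}$-submodule generated by $\varphi(N)$. Iterating this inclusion $m$ times and using the telescoping identity $\prod_{j=0}^{m-1}\varphi^j(q) = \varphi^m(\pi)/\pi$ (immediate from $q = \varphi(\pi)/\pi$), one obtains $\bigl(\varphi^m(\pi)/\pi\bigr)^s N \subset \varphi^{m,*}(N)$, where $\varphi^{m,*}(N)$ denotes the $\OARpi^{\textpd}$-submodule of $N$ generated by $\varphi^m(N)$.

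Now the coefficient $\varphi^m(\pi)/\pi$ can be compared with $p^m$. Since $t := \log(1+\pi)$ lies in $\ARpi^{\textpd}$ with $t/\pi \in (\ARpi^{\textpd})^\times$ (Lemma \ref{lem:t_over_pi_unit}) and $\varphi^m(t) = p^m t$, the identity $\varphi^m(\pi) \cdot \varphi^m(t/\pi) = p^m \pi \cdot (t/\pi)$ shows that $\varphi^m(\pi)/\pi = p^m \alpha_m$ for the unit $\alpha_m := (t/\pi)/\varphi^m(t/\pi) \in (\ARpi^{\textpd})^\times \subset (\OARpi^{\textpd})^\times$. Consequently $\bigl(\varphi^m(\pi)/\pi\bigr)^s = p^{ms}\alpha_m^s$, and multiplying the previous inclusion by the unit $\alpha_m^{-s}$ we deduce $p^{ms} N \subset \varphi^{m,*}(N)$ as $\OARpi^{\textpd}$-submodules of $N$.

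For the descent to $M_0$: by Theorem \ref{thm:crys_wach_comparison}, $\OARpi^{\textpd}\otimes_R \ODcrys(V) \simeq N[1/p]$ is $\Gamma_R$- and $\varphi$-equivariant, and Lemma \ref{lem:crys_from_wach_mod} realizes $M_0 \subset \ODcrys(V)$ as a free $R$-lattice, inherited from $\mbfn(T)$; in particular $M_0$ is $\varphi$-stable since $\varphi(\mbfn(T)) \subset \mbfn(T)$. Taking $\Gamma_R$-invariants of $p^{ms} N \subset \varphi^{m,*}(N)$ and using the comparison isomorphism to identify $(N[1/p])^{\Gamma_R}$ with $\ODcrys(V)$ integrally on the lattice $M_0$, we obtain $p^{ms}M_0 \subset \varphi^{m,*}(M_0)$ inside $\ODcrys(V)$. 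Finally, since $\varphi^{m-1}(M_0)\subset M_0$ by iterated $\varphi$-stability, one has $\varphi^m(M_0) \subset \varphi(M_0) \subset \varphi^*(M_0)$, so $\varphi^{m,*}(M_0) \subset \varphi^*(M_0)$, which gives the required $p^{ms} M_0 \subset \varphi^*(M_0)$.

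The delicate step is the descent of the inclusion from $N$ to $M_0$: one needs to verify that $p^{ms}\cdot M_0$, viewed inside $N$ via the comparison, really lands in the $R$-span $\varphi^{m,*}(M_0)$ and not merely in the larger $\OARpi^{\textpd}$-span $\varphi^{m,*}(N)$. This requires the explicit realization of $M_0$ from Lemma \ref{lem:crys_from_wach_mod} as the horizontal sections provided by the connection, together with the fact that $\varphi^m(M_0) \subset M_0$ gives compatibility between $\varphi^{m,*}$ on $N$ and on $M_0$ under the comparison isomorphism.
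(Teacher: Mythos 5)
The first half of your argument — base changing $q^s\mbfn(T)\subset\varphi^*(\mbfn(T))$ to $N=\OARpi^{\textpd}\otimes_{\AR^+}\mbfn(T)$, iterating $m$ times, and converting the coefficient $\varphi^m(\pi)/\pi$ into $p^m$ times a unit via $t/\pi\in(\ARpi^{\textpd})^\times$ — is correct and is essentially the computation the paper performs (the paper phrases it as ``$q$ and $p$ are associates in $\OARpi^{\textpd}$'' via $q=p\,\varphi(\pi/t)(t/\pi)$, but this is the same observation). The problem is the descent step, which you yourself flag as delicate and then do not actually carry out.

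Taking $\Gamma_R$-invariants of $p^{ms}N\subset\varphi^{m,*}(N)$ gives $p^{ms}M\subset(\varphi^{m,*}(N))^{\Gamma_R}$, but there is no a priori reason why the $\Gamma_R$-invariants of the $\OARpi^{\textpd}$-span of $\varphi^m(N)$ should land in the $R$-span of $\varphi^m(M_0)$ (or even of $\varphi(M_0)$); this is exactly the content of the proposition, not a formal consequence of equivariance. The paper's proof closes this gap using the \emph{explicit} construction of $M_0$ from Lemma \ref{lem:crys_from_wach_mod}: its basis is $\smbfg=\varphi^m(\smbff)\varphi^m(A)$ with $A$ invertible over $\pazo\widehat{S}_m^{\textpd}$, so that one has the \emph{equality} $\OARpi^{\textpd}\otimes_R M_0=\varphi^{m,*}(N)$, not merely an inclusion. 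From this equality, $p^{ms}N\subset\OARpi^{\textpd}\otimes_R M_0$; applying $\varphi^{m,*}$ to the inclusion $\OARpi^{\textpd}\otimes_R M_0\subset N$ then bounds the cokernel of $\varphi^{m,*}(\OARpi^{\textpd}\otimes_R M_0)\subset\OARpi^{\textpd}\otimes_R M_0$ by $p^{ms}$, and only now are $\Gamma_R$-invariants taken — crucially, of an inclusion whose smaller member is the scalar extension $\OARpi^{\textpd}\otimes_R\varphi^{m,*}(M_0)$ of an $R$-module, so the invariants can be identified. Your sketch never establishes $\OARpi^{\textpd}\otimes_R M_0=\varphi^{m,*}(N)$, and without it the claimed implication ``$p^{ms}N\subset\varphi^{m,*}(N)$ yields $p^{ms}M_0\subset\varphi^{m,*}(M_0)$ after taking invariants'' does not go through. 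A second, smaller inaccuracy: you assert that $M_0$ is $\varphi$-stable ``since $\varphi(\mbfn(T))\subset\mbfn(T)$''; this does not follow directly, because $M_0$ is not $\mbfn(T)$ but the $R$-module spanned by $\varphi^m(\smbff)\varphi^m(A)$, and its $\varphi$-stability again requires the identification $\OARpi^{\textpd}\otimes_R M_0=\varphi^{m,*}(N)$ together with a descent of invariants.
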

\begin{proof}
	In order to prove the claim, we will use without recalling constructions and notations from the proof of \cite[Proposition 4.28]{abhinandan-crystalline-wach}.
	Let $\smbff = \{f_1, \ldots, f_h\}$ be an $\AR^+\textrm{-basis}$ of $\mbfn(T)$.
	Then from Lemma \ref{lem:crys_from_wach_mod} and the proof of \cite[Proposition 4.28]{abhinandan-crystalline-wach}, we have that $M_0$ is a free $R\textrm{-module}$ with a basis given as $\smbfg = \{g_1, \ldots, g_h\}$, where $\smbfg = \varphi^m(\smbff)\varphi^m(A)$ for some $A$ in $\GL(h, \pazo \widehat{S}_m^{\textpd})$.
	It is easy to see that $M_0$ is independent of the choice of the $\AR^+\textrm{-basis}$ of $\mbfn(T)$.
	Note that we have $q = \varphi(\pi)/\pi = p\varphi(\pi/t)(t/\pi)$, and since $\pi/t$ is a unit in $\OARpi^{\textpd}$ (see Lemma \ref{lem:t_over_pi_unit}), therefore, we obtain that $q$ and $p$ are associates in $\OARpi^{\textpd}$.
	Furthermore, $\mbfn(T)/\varphi^{\ast}(\mbfn(T))$ is killed by $q^s$, where $s$ is the height of $V$.
	So $\big(\OARpi^{\textpd} \otimes_{\AR^+} \mbfn(T)\big) / \varphi^{m, \ast}\big(\OARpi^{\textpd} \otimes_{\AR^+} \mbfn(T)\big)$ is killed by $p^{ms}$, where we write $\varphi^{m, \ast}\big(\OARpi^{\textpd} \otimes_{\AR^+} \mbfn(T)\big) = \oplus_{i=1}^h \OARpi^{\textpd} \varphi^m(f_i)$.
	Now, recall that $\det A$ is a unit in $\pazo \widehat{S}_m^{\textpd}$ (see \cite[Lemma 4.43]{abhinandan-crystalline-wach}), therefore, $\varphi^m(\det A)$ is a unit in $\OARpi^{\textpd}$ and $\varphi^m(A)$ is invertible over $\OARpi^{\textpd}$, in particular, $\OARpi^{\textpd} \otimes_R M_0 \isomorphic \varphi^{m, \ast}\big(\OARpi^{\textpd} \otimes_{\AR^+} \mbfn(T)\big)$.
	So, we get that the cokernel of the natural inclusion $\OARpi^{\textpd} \otimes_R M_0 \subset \OARpi^{\textpd} \otimes_{\AR^+} \mbfn(T)$ is killed by $p^{ms}$.
	Moreover, the observation above also implies that the cokernel of the composition $\varphi^{m, \ast}\big(\OARpi^{\textpd} \otimes_R M_0 \big) \subset \OARpi^{\textpd} \otimes_R M_0 \isomorphic \varphi^{m, \ast}\big(\OARpi^{\textpd} \otimes_{\AR^+} \mbfn(T)\big)$ is killed by $p^{ms}$.
	In other words, we get that $p^{ms} \big(\OARpi^{\textpd} \otimes_R M_0\big) \subset \varphi^{m, \ast}\big(\OARpi^{\textpd} \otimes_R M_0\big) \subset \varphi^{\ast}\big(\OARpi^{\textpd} \otimes_R M_0\big)$.
	Finally, as the action of the Frobenius is $\Gamma_R\textrm{-equivariant}$, therefore, by taking $\Gamma_R\textrm{-invariants}$ we get that $p^{ms} M_0 \subset \varphi^{\ast}(M_0)$, i.e.\ $M_0/\varphi^{\ast}(M_0)$ is killed by $p^{ms}$.
\end{proof}

\begin{rem}\label{rem:odcris_phi_coker}
	From the proof of Proposition \ref{prop:odcris_phi_coker}, note that we have an inclusion $p^s \big(\OARpi^{\textpd} \otimes_{\AR^+} \mbfn(T)\big) \subset \varphi^{\ast}\big(\OARpi^{\textpd} \otimes_{\AR^+} \mbfn(T)\big)$.
	Since the action of Frobenius is $\Gamma_R\textrm{-equivariant}$, therefore, by taking $\Gamma_R\textrm{-invariants}$ of the preceding inclusion, we get that $p^s M \subset \varphi^*(M)$.
	Moreover, from Lemma \ref{lem:crys_from_wach_mod} and Proposition \ref{prop:odcris_phi_coker}, as $M_0 \subset M$, therefore, it also follows that the cokernel of the composition $\OARpi^{\textpd} \otimes_{R} M \rightarrow \OARpi^{\textpd} \otimes_{\AR^+} \mbfn(T)$ is killed by $p^{ms}$ (in fact, the cokernel is killed by $p^s$, see Remark \ref{rem:lattice_depends_on_m}).
\end{rem}

\begin{rem}\label{rem:connection_filtration_M}
	Using Theorem \ref{thm:crys_wach_comparison}, we equip $M \subset M[1/p]$ with a $p\textrm{-adically}$ quasi-nilpotent integrable connection $\partial : M \rightarrow M \otimes_R \Omega^1_R$ and an induced filtration compatible with the tensor product filtration on $\OARpi^{\textpd} \otimes_{\AR^+} \mbfn(V)$ (see \cite[\S 4.5.1]{abhinandan-crystalline-wach}); the connection satisfies Griffiths transversality with respect to the filtration.
	Furthermore, using the explicit description of $M_0$ in Proposition \ref{prop:odcris_phi_coker}, we obtain an induced filtration on $M_0$ and an induced $p\textrm{-adically}$ quasi-nilpotent integrable connection $\partial : M_0 \rightarrow M_0 \otimes_R \Omega^1_R$, satisfying Griffiths transversality with respect to the filtration.
\end{rem}

\begin{rem}\label{rem:lattice_depends_on_m}
	Note that we fixed $m \in \NN_{\geqslant 1}$ in the beginning and the $R\textrm{-modules}$ obtained above depend on this choice.
	In particular, let $1 \leqslant m \leqslant m\prm$ with $\varpi = \zeta_{p^m}-1$ and $\varpi' = \zeta_{p^{m\prm}}-1$.
	Then, we have an inclusion $\OARpi^{\textpd} \subset \pazo \mbfa_{R, \varpi'}^{\textpd}$ and we obtain that $M = \big(\OARpi^{\textpd} \otimes_{\AR^+} \mbfn(T)\big)^{\Gamma_R} \subset \big(\pazo \mbfa_{R, \varpi'}^{\textpd} \otimes_{\AR^+} \mbfn(T)\big)^{\Gamma_R} = M'$.
	As the cokernel of $\OARpi^{\textpd} \otimes_R M \rightarrow \OARpi^{\textpd} \otimes_{\AR^+} \mbfn(T)$ is killed by $p^{ms}$ (see Remark \ref{rem:odcris_phi_coker}) and $\pazo \mbfa_{R, \varpi'}^{\textpd} \otimes_R M \subset \pazo \mbfa_{R, \varpi'}^{\textpd} \otimes_R M'$, therefore, the cokernel of $\pazo \mbfa_{R, \varpi'}^{\textpd} \otimes_R M' \rightarrow \pazo \mbfa_{R, \varpi'}^{\textpd} \otimes_{\AR^+} \mbfn(T)$ is also killed by $p^{ms}$.
	In particular, taking $m = 1$, we see that the cokernel of $\pazo \mbfa_{R, \varpi'}^{\textpd} \otimes_R M' \rightarrow \pazo \mbfa_{R, \varpi'}^{\textpd} \otimes_{\AR^+} \mbfn(T)$ is always killed by $p^s$.
	Finally, let $M_0$ and $M_0\prm$ be $R\textrm{-modules}$ respectively obtained for $m$ and $m\prm$ in Lemma \ref{lem:crys_from_wach_mod}, then we have that $\varphi^{m\prm-m}(M_0\prm) \subset M_0$.
\end{rem}

\subsection{Filtrations and a Poincar\'e Lemma}\label{subsec:wachmod_poincare_lem}

Let $T$ be a positive finite $q\textrm{-height}$ $\ZZ_p\textrm{-representation}$ of $G_R$ and set $V = T[1/p]$.
Let $\mbfn(T)$ denote the associated Wach module over $\AR^+$ and set $M := (\OARpi^{\textpd} \otimes_{\AR^+} \mbfn(T))^{\Gamma_R}$ as a finitely generated $p\textrm{-torsion free}$ $R\textrm{-module}$.
Now consider the following diagram:
\begin{equation}\label{eq:crys_wach_B}
	\begin{tikzcd}
		\pazb \otimes_{R[1/p]} M[1/p] \arrow[r, "\alpha", "\sim"'] \arrow[d, "\wr"'] & \pazb \otimes_{\BR^+} \mbfn(V) \arrow[d, "\wr"', "\beta"]\\
		\pazb \otimes_{R[1/p]} \ODcrys(V) \arrow[r, "\alpha_{\pazb}", "\sim"'] & \pazb \otimes_{\QQ_p} V, 
	\end{tikzcd}
\end{equation}
where $\BR^+ = \AR^+[1/p]$, and the right vertical arrow is the $\pazb\textrm{-linear}$ extension of the natural inclusion $\mbfn(V) \subset \Ainf(\Rbar) \otimes_{\QQ_p} V \subset \pazb \otimes_{\QQ_p} V$; the top horizontal arrow is the extension along $\OARpi^{\textpd} \rightarrow \pazb$ of the $\OARpi^{\textpd}\textrm{-linear}$ isomorphism $\OARpi^{\textpd} \otimes_R M[1/p] \isomorphic \OARpi^{\textpd} \otimes_{\AR^+} \mbfn(V)$ (see the first isomorphism in \eqref{eq:crys_wach_comparison} of Theorem \ref{thm:crys_wach_comparison}); the left vertical arrow is the extension along $R[1/p] \rightarrow \pazb$ of the $R[1/p]\textrm{-linear}$ isomorphism $M[1/p] \isomorphic \ODcrys(V)$ (see the second isomorphism in \eqref{eq:crys_wach_comparison} of Theorem \ref{thm:crys_wach_comparison}), and it is compatible with the respective filtrations; the bottom horizontal arrow is the filtration compatible $\pazb\textrm{-linear}$ isomorphism from \eqref{eq:alpha_B} (see Lemma \ref{lem:fil_alpha_B}).
The diagram commutes by definition and the right vertical arrow is an isomorphism because the other three arrows are isomorphisms (see \cite[\S 4.5]{abhinandan-crystalline-wach} for a similar diagram over $\OBcrys(\Rbar)$).
Using the right vertical arrow of diagram \eqref{eq:crys_wach_B}, for each $r \in \ZZ$, we set
\begin{equation}\label{eq:fil_nB_beta}
	\Fil^r (\pazb \otimes_{\BR^+} \mbfn(V)) := \beta^{-1}(\Fil^r \pazb \otimes_{\QQ_p} V).
\end{equation}
In \eqref{eq:crys_wach_B}, by the compatibility of the left vertical arrow and the bottom horizontal arrow with the respective filtrations, an easy diagram chase shows that, for each $r \in \ZZ$, the top horizontal arrow induces,
\begin{equation}\label{eq:fil_nB_alpha}
	\alpha : \Fil^r(\pazb \otimes_{R[1/p]} M[1/p]) \isomorphic \Fil^r (\pazb \otimes_{\BR^+} \mbfn(V)).
\end{equation}

\subsubsection{Filtration on scalar extensions of Wach modules}

Let $S$ be a ring such that $\AR^+ \subset S \subset \pazb$ and $p$ is not invertible in $S$.
Set $N_S := S \otimes_{\AR^+} \mbfn(T)$ and note that we have a natural embedding $N_S \rightarrow \pazb \otimes_{\BR^+} \mbfn(V)$.
We equip $N_S$ with the induced filtration, i.e.\ for each $r \in \ZZ$, using \eqref{eq:fil_nB_beta}, set
\begin{equation}\label{eq:fil_ns}
	\Fil^r N_S := N_S \cap \Fil^r (\pazb \otimes_{\BR^+} \mbfn(V)) \subset \pazb \otimes_{\BR^+} \mbfn(V).
\end{equation}
Similarly, we set $\Fil^r N_S[1/p] := N_S[1/p] \cap \Fil^r (\pazb \otimes_{\BR^+} \mbfn(V))$, for each $r \in \ZZ$, and it is clear that $\Fil^r N_S = N_S \cap \Fil^r N_S[1/p]$.

\begin{rem}\label{rem:ns_fil_induced}
	Let $S$ and $S'$ be such that $S \subset S' \subset \pazb$ and $p$ is not invertible in $S'$.
	Then, from the definition of the respective filtrations on $N_S$ and $N_{S'}$ in \eqref{eq:fil_ns}, it is clear that $\Fil^r N_S = N_S \cap \Fil^r N_{S'} \subset N_{S'}$.
\end{rem}

\begin{lem}\label{lem:fil_ns_Gstable}
	Let $S \subset \fERbar^{[u, v]}$ be a $G_R\textrm{-stable}$ $\AR^+\textrm{-subalgebra}$.
	Then, the filtration on $N_S$ in \eqref{eq:fil_ns} is stable under the natural action of $G_R$ on $N_S$.
\end{lem}
\begin{proof}
	Let us consider the following commutative diagram,
	\begin{equation}\label{eq:crys_wach_eruv}
		\begin{tikzcd}[row sep=small]
			\fERbar^{[u, v]} \otimes_R M[1/p] \arrow[r, "\alpha", "\sim"'] \arrow[d] & \fERbar^{[u, v]} \otimes_{\AR^+} \mbfn(V) \arrow[d]\\
			\pazb \otimes_{R[1/p]} M[1/p] \arrow[r, "\alpha", "\sim"'] & \pazb \otimes_{\BR^+} \mbfn(V), 
		\end{tikzcd}
	\end{equation}
	where the bottom horizontal arrow is the top horizontal isomorphism of \eqref{eq:crys_wach_B}; the top horizontal arrow is the extension of the $\OARpi^{\textpd}\textrm{-linear}$ isomorphism $\OARpi^{\textpd} \otimes_R M[1/p] \isomorphic \OARpi^{\textpd} \otimes_{\AR^+} \mbfn(V)$ (see the first isomorphism in \eqref{eq:crys_wach_comparison} of Theorem \ref{thm:crys_wach_comparison}), along the $G_R\textrm{-equivariant}$ map $\OARpi^{\textpd} \rightarrow \fERbar^{[u, v]}$ (see Remark \ref{rem:oarpd_erpd}) and compatible with the respective Frobenii, $\ARbar^{[u, v]}\textrm{-linear}$ connections and the actions of $G_R$; the vertical maps are extensions of scalars along the map $\fERbar^{[u, v]} \rightarrow \pazb$ (see Lemma \ref{lem:fil_B_er}).
	Now, by using the definition of filtrations on each term (see \eqref{eq:filr_ds} and \eqref{eq:fil_ns}) and the isomorphism in \eqref{eq:fil_nB_alpha}, the top horizontal arrow induces the following $\fERbar^{[u, v]}\textrm{-linear}$ isomorphism, for each $r \in \ZZ$, 
	\begin{equation}\label{eq:fil_neruv}
		\alpha : \Fil^r \big(\fERbar^{[u, v]} \otimes_R M[1/p]\big) \isomorphic \Fil^r \big(\fERbar^{[u, v]} \otimes_{\AR^+} \mbfn(V)\big).
	\end{equation}
	As the source of \eqref{eq:fil_neruv} is stable under the natural action of $G_R$ on $\fERbar^{[u, v]} \otimes_R M[1/p]$ and the top horizontal arrow of \eqref{eq:crys_wach_eruv} is $G_R\textrm{-equivariant}$, therefore, it follows that the target of \eqref{eq:fil_neruv} is stable under the natural action of $G_R$ on $\fERbar^{[u, v]} \otimes_{\AR^+} \mbfn(V)$.
	Finally, note that we have the $G_R\textrm{-equivariant}$ inclusion $S \subset \fERbar^{[u, v]}$, so by using Remark \ref{rem:ns_fil_induced}, we obtain that $\Fil^r N_S$ is stable under the natural action of $G_R$ on $N_S$.
\end{proof}

\begin{rem}\label{rem:fil_ms_ns}
	Let $S$ be any ring out of $\OAcrys(\Rbar)$, $\fERpi^{\smstar}$ for $\smstar \in \{\textpd, [u], [u, v]\}$, or $\fERbar^{\smstar}$ for $\smstar \in \{\textpd, [u], [u, v]\}$.
	Then, by Lemma \ref{lem:fil_ns_Gstable} we get that, for each $r \in \ZZ$, the isomorphism in \eqref{eq:fil_neruv} induces a $G_R\textrm{-equivariant}$ $S\textrm{-linear}$ isomorphism,
	\begin{equation}\label{eq:fil_ms_ns}
		\alpha : \Fil^r (S \otimes_R M[1/p]) \isomorphic \Fil^r (S \otimes_{\AR^+} \mbfn(V)).
	\end{equation}
	In particular, as the connection on $S \otimes_R M[1/p]$ satisfies Griffiths transversailty with respect to the filtration, therefore, similar to Remark \ref{rem:crys_wach_comparison_struct}, it follows that the connection on $S \otimes_{\AR^+} \mbfn(V)$ satisfies Griffiths transversality with respect to the filtration in \eqref{eq:fil_ns}.
\end{rem}

\begin{rem}\label{rem:fil_neruv_explicit}
	Let $E = \fERpi^{\bmstar}$ or $\fERbar^{\bmstar}$, for $\smstar \in \{\textpd, [u], [u, v]\}$ and we claim that $\Fil^r (E \otimes_{\AR^+} \mbfn(V)) = \sum_{i+j=r} \Fil^i E \cdot \Fil^j \mbfn(V)$, where $\Fil^i E \cdot \Fil^j \mbfn(V)$ denotes the image of $\Fil^i E \otimes_{\AR^+} \Fil^j \mbfn(V) \rightarrow E \otimes_{\AR^+} \mbfn(V)$.
	Indeed, using Lemma \ref{lem:fil_B_er}, Remark \ref{rem:wach_mod_fil_ainf} and \eqref{eq:fil_nB_alpha}, it easily follows that $\Fil^i E \cdot \Fil^j \mbfn(V) \subset \Fil^r(\pazb \otimes_{\AR^+} \mbfn(V))$, in particular, from \eqref{eq:fil_ns} we deduce that $\sum_{i+j=r} \Fil^i E \cdot \Fil^j \mbfn(V) \subset \Fil^r (E \otimes_{\AR^+} \mbfn(V))$.
	To show the reverse inclusion, recall that $\Fil^r M[1/p] \isomorphic \Fil^r \ODcrys(V)$ is a finite projective $R[1/p]\textrm{-module}$ (see Theorem \ref{thm:crys_wach_comparison} and \cite[Proposition 8.3.2]{brinon-padicrep-relatif}), in particular, flat as an $R\textrm{-module}$ and the natural map $\Fil^i E \otimes_R \Fil^j M[1/p] \rightarrow E \otimes_R M[1/p]$ is injective by Lemma \ref{lem:fil_gr_ds}, for each $i, j \in \NN$; we denote the image as $\Fil^i E \cdot \Fil^j M[1/p]$ and note that $\Fil^r (E \otimes_R M[1/p]) = \sum_{i+j=r} \Fil^i E \otimes_R \Fil^j M[1/p] = \sum_{i+j=k} \Fil^i E \cdot \Fil^j M[1/p]$.
	Now, since the isomorphism $E \otimes_R M[1/p] \isomorphic E \otimes_{\AR^+} \mbfn(V)$ is given by the natural multiplication map and the filtration on $M[1/p]$ is given as the tensor product filtration (see Remark \ref{rem:crys_wach_comparison_struct}), therefore, we obtain that the natural map $\sum_{i+j=k} \Fil^i E \cdot \Fil^j M[1/p] \rightarrow \sum_{i+j=r} \Fil^i E \cdot \Fil^j \mbfn(V)$ is injective.
	But from \eqref{eq:fil_ms_ns}, we have that $\Fil^r (E \otimes_R M[1/p]) \isomorphic \Fil^r (E \otimes_{\AR^+} \mbfn(V))$.
	Hence, it follows that $\Fil^r (E \otimes_{\AR^+} \mbfn(V)) = \sum_{i+j=r} \Fil^i E \cdot \Fil^j \mbfn(V)$.
\end{rem}

Next, let $S = \ARpi^{\bmstar}$ for $\smstar \in \{+, \textpd, [u], [u, v], (0, v]+\}$ or $\fERpi^{\bmstar}$ for $\smstar \in \{\textpd, [u], [u, v]\}$ and set $N_S := S \otimes_{\AR^+} \mbfn(T)$.
Then, we have the following:
\begin{lem}\label{lem:ns_fil_pi_cap}
	For each $r \in \ZZ$, we have that $\Fil^r N_S \cap \pi N_S = \pi \Fil^{r-1}N_S$.
\end{lem}
\begin{proof}
	Note that the claim is clear for $r \leqslant 0$, so let $r \geqslant 1$.
	Let $S' = \fERpi^{[u, v]}$ and using the definition of the filtration on $N_{S'}[1/p]$ in \eqref{eq:fil_ns}, the $S'\textrm{-linear}$ isomorphism in \eqref{eq:fil_neruv} and Lemma \ref{lem:ms_fil_pi_cap}, note that
	\begin{align*}
		\Fil^r N_{S'}[1/p] \cap \pi N_{S'}[1/p] &= \alpha(\Fil^r(S' \otimes_R M[1/p])) \cap \alpha(\pi S' \otimes_R M[1/p])\\
		&= \alpha(\Fil^r(S' \otimes_R M[1/p]) \cap \pi (S' \otimes_R M[1/p]))\\
		&= \alpha(\pi \Fil^{r-1} (S' \otimes_R M[1/p])) = \pi \Fil^{r-1} N_{S'}[1/p].
	\end{align*}
	In particular, we get that $\Fil^r N_{S'} \cap \pi N_{S'} = \pi \Fil^{r-1} N_{S'}[1/p] \cap \pi N_{S'} = \pi \Fil^{r-1} N_{S'}$.
	Now, by using the definition of the filtration on $N_S$ in \eqref{eq:fil_ns}, Remark \ref{rem:ns_fil_induced} and the equality above, we get that $\Fil^r N_S \cap \pi N_S \subset \pi \Fil^{r-1} N_{S'} \cap \pi N_S = \pi \Fil^{r-1} N_S$.
	The other inclusion, i.e.\ $\pi \Fil^{r-1}N_S \subset \Fil^r N_S \cap \pi N_S$, is obvious.
\end{proof}

\begin{lem}\label{lem:fil_as_prod}
	For each $r \in \ZZ$, we have that $\Fil^r N_S[1/p] = \sum_{i+j=r} \Fil^i S \cdot \Fil^j \mbfn(V)$, where $\Fil^i S \cdot \Fil^j \mbfn(V)$ denotes the image of $\Fil^i S \otimes_{\AR^+} \Fil^j \mbfn(V) \rightarrow N_S[1/p]$.
\end{lem}
\begin{proof}
	Note that the claim for $\fERpi^{\bmstar}$ was shown in Remark \ref{rem:fil_neruv_explicit}.
	For $\ARpi^{\bmstar}$, the claim for $\smstar \in \{\textpd, [u], [u, v]\}$ follows from the proof of Lemma \ref{lem:fil_poincare_lem_na} (see Remark \ref{rem:fil_as_prod}) and for $\ARpi^+$, the claim follows from Lemma \ref{lem:fil_nrpi+}.
	So, it remains to show the claim for $\ARpi^{(0, v]+}$.
	Let $S = \ARpi^{(0, v]+}$, $A = \ARpi^+$, $B = \ARpi^{[u]}$, $C = \ARpi^{[u, v]}$, and $N[1/p] = \mbfn(V)$.
	Note that by definition, we have $C = S + B$ and the ideal $\Fil^i C$ is topologically generated by $(\Fil^i S + \Fil^i B)C$, for all $i \in \NN$ (see Remark \ref{rem:fil_pd_u}).
	Moreover, from Remark \ref{rem:fil_as_prod}, we have that $\Fil^r N_B[1/p] = \sum_{i+j=r} \Fil^i B \cdot \Fil^j N[1/p]$ and $\Fil^r N_C[1/p] = \sum_{i+j=r} \Fil^i C \cdot \Fil^j N[1/p]$.
	So, by setting $M := \sum_{i+j=r} \Fil^i S \cdot \Fil^j N[1/p]$, we see that $\Fil^r N_C[1/p] = \sum_{i+j=r} \Fil^i C \cdot \Fil^j N[1/p] = M + \Fil^r N_B[1/p] = \Fil^r N_S[1/p] + \Fil^r N_B[1/p]$.
	Now, consider the following diagram with exact rows:
	\begin{center}
		\begin{tikzcd}[row sep=small]
			0 \arrow[r] & M \arrow[r] \arrow[d] & M + \Fil^r N_B[1/p] \arrow[r] \arrow[d, equal] & (\Fil^r N_B[1/p])/(M \cap \Fil^r N_B[1/p]) \arrow[r] \arrow[d] & 0\\
			0 \arrow[r] & \Fil^r N_S[1/p] \arrow [r] & \Fil^r N_C[1/p] \arrow [r] & (\Fil^r N_C[1/p])/(\Fil^r N_S[1/p]) \arrow[r] & 0,
		\end{tikzcd}
	\end{center}
	where the left vertical arrow is injective (by an argument similar to the first part of Remark \ref{rem:fil_neruv_explicit}).
	To get the claim, it is enough to show that the right vertical arrow is bijective.
	Note that we have $(\Fil^r N_C[1/p])/(\Fil^r N_S[1/p]) = (\Fil^r N_S[1/p] + \Fil^r N_B[1/p])/(\Fil^r N_S[1/p]) = (\Fil^r N_B[1/p])/(\Fil^r N_S[1/p] \cap \Fil^r N_B[1/p])$.
	It is clear that $M \cap \Fil^r N_B[1/p] \subset \Fil^r N_S[1/p] \cap \Fil^r N_B[1/p]$, and we claim that the reverse inclusion also holds.
	Indeed, as $N[1/p]$ is a finite projective $\AR^+[1/p]\textrm{-module}$ and $A = S \cap B \subset C$, therefore, we get that $N_A[1/p] = N_S[1/p] \cap N_B[1/p] \subset N_C[1/p]$.
	Then, it follows that $\Fil^r N_S[1/p] \cap \Fil^r N_B[1/p] \subset N_S[1/p] \cap N_B[1/p] = N_A[1/p]$, in particular, we see that $\Fil^r N_S[1/p] \cap \Fil^r N_B[1/p] = \Fil^r N_A[1/p] \cap \Fil^r N_B[1/p] \subset M \cap \Fil^r N_B[1/p]$, where the equality follows from Remark \ref{rem:ns_fil_induced} and the inclusion follows by using the description of $\Fil^r N_A[1/p]$ from Lemma \ref{lem:fil_nrpi+}.
	So, we obtain that the right vertical arrow in the diagram is bijective, and hence, the left vertical arrow is bijective as well, i.e.\ $\Fil^r N_S[1/p] = \sum_{i+j=r} \Fil^i S \cdot \Fil^j \mbfn(V)$.
\end{proof}

Set $\Fil^i \Ainf(\Rbar) := \Ainf(\Rbar) \cap \Fil^i \Acrys(\Rbar) = \xi^i \Ainf(\Rbar) \subset \Acrys(\Rbar)$, for $i \in \ZZ$.
\begin{lem}\label{lem:fil_nrpi+}
	For $S = \ARpi^+$ and any $r \in \ZZ$, we have that $\Fil^r N_S[1/p] = (\Fil^r\Ainf(\Rbar) \otimes_{\ZZ_p} V) \cap N_S[1/p] = \sum_{i+j=r} \Fil^i \ARpi^+ \cdot \Fil^j \mbfn(V)$.
\end{lem}
\begin{proof}
	The first equality is obvious from the definition of the filtration on $N_S[1/p]$ in \eqref{eq:fil_ns} and Remark \ref{rem:ns_fil_induced}.
	For the second equality, we will show a stronger claim: $\Fil^r N_S = \sum_{i+j=r} \Fil^i \ARpi^+ \cdot \Fil^j \mbfn(T)$.
	From the first equality, note that we have $\Fil^r N_S = (\Fil^r\Ainf(\Rbar) \otimes_{\ZZ_p} V) \cap N_S = (\Fil^r\Ainf(\Rbar) \otimes_{\ZZ_p} T) \cap N_S$.
	Let us set $F^r N_S := \sum_{i+j=r} \Fil^i \ARpi^+ \cdot \Fil^j \mbfn(T)$, for each $r \in \NN$, and note that the inclusion $F^r N_S \subset \Fil^r N_S$ is obvious.
	To prove the reverse inclusion, we will simplify the claim a bit.
	Note that the natural map $\ARpi^+ \otimes_{\AR^+} \Fil^r \mbfn(T) \rightarrow N_S$ is injective because the morphism $\AR^+ \rightarrow \ARpi^+$ is flat.
	So it follows that we have $F^r N_S = \sum_{i+j=r} \Fil^i \ARpi^+ \otimes_{\AR^+} \Fil^j \mbfn(T) = \xi F^{r-1} N_S + \ARpi^+ \otimes_{\AR^+} \Fil^r \mbfn(T)$.
	Now, to show the inclusion $\Fil^r N_S \subset F^r N_S$, we will proceed by induction on $r \in \NN$.
	The case $r=0$ is trivial, so assume that $r \geqslant 1$ and the claim holds for all $k \leqslant r-1$.
	Let us note that inside $\Ainf(\Rbar) \otimes_{\ZZ_p} T$, we have $\Fil^r N_S \cap \xi \Fil^{r-2} N_S = (\xi^r \Ainf(\Rbar) \otimes_{\ZZ_p} T) \cap N_S \cap (\xi^{r-1} \Ainf(\Rbar) \otimes_{\ZZ_p} T) \cap \xi N_S = \xi \Fil^{r-1} N_S$.
	Therefore, it follows that the natural inclusion $ \Fil^r N_S \subset \Fil^{r-1} N_S$ induces an injective map $(\Fil^r N_S)/(\xi \Fil^{r-1} N_S) \rightarrow (\Fil^{r-1} N_S)/(\xi \Fil^{r-2} N_S)$, where we have,
	\begin{equation*}
		(\Fil^{r-1} N_S)/(\xi \Fil^{r-2} N_S) = (\ARpi^+ \otimes_{\AR^+} \Fil^{r-1} \mbfn(T))/((\ARpi^+ \otimes_{\AR^+} \Fil^{r-1} \mbfn(T)) \cap (\xi \Fil^{r-2} N_S)).
	\end{equation*}
	In particular, given any element $x$ in $\Fil^r N_S$, we can write $x = \xi y + z$, for some $y \in \Fil^{r-1} N_S = F^{r-1} N_S$ and $z \in \ARpi^+ \otimes_{\AR^+} \Fil^{r-1} \mbfn(T)$.
	To obtain the claim, it is enough to show that $z$ is an element of $F^r N_S$.

	Note that we have $\Fil^r N_S = (\xi^r\Ainf(\Rbar) \otimes_{\ZZ_p} T) \cap N_S$, so we see that $z = x - \xi y = \xi^r z'$, for some $z' \in \Ainf(\Rbar) \otimes_{\ZZ_p} T$.
	Recall that we have $\ARpi^+ = \AR^+[\pi_m]$, where $\pi_m = \varphi^{-m}(\pi)$, and it follows that any element $a \in \ARpi^+$ has a unique presentation as $a = \sum_{i=0}^e a_i (1+\pi_m)^{i/p}$, with $a_i \in \AR^+$ and $e = p^{m-1}(p-1)$.
	Now, let us write $z = \sum_j f_j n_j$, for some $f_j \in \ARpi^+$ and $n_j \in \Fil^{r-1} \mbfn(T)$.
	Then, expressing each $f_j$ as above, i.e.\ in terms of the powers of $1+\pi_m$, and rearranging the sum for $z$ in terms of the powers of $1+\pi_m$, we get that $z = \sum_{i=0}^e z_i (1+\pi_m)^{i/p}$, for some $z_i \in \Fil^{r-1} \mbfn(T)$ (obtained from elements $n_j$ above).
	Now, by using Remark \ref{rem:wach_mod_fil_ainf}, we can write each $z_i$ as $\xi^{r-1} w_i$, for some $w_i \in \Ainf(\Rbar) \otimes_{\ZZ_p} T$.
	Plugging the values of $z$ and $z_i$ into the equality $z = \sum_{i=0}^e z_i (1+\pi_m)^{i/p}$ and noting that $\Ainf(\Rbar) \otimes_{\ZZ_p} T$ is $\xi\textrm{-torsion}$ free, we get that $\xi z' = \sum_{i=0}^e w_i (1+\pi_m)^{i/p}$.
	Reducing the latter equality modulo $\xi \Ainf(\Rbar) \otimes_{\ZZ_p} T$, we obtain the equality $\sum_{i=0}^e w_i \zeta_{p^m}^{i/p} = 0 \textrm{ mod } \xi$ in $\CC^+(\Rbar) \otimes_{\ZZ_p} T$, which is possible only if $w_0 = w_1 = \cdots = w_e \textrm{ mod } \xi \Ainf(\Rbar) \otimes_{\ZZ_p} T$.
	So we write $\xi z' = \xi w_0 + \sum_{i=1}^e (w_i-w_0) (1+\pi_m)^{i/p}$, with $w_i-w_0 \in \xi\Ainf(\Rbar) \otimes_{\ZZ_p} T$, for each $1 \leqslant i \leqslant e$.
	In particular, we get that $z = \xi^r z' = \xi^r w_0 + \sum_{i=1}^e \xi^{r-1}(w_i-w_0) (1+\pi_m)^{i/p} = \xi z_0 + \sum_{i=1}^e (z_i-z_0) (1+\pi_m)^{i/p}$.
	Note that $z_0$ is in $\Fil^{r-1} \mbfn(T)$ and $z_i - z_0 = \xi^{r-1}(w_i-w_0)$ is in $(\xi^r\Ainf(\Rbar) \otimes_{\ZZ_p} T) \cap \Fil^{r-1} \mbfn(T) = \Fil^r \mbfn(T)$ (see Remark \ref{rem:wach_mod_fil_ainf}), for each $1 \leqslant i \leqslant e$.
	Therefore, it follows that $z$ belongs to $\xi \Fil^{r-1} N_S + \ARpi^+ \otimes_{\AR^+} \Fil^r \mbfn(T) = F^r N_S$.
	This allows us to conclude.
\end{proof}

Next, let $k \in \ZZ$ and consider the $p\textrm{-adic}$ representation $V(k)$ of $G_R$.
Using \eqref{eq:fil_ns} and Lemma \ref{lem:fil_ns_Gstable}, we define a $\Gamma_R\textrm{-stable}$ filtration on $\fERpi^{[u, v]} \otimes_{\AR^+} \mbfn(V(k))$ as follows:
\begin{equation}\label{eq:fil_neruv_k}
	\Fil^r \big(\fERpi^{[u, v]} \otimes_{\AR^+} \mbfn(V(k))\big) := \pi^{-k} \Fil^{r+k} \big(\fERpi^{[u, v]} \otimes_{\AR^+} \mbfn(V)\big)(k).
\end{equation}
From the explicit description of the filtration in Remark \ref{rem:fil_neruv_explicit} and by using Lemma \ref{lem:wach_mod_twist_fil}, it follows that we have $\Fil^r \big(\fERpi^{[u, v]} \otimes_{\AR^+} \mbfn(V(k))\big) = \sum_{i+j=r} \Fil^i \fERpi^{[u, v]} \cdot \Fil^j \mbfn(V(k))$.
Furthermore, let $S \subset \fERpi^{[u, v]}$ be as above (see before Lemma \ref{lem:ns_fil_pi_cap}).
Then, we note that we have a natural embedding $S \otimes_{\AR^+} \mbfn(T(k)) \rightarrow \fERpi^{[u, v]} \otimes_{\AR^+} \mbfn(V(k))$, and we equip the former with an induced $\Gamma_R\textrm{-stable}$ filtration, i.e.\ for each $r \in \ZZ$, set
\begin{equation}\label{eq:fil_ns_k}
	\Fil^r \big(S \otimes_{\AR^+} \mbfn(T(k))\big) := \big(S \otimes_{\AR^+} \mbfn(T(k))\big) \cap \Fil^r \big(\fERpi^{[u, v]} \otimes_{\AR^+} \mbfn(V(k))\big) \subset \fERpi^{[u, v]} \otimes_{\AR^+} \mbfn(V(k)).
\end{equation}
Using \eqref{eq:fil_neruv_k} and Remark \ref{rem:ns_fil_induced}, it easily follows that,
\begin{lem}\label{lem:fil_ns_k_twist}
	For each $r \in \ZZ$, we have $\Fil^r \big(S \otimes_{\AR^+} \mbfn(T(k))\big) = \pi^{-k} \Fil^{r+k} \big(S \otimes_{\AR^+} \mbfn(T)\big)(k)$.
\end{lem}

\subsubsection{Filtered Poincar\'e Lemma}\label{subsubsec:fil_poincare_lem_wach}

In the notation of \S \ref{subsubsec:crys_fil_poincare_lem}, let us set $A = \ARpi^{\bmstar}$ (resp.\ $\ARbar^{\bmstar}$), $B = \Rpi^{\bmstar}$ and $E = \fERpi^{\bmstar}$ (resp.\ $\fERbar^{\bmstar}$), for $\smstar \in \{\textpd, [u], [u, v]\}$.
Let $\omega_0 := \frac{d X_0}{1+X_0}$ and $\omega_i := \frac{d X_i}{X_i}$, for $1 \leqslant i \leqslant d$.
Set $\Omega^1 := \oplus_{i=1}^d \ZZ \omega_i$ and $\Omega^k := \bmwedge^k \Omega^1$.
Then, we have $\Omega^k_{E/A} = E \otimes_{\ZZ} \Omega^k$ and from Remark \ref{rem:elements_of_pd_ring} (iv), note that for $r \in \ZZ$, we have the following filtered de Rham complex of $E$ relative to $A$,
\begin{equation*}
	\Fil^r \Omega^{\bullet}_{E/A} := \Fil^r E \longrightarrow \Fil^{r-1} E \otimes_{\ZZ} \Omega^1 \longrightarrow \Fil^{r-2} E \otimes_{\ZZ} \Omega^2 \longrightarrow \cdots.
\end{equation*}

Let $T$ be a positive finite $q\textrm{-height}$ $\ZZ_p\textrm{-representation}$ of $G_R$ as above and assume that $\mbfn(T)$ is finite free over $\AR^+$.
Let us set $N_A := A \otimes_{\AR^+} \mbfn(T)$, equipped with a filtration as in \eqref{eq:fil_ns}, and similarly, we set $N_E := E \otimes_{\AR^+} \mbfn(T)$, equipped with a filtration as in \eqref{eq:fil_ns}.
Note that the $A\textrm{-linear}$ differential operator on $E$ induces a quasi-nilpotent integrable connection $\partial : N_E \rightarrow N_E \otimes_E \Omega_{E/A}^1$ satisfying Griffiths transversality with respect to the filtration (since the same is true after inverting $p$, see Remark \ref{rem:fil_ms_ns}).
In particular, for each $r \in \ZZ$, we have the following filtered de Rham complex,
\begin{align*}
	\Fil^r N_E \otimes \Omega^{\bullet}_{E/A} &:= \Fil^r N_E \longrightarrow \Fil^{r-1} N_E \otimes_E \Omega_{E/A}^1 \longrightarrow \Fil^{r-2} N_E \otimes_E \Omega_{E/A}^2 \longrightarrow \cdots \\
	&\hspace{2mm}= \Fil^r N_E \longrightarrow \Fil^{r-1} N_E \otimes_{\ZZ} \Omega^1 \longrightarrow \Fil^{r-2} N_E \otimes_{\ZZ} \Omega^2 \longrightarrow \cdots.
\end{align*}
Using the equality $N_A = N_E^{\partial=0}$ and \eqref{eq:fil_ns}, we note that $\Fil^r N_A = \Fil^r N_E \cap N_E^{\partial = 0} = (\Fil^r N_E)^{\partial=0}$.
Then, we have the following filtered Poincar\'e Lemma:

\begin{lem}\label{lem:fil_poincare_lem_na}
	The natural map $\Fil^r N_A \rightarrow \Fil^r N_E \otimes \Omega^{\bullet}_{E/A}$ is a quasi-isomorphism.
\end{lem}
\begin{proof}
	The claim follows by employing an argument similar to the proof of Lemma \ref{lem:fil_poincare_lem_mb}, where we use the description of filtration on $N_E[1/p]$ from Remark \ref{rem:fil_neruv_explicit}.
	We omit the details.
\end{proof}

\begin{rem}\label{rem:fil_as_prod}
	From the proof of Lemma \ref{lem:fil_poincare_lem_na}, using the map $h^0 : \Fil^r N_E[1/p] \rightarrow \Fil^r N_A[1/p]$, it follows that for any $r \in \ZZ$, we have $\Fil^r N_A[1/p] = \sum_{i+j=r} \Fil^i A \cdot \Fil^j \mbfn(V)$, where $\Fil^i A \cdot \Fil^j \mbfn(V)$ denotes the image of $\Fil^i A \otimes_{\AR^+} \Fil^j \mbfn(V) \rightarrow A \otimes_{\AR^+} \mbfn(V)$.
\end{rem}

\subsection{Relative Fontaine--Laffaille modules}\label{subsec:fontaine_laffaile_to_wach}

In this subsection we will consider the category of relative Fontaine--Laffaille modules $\MF_{[0, s], \free}(R, \Phi, \partial)$ defined in \cite[\S 4]{tsuji-ainf-genrep} as a full subcategory of the abelian category $\mathfrak{MF}_{[0, s]}^{\nabla}(R)$ introduced in \cite[\S II]{faltings-crystalline}.
Let $s \in \NN$ such that $s \leqslant p-2$.
\begin{defi}\label{defi:rel_fontaine_laffaille}
	Define the category of \textit{free relative Fontaine--Laffaille} modules of level $[0, s]$, denoted by $\MF_{[0, s], \free}(R, \Phi, \partial)$, as follows:\\
	An object with weights/level in the interval $[0, s]$ is a quadruple $(M, \Fil^{\bullet} M, \partial, \Phi)$ such that,
	\begin{enumromanup}
	\item $M$ is a free $R\textrm{-module}$ of finite rank.
		It is equipped with a decreasing filtration $\{\Fil^k M\}_{k \in \ZZ}$ by finite $R\textrm{-submodules}$, with $\Fil^0 M = M$ and $\Fil^{s+1} M = 0$, and such that $\gr^k_{\Fil} M$ is a finite free $R\textrm{-module}$ for all $k \in \ZZ$.

	\item The connection $\partial : M \rightarrow M \otimes_{R} \Omega^1_{R}$ is quasi-nilpotent and integrable and satisfies Griffiths transversality with respect to the filtration, i.e.\ $\partial(\Fil^k M) \subset \Fil^{k-1} M \otimes_{R} \Omega^1_{R}$ for all $k \in \ZZ$.	
	
	\item Let $(\varphi^{\ast}(M), \varphi^{\ast}(\partial))$ denote the pullback of $(M, \partial)$ by $\varphi : R \rightarrow R$ and equip it with a decreasing filtration $\Fil^k_p(\varphi^{\ast}(M)) = \sum_{i \in \NN} (p^i/i!) \varphi^{\ast}(\Fil^{k-i} M)$, for $k \in \ZZ$.
		Suppose that there is an $R\textrm{-linear}$ morphism $\Phi : \varphi^{\ast}(M) \rightarrow M$ such that $\Phi$ is compatible with connections, $\Phi\big(\Fil^k_p(\varphi^{\ast}(M))\big) \subset p^k M$ for $0 \leqslant k \leqslant s$, and $\sum_{k=0}^{s} p^{-k}\Phi\big(\Fil^k_p(\varphi^{\ast}(M))\big) = M$.
		Denote the composition $M \rightarrow \varphi^{\ast}(M) \xrightarrow{\Phi} M$ by $\varphi$.
	\end{enumromanup}
	A morphism between two objects of the category $\MF_{[0, s], \free}(R, \Phi, \partial)$ is a continuous $R\textrm{-linear}$ map compatible with the homomorphism $\Phi$ and the connection $\partial$ on each side.
\end{defi}

\begin{rem}
	In Definition \ref{defi:rel_fontaine_laffaille} (iii), note that $\varphi^*(M)$ denotes the $R\textrm{-module}$ $R \otimes_{\varphi, R} M$ on which the $O_F\textrm{-linear}$ connection is given by the formula $\varphi^*(\partial)(a \otimes x) = da \otimes x + a \otimes \partial(x)$, for any $a$ in $R$ and $x$ in $M$.
	Furthermore, compatibility of the $R\textrm{-linear}$ morphism $\Phi : \varphi^*(M) \rightarrow M$ with connections means that for any $a$ in $R$ and $x$ in $M$, we must have $\partial \circ \Phi(a \otimes x) = \Phi \circ \varphi^*(\partial)(a \otimes x)$.
\end{rem}

To an object $M$ in $\MF_{[0, s], \free}(R, \varphi, \Fil)$, we can functorially associate a $\ZZ_p\textrm{-module}$ as $T_{\crys}^{\ast}(M) := \Hom_{R, \hspace{0.3mm}\Fil, \hspace{0.3mm}\varphi, \hspace{0.3mm}\partial}(M, \pazo \mbfa_{\crys}(\overline{R}))$, i.e.\ $R\textrm{-linear}$ maps from $M$ to $\pazo \mbfa_{\crys}(\overline{R})$, compatible with the respective Frobenii, filtrations and connections.
Set $T_{\crys}(M) := \Hom_{\ZZ_p}(T_{\crys}^{\ast}(M), \ZZ_p)$ and note that it is a finite free $\ZZ_p\textrm{-module}$ of rank $=\textup{rk}_R M$, admitting a continuous action of $G_R$.
By \cite{faltings-crystalline} and \cite{tsuji-ainf-genrep}, the $\padic$ representation $V_{\crys}(M) := \QQ_p \otimes_{\ZZ_p} T_{\crys}(M)$ is crystalline with Hodge--Tate weights in the interval $[-s, 0]$.

\begin{thm}[{\cite[Theorem 5.4]{abhinandan-crystalline-wach}}]\label{thm:fl_to_wach}
	For a free relative Fontaine--Laffaille module $M$ over $R$ of level $[0, s]$, the associated $\padic$ representation $V_{\crys}(M) := \QQ_p \otimes_{\ZZ_p} T_{\crys}(M)$ of $G_R$ is a positive finite $q\textrm{-height}$ representation (in the sense of Definition \ref{defi:wach_reps}).
\end{thm}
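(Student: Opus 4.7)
The plan is to construct the candidate Wach module $\mbfn(T_{\crys}(M))$ explicitly from the Fontaine-Laffaille data $(M, \Fil^\bullet M, \Phi, \partial)$, and then verify each of the four defining properties of Definition \ref{defi:wach_reps}. Throughout, the key technical observation is the relation $q = p + \binom{p}{2}\pi + \cdots + \pi^{p-1}$ in $\AR^+$, so that $q \equiv p \pmod{\pi}$ but $q \equiv \pi^{p-1} \pmod{p}$; this lets us trade the $p$-adic bound coming from the Fontaine-Laffaille filtration for the $q$-adic bound required of a Wach module.

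First I would fix a Frobenius-compatible embedding $R \hookrightarrow \AR^+$ sending $X_i \mapsto [X_i^{\flat}]$, whose composition with $\theta : \AR^+ \twoheadrightarrow R$ is the identity. Using this embedding, form the free $\AR^+$-module $N_0 := \AR^+ \otimes_R M$ of rank $\operatorname{rk}_R M$, equip it with the tensor-product Frobenius $\varphi_{\AR^+} \otimes \Phi$, the $\Gamma_R$-action coming from $\AR^+$ (trivial on the $M$-factor), and the tensor-product filtration $\Fil^k N_0 := \sum_{i+j=k} \pi^i \AR^+ \otimes_R \Fil^j M$ (using $\Fil^i \AR^+ = \pi^i \AR^+$). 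Because the connection on $M$ is $p$-adically quasi-nilpotent and the map $R \hookrightarrow \AR^+$ is a section of $\theta$, one can build the correct $\Gamma_R$-semilinear action (Taylor series in $\pi$) modifying the trivial one so that the two structures are compatible mod $\pi$.

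Second, following the Wach-Berger-style modification, define
\begin{equation*}
\mbfn := \sum_{k=0}^{s} q^{s-k}\,\varphi\bigl(\AR^+ \otimes_R \Fil^k M\bigr) \subset N_0\bigl[\tfrac{1}{q}\bigr].
\end{equation*}
This submodule is stable under $\varphi$ and $\Gamma_R$ by construction (using that the filtration, Frobenius and $\Gamma_R$-action are compatible on $N_0$), and the formula $\sum_{k=0}^s p^{-k} \Phi(\Fil^k_p \varphi^*(M)) = M$ from the Fontaine-Laffaille axioms translates, after comparing $p$ with $q$ in the relevant PD-completions via Lemma \ref{lem:t_over_pi_unit} and the structure of $\OARpi^{\textpd}$, into the bound $q^s \mbfn \subset \varphi^*(\mbfn)$; this verifies property (ii) with height exactly $s$. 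The triviality of the $\Gamma_R$-action on $\mbfn/\pi\mbfn$ (property (iii)) follows because reducing modulo $\pi$ identifies $\mbfn/\pi\mbfn$ with $M$ (or a twist thereof), on which $\Gamma_R$ acts trivially. Property (i), i.e.\ $\AR \otimes_{\AR^+} \mbfn \simeq \mbfd(T_{\crys}(M))$, follows from inverting $\pi$ (hence $q$) and invoking Theorem \ref{thm:crys_wach_comparison} combined with Proposition \ref{prop:tcrysast_fl}, which identifies the crystalline representation attached to $M$. Property (iv) is automatic with $R' = R$ since $N_0$ and hence $\mbfn$ is free over $\AR^+$.

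The final step is to match the resulting representation with $T_{\crys}(M)$. For this I would use the comparison isomorphism in Theorem \ref{thm:crys_wach_comparison}(iii) applied to any positive finite $q$-height representation $V$ built from a Wach module as above: the $\Gamma_R$-invariants of $\OARpi^{\textpd} \otimes_{\AR^+} \mbfn$ recover $\ODcrys(V_{\crys}(M))$ together with its Frobenius, filtration and connection, which by the full faithfulness part of Proposition \ref{prop:tcrysast_fl} identifies $V$ with $V_{\crys}(M)$. The main obstacle will be the careful comparison in step two: ensuring that the $p$-adic height bound on the Frobenius cokernel of the Fontaine-Laffaille module translates precisely to the $q$-adic bound defining the Wach module of height $s$, and that the $\Gamma_R$-action constructed via the Taylor series in $\pi$ (which depends on the connection $\partial$) genuinely lifts to a continuous action and is compatible with $\varphi$; both require delicate $(p, \pi)$-adic convergence arguments exploiting $s \leq p-2$.
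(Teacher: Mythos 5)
The paper does not prove this statement itself; it is quoted from \cite[Theorem 5.5]{abhinandan-crystalline-wach}, whose proof (as Remark \ref{rem:fl_wach_comparison} indicates) runs through the integral comparison $\OARpi^{\textpd}\otimes_{\AR^+}\mbfn(T)\simeq\OARpi^{\textpd}\otimes_R M$ established in Propositions 5.25 and 5.29 of that reference. Your proposal is in the right spirit — a Berger/Wach-style construction $\mbfn=\sum_{k=0}^{s}q^{s-k}\varphi(\AR^+\otimes_R\Fil^k M)$ — but it has two genuine gaps.

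First, the $\Gamma_R$-action you want to put on $N_0=\AR^+\otimes_R M$ by ``Taylor series in $\pi$'' does not converge there. Since $\gamma_i([X_i^{\flat}])-[X_i^{\flat}]=\pi[X_i^{\flat}]$, the transport of structure along the non-equivariant embedding $R\hookrightarrow\AR^+$ produces terms $\tfrac{\pi^{|\smbfk|}}{\smbfk !}[X^{\flat}]^{\smbfk}\partial^{\smbfk}(m)$, and $\AR^+$ contains no divided powers of $\pi$ (nor is $\pi$ divisible by $p$), so $p$-adic quasi-nilpotence of $\partial$ does not rescue the series. This is precisely why the comparison in Theorem \ref{thm:crys_wach_comparison} lives over the PD-ring $\OARpi^{\textpd}$: the honest argument constructs everything over $\OAcrys(\Rbar)$ (where the Taylor series converges) and then performs a non-trivial descent to $\AR^+$ — that descent is the actual content of the cited Theorem 5.5, and your sketch omits it. (Your own definition of $\mbfn$ as a sum of $\varphi$-images partially mitigates this, since $\partial\varphi=p\varphi\partial$ forces extra $p$-divisibility on $\partial^{\smbfk}(\varphi(m))$, but you would then have to define the action directly on $\mbfn$ and prove stability, which you do not do.) Second, your verification of property (i) — that $\AR\otimes_{\AR^+}\mbfn\simeq\mbfd(T_{\crys}(M))$ — by ``invoking Theorem \ref{thm:crys_wach_comparison}'' is circular: that theorem takes a finite $q$-height representation as input and cannot be used to identify the representation attached to an abstractly constructed module. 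The correct input here is Faltings' integral Fontaine--Laffaille comparison $\Acrys(\Rbar)\otimes_{\ZZ_p}T^{\ast}_{\crys}(M)\simeq\OAcrys(\Rbar)\otimes_R M$ (valid because $s\leq p-2$), which is what embeds your candidate $\mbfn$ into $\mbfd^+(T_{\crys}(M))$ and pins down the representation; Proposition \ref{prop:tcrysast_fl} alone (full faithfulness of $T^{\ast}_{\crys}$) does not do this.
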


\begin{rem}\phantomsection\label{rem:fl_wach_comparison}
	\begin{enumromanup}
	\item The results of \cite{abhinandan-crystalline-wach} are shown for $s = p-2$.
		However, all the arguments can be adapted almost verbatim (by replacing $p-2$ everywhere by any $0 \leqslant s \leqslant p-2$).

	\item Let $M$ be a free relative Fontaine--Laffaille module over $R$ of level $[0, s]$ and let $T = T_{\crys}(M)$ be its associated $\ZZ_p\textrm{-representation}$ of $G_R$.
		Then, from Theorem \ref{thm:fl_to_wach} we have a free relative Wach module $\mbfn(T)$ over $\AR^+$, associated to $T$.
		Moreover, by combining \cite[Propositions 5.23 \& 5.27]{abhinandan-crystalline-wach} and the proof of \cite[Theorem 5.4]{abhinandan-crystalline-wach}, we have a natural isomorphism $\OARpi^{\textpd} \otimes_R M \isomorphic \OARpi^{\textpd} \otimes_{\AR^+} \mbfn(T)$, compatible with the respective Frobenii, filtrations, connections and the actions of $\Gamma_R$.

	\item From the proof of \cite[Theorem 5.4]{abhinandan-crystalline-wach}, one can observe that $M/\Phi(\varphi^{\ast}(M))$ is $p^s\textrm{-torsion}$ and $s$ equals the maximum among the absolute value of Hodge--Tate weights of $V_{\crys}(M)$.
	\end{enumromanup}
\end{rem}

\begin{rem}\label{rem:modpn_flmodules}
	In Defintion \ref{defi:rel_fontaine_laffaille}, we considered finite free $R\textrm{-modules}$.
	For $R/p^n\textrm{-module}$ $M/p^n$, the associated $\ZZ/p^n\textrm{-representation}$ of $G_R$ is given as $T_{\crys}(M/p^n) = T_{\crys}(M)/p^n$.
	Moreover, we associate a Wach module to $T/p^n = T_{\crys}(M)/p^n$ as $\mbfn(T/p^n) := \mbfn(T)/p^n$ and we have a natural isomorphism $\OARpi^{\textpd}/p^n \otimes_{\AR^+/p^n} \mbfn(T/p^n) \isomorphic \OARpi^{\textpd}/p^n \otimes_{R/p^n} M/p^n$ compatible with the respective Frobenii, filtrations, connections and the actions of $\Gamma_R$ (see \cite[\S 5.3]{abhinandan-crystalline-wach}).
\end{rem}


\section{Galois cohomology complexes}\label{sec:galois_cohomology}

In this section, we will describe Koszul complexes computing the cohomology for the action of $\Gamma_R$ and $\Lie\Gamma_R$ on certain modules.

\subsection{Relative Fontaine--Herr complex}\label{subsec:fontaine_herr_complex}

From \S \ref{subsec:relative_phi_gamma_mod}, recall that we have an equivalence between $\ZZ_p\textrm{-representations}$ of $G_R$ and \'etale $(\varphi, \Gamma_R)\textrm{-modules}$ over $\mbfa_R$, so it is natural to expect that the continuous $G_R\textrm{-cohomology}$ groups of a $\ZZ_p\textrm{-representation}$ $T$ could be computed using its associated \'etale $(\varphi, \Gamma_R)\textrm{-module}$ $\mbfd(T)$.
Below, we will consider the continuous cohomology (for the weak topology) of \'etale $(\varphi, \Gamma_R)\textrm{-modules}$ over $\AR$ and $\AR^{\dagger}$ (see \S \ref{subsec:relative_phi_gamma_mod}).
\begin{defi}
	Let $D$ be an \'etale $(\varphi, \Gamma_R)\textrm{-module}$ over $\AR$ or $\AR^{\dagger}$.
	In the derived category of abelian groups, let $\RGamma_{\cont}(\Gamma_R, D)$ denote the complex of continuous cochains with values in $D$.
\end{defi}

\begin{thm}[{\cite{herr-galois-cohomology}, \cite[Theorem 3.3, Theorem 7.10.6]{andreatta-iovita-relative-phiGamma}}]\label{thm:galois_cohomology_herr_complex}
	Let $T$ in $\Rep_{\ZZ_p}(G_R)$ and let $\mbfd(T)$ and $\mbfd^{\dagger}(T)$ be the associated \'etale $(\varphi, \Gamma_R)\textrm{-module}$ over $\AR$ and $\AR^{\dagger}$, respectively.
	Then we have natural quasi-isomorphisms 
	\begin{align*}
		\big[\RGamma_{\cont}(\Gamma_R, \mbfd(T)) \xrightarrow{1-\varphi} \RGamma_{\cont}(\Gamma_R, \mbfd(T))\big] &\simeq \RGamma_{\cont}(G_R, T),\\
		\big[\RGamma_{\cont}(\Gamma_R, \mbfd^{\dagger}(T)) \xrightarrow{1-\varphi} \RGamma_{\cont}(\Gamma_R, \mbfd^{\dagger}(T))\big] &\simeq \RGamma_{\cont}(G_R, T).
	\end{align*}
\end{thm}

\begin{rem}\label{rem:galcoh_rpi}
	Theorem \ref{thm:galois_cohomology_herr_complex} is also valid for $S = R[\varpi]$, where $\varpi = \zeta_{p^m}-1$, and we replace $G_R$ by $G_S \triangleleft G_R$, $\Gamma_R$ by $\Gamma_S = \Gamma_R' \rtimes \Gamma_K \triangleleft \Gamma_R$ and consider complexes in terms of \'etale $(\varphi, \Gamma_S)\textrm{-modules}$ over respective period rings $\ARpi$ and $\ARpi^{\dagger}$ (defined in an obvious way).
\end{rem}

\subsection{Koszul complexes}\label{subsec:gal_coho_kos_complex}

Recall that $K = F(\zeta_{p^m})$ for $m \in \NN_{\geqslant 1}$.
Let $S = R[\varpi]$ for $\varpi = \zeta_{p^m}-1$.
From \S \ref{subsec:relative_phi_gamma_mod}, recall that $S_{\infty}[1/p] = R_{\infty}[1/p]$ is a Galois extension of $S[1/p]$, with Galois group $\Gamma_S = \Gamma_R' \rtimes \Gamma_K \triangleleft \Gamma_R$.
Also recall that we fixed topological generators $\{\gamma_0, \gamma_1, \ldots, \gamma_d\}$ of $\Gamma_S$ such that $\{\gamma_1, \ldots, \gamma_d\}$ are topological generators of $\Gamma_S' := \Gamma_R'$ and $\gamma_0$ is a lift (to $\Gamma_S$) of a topological generator of $\Gamma_K$.
Furthermore, $\chi$ denotes the $p\textrm{-adic}$ cyclotomic character and recall that $c = \chi(\gamma_0) = \exp(p^m)$.

In this subsection, we will recall the definition of Koszul complexes from \cite[\S 4.2]{colmez-niziol-nearby-cycles} computing continuous $\Gamma_S\textrm{-cohomology}$ of topological modules admitting a continuous action of $\Gamma_S$, in particular, \'etale $(\varphi, \Gamma_S)\textrm{-modules}$ (see Remark \ref{rem:galcoh_rpi}).
Let $\tau_i = \gamma_i - 1$, for $1 \leqslant i \leqslant d$, and set $K(\tau_i) : 0 \longrightarrow \ZZ_p \llbracket \tau_i \rrbracket  \xrightarrow{\hspace{2mm}\tau_i\hspace{2mm}} \ZZ_p \llbracket \tau_i \rrbracket  \longrightarrow 0$, where the middle map is multiplication by $\tau_i$ and the right-hand term is placed in degree 0.
\begin{defi}\label{defi:complex_ktau}
	Define $K(\tau_1, \ldots, \tau_d) := K(\tau_1) \widehat{\otimes}_{\ZZ_p} K(\tau_2) \widehat{\otimes}_{\ZZ_p} \cdots \widehat{\otimes}_{\ZZ_p} K(\tau_d)$, to be the \textit{Koszul complex} associated to $(\tau_1, \ldots, \tau_d)$.
\end{defi}

\begin{rem}\label{rem:koszul_complex_d}
	The degree $q$ term in the complex $K(\tau_1, \ldots, \tau_d)$ (Definition \ref{defi:complex_ktau}) equals the exterior power $\wedge_A^q A^d$, where $A = \ZZ_p \llbracket \tau_1, \ldots, \tau_d \rrbracket \isomorphic \ZZ_p\llbracket \Gamma_S' \rrbracket$, the last term denotes the Iwasawa algebra of $\Gamma_S'$.
	The differential $d^1_{q-1} : \wedge_A^q A^d \rightarrow \wedge_A^{q-1} A^d$ is given as $d^1_{q-1}\big(e_{i_1 \cdots i_q}\big) = \sum_{k=1}^q (-1)^{k+1} e_{i_1 \cdots \widehat{i_k} \cdots i_q} \tau_{i_k}$, in the standard basis $\{e_{i_1 \cdots i_q}, 1 \leqslant i_1 < \cdots < i_q \leqslant d\}$ of $\wedge_A^q A^d$.
	In the category of topological $A\textrm{-modules}$, the augmentation map $A \rightarrow \ZZ_p$ makes $K(\tau_1, \ldots, \tau_d)$ into a resolution of $\ZZ_p$.
	Explicitly, we have that,
	\begin{center}
		\begin{tikzcd}[row sep=large, column sep=large]
			K(\tau_1, \ldots, \tau_d) = 0 \arrow[r] & A^{I_d'} \arrow[r, "d^1_{d-1}"] & \cdots \arrow[r, "d^1_1"] & A^{I_1'} \arrow[r, "d_0^1"] & A \arrow[r] & 0,
		\end{tikzcd}
	\end{center}
	where $A^{I_q'} = \oplus_{I_q'} A$, for $I_q\prm = \{(i_1, \ldots, i_q), \hspace{1mm} 1 \leqslant i_1 < \cdots < i_q \leqslant d\}$, and the differentials are as described above. 
	Similarly, for $c = \chi(\gamma_0)$, we can define the Koszul complex $K(\tau_1^c, \ldots, \tau_d^c)$, where $\tau_i^c := \gamma_i^c - 1$.
\end{rem}

\begin{defi}\label{defi:complex_K_lambda}
	Let $\Lambda := \ZZ_p \llbracket \Gamma_S \rrbracket $ and define the complex
	\begin{center}
		\begin{tikzcd}[row sep=large, column sep=large]
			K(\Lambda) := 0 \arrow[r] & \Lambda^{I_d'} \arrow[r, "d^1_{d-1}"] & \cdots \arrow[r, "d^1_1"] & \Lambda^{I_d'} \arrow[r, "d^1_0"] & \Lambda \arrow[r] & 0,
		\end{tikzcd}
	\end{center}
	where we have $\Lambda^{I_q'} = \oplus_{I_q'} \Lambda$ and the indexing sets $I_q'$ were described in Remark \ref{rem:koszul_complex_d}.
	From \cite[Lemma 4.3]{morita-galois-cohomology}, we have an isomorphism of complexes $\lim_m \ZZ_p\big[\Gamma_K / \hspace{0.5mm} (\Gamma_K)^{p^m}\big] \otimes_{\ZZ_p} K(\tau_1, \ldots, \tau_d) \isomorphic K(\Lambda)$.
	Similarly, one can obtain $K^c(\Lambda)$ from $K(\tau_1^c, \ldots, \tau_d^c)$.
	Both $K(\Lambda)$ and $K^c(\Lambda)$ are resolutions of $\ZZ_p \llbracket \Gamma_K \rrbracket$ in the category of topological left $\Lambda\textrm{-modules}$.
\end{defi}

\begin{exm}
	For $d = 2$, the complex $K(\Lambda)$ in Definition \ref{defi:complex_K_lambda} is given as follows:
	\begin{center}
		\begin{tikzcd}[row sep=large, column sep=large]
			0 \arrow[r] & \Lambda \arrow[r, "d^1_1"] & \Lambda \oplus \Lambda \arrow[r, "d^1_0"] & \Lambda \arrow[r] & 0,
		\end{tikzcd}
	\end{center}
	where $d^1_1(x) = (-x\tau_2, x\tau_1)$ and $d^1_0(y, z) = y\tau_1 + z\tau_2$.
\end{exm}

\begin{defi}\label{defi:tau_0}
	Define a map $\tau_0 : K^c(\Lambda) \rightarrow K(\Lambda)$ by setting in each degree $\tau_0^0 = \gamma_0-1$ and $\tau_0^q : \big(a_{i_1 \cdots i_q}\big) \mapsto \big(a_{i_1 \cdots i_q}\big(\gamma_0 - \delta_{i_1 \cdots i_q}\big)\big)$, for $1 \leqslant q \leqslant d$, $1 \leqslant i_1 < \cdots < i_q \leqslant d$ and $\delta_{i_1 \cdots i_q} = \delta_{i_q} \cdots \delta_{i_1}$, with $\delta_{i_j} = \big(\gamma_{i_j}^c - 1\big)\big(\gamma_{i_j} - 1\big)^{-1}$.
\end{defi}

Let $M$ be a topological $\ZZ_p\textrm{-module}$ admitting a continuous action of $\Gamma_S$. 

\begin{defi}\label{defi:koszul_complex_gammaR_prime}\label{defi:koszul_complex_gamma}
	Define the \textit{$\Gamma_S'\textrm{-Koszul complexes}$} of $M$ by setting $\Kos(\Gamma_S\prm, M) := \Hom_{\Lambda, \cont}(K(\Lambda), M)$ and $\Kos^c(\Gamma_S\prm, M) := \Hom_{\Lambda, \cont}(K^c(\Lambda), M)$.
	Moreover, define the \textit{$\Gamma_S\textrm{-Koszul complex}$} of $M$ as $\Kos(\Gamma_S, M) := \big[\Kos(\Gamma_S\prm, M) \xrightarrow{\hspace{2mm} \tau_0 \hspace{2mm}} \Kos^c(\Gamma_S\prm, M)\big]$.
\end{defi}

\begin{prop}[{\cite[Lazard]{lazard-groupes-analytiques-padiques}, \cite[\S 4.2]{colmez-niziol-nearby-cycles}}]\label{prop:koszul_complex_cont_cohomology}
	There exists a natural quasi-isomorphism of complexes $\Kos(\Gamma_S, M) \simeq \RGamma_{\cont}(\Gamma_S, M)$.
\end{prop}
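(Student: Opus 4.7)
The plan is to realize both sides of the claimed quasi-isomorphism as computing $\textup{RHom}_{\Lambda, \cont}(\ZZ_p, M)$, where $\Lambda := \ZZ_p\llbracket \Gamma_S \rrbracket$ and $\ZZ_p$ carries the trivial $\Gamma_S$-action. The approach is to build an explicit topological free resolution of $\ZZ_p$ over $\Lambda$ by splicing the Koszul complexes $K(\Lambda)$ and $K^c(\Lambda)$ along the morphism $\tau_0$, and then to observe that applying $\Hom_{\Lambda, \cont}(-, M)$ to this resolution reproduces $\Kos(\Gamma_S, M)$ by definition.

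First I would check that $K(\tau_1, \ldots, \tau_d)$ and $K^c(\tau_1, \ldots, \tau_d)$ are topological free resolutions of $\ZZ_p$ over $A := \ZZ_p\llbracket \Gamma_S' \rrbracket = \ZZ_p\llbracket \tau_1, \ldots, \tau_d \rrbracket$. For the former this is classical, since $(\tau_1, \ldots, \tau_d)$ is a regular sequence generating the augmentation ideal. For the latter, one uses that $c = \chi(\gamma_0) = \exp(p^m) \in 1 + p^m\ZZ_p$ is a unit in $\ZZ_p$, whence $\tau_i^c = (1 + \tau_i)^c - 1 = \tau_i \bigl(c + \binom{c}{2}\tau_i + \cdots\bigr)$ is an associate of $\tau_i$ in $A$; hence $(\tau_1^c, \ldots, \tau_d^c)$ is likewise a regular sequence cutting out the same ideal. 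The semidirect decomposition $\Lambda \simeq A \widehat{\otimes}_{\ZZ_p} \ZZ_p\llbracket \Gamma_K \rrbracket$ makes $\Lambda$ free over $A$ on the right, so base change gives the resolutions $K(\Lambda), K^c(\Lambda) \to \ZZ_p\llbracket \Gamma_K \rrbracket$ already recorded in Definition \ref{defi:complex_K_lambda}.

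Next I would verify that $\tau_0 : K^c(\Lambda) \to K(\Lambda)$ of Definition \ref{defi:tau_0} is a chain map lifting multiplication by $\gamma_0 - 1$ on $\ZZ_p\llbracket \Gamma_K \rrbracket$. The key identity is $\gamma_0 \gamma_i \gamma_0^{-1} = \gamma_i^c$ in $\Gamma_S$, which in $\Lambda$ reads $\gamma_0(\gamma_i - 1) = (\gamma_i^c - 1)\gamma_0$; the correction factor $\delta_{i_j} = (\gamma_{i_j}^c - 1)(\gamma_{i_j} - 1)^{-1}$ lies in $\Lambda$ by divisibility in $\ZZ_p\llbracket \tau_{i_j} \rrbracket$, and the product $\delta_{i_1 \cdots i_q}$ is precisely what is required for $\tau_0$ to intertwine the Koszul differentials in each degree. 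The mapping fiber of $\tau_0$ then augments to the mapping fiber of $\gamma_0 - 1$ on $\ZZ_p\llbracket \Gamma_K \rrbracket$, which itself augments to $\ZZ_p$. The spectral sequence of this bicomplex degenerates (rows exact by the previous paragraph, vertical cokernel identified with $\ZZ_p$), so the composite augmentation is a quasi-isomorphism, yielding a topological free resolution of $\ZZ_p$ over $\Lambda$.

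Applying $\Hom_{\Lambda, \cont}(-, M)$ to this resolution produces exactly $\Kos(\Gamma_S, M) = \bigl[\Kos(\Gamma_S', M) \xrightarrow{\tau_0} \Kos^c(\Gamma_S', M)\bigr]$ by Definitions \ref{defi:koszul_complex_gammaR_prime} and \ref{defi:koszul_complex_gamma}, and $\textup{RHom}_{\Lambda, \cont}(\ZZ_p, -)$ computes $\RGamma_{\cont}(\Gamma_S, -)$ by the standard interpretation of continuous cohomology over the completed group algebra. The main obstacle is the homological bookkeeping around $\tau_0$: one must check its chain-map property at each degree using the explicit formula (where the correction factors $\delta_{i_1 \cdots i_q}$ are essential) and confirm that the resulting bicomplex is genuinely a topological resolution. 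Alternatively, one can bypass the explicit resolution and deduce the statement directly from the Hochschild-Serre spectral sequence for $\Gamma_S = \Gamma_S' \rtimes \Gamma_K$, combining the $\Gamma_S'$-level Koszul identification with the fact that $\RGamma_{\cont}(\Gamma_K, N) \simeq [N \xrightarrow{\gamma_0 - 1} N]$ since $\Gamma_K \simeq \ZZ_p$.
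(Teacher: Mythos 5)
Your proposal is correct and follows essentially the argument of the source the paper defers to: the paper states this proposition without proof, citing Lazard and Colmez--Nizio{\l} \S 4.2, and your route --- splicing $K(\Lambda)$ and $K^c(\Lambda)$ along $\tau_0$ (using $\gamma_0\gamma_i\gamma_0^{-1}=\gamma_i^c$ and the unit $\delta_{i_j}$) to get a topologically free resolution of $\ZZ_p$ over $\Lambda$, then applying $\Hom_{\Lambda,\cont}(-,M)$ --- is exactly the mechanism behind Definitions \ref{defi:complex_K_lambda}--\ref{defi:koszul_complex_gamma} and the cited reference. The only point to keep explicit is the Lazard-type input identifying $\textup{RHom}_{\Lambda,\cont}(\ZZ_p,M)$ with continuous cochain cohomology, which you correctly flag and which is precisely what the citation carries.
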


\begin{defi}\label{defi:koszul_phigammaD}
	Let $D$ be an \'etale $(\varphi, \Gamma_S)\textrm{-module}$ over $\ARpi$ and set
	\begin{displaymath}
		\Kos(\varphi, \Gamma_S, D) :=
		\left[
			\vcenter
			{
				\xymatrix
				{
					\Kos(\Gamma_S\prm, D) \ar[r]^{1-\varphi} \ar[d]^{\tau_0} & \Kos(\Gamma_S\prm, D) \ar[d]^{\tau_0} \\
					\Kos^c(\Gamma_S\prm, D) \ar[r]^{1-\varphi} & \Kos^c(\Gamma_S\prm, D)
				}
			}
		\right].
	\end{displaymath}
\end{defi}

Note that from Proposition \ref{prop:koszul_complex_cont_cohomology} and Definition \ref{defi:koszul_phigammaD} we have a natural quasi-isomorphism of complexes $\Kos(\varphi, \Gamma_S, D) \simeq \big[\RGamma_{\cont}(\Gamma_S, D) \xrightarrow{\hspace{1mm} 1-\varphi \hspace{1mm}} \RGamma_{\cont}(\Gamma_S, D)\big]$.
So we conclude the following:
\begin{prop}\label{prop:phigamma_complex_galcoh}
	Let $T$ be in $\Rep_{\ZZ_p}(G_S)$ and $\Dpi(T)$ the associated \'etale $(\varphi, \Gamma_S)\textrm{-module}$ over $\ARpi$.
	Then we have a natural quasi-isomorphism of complexes $\Kos(\varphi, \Gamma_S, \Dpi(T)) \simeq \RGamma_{\cont}(G_S, T)$.
\end{prop}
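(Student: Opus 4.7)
The plan is to assemble the statement from two ingredients already established in the excerpt: the relative Fontaine--Herr description of $\RGamma_{\cont}(G_S, T)$ coming from Theorem \ref{thm:galois_cohomology_herr_complex} (applied in the $S$-variant indicated by Remark \ref{rem:galcoh_rpi}), and the Koszul-theoretic computation of continuous $\Gamma_S$-cohomology coming from Proposition \ref{prop:koszul_complex_cont_cohomology}. Concretely, Theorem \ref{thm:galois_cohomology_herr_complex} gives a natural isomorphism
\begin{equation*}
	\big[\RGamma_{\cont}(\Gamma_S, \mbfd(T)) \xrightarrow{\hspace{1mm} 1-\varphi \hspace{1mm}} \RGamma_{\cont}(\Gamma_S, \mbfd(T))\big] \isomorphic \RGamma_{\cont}(G_S, T),
\end{equation*}
while Proposition \ref{prop:koszul_complex_cont_cohomology}, applied to the topological $\ZZ_p[[\Gamma_S]]$-module $\mbfd(T)$, gives a natural quasi-isomorphism $\Kos(\Gamma_S, \mbfd(T)) \simeq \RGamma_{\cont}(\Gamma_S, \mbfd(T))$. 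So the task reduces to checking that these identifications are compatible with the operator $1-\varphi$ and can be promoted to an isomorphism of mapping fibres.

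First I would unfold $\Kos(\Gamma_S, \mbfd(T))$ using Definition \ref{defi:koszul_complex_gamma} as the mapping fibre $[\Kos(\Gamma_S', \mbfd(T)) \xrightarrow{\tau_0} \Kos^c(\Gamma_S', \mbfd(T))]$, so that $\Kos(\varphi, \Gamma_S, \mbfd(T))$ from Definition \ref{defi:koszul_phigammaD} is manifestly the double mapping fibre of the two commuting endomorphisms $\tau_0$ and $1-\varphi$. The commutation $\tau_0(1-\varphi)=(1-\varphi)\tau_0$ at the level of complexes of continuous cochains is immediate from the fact that the $\Gamma_S$-action on $\mbfd(T)$ commutes with $\varphi$, so the double mapping fibre makes sense and can equivalently be formed in either order.

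Next I would identify this double mapping fibre with $[\RGamma_{\cont}(\Gamma_S, \mbfd(T)) \xrightarrow{1-\varphi} \RGamma_{\cont}(\Gamma_S, \mbfd(T))]$. For this I would apply Proposition \ref{prop:koszul_complex_cont_cohomology} functorially in the $\varphi$-direction: the natural quasi-isomorphism $\Kos(\Gamma_S, -) \simeq \RGamma_{\cont}(\Gamma_S, -)$ is an isomorphism of $\delta$-functors on the category of topological $\ZZ_p[[\Gamma_S]]$-modules, hence intertwines the $\ZZ_p[[\Gamma_S]]$-linear endomorphism $1-\varphi$ of $\mbfd(T)$ on the two sides. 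Taking mapping fibres of this quasi-isomorphism of morphisms then yields $\Kos(\varphi, \Gamma_S, \mbfd(T)) \simeq [\RGamma_{\cont}(\Gamma_S, \mbfd(T)) \xrightarrow{1-\varphi} \RGamma_{\cont}(\Gamma_S, \mbfd(T))]$. Composing with the Fontaine--Herr isomorphism recalled above gives the desired natural quasi-isomorphism $\Kos(\varphi, \Gamma_S, \mbfd(T)) \simeq \RGamma_{\cont}(G_S, T)$.

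The only step requiring a touch of care is the functoriality/compatibility of the Lazard--Koszul quasi-isomorphism with the Frobenius endomorphism, and the verification that naturality passes through the mapping-fibre construction. This is essentially bookkeeping at the level of total complexes of an honest double complex of continuous cochains, so I expect no genuine obstacle; the statement is really a formal consequence of assembling Theorem \ref{thm:galois_cohomology_herr_complex}, Remark \ref{rem:galcoh_rpi}, and Proposition \ref{prop:koszul_complex_cont_cohomology}.
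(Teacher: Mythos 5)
Your proposal is correct and follows exactly the route the paper takes: the paper deduces the proposition by combining Proposition \ref{prop:koszul_complex_cont_cohomology} with Definition \ref{defi:koszul_phigammaD} to identify $\Kos(\varphi, \Gamma_S, \mbfd(T))$ with the mapping fibre of $1-\varphi$ on $\RGamma_{\cont}(\Gamma_S, \mbfd(T))$, and then invokes the relative Fontaine--Herr isomorphism of Theorem \ref{thm:galois_cohomology_herr_complex} (in the $S$-variant of Remark \ref{rem:galcoh_rpi}). Your extra care about the commutation of $\tau_0$ with $1-\varphi$ and the functoriality of the Lazard--Koszul quasi-isomorphism is just the bookkeeping the paper leaves implicit.
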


\subsection{Lie algebra cohomology}\label{subsec:lie_algebra_coh}

In this subsection we will fix constants $u, v \in \RR$ such that $(p-1)/p \leqslant u \leqslant v/p < 1 < v$, for example, one can take $u = (p-1)/p$ and $v = p-1$.

\subsubsection{Convergence of operators}

From \S \ref{subsec:cyclotomic_embeddings}, recall that we have rings $\ARpi^{\textpd}$, $\ARpi^{[u]}$ and $\ARpi^{[u, v]}$ equipped with a continuous action of $\Gamma_S \triangleleft \Gamma_R$.

\begin{lem}\label{lem:log_gamma_converges}
	For $i \in \{0, 1, \ldots, d\}$ the operators $\nabla_i := \log \gamma_i = \sum_{k \in \NN}(-1)^k ((\gamma_i-1)^{k+1})/(k+1)$ converge as a series of operators on $\ARpi^{\textpd}$, $\ARpi^{[u]}$ and $\ARpi^{[u, v]}$.
\end{lem}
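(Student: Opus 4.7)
The plan is to verify that each individual term of the series goes to zero $p$-adically inside the respective (already $p$-adically complete) ring. Since $(\gamma_i-1)$ acts continuously on each of $\ARpi^{\textpd}$, $\ARpi^{[u]}$, $\ARpi^{[u,v]}$, it is enough to show that for every element $x$, the sequence $(\gamma_i-1)^{k+1}(x)/(k+1)$ tends to $0$ in the $p$-adic topology; the formal series $\sum_{k\ge 0}(-1)^k(\gamma_i-1)^{k+1}/(k+1)$ then converges term-wise.

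First I would iterate the containments of Lemma \ref{lem:gamma_minus_1_pd} (for $\bmstar\in\{\textpd,[u]\}$) and Lemma \ref{lem:gamma_minus_1_oc}(ii) (for $\bmstar=[u,v]$), starting from $k=0$, to obtain
\begin{equation*}
	(\gamma_i-1)^{k+1}\,\ARpi^{\bmstar}\subset \bigl(p^m,\pi_m^{p^m}\bigr)^{k+1}\ARpi^{\bmstar}.
\end{equation*}
The right hand side is generated by monomials $p^{ma}\pi_m^{p^m b}$ with $a+b=k+1$, so it is enough to exhibit a linear-in-$k$ lower bound for the $p$-adic valuation of each such generator in the ambient ring.

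The key step is this valuation estimate and it is the only real calculation. For $\ARpi^{\textpd}$, the divided powers of $P_\varpi$ force $\pi_m^n/\lfloor n/e\rfloor!\in\ARpi^{\textpd}$, so with $n=p^mb$ the factor $\lfloor p^mb/e\rfloor!=\lfloor pb/(p-1)\rfloor!$ can be pulled out of $\pi_m^{p^mb}$, and its $p$-adic valuation is $\big\lfloor pb/(p-1)\big\rfloor/(p-1)+O(\log b)$, growing linearly in $b$. For $\ARpi^{[u]}$ and $\ARpi^{[u,v]}$, the very definition of these rings provides $\pi_m^n/p^{\lfloor un/e\rfloor}$ as an element, so the monomial $\pi_m^{p^mb}$ is divisible by $p^{\lfloor upb/(p-1)\rfloor}$. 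In all three cases $p^{ma}\pi_m^{p^mb}$ is divisible by $p^{C(k+1)}$ for a constant $C=C(p,m,u)>0$.

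Combining, $(\gamma_i-1)^{k+1}(x)\in p^{C(k+1)}\ARpi^{\bmstar}$, whereas the denominator $k+1$ has $p$-adic valuation at most $\log_p(k+1)$; hence $(\gamma_i-1)^{k+1}(x)/(k+1)$ has $p$-adic valuation tending to $+\infty$, and the series defining $\nabla_i$ converges. The same argument, applied to the operator $\gamma_0-1$ on the arithmetic variable, handles $i=0$ without modification since Lemmas \ref{lem:gamma_minus_1_pd} and \ref{lem:gamma_minus_1_oc}(ii) also cover that case. The main (and only nontrivial) obstacle is the bookkeeping that turns the ideal containment into a $p$-adic divisibility; once the divided-power (or presentation-based) identities are invoked this is immediate.
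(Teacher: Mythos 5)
Your proposal is correct and follows essentially the same route as the paper: iterate the twisted-derivation containments of Lemmas \ref{lem:gamma_minus_1_pd} and \ref{lem:gamma_minus_1_oc} to get $(\gamma_i-1)^{k+1}\ARpi^{\bmstar}\subset(p^m,\pi_m^{p^m})^{k+1}\ARpi^{\bmstar}$, then bound the $p$-adic valuation of the monomial generators $p^{ma}\pi_m^{p^mb}$ via the divided powers (for $\textpd$) or the defining elements $\pi_m^n/p^{\lfloor un/e\rfloor}$ (for $[u]$ and $[u,v]$), and observe that this linear growth dominates $\upsilon_p(k+1)$. The only cosmetic caveat is that the clean divisibility by $p^{C(k+1)}$ should really be ``linear minus a logarithmic correction'' coming from $s_p$ in $\upsilon_p(n!)$, which still suffices.
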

\begin{proof}
	From Lemma \ref{lem:gamma_minus_1_pd}, note that we have $(\gamma_0 - 1) \big(p^m, \pi_m^{p^m}\big)^k \ARpi^{\textpd} \subset \big(p^m, \pi_m^{p^m}\big)^{k+1} \ARpi^{\textpd}$, for all $k \geqslant 0$.
	Using the fact that $\gamma_0-1$ acts as a twisted derivation, we see that, for any $x$ in $\ARpi^{\textpd}$, the expression $(\gamma_0 - 1)^k x$ belongs to $\big(p^m, \pi_m^{p^m}\big)^k \ARpi^{\textpd}$.
	Therefore, to check that the series $\nabla_0(x) = \sum_{k \in \NN}(-1)^k ((\gamma_0-1)^{k+1}(x))/(k+1)$ converges in $\ARpi^{\textpd}$, it is enough to show that for a fixed $0 \leqslant j \leqslant k$, the $\padic$ valuation of $(\lfloor p^m j/e \rfloor!)(p^{m(k-j)}/k)$ goes to $+\infty$ as $k \rightarrow +\infty$, which follows from an elementary computation.
	In particular, we have that $\nabla_0(x)$ converges in $\ARpi^{\textpd}$.

	Now, let us consider $\gamma_i $ for $i \in \{1, \ldots, d\}$.
	Again, from Lemma \ref{lem:gamma_minus_1_pd}, note that we have $(\gamma_i - 1) \big(p^m, \pi_m^{p^m}\big)^k \ARpi^{\textpd} \subset \big(p^m, \pi_m^{p^m}\big)^{k+1} \ARpi^{\textpd}$, for all $k \geqslant 0$.
	Using the fact that $\gamma_i-1$ acts as a twisted derivation, we conclude that for any $x$ in $\ARpi^{\textpd}$, the expression $(\gamma_i-1)^k x$ belongs to $\big(p^m, \pi_m^{p^m}\big)^k \ARpi^{\textpd}$.
	Therefore, using an estimate similar the case of $\gamma_0$, we conclude that the series $\nabla_i(x) = \sum_{k \in \NN}(-1)^k ((\gamma_i-1)^{k+1}(x))/(k+1)$ converges in $\ARpi^{\textpd}$.
	The case of $\ARpi^{[u]}$ and $\ARpi^{[u, v]}$ follow from similar arguments (use Lemma \ref{lem:gamma_minus_1_oc} for $\ARpi^{[u, v]}$).
	This allows us to conclude.
\end{proof}

Next, note that formally we can write,
\begin{align*}
	\tfrac{\log (1 + X)}{X} = 1 + a_1 X + a_2 X^2 + a_3 X^3 + \cdots,\\
	\tfrac{X}{\log (1 + X)} = 1 + b_1 X + b_2 X^2 + b_3 X^3 + \cdots,
\end{align*}
where $\upsilon_p(a_k) \geqslant -k/(p-1)$, for all $k \geqslant 1$, and therefore, $\upsilon_p(b_k) \geqslant -k/(p-1)$, for all $k \geqslant 1$.
Setting $X = \gamma_i - 1$, for $i \in \{0, 1, \ldots, d\}$, we make the following claim:
\begin{lem}\label{lem:nabla_quotient_tau_invertible}
	For $i \in \{0, 1, \ldots, d\}$, the operators $\nabla_i/(\gamma_i-1) = (\log \gamma_i)/(\gamma_i-1)$ and $(\gamma_i-1)/\nabla_i = (\gamma_i-1)/(\log \gamma_i)$ converge as series of operators on $\ARpi^{\textpd}$, $\ARpi^{[u]}$ and $\ARpi^{[u, v]}$.
\end{lem}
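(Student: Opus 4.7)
The plan is to view each of the two operators as a formal power series $\sum_{k \geq 0} c_k (\gamma_i - 1)^k$ in the operator $\gamma_i - 1$, with coefficients $c_k \in \QQ_p$ satisfying $\upsilon_p(c_k) \geq -k/(p-1)$, and then to adapt the convergence estimate already used in the proof of Lemma \ref{lem:log_gamma_converges} for the series $\nabla_i = \sum_{k \geq 0} (-1)^k (\gamma_i - 1)^{k+1}/(k+1)$. Since the coefficients $\frac{(-1)^k}{k+1}$ controlling $\nabla_i$ satisfy exactly the same growth bound, no essentially new input is required.

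First I would record that the coefficients $a_k$ and $b_k$ of the two formal series $\log(1+X)/X$ and $X/\log(1+X)$ both satisfy $\upsilon_p(a_k), \upsilon_p(b_k) \geq -k/(p-1)$; for $a_k = (-1)^k/(k+1)$ this is immediate, and for $b_k$ it follows by induction from the recursion $\sum_{j=0}^k a_j b_{k-j} = \delta_{k,0}$ together with the bound on the $a_j$'s (equivalently, the fact that inverting a power series in $1 + X\QQ_p\llbracket X\rrbracket$ preserves the radius of convergence).

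Second, for $S = \ARpi^{\textpd}$, I would use the key inclusions \eqref{eq:gamma0_minus_1_action} and \eqref{eq:gammai_minus_1_action} established in the proof of Lemma \ref{lem:log_gamma_converges}, namely $(\gamma_i - 1)^k S \subset (p^m, \pi_m^{p^m})^k S$. Writing an element of $(p^m, \pi_m^{p^m})^k S$ as a sum $\sum_{j=0}^k p^{m(k-j)} \pi_m^{p^m j} z_{j,k}$ and remembering that $\pi_m^{p^m j}/\lfloor p^m j/e\rfloor! \in S$, one is reduced to verifying that
\[
\upsilon_p(c_k) + m(k-j) + \upsilon_p\bigl(\lfloor p^m j/e\rfloor!\bigr) \longrightarrow +\infty
\]
as $k \to +\infty$, uniformly in $0 \leq j \leq k$. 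With $\upsilon_p(c_k) \geq -k/(p-1)$, this is the very same inequality treated in the proof of Lemma \ref{lem:log_gamma_converges}, and the same chain of estimates (using Remark \ref{rem:factorial_padic_estimate}) applies verbatim. The cases $S = \ARpi^{[u]}$ and $S = \ARpi^{[u,v]}$ are handled identically: for $\ARpi^{[u]}$ the factorial is replaced by the denominators $p^{\lfloor uk/e \rfloor}$ appearing in Definition \ref{def:coordinate_rings}, and for $\ARpi^{[u,v]}$ one combines this with Lemma \ref{lem:gamma_minus_1_oc} in exactly the way done for $\nabla_i$.

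The only nontrivial point is the preliminary valuation bound $\upsilon_p(b_k) \geq -k/(p-1)$, which is not completely formal; everything after that is a bookkeeping exercise parallel to the proof of Lemma \ref{lem:log_gamma_converges}. I do not expect any genuine obstacle, and the resulting convergent operators then commute with $\gamma_i - 1$ and satisfy $\frac{\nabla_i}{\gamma_i-1} \cdot \frac{\gamma_i-1}{\nabla_i} = 1$ on each of the three rings by continuity of the formal identity $(\log(1+X)/X) \cdot (X/\log(1+X)) = 1$.
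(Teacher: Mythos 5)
Your proposal is correct and follows essentially the same route as the paper's proof: reduce to a uniform coefficient bound $\upsilon_p(c_k)\geq -k/(p-1)$ for both series, invoke the inclusions $(\gamma_i-1)^k S \subset (p^m,\pi_m^{p^m})^k S$ from the proof of Lemma \ref{lem:log_gamma_converges} (resp.\ Lemma \ref{lem:gamma_minus_1_oc} for $\ARpi^{[u,v]}$), and run the same valuation estimate via Remark \ref{rem:factorial_padic_estimate}. The only difference is that you justify the bound on the $b_k$ via the recursion, whereas the paper simply asserts it; this is a welcome extra detail, not a divergence in method.
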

\begin{proof}
	We will only show that these series converge on $\ARpi^{\textpd}$; the case of $\ARpi^{[u]}$ and $\ARpi^{[u, v]}$ follow similarly (using Lemma \ref{lem:gamma_minus_1_oc} for $\ARpi^{[u, v]}$).
	Note that we have $\upsilon_p(a_k) \geqslant -k/(p-1)$ and $\upsilon_p(b_k) \geqslant -k/(p-1)$, for all $k \geqslant 1$, so it is enough to show the convergence of $(\gamma_i-1)/(\log \gamma_i)$.
	Now from Lemma \ref{lem:gamma_minus_1_pd}, we have that for $k \geqslant 1$, $(\gamma_i - 1) \big(p^m, \pi_m^{p^m}\big)^k \ARpi^{\textpd} \subset \big(p^m, \pi_m^{p^m}\big)^{k+1} \ARpi^{\textpd}$.
	Since $\gamma_i-1$ acts as a twisted derivation, therefore for any $x$ in $\ARpi^{\textpd}$, from the proof of Lemma \ref{lem:log_gamma_converges}, we have that $(\gamma_i - 1)^k x$ belongs to $\big(p^m, \pi_m^{p^m}\big)^k \ARpi^{\textpd}$.
	Therefore, to check that the series $\sum_{k \in \NN}(-1)^k b_k(\gamma_i-1)^k x$ converges in $\ARpi^{\textpd}$, it is enough to show that for a fixed $0 \leqslant j \leqslant k$, the $\padic$ valuation of $b_k p^{m(k-j)} (\lfloor p^m j/e \rfloor!)$ goes to $+\infty$ as $k \rightarrow +\infty$, which follows from an elementary computation.
	So, we get that the series $(\gamma_i-1)/(\log \gamma_i)$ converges on $\ARpi^{\textpd}$.
	This concludes our proof.
\end{proof}

\subsubsection{Koszul Complexes for \texorpdfstring{$\Lie \Gamma_S$}{-}}\label{subsubsec:lie_gamma_koszul_complex}

For $0 \leqslant i \leqslant d$, let $\nabla_i$ denote the operators defined as above.
The Lie algebra $\Lie \Gamma_S\prm$ of the $p\textrm{-adic}$ Lie group $\Gamma_S\prm$ is a finite free $\ZZ_p\textrm{-module}$ of rank $d$, i.e.\ $\Lie \Gamma_S\prm = \ZZ_p[\nabla_i]_{1 \leqslant i \leqslant d}$ and the Lie algebra $\Lie \Gamma_S$ of the $p\textrm{-adic}$ Lie group $\Gamma_S$ is a finite free $\ZZ_p\textrm{-module}$ of rank $d+1$, i.e.\ $\Lie \Gamma_S = \ZZ_p[\nabla_i]_{0 \leqslant i \leqslant d}$.
Moreover, we have $[\nabla_i, \nabla_j] = \nabla_i \circ \nabla_j - \nabla_j \circ \nabla_i = 0$, for $1 \leqslant i, j \leqslant d$, and $[\nabla_0, \nabla_i] = \nabla_0 \circ \nabla_i - \nabla_i \circ \nabla_0 = p^m \nabla_i$, for $1 \leqslant i \leqslant d$.
In particular, $\Lie \Gamma_S'$ is commutative as a $\ZZ_p\textrm{-algebra}$, however, $\Lie \Gamma_S$ is noncommutative.
Let $M$ be a topological $\ZZ_p\textrm{-module}$ admitting a continuous action of $\Lie \Gamma_S$.

\begin{defi}\label{defi:koszul_complex_lie_gammaR_prime}
	Define the complex $\Kos(\Lie \Gamma_S\prm, M) := M \longrightarrow M^{I_1\prm} \longrightarrow \cdots \longrightarrow M^{I_d\prm}$, with differentials dual to those in Remark \ref{rem:koszul_complex_d} (with $\tau_i$ replaced by $\nabla_i$).
\end{defi}

Consider a morphism of complexes $\nabla_0 : \Kos(\Lie \Gamma_S\prm, M) \rightarrow \Kos(\Lie \Gamma_S\prm, M)$ defined on the $q\textrm{-th}$ term as $\nabla_0 - qp^m : M^{I_q\prm} \rightarrow M^{I_q\prm}$.
\begin{defi}\label{defi:koszul_complex_lie_gammaR}
	Define the \textit{$\Lie \Gamma_S\textrm{-}$Koszul complex} with values in $M$ as
	\begin{equation*}
		\Kos(\Lie \Gamma_S, M) := \big[\Kos(\Lie \Gamma_S\prm, M) \xrightarrow{\hspace{2mm} \nabla_0 \hspace{2mm}} \Kos(\Lie \Gamma_S\prm, M)\big].
	\end{equation*}
\end{defi}

\begin{prop}[{\cite[Lazard]{lazard-groupes-analytiques-padiques}, \cite[\S 4.3]{colmez-niziol-nearby-cycles}}]
	There exist natural quasi-isomorphisms of complexes $\RGamma_{\cont}(\Lie \Gamma_S\prm, M) \simeq \Kos(\Lie \Gamma_S\prm, M)$ and $\RGamma_{\cont}(\Lie \Gamma_S, M) \simeq \Kos(\Lie \Gamma_S, M)$.
\end{prop}


\section{Syntomic complexes and finite height representations}\label{sec:syntomic_complex_finite_height}

We will assume the setup of \S \ref{sec:relative_padic_Hodge_theory}.
Recall that we fixed some $m \in \NN_{\geqslant 1}$ and from \S \ref{subsec:pd_envelope}, we have rings $\Rpi^{\bmstar}$ for $\smstar \in \{\hspace{1mm}, +, \textpd, [u], (0, v]+, [u, v]\}$.
Unless otherwise stated, we will assume $u = (p-1)/p$ and $v = p-1$.
Note that the $p\textrm{-adic}$ completion of the module of differentials of $R$ relative to $\ZZ$ is given as $\Omega^1_{R} = \oplus_{i=1}^d R \dlog X_i$.
Also, for $\smstar \in \{+, \textpd, [u], [u, v]\}$, we have $\Omega^1_{\Rpi^{\bmstar}} = \Rpi^{\bmstar} \tfrac{dX_0}{1+X_0} \oplus \big(\oplus_{i=1}^d \Rpi^{\bmstar} \dlog X_i\big)$.

\subsection{Formulation of the main result}\label{subsec:main_result}

In \S \ref{sec:syntomic_complex_finite_height} and \S \ref{sec:syntomic_galcoh} we will work with the following class of representations:
\begin{assum}\label{assum:relative_crystalline_wach_free}
	Let $T$ be a positive finite $q\textrm{-height}$ $\ZZ_p\textrm{-representation}$ of $G_R$ of height $s$, and we set $V = T[1/p]$ (see Definition \ref{defi:wach_reps}).
	Assume that the Wach module $\mbfn(T)$ is free of rank $=\textrm{rk}_{\ZZ_p} T$ over $\AR^+$ and $M \subset \ODcrys(V)$ is a free $R\textrm{-submodule}$ of rank $=\textrm{rk}_{\ZZ_p} T$ such that $M$ is stable under the induced Frobenius, $M[1/p] = \ODcrys(V)$ and the induced connection over $M$ is $p\textrm{-adically}$ quasi-nilpotent, integrable and satisfies Griffiths transversality with respect to the induced filtration.
	Furthermore, assume that $p^s M \subset \varphi^{\ast}(M)$ and there is a natural map $\OARpi^{\textpd} \otimes_{R} M \rightarrow \OARpi^{\textpd} \otimes_{\AR^+} \mbfn(T)$ compatible with the respective Frobenii, filtrations, connections and actions of $\Gamma_R$, and such that it is a $p^N\textrm{-isomorphism}$ with $N = n(T, e) \in \NN$, for $e = [K:F] = p^{m-1}(p-1)$.
\end{assum}

\begin{exm}\label{exm:choice_odcrist}
	Following are some cases in which Assumption \ref{assum:relative_crystalline_wach_free} is satisfied:
	\begin{enumromanup}
	\item Assuming that $\mbfn(T)$ is a free $\mbfa_R^+\textrm{-module}$, from Proposition \ref{prop:odcris_phi_coker} and Remark \ref{rem:connection_filtration_M} we have that the $R\textrm{-module}$ $M := M_0$ (in the notation of the proposition) satisfies Assumption \ref{assum:relative_crystalline_wach_free} with $m=1$ and $n(T, e) = s$.

	\item Let $M = \big(\OARpi^{\textpd} \otimes_{\AR^+} \mbfn(T)\big)^{\Gamma_R}$ with an additional assumption that it is free over $R$ of rank $=\textrm{rk}_{\ZZ_p} T$.
		Then, the module $M$ depends on $T$ and $m \in \NN_{\geqslant 1}$ (see Remark \ref{rem:lattice_depends_on_m}), and it satisfies Assumption \ref{assum:relative_crystalline_wach_free} with $n(T, e) = s$ (see Remark \ref{rem:odcris_phi_coker}, Remark \ref{rem:connection_filtration_M} and Remark \ref{rem:lattice_depends_on_m}).

	\item For our intended global applications to relative Fontaine--Laffaille modules, we note that for representations arising from finite free relative Fontaine--Laffaille modules of level $[0, s]$ with $s \leqslant p-2$ as in \S \ref{subsec:fontaine_laffaile_to_wach}, the conditions of Assumption \ref{assum:relative_crystalline_wach_free} are automatically satisfied, with $M$ being the relative Fontaine--Laffaille module and $n(T, e) = 0$ (see Remark \ref{rem:fl_wach_comparison}).
	\end{enumromanup}
\end{exm}

Let us first consider the case of $S = R[\varpi]$.
From \S \ref{subsec:pd_envelope} we have the divided power ring $\Rpi^{\textpd} \twoheadrightarrow S$ and we have a finite free $\Rpi^{\textpd}\textrm{-module}$ $\Mpi^{\textpd} := \Rpi^{\textpd} \otimes_{R} M$ equipped with a Frobenius-semilinear endomorphism $\varphi$ given by the diagonal action on each component of the tensor product, and a filtration $\{\Fil^k \Mpi^{\textpd}\}_{k \in \NN}$ induced from the tensor product filtration on $\Mpi^{\textpd}[1/p]$ (see the discussion before Lemma \ref{lem:filr_induced}).
Moreover, the $O_F\textrm{-linear}$ integrable connection on $M$ and the continuous $O_F\textrm{-linear}$ de Rham differential operator on $\Rpi^{\textpd}$ induce an $O_F\textrm{-linear}$ integrable connection $\partial : \Mpi^{\textpd} \rightarrow \Mpi^{\textpd} \otimes_{\Rpi^{\textpd}} \Omega^1_{\Rpi^{\textpd}}$ defined by sending $a \otimes x \mapsto a \otimes \partial_M(x) + x da$.
It is easy to see that the connection $\partial$ on $\Mpi^{\PD}$ satisfies Griffiths transversality with respect to the filtration since the same is true for the connection on $M$ and the differential operator on $\Rpi^{\textpd}$.
In particular, we have the following filtered de Rham complex:
\begin{equation}\label{eq:fildeRham_S}
	\Fil^r \cald_{S, M}^{\bullet} := \Fil^r \Mpi^{\textpd} \longrightarrow \Fil^{r-1} \Mpi^{\textpd} \otimes_{\Rpi^{\textpd}} \Omega^1_{\Rpi^{\textpd}} \longrightarrow \cdots.
\end{equation}

Fix a basis of $\Omega^1_{\Rpi^{\textpd}}$ as $\big\{\frac{dX_0}{1+X_0}, \frac{dX_1}{X_1}, \ldots, \frac{dX_d}{X_d}\big\}$ and we will equip $\Omega^1_{\Rpi^{\textpd}}$ with an action of Frobenius next.
Let $j \in \NN$ and $I_j = \{0 \leqslant i_1 < \cdots < i_j \leqslant d\}$.
For $\smbfi = (i_1, \ldots, i_j) \in I_j$, set $\omega_{\smbfi} := \tfrac{dX_0}{1+X_0} \wedge \tfrac{dX_{i_2}}{X_{i_2}} \wedge \cdots \wedge \tfrac{dX_{i_j}}{X_{i_j}}$, if $i_1 = 0$, and $\omega_{\smbfi} := \tfrac{dX_{i_1}}{X_{i_1}} \wedge \cdots \wedge \tfrac{dX_{i_j}}{X_{i_j}}$, otherwise.
Define the operators $\varphi$ and $\psi$ on $\Omega^j_{\Rpi^{\textpd}}$ by the following formulas:
\begin{equation}\label{eq:phi_psi_diff}
	\varphi\big(\textstyle\sum_{\smbfi \in I_j} x_{\smbfi} \omega_{\smbfi}\big) = \textstyle\sum_{\smbfi \in I_j} \varphi(x_{\smbfi}) \omega_{\smbfi} \hspace{2mm} \textrm{and} \hspace{2mm} \psi\big(\textstyle\sum_{\smbfi \in I_j} x_{\smbfi} \omega_{\smbfi}\big) = \textstyle\sum_{\smbfi \in I_j} \psi(x_{\smbfi}) \omega_{\smbfi}.
\end{equation}

\begin{rem}\label{rem:unusual_frobenius}
	Note that \eqref{eq:phi_psi_diff} is not the natural definition of Frobenius, since we have $d(\varphi(x)) = p\varphi(dx)$ in \eqref{eq:phi_psi_diff}.
	But in order to define $\psi$ integrally, we need to divide the usual Frobenius on $\Omega^1_{\Rpi^{\bmstar}}$ by powers of $p$.
	Recall that with the usual definition of Frobenius we have $\varphi \partial = \partial \varphi$ over $M \subset \ODcrys(V)$ (see \S \ref{subsec:relative_padic_reps}).
	However, using \eqref{eq:phi_psi_diff} for $\Omega^1_R$ as well, we see that for any $f \in M$, we now have $\partial_M(\varphi(f))$ = $\sum_{i=1}^d \partial_i(\varphi(f)) \omega_i$ = $\sum p \varphi(\partial_i(f)) \omega_i$ = $p \varphi(\partial_M(f))$.
\end{rem}

\begin{defi}\label{defi:syntomic_complex_coeff}
	Let $r \in \NN$ and consider the complex $\Fil^r \cald_{S, M}^{\bullet}$ as above.
	For $n \in \NN$, let $S_n = S \otimes \ZZ/p^n$ and $M_n = M \otimes \ZZ/p^n$.
	Define the \textit{syntomic complex} and the \textit{syntomic cohomology} of $S$ with coefficients in $M$ as
	\begin{align*}
		&\Syn(S, M, r) := \big[\hspace{1mm} \Fil^r \cald_{S, M}^{\bullet} \xrightarrow{\hspace{1mm}p^r - p^{\bullet} \varphi\hspace{1mm}} \cald_{S, M}^{\bullet}\hspace{1mm}\big], \hspace{2mm} H^{\ast}_{\syn}(S, M, r) := H^{\ast}(\Syn(S, M, r));\\
		&\Syn(S, M, r)_n := \Syn(S, M, r) \otimes \ZZ/p^n, \hspace{12mm} H^{\ast}_{\syn}(S_n, M_n, r) := H^{\ast}(\Syn(S, M, r)_n).
	\end{align*}
\end{defi}

Our main local result is as follows:

\begin{thm}\label{thm:syntomic_complex_galois_cohomology}
	Consider the setting of Assumption \ref{assum:relative_crystalline_wach_free} and let $r \in \ZZ$ such that $r \geqslant s + 1$.
	Then there exists $p^N\textrm{-quasi-isomorphisms}$ 
	\begin{align*}
		\alpha_r^{\Laz} : \tau_{\leqslant r-s-1} \Syn(S, M, r) &\simeq \tau_{\leqslant r-s-1} \RGamma_{\cont}(G_S, T(r)),\\
		\alpha_{r, n}^{\Laz} : \tau_{\leqslant r-s-1} \Syn(S, M, r)_n &\simeq \tau_{\leqslant r-s-1} \RGamma_{\cont}(G_S, T/p^n(r)),
	\end{align*}
	where $N = N(T, e, r) \in \NN$ depends on the representation $T$, $e = [K:F]$ and the twist $r$.
\end{thm}

\begin{rem}
	For $M$ as in Example \ref{exm:choice_odcrist} (ii), note that in Theorem \ref{thm:syntomic_complex_galois_cohomology}, the constant $N$ can precisely be given as $N = 14r + 9s + 2$ (see \S \ref{subsec:proof_lazard_comp}).
\end{rem}

\begin{rem}
	Almost all statements and proofs in \S \ref{sec:syntomic_complex_finite_height} and \S \ref{sec:syntomic_galcoh} are true for $m \geqslant 1$.
	However, for some lemmas in \S \ref{subsec:change_annulus_2} and \S \ref{subsec:change_disk} we need to assume that $m \geqslant 2$.
	So from now on, the reader may safely assume that $m \geqslant 2$ in \S \ref{sec:syntomic_complex_finite_height} and \S \ref{sec:syntomic_galcoh} and obtain Theorem \ref{thm:syntomic_complex_galois_cohomology} for $m=1$, using the Galois descent of Lemma \ref{lem:syn_galois_descent}.
\end{rem}

Using Theorem \ref{thm:syntomic_complex_galois_cohomology}, we can obtain a similar statement over $R$.
Recall that $R$ is smooth over $O_F$ and for $r \in \ZZ$, we have the following filtered de Rham complex:
\begin{equation}\label{eq:fildeRham_R}
	\Fil^r \cald_{R, M}^{\bullet} := \Fil^r M \longrightarrow \Fil^{r-1} M \otimes_{R} \Omega^1_{R} \longrightarrow \Fil^{r-2} M \otimes_{R} \Omega^2_{R} \longrightarrow \cdots.
\end{equation}
\begin{rem}\label{rem:fildeRham_R}
	One can also consider the formulation of filtered de Rham complex for $R$ as in \eqref{eq:fildeRham_S}.
	In that case one considers a surjection $R_{\varpi}^+ \twoheadrightarrow R$ via the map $X_0 \mapsto 0$.
	By writing down the corresponding de Rham complex one readily sees that it is quasi-isomorphic to $\cald_{R, M}^{\bullet}$.
\end{rem}

Using \eqref{eq:fildeRham_R}, similar to Definition \ref{defi:syntomic_complex_coeff}, one can define the syntomic complex of $R$ with coefficients in $M$.
Then using Theorem \ref{thm:syntomic_complex_galois_cohomology} for $\varpi = \zeta_{p^2}-1$ (in particular, Example \ref{exm:choice_odcrist} (ii) for $m=2$), Corollary \ref{cor:lazard_fmlocal_comparison_R} and Galois descent in Lemma \ref{lem:syn_galois_descent} for $e = p(p-1)$), we obtain the following:

\begin{cor}\label{cor:syntomic_complex_galois_cohomology}
	Consider the setting of Assumption \ref{assum:relative_crystalline_wach_free} and let $r \in \ZZ$ such that $r \geqslant s + 1$.
	Then there exists $p^N\textrm{-quasi-isomorphisms}$
	\begin{align*}
		\tau_{\leqslant r-s-1} \Syn(R, M, r) &\simeq \tau_{\leqslant r-s-1} \RGamma_{\cont}(G_R, T(r)),\\
		\tau_{\leqslant r-s-1} \Syn(R, M, r)_n &\simeq \tau_{\leqslant r-s-1} \RGamma_{\cont}(G_R, T/p^n(r)),
	\end{align*}
	where $N = N(p, r, s) \in \NN$ depending on the prime $p$, twist $r$ and height $s$ of $T$.
\end{cor}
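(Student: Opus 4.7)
My plan is to deduce Corollary \ref{cor:syntomic_complex_galois_cohomology} from Theorem \ref{thm:syntomic_complex_galois_cohomology} by Galois descent along $\Gal(K/F)$, after specialising the cyclotomic parameter to $m = 2$.

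First I would fix $m = 2$, so that $\varpi = \zeta_{p^2} - 1$, $K = F(\zeta_{p^2})$, $e = [K:F] = p(p-1)$, and $S = R[\varpi]$. The point of choosing $m = 2$ is that by Example \ref{exm:choice_odcrist}(ii), the constant $n(T, e)$ appearing in Assumption \ref{assum:relative_crystalline_wach_free} can be taken equal to $ms = 2s$, which no longer depends on the specific representation $T$ nor on a varying ramification, but only on the height $s$. Setting $M_S := S \otimes_R M$ and applying Theorem \ref{thm:syntomic_complex_galois_cohomology} at this level then produces $p^{N_1}$-quasi-isomorphisms
\begin{align*}
\alpha_r^{\Laz} &: \tau_{\leq r-s-1} \Syn(S, M_S, r) \simeq \tau_{\leq r-s-1} \RGamma_{\cont}(G_S, T(r)), \\
\alpha_{r,n}^{\Laz} &: \tau_{\leq r-s-1} \Syn(S, M_S, r)_n \simeq \tau_{\leq r-s-1} \RGamma_{\cont}(G_S, T/p^n(r)),
\end{align*}
where $N_1 = N_1(p, r, s)$ is independent of $n$ and of $T$.

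Next I would descend along the finite Galois cover $\Gal(S[\tfrac{1}{p}]/R[\tfrac{1}{p}]) = \Gal(K/F)$, which has order $e = p(p-1)$. On the Galois cohomology side this is standard Hochschild--Serre for the exact sequence $1 \to G_S \to G_R \to \Gal(K/F) \to 1$: since higher cohomology of the finite group $\Gal(K/F)$ is annihilated by $e$, one obtains a natural $p^{N_2}$-quasi-isomorphism
\begin{equation*}
\tau_{\leq r-s-1} \RGamma_{\cont}(G_R, T(r)) \simeq \tau_{\leq r-s-1} \bigl(\RGamma_{\cont}(G_S, T(r))\bigr)^{\Gal(K/F)},
\end{equation*}
and analogously for $T/p^n(r)$, with $N_2$ depending only on $p$ (via $e$) and on the truncation degree $r-s-1$. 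On the syntomic side, the analogous descent is furnished by Lemma \ref{lem:syn_galois_descent}, which compares $\Syn(R, M, r)$ to the $\Gal(K/F)$-invariants of $\Syn(S, M_S, r)$ up to a controlled power of $p$ coming again from $e$.

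Combining the two descents with $\alpha_r^{\Laz}$ and $\alpha_{r,n}^{\Laz}$ from Theorem \ref{thm:syntomic_complex_galois_cohomology} yields the desired $p^N$-quasi-isomorphisms over $R$ with $N = N(p, r, s)$ independent of $T$ and $n$. The main obstacle I expect is not the descent itself (which is essentially a trace/corestriction argument on both sides) but rather checking compatibility: namely, that the syntomic descent isomorphism produced by Lemma \ref{lem:syn_galois_descent} intertwines the local Lazard/Fontaine--Messing period map $\alpha_{r,n}^{\Laz}$ at level $S$ with its analogue at level $R$ (this is precisely the content of a statement like Corollary \ref{cor:lazard_fmlocal_comparison_R} invoked in the introduction to this proof). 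Once that compatibility is pinned down, a diagram chase strings the two descent quasi-isomorphisms together with $\alpha_r^{\Laz}$, and the constants add to give $N = N_1 + c \cdot N_2$ for a small absolute $c$, finishing the argument.
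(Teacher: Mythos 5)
Your proposal follows the paper's own route: specialise to $m=2$ (so $e=p(p-1)$ and $n(T,e)=2s$ via Example \ref{exm:choice_odcrist}(ii)), apply Theorem \ref{thm:syntomic_complex_galois_cohomology} over $S$, and descend to $R$ using Lemma \ref{lem:syn_galois_descent} on the syntomic side together with the $\Gal(K/F)$-equivariance of the period maps (Corollary \ref{cor:lazard_fmlocal_comparison_R}), with Galois descent being exact on the $G_R$-cohomology side. The only cosmetic difference is that you take naive $\Gal(K/F)$-invariants (controlled by $e$) where the paper uses derived invariants $\RGamma(\Gal(K/F),-)$, which is harmless; the argument is essentially identical.
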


\begin{rem}
	For $M$ as in Example \ref{exm:choice_odcrist} (ii), note that in Corollary \ref{cor:syntomic_complex_galois_cohomology}, the constant $N$ can precisely be given as $N = 18r + 9s + 3p(p-1) + 2$ (see \S \ref{subsec:proof_lazard_comp}).
\end{rem}

In Theorem \ref{thm:syntomic_complex_galois_cohomology} we only prove the $\padic$ case.
The modulo $p^n$ case follows in a similar manner.
The complete proof is divided in two main steps: 
first, we will modify the syntomic complexes with coefficients in $M$ to relate it to a ``differential'' Koszul complex with coefficients in $\mbfn(T)$ (see Proposition \ref{prop:syntomic_to_phi_gamma}).
Next, we will modify the Koszul complex from the first step to obtain a Koszul complex computing the continuous $G_S\textrm{-cohomology}$ of $T(r)$ (see Definition \ref{thm:syntomic_complex_galois_cohomology} and Proposition \ref{prop:differential_koszul_complex_galois_cohomology}).
The key to the connection between these two steps will be provided by the comparison isomorphism in Theorem \ref{thm:crys_wach_comparison} and a filtered Poincar\'e Lemma.
In the rest of \S \ref{sec:syntomic_complex_finite_height} we will show the first step.
The second step will be worked out in \S \ref{sec:syntomic_galcoh}.

\subsection{Syntomic complexes with coefficients}\label{subsec:syntomic_coefficents}

For $\bmstar \in \{[u], [u, v], [u, v/p]\}$, define a finite free $R_{\varpi}^{\bmstar}\textrm{-module}$ $\Mpi^{\bmstar} := \Rpi^{\bmstar} \otimes_{R} M$.
Via the diagonal action of Frobenius on each component, define Frobenius-semilinear operators $\varphi : \Mpi^{[u]} \rightarrow \Mpi^{[u]}$ and $\varphi : \Mpi^{[u, v]} \rightarrow \Mpi^{[u, v/p]}$.
Equip $\Mpi^{\bmstar}$ with a filtration $\{\Fil^k \Mpi^{\bmstar}\}_{k \in \NN}$ induced from the tensor product filtration on $\Mpi^{\bmstar}[1/p]$ (see the discussion before Lemma \ref{lem:filr_induced}).
Furthermore, the $O_F\textrm{-linear}$ integrable connection on $M$ and the continuous $O_F\textrm{-linear}$ de Rham differential operator on $\Rpi^{\bmstar}$ induce an $O_F\textrm{-linear}$ integrable connection on $\Mpi$, which satisfies Griffiths transversality with respect to the filtration since the same is true for the connection on $M$ and the differential operator on $\Rpi^{\bmstar}$.
In particular, we have the following filtered de Rham complex:
\begin{equation}\label{eq:deRham_complex_coeff}
	\Fil^r \cald_{R_{\varpi}^{\bmstar}, M}^{\bullet} := \Fil^r \Mpi^{\bmstar} \longrightarrow \Fil^{r-1} \Mpi^{\bmstar} \otimes \Omega^1_{\Rpi^{\bmstar}} \longrightarrow \Fil^{r-2} \Mpi^{\bmstar} \otimes \Omega^2_{\Rpi^{\bmstar}} \longrightarrow \cdots.
\end{equation}
Moreover, for $\bmstar \in \{[u], [u, v], [u, v/p]\}$, we define operators $\varphi$ and $\psi$ on $\Omega^j_{\Rpi^{\bmstar}}$ as in \eqref{eq:phi_psi_diff}.
From \eqref{eq:deRham_complex_coeff}, for $\bmstar \in \{[u], [u, v]\}$, denote by $\cald_{R_{\varpi}^{\bmstar}, M}^{\bullet}$ the source de Rham complex and for $\bmstar \in \{[u], [u,v/p]\}$, denote by $\cale_{R_{\varpi}^{\bmstar}, M}^{\bullet}$ the target de Rham complex.
\begin{defi}
	Define $\Syn(\Mpi^{\bmstar}, r) := \big[\hspace{1mm} \Fil^r \cald_{\Rpi^{\bmstar}, M}^{\bullet} \xrightarrow{\hspace{1mm}p^r - p^{\bullet} \varphi\hspace{1mm}} \cale_{\Rpi^{\bmstar}, M}^{\bullet}\hspace{1mm}\big]$.
\end{defi}

\subsection{Change of the disk of convergence}\label{subsec:change_disk_convergence}

In this section, we will denote the syntomic complex $\Syn(S, M, r)$ in Definition \ref{defi:syntomic_complex_coeff} as $\Syn(\Mpi^{\textpd}, r)$.

\begin{prop}\label{prop:syntomic_pd_to_u}
	For $\tfrac{1}{p-1} \leqslant u \leqslant 1$, the natural morphism between syntomic complexes $\Syn\big(\Mpi^{\textpd}, r\big) \rightarrow \Syn\big(\Mpi^{[u]}, r\big)$, induced by the inclusion $\Mpi^{\textpd} \subset \Mpi^{[u]}$, is a $p^{2r}\textrm{-isomorphism}$. 
\end{prop}

The proposition follows from the following lemma by setting $k=r$.
\begin{lem}
	Let $j, k \in \NN$.
	If $\frac{1}{p-1} \leqslant u \leqslant 1$, the following map is a $p^{k+r}\textrm{-isomorphism}$
	\begin{equation*}
		p^k - p^j \varphi : \Fil^r \Mpi^{[u]} \otimes \Omega^j_{\Rpi^{[u]}} / \Fil^r \Mpi^{\textpd} \otimes \Omega^j_{\Rpi^{\textpd}} \longrightarrow \Mpi^{[u]} \otimes \Omega^j_{\Rpi^{[u]}} / \Mpi^{\textpd} \otimes \Omega^j_{\Rpi^{\textpd}}.
	\end{equation*}
\end{lem}
\begin{proof}
	The proof is motivated by \cite[Lemma 3.2]{colmez-niziol-nearby-cycles}.
	Note that we can decompose everything in the basis of the $\omega_{\mathbf{i}}$'s, where $\smbfi \in I_j = \{0 \leqslant i_1 < \cdots < i_j \leqslant d\}$.
	Then by the definition of Frobenius on $\omega_{\smbfi}$ we are reduced to showing that $p^k - p^j \varphi : \Fil^r \Mpi^{[u]} / \Fil^r \Mpi^{\textpd} \rightarrow \Mpi^{[u]} / \Mpi^{\textpd}$ is a $p^{k+r}\textrm{-isomorphism}$.
	Since $\varphi\big(\Rpi^{[u]}\big) \subset \Rpi^{[u/p]} \subset \Rpi^{\textpd}$, for $\frac{1}{p-1} \leqslant u \leqslant 1$, therefore, we have $\Mpi^{\textpd} \subset \Mpi^{[u]}$ and $\varphi\big(\Mpi^{[u]}\big) \subset \Mpi^{\textpd}$ .

	For $p^k\textrm{-injectivity}$, recall that we have $\Fil^r \Mpi^{[u]} = \Mpi^{[u]} \cap \Fil^r \Mpi^{\textpd}$ (see Lemma \ref{lem:filr_induced}), so for any $x$ in $\Fil^r \Mpi^{[u]}$ it suffices to show that if $(p^k - p^j\varphi)x \in \Mpi^{\textpd}$ then $p^k x \in \Mpi^{\textpd}$.
	As we can write $p^k x$ = $(p^k - p^j\varphi)x + p^j\varphi(x)$ and $\varphi\big(\Mpi^{[u]}\big) \subset \Mpi^{\textpd}$, therefore, we get that $p^k x \in \Mpi^{\textpd}$.
	Next, let us show the $p^{k+r}\textrm{-surjectivity}$.
	Let $\{f_1, \ldots, f_h\}$ be an $R\textrm{-basis}$ of $M$ and take $x = \sum_{i=1}^h a_i \otimes f_i \in \Mpi^{[u]}$.
	Let $N = \frac{ke}{u(p-1)}$, then from the definition of $\Rpi^{[u]}$ we can write $a_i = a_{i1} + a_{i2}$, with $a_{i2} \in R_{\varpi, N}^{[u]}$ and $a_{i1} \in p^{-\lfloor Nu/e \rfloor}\Rpi^+ \subset p^{-k} \Rpi^{\textpd}$, where we write $R_{\varpi,N}^{[u]}$ as in the notation of Lemma \ref{lem:id_minus_frob_bijective} (it consists of power series in $X_0$ involving terms $X_0^s$ for $s \geqslant N$).
	Now let $x_1 = \sum_{i=1}^h a_{i1} \otimes f_i$ and $x_2 = \sum_{i=1}^h a_{i2} \otimes f_i$, so that $x = x_1 + x_2$.
	By Lemma \ref{lem:id_minus_frob_bijective} and the fact that $M$ is stable under $\varphi$, it follows that $(1-p^{j-k}\varphi)$ is bijective on $R_{\varpi, N}^{[u]} \otimes_R M$ (note that the series of operators $\sum_{i \in \NN} p^{(j-k)i}\varphi^i$ converge as an inverse to $1-p^{j-k}\varphi$ on $R_{\varpi, N}^{[u]} \otimes_R M$).
	In particular, we can write $x_2 = (1-p^{j-k}\varphi)z$, for some $z = \sum_{i=1}^h b_i \otimes f_i \in \Mpi^{[u]}$.
	Also, by Lemma \ref{lem:split_pd_elements} we can write $b_i = b_{i1} + b_{i2}$, with $b_{i1} \in \Fil^r \Rpi^{[u]}$ and $b_{i2} \in p^{-\lfloor ru \rfloor}\Rpi^+$.
	By setting $z_1 = \sum_{i=1}^h b_{i1} \otimes f_i \in \Fil^r \Mpi^{[u]}$ and $z_2 = \sum_{i=1}^h b_{i2} \otimes f_i \in p^{-r} \Mpi^{\textpd}$, we obtain that $(1-p^{j-k}\varphi)z_2 = p^{-k}(p^k-p^j\varphi)z_2 \in p^{-k-r} \Mpi^{\textpd}$.
	Using the preceding observation in the expression for $x$, we get that $x - (1-p^{j-k}\varphi)z_1 = x_1 + (1-p^{j-k}\varphi)z_2 \in p^{-k} \Mpi^{\textpd} + p^{-k-r} \Mpi^{\textpd} \subset p^{-k-r} \Mpi^{\textpd}$.
	Therefore, we obtain that $x \in p^{-k-r} \Mpi^{\textpd} + p^{-k}(p^k - p^j \varphi) \Fil^r \Mpi^{[u]}$, allowing us to conclude.
\end{proof}

\subsection{Change of the annulus of convergence}\label{subsec:change_annulus_converge}

We will consider the base change of the syntomic complex from $\Rpi^{\textpd}$ to $\Rpi^{[u, v]}$.

\begin{prop}\label{prop:syntomic_u_to_uv_comp}
	For $pu \leqslant v$, there exists a $p^{2r+4s}\textrm{-quasi-isomorphism}$
	\begin{equation*}
		\tau_{\leqslant r-s-1} \Syn\big(\Mpi^{[u]}, r\big) \simeq \tau_{\leqslant r-s-1} \Syn\big(\Mpi^{[u, v]}, r\big),
	\end{equation*}
	i.e.\ we have $p^{2r+4s}\textrm{-isomorphisms}$ $H^k_{\syn}\big(\Mpi^{[u]}, r\big) \simeq H^k_{\syn}\big(\Mpi^{[u, v]}, r\big)$ for $0 \leqslant k \leqslant r - s - 1$.
\end{prop}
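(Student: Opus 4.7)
The plan is to analyze the mapping cone of the natural inclusion $\Syn(\Mpi^{[u]}, r) \to \Syn(\Mpi^{[u, v]}, r)$ and show it is $p^{2r+4s}$-acyclic in degrees $\leq r-s-1$. Since both syntomic complexes are themselves mapping fibres of $p^r - p^\bullet\varphi$, and $\varphi(\Rpi^{[u, v]}) \subset \Rpi^{[u/p, v/p]} \subset \Rpi^{[u, v/p]}$, the cone of the map between them is itself the mapping fibre
\[
C \;:=\; \Big[\,\Fil^r \cald^{\bullet}_{\Mpi^{[u, v]}/\Mpi^{[u]}} \xrightarrow{\ p^r - p^\bullet \varphi\ } \cale^{\bullet}_{\Mpi^{[u, v/p]}/\Mpi^{[u]}}\,\Big].
\]
Since the mapping fibre of a termwise $p^N$-bijection is $p^{2N}$-acyclic, it suffices to prove that $p^r - p^j\varphi$ is $p^{r+2s}$-bijective on each quotient module appearing in $C$ in the degree range $j \leq r-s-1$.

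First, I would describe the quotients concretely: writing $\Rpi^{[u, v]} = \Rpi^{[u]} + \Rpi^{(0, v]+}$ with intersection $\Rpi^+$, the quotient $\Rpi^{[u, v]}/\Rpi^{[u]}$ is isomorphic to the polar part $\Rpi^{(0, v]+}/\Rpi^+$, and tensoring with the free $R$-module $M$ and with $\Omega^j$ gives explicit generators of the building blocks of $C$. Second, I would invert $p^r - p^j\varphi$ on these polar quotients using the operator $\psi$ from \S\ref{subsec:cyclotomic_frob}. Exploiting the decomposition of Proposition \ref{prop:ring_psi_eigenspace_decomp}, one splits the target into $\varphi(\Rpi^{[u, v]})$ and its $\psi = 0$-complement: on the $\psi = 0$-part the map reduces to multiplication by $p^r$, while on the Frobenius image one formally inverts via the geometric series $\sum_{k \geq 0} p^{(j-r)k}\psi^k$. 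This series converges $p$-adically precisely when $r - j \geq s + 1$, i.e.\ $j \leq r - s - 1$, because the coefficient condition $p^s M \subset \varphi^*(M)$ from Assumption \ref{assum:relative_crystalline_wach_free} implies that each application of $\psi$ introduces at most a factor of $p^s$ on the $M$-component, so that $p^{(j-r) + s} = p^{-(r-j-s)}$ is topologically nilpotent exactly in this range. Tracking denominators bounds the total loss in inverting by $p^{2s}$, and combined with the factor $p^r$ this yields the asserted $p^{r+2s}$-bijectivity.

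The main obstacle will be ensuring that the inverse produced respects the source filtration $\Fil^r$ after clearing the factor $p^{2s}$. This requires combining the $\psi$-inversion above with a decomposition of arbitrary elements into a $\Fil^r$-part plus a part with bounded denominator, in the spirit of Lemma \ref{lem:split_pd_elements}, and then bookkeeping the filtration shifts induced on the tensor product filtration \eqref{eq:fil_dpd} by $\varphi$ and $\psi$. This is analogous to the approximation argument behind Proposition \ref{prop:syntomic_pd_to_u} but now in the polar regime and with the coefficient module $M$ carrying a nontrivial Frobenius structure, which is precisely the source of the additional $p^{4s}$-contribution in the final exponent $p^{2r+4s}$.
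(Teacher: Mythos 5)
Your strategy --- analyze the cone of the inclusion termwise and invert $p^r - p^{j}\varphi$ on the quotient modules using a geometric series in $\psi$, with convergence forcing the truncation at $r-s-1$ --- captures the essential mechanism, but it is packaged differently from the paper. The paper does not work with the cone of the $\varphi$-complexes directly: it first replaces \emph{each} of $\Syn(\Mpi^{[u]},r)$ and $\Syn(\Mpi^{[u,v]},r)$ by its $\psi$-version $\Syn^{\psi}$ via the map $(\mathrm{id}, p^{s}\psi)$ (Lemmas \ref{lem:syntomic_u_psi_eigenspace} and \ref{lem:syntomic_uv_psi_eigenspace}, each a $p^{2s}$-quasi-isomorphism), and only then compares the two $\psi$-complexes termwise on the quotient (Lemma \ref{lem:syntomic_u_to_uv}), where $1-p^{i}\psi$ is inverted by the geometric series for $i=r-k\geq s+1$ and the filtration is discarded at cost $p^{r}$ (Lemma \ref{lem:fil_quotient_robba_u_module}). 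The conversion step is where the connection genuinely enters: the kernel ($\psi=0$) complexes are identified with direct sums of de Rham complexes $M_{\alpha}\otimes\Omega^{\bullet}$ whose acyclicity is proved by the mod-$p$ ``cube'' argument with $\partial_i$ acting as $\alpha_i$. Your route avoids this ingredient entirely, replacing it by brute-force division by $p^{r}$ on the $\psi=0$ part of the quotient; this is legitimate for the quotient and is absorbed by your termwise constant $p^{r+2s}$, so the reorganization buys a shorter argument at the price of a slightly cruder treatment of the $\psi=0$ part.

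Two points need repair before this goes through. First, your geometric series is written with the wrong sign in the exponent: $\sum_{k\geq 0}p^{(j-r)k}\psi^{k}$ has coefficients of valuation $(j-r)k\leq -(s+1)k$, which diverges; the operator that is actually topologically nilpotent is $p^{r-j}\psi=p^{r-j-s}(p^{s}\psi)$, exactly as in Lemma \ref{lem:syntomic_u_to_uv}, and this is what produces the correct range $j\leq r-s-1$ that you state. Relatedly, one cannot invert $p^{r}-p^{j}\varphi$ by expanding in powers of $\varphi$, since $\varphi$ gains no powers of $p$ on the polar part of $\Rpi^{[u,v]}/\Rpi^{[u]}$; the inversion must go through $\psi$. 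Second, the splitting of the map with respect to the $\psi$-eigenspace decomposition of the target is triangular, not diagonal: the identity component $p^{r}x$ of an element $x=\varphi(x')+x_0$ contributes to \emph{both} summands of the target, so ``on the $\psi=0$-part the map reduces to multiplication by $p^{r}$'' must be upgraded to a two-step d\'evissage (solve the $\psi=0$ component first, then a fixed-point equation on the Frobenius-image component). With these corrections your argument closes, with constants compatible with the stated $p^{2r+4s}$.
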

\begin{proof}
	The claim follows by combining the results from Lemmas \ref{lem:syntomic_u_psi_eigenspace}, \ref{lem:syntomic_u_to_uv} \& \ref{lem:syntomic_uv_psi_eigenspace}.
\end{proof}

To prove the claim in Proposition \ref{prop:syntomic_u_to_uv_comp}, we will pass to the corresponding (quasi-isomorphic) $\psi\textrm{-complex}$.
Recall that we have $\varphi^{\ast}\big(\ODcrys(V)\big) \isomorphic \ODcrys(V)$.
Let $\smbff = \{f_1, \ldots, f_h\}$ denote an $R\textrm{-basis}$ of $M$.
Then $\smbff$ and $\varphi(\smbff)$ form two different basis of $\ODcrys(V)$ over $R[1/p]$.
So, we can write $\smbff = \varphi(\smbff)X$, where $X = (x_{ij}) \in \Mat\big(h, R[1/p])$.
For our choice of $M$ (see Assumption \ref{assum:relative_crystalline_wach_free}) and using Theorem \ref{thm:crys_wach_comparison} and Proposition \ref{prop:odcris_phi_coker}, we have $x_{ij} \in p^{-s} R$, where $1 \leqslant i, j \leqslant h$ and $s$ is the height of $V$.
Define $\psi : M^{[u]} = \Rpi^{[u]} \otimes_{R} M \rightarrow p^{-s} \Rpi^{[pu]} \otimes_R M$ by sending $\smbff \smbfy^\intercal \mapsto \smbff \psi(X\smbfy^\intercal)$, where we consider the operator $\psi$ on $\Rpi^{[u]}$ defined in \S \ref{subsec:cyclotomic_frob}.
It is easy to show that this map is well defined, i.e.\ independent of choice of a basis for $M$.
Using the operator $\psi$ on $\Mpi^{[u]}$ as above and on $\Omega^{\bullet}_{\Rpi^{[u]}}$ as in \eqref{eq:phi_psi_diff}, define the complex
\begin{equation*}
	\Syn^{\psi}\big(\Mpi^{[u]}, r\big) := \Big[ \Fil^r \Mpi^{[u]} \otimes \Omega_{\Rpi^{[u]}}^{\bullet} \xrightarrow{p^{r+s}\psi - p^{\bullet + s}} \Mpi^{[pu]} \otimes \Omega_{\Rpi^{[pu]}}^{\bullet} \Big].
\end{equation*}

\begin{lem}\label{lem:syntomic_u_psi_eigenspace}
	The commutative diagram
	\begin{center}
		\begin{tikzcd}[column sep=large]
			\Fil^r \Mpi^{[u]} \otimes \Omega_{\Rpi^{[u]}}^{\bullet} \arrow[d, "id"] \arrow[rr, "p^r-p^{\bullet}\varphi"] & & \Mpi^{[u]} \otimes \Omega^{\bullet}_{\Rpi^{[u]}} \arrow[d, "p^{s}\psi"]\\
			\Fil^r \Mpi^{[u]} \otimes \Omega_{\Rpi^{[u]}}^{\bullet} \arrow[rr, "p^{r+s}\psi-p^{\bullet+s}"] & & \Mpi^{[pu]} \otimes \Omega^{\bullet}_{\Rpi^{[pu]}},
		\end{tikzcd}
	\end{center}
	defines a $p^{2s}\textrm{-quasi-isomorphism}$ from $\Syn\big(\Mpi^{[u]}, r\big)$ to $\Syn^{\psi}\big(\Mpi^{[u]}, r\big)$.
\end{lem}
\begin{proof}
	First, we will look at the cokernel complex which is the cokernel of the right vertical arrow.
	By definition, we have that $\psi(\Mpi^{[u]}) \subset p^{-s} \Mpi^{[pu]}$, in particular, $p^{s} \psi(\Mpi^{[u]}) \subset \Mpi^{[pu]}$.
	Moreover, note that the operator $\psi : \Rpi^{[u]} \rightarrow \Rpi^{[pu]}$ is surjective and $p^s M \subset \varphi^{\ast}(M)$ (see Assumption \ref{assum:relative_crystalline_wach_free}).
	Therefore, $\Mpi^{[pu]} = \Rpi^{[pu]} \otimes_{R} M \subset \psi(\Rpi^{[u]} \otimes_{R} \varphi^{\ast}(M)) \subset \psi(\Mpi^{[u]})$.
	Hence, $p^s\psi(\Mpi^{[u]})$ is $p^{s}\textrm{-isomorphic}$ to $\Mpi^{[pu]}$ and the cokernel complex is killed by $p^{s}$.

	Next, for the kernel complex, we proceed as follows:
	let $M = \oplus_{j=1}^h R f_j$, therefore $\Mpi^{[u]} = \oplus_{j=1}^h \Rpi^{[u]} f_j$.
	Recall that $M / \varphi^{\ast}(M)$ is killed by $p^s$, so we have a $p^{s}\textrm{-isomorphism}$ $\oplus_{j=1}^h \Rpi^{[u]} \varphi(f_j) \isomorphic \Mpi^{[u]}$.
	Note that an element $y = \sum_{j=1}^h y_j \varphi(f_j)$ is in $\big(\oplus_{j=1}^h \Rpi^{[u]} \varphi(f_j)\big)^{\psi=0}$ if and only if $y_j$ is in $(\Rpi^{[u]})^{\psi=0}$.
	Indeed, $\psi(y) = 0$ if and only if $\sum_{j=1}^h \psi(y_j) f_j = 0$, and since $f_j$ are linearly independent over $R[1/p]$, therefore, we see that $\psi(y) = 0$ if and only if $\psi(y_j) = 0$, for all $1 \leqslant j \leqslant h$.
	In particular, we obtain a $p^{s}\textrm{-isomorphism}$ $\big(\Mpi^{[u]}\big)^{\psi=0} \lisomorphic \big(\oplus_{j=1}^h \Rpi^{[u]} \varphi(f_j)\big)^{\psi=0} = \oplus_{j=1}^h (\Rpi^{[u]})^{\psi=0} \varphi(f_j)$.

	Using the definiton of $\psi$ on $\Omega^k_{\Rpi^{[u]}}$ in the chosen basis of \eqref{eq:phi_psi_diff}, it easily follows that $\big(M \otimes_{R} \Omega^k_{\Rpi^{[u]}}\big)^{\psi=0} = \big(\Mpi^{[u]}\big)^{\psi=0} \otimes_{\ZZ} \Omega^k$.
	Recall that from Lemma \ref{lem:analytic_rings_psi_action} (ii), we have a decomposition $(\Rpi^{[u]})^{\psi=0} = \oplus_{\alpha \neq 0} R_{\varpi, \alpha}^{[u]} = \oplus_{\alpha \neq 0} \Rpi^{[u]} u_{\alpha}$, where $u_{\alpha} = (1+X_0)^{\alpha_0} X_1^{\alpha_1} \cdots X_d^{\alpha_d}$, where $\alpha = (\alpha_0, \ldots, \alpha_d)$ is a $(d+1)\textrm{-tuple}$ with $\alpha_i \in \{0, \ldots, p-1\}$ for each $0 \leqslant i \leqslant d$.
	Moreover, we have $\partial_i(u_{\alpha}) = \alpha_i u_{\alpha}$, for each $0 \leqslant i \leqslant d$.
	In particular, $\partial_i(R_{\varpi, \alpha}^{[u]}) \subset R_{\varpi, \alpha}^{[u]}$.
	Now, using the decomposition of $(\Rpi^{[u]})^{\psi=0}$, we set $M_{\alpha} = \oplus_{j=1}^h R_{\varpi, \alpha}^{[u]} \varphi(f_j)$ and obtain that $\big(\Mpi^{[u]}\big)^{\psi=0}$ is $p^{s}\textrm{-isomorphic}$ to $\oplus_{\alpha \neq 0} M_{\alpha}$.
	From the $O_F\textrm{-linear}$ continuous de Rham differential operator on $R_{\varpi, \alpha}^{[u]}$ and the $O_F\textrm{-linear}$ integrable connection on $\Mpi^{[u]}$, we obtain an induced $O_F\textrm{-linear}$ integrable connection $\partial : M_{\alpha} \rightarrow M_{\alpha} \otimes \Omega^1_{R_{\varpi, \alpha}^{[u]}} = M_{\alpha} \otimes_{\ZZ} \Omega^1$.
	Then the decomposition of $\big(\Mpi^{[u]})^{\psi=0}$ shows that the kernel complex in the claim is $p^{s}\textrm{-isomorphic}$ to direct sum of the following complexes:
	\begin{equation}\label{eq:deRham_complex_alpha}
		0 \longrightarrow M_{\alpha} \longrightarrow M_{\alpha} \otimes \Omega^1 \longrightarrow M_{\alpha} \otimes \Omega^2 \longrightarrow \cdots,
	\end{equation}
	where $\alpha \ne 0$.
	We will show that \eqref{eq:deRham_complex_alpha} is exact for each $\alpha$; the idea of the proof is based on \cite[Lemma 3.4]{colmez-niziol-nearby-cycles}.
	Since everything is $p\textrm{-adically}$ complete and $p\textrm{-torsion free}$, we only need to show the exactness of \eqref{eq:deRham_complex_alpha} modulo $p$.
	Note that for $y = \sum_{j=1}^h y_j \varphi(f_j) \in M_{\alpha}$, we have $\partial\big(\sum_{j=1}^h y_j \varphi(f_j)\big) = \sum_{j=1}^h y_j \partial_M(\varphi(f_j)) + \varphi(f_j) \partial(y_j)$, where $\partial_M$ denotes the connection on $M$.
	Recall that from Remark \ref{rem:unusual_frobenius} we have $\varphi \partial_M = p \partial_M \varphi$.
	So $\partial(y) - \sum_{i=1}^h \varphi(f_j) \partial(y_j) \in pM_{\alpha}$.
	Moreover, by using Lemma \ref{lem:differential_mod_p} we have $\partial_i(y_j) - \alpha_i y_j \in pR_{\varpi, \alpha}^{[u]}$.
	So we get that the complex \eqref{eq:deRham_complex_alpha} has a very simple shape modulo $p$: if $d = 0$ it is just $M_{\alpha} \xrightarrow{\hspace{1mm} \alpha_0 \hspace{1mm}} M_{\alpha}$; if $d=1$ it is the complex $M_{\alpha} \xrightarrow{(\alpha_0, \alpha_1)} M_{\alpha} \oplus M_{\alpha} \xrightarrow{-\alpha_1+\alpha_0} M_{\alpha}$; for general $d$ it is the total complex attached to a $(d+1)\textrm{-dimensional}$ cube with all vertices equal to $M_{\alpha}$ and arrows in the $i\textrm{-th}$ direction equal to $\alpha_i$.
	As one of the $\alpha_i$ is invertible by assumption, this implies that the cohomology of the total complex is 0 and \eqref{eq:deRham_complex_alpha} is exact for each $\alpha$.
	This allows us to conclude.
\end{proof}

Following the defintion of $\psi$ over $M^{[u]}$ (see the discussion before Lemma \ref{lem:syntomic_u_psi_eigenspace}), one can define an operator $\psi : \Rpi^{[u, v]} \otimes_{R} M \rightarrow p^{-s} \Rpi^{[pu, pv]} \otimes_{R} M$ as a left inverse to $\varphi$ and set
\begin{equation*}
	\Syn^{\psi}\big(\Mpi^{[u, v]}, r\big) := \Big[ \Fil^r \Mpi^{[u, v]} \otimes \Omega_{\Rpi^{[u, v]}}^{\bullet} \xrightarrow{p^{r+s}\psi - p^{\bullet + s}} \Mpi^{[pu, v]} \otimes \Omega_{\Rpi^{[pu, v]}}^{\bullet} \Big].
\end{equation*}

\begin{lem}\label{lem:syntomic_u_to_uv}
	 For $u \leqslant 1 \leqslant v$ the natural morphism of complexes $\Syn^{\psi}\big(\Mpi^{[u]}, r\big) \rightarrow \Syn^{\psi}\big(\Mpi^{[u, v]}, r\big)$ is a $p^{2r}\textrm{-quasi-isomorphism}$ in degrees $k \leqslant r - s - 1$.
\end{lem}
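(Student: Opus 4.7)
The plan is to show that the mapping fiber of the natural morphism is $p^{2r}$-acyclic in cohomological degrees $\leq r - s - 1$. Using the inclusions $\Rpi^{[u]} \subset \Rpi^{[u,v]}$ and $\Rpi^{[pu]} \subset \Rpi^{[pu,v]}$ together with the trivialisations $\Omega^j_{\Rpi^{\bmstar}} = \Rpi^{\bmstar} \otimes_{\ZZ} \Omega^j$ from \eqref{eq:phi_psi_diff}, this mapping fiber is quasi-isomorphic (up to a shift) to the total complex of the bicomplex
$$\big[\,\Fil^r \big(\Mpi^{[u,v]}/\Mpi^{[u]}\big) \otimes \Omega^{\bullet} \xrightarrow{\ p^{r+s}\psi\ -\ p^{\bullet+s}\ } \big(\Mpi^{[pu,v]}/\Mpi^{[pu]}\big) \otimes \Omega^{\bullet}\,\big],$$
whose horizontal arrows are the induced de Rham differentials. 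A standard spectral-sequence argument on this bicomplex reduces the claim to showing that, for each $j \leq r - s - 1$, the vertical operator $p^{r+s}\psi - p^{j+s}$ is a $p^{2r}$-isomorphism between the displayed quotient modules.

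To prove the vertical assertion I would construct an inverse by a geometric series. Factoring out $-p^{j+s}$, one formally writes
$$\big(p^{r+s}\psi - p^{j+s}\big)^{-1} \;=\; -\,p^{-(j+s)} \sum_{k \geq 0} (p^{r-j}\psi)^k.$$
By the construction of $\psi$ on $\Mpi^{\bmstar}$ in \eqref{eq:psi_odcris}, the $k$-th iterate $\psi^k$ sends $\Mpi^{[pu,v]}/\Mpi^{[pu]}$ into $p^{-ks}\,\Mpi^{[p^{k+1}u,\,v]}/\Mpi^{[p^{k+1}u]}$, so the $k$-th summand carries a net $p$-adic weight $p^{k(r-j-s)-(j+s)}$. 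The hypothesis $j \leq r-s-1$ gives $r-j-s \geq 1$, ensuring $p$-adic convergence of the series; moreover, once $p^{k+1}u$ exceeds $v$ the target quotient collapses and the sum is essentially finite. Since $j+s \leq r-1$, multiplying by $p^r$ clears the leading denominator $p^{-(j+s)}$, producing a $p^r$-bounded section into $\Mpi^{[u,v]}/\Mpi^{[u]}$. A further factor of $p^r$ is paid to land in $\Fil^r$ using Lemma \ref{lem:t_over_xi_unit}, which asserts that multiplication by $t^r$ is a $p^r$-isomorphism $\Rpi^{[u,v]} \simeq \Fil^r \Rpi^{[u,v]}$; because $\varphi(t) = pt$ this is compatible with the $\psi$-inversion. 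Combining the two $p^r$-factors yields the required $p^{2r}$-isomorphism.

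The principal technical difficulty will be verifying that this formal vertical inversion is compatible with the de Rham differentials up to uniformly bounded powers of $p$, so that the bicomplex as a whole is $p^{2r}$-acyclic in degrees $\leq r - s - 1$ rather than only being so termwise in the vertical direction. A secondary subtlety is the careful bookkeeping of iterated $\psi$ across the nested interval rings $\Rpi^{[p^ku,v]}$: one must show that the accumulated $p^{-s}$-denominators of $\psi$ and the $p^{r-j}$-scalings combine to give the uniform net gain $p^{r-j-s}$ per iteration, independently of the parameters $u, v$, and that the effective truncation at $k$ with $p^{k+1}u > v$ does not spoil the constant $p^{2r}$ or its uniformity across $0 \leq j \leq r-s-1$.
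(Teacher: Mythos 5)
Your overall strategy coincides with the paper's: reduce to the mapping fiber, use the splitting of $\Omega^j$ into the basis $\omega_{\smbfi}$ to work termwise, and invert $1-p^{r-k}\psi$ on the quotient modules by a geometric series exploiting $r-k \geq s+1$. However, there is a genuine gap in how you make that series converge. You sum the iterates $(p^{r-j}\psi)^k$ across a tower of pairwise different modules $\Mpi^{[p^{k}u,v]}/\Mpi^{[p^{k}u]}$ and appeal to a ``collapse'' of the target once $p^{k+1}u>v$ to truncate the sum. This cannot work as stated: an inverse must be a map back into a fixed module, so a sum whose terms live in different quotients is not a well-defined operator; moreover, when $p^{k+1}u>v$ the ring $\Rpi^{[p^{k+1}u,v]}$ is not even defined (the construction requires $u\leq v$), and even if one interprets the collapse as the operator becoming zero, post-composing with zero does not produce an inverse.

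The ingredient you are missing is the observation that the natural map $\Mpi^{[u,v]}/\Mpi^{[u]} \rightarrow \Mpi^{[pu,v]}/\Mpi^{[pu]}$ is an isomorphism, so that $p^{s}\psi$ induces an endomorphism of the single $p$-adically complete quotient $Q = \Mpi^{[u,v]}/\Mpi^{[u]}$. Then for $i=r-k\geq s+1$ the operator $1-p^{i}\psi = 1-p^{i-s}(p^{s}\psi)$ is invertible on $Q$, with inverse $\sum_{m\geq 0} p^{m(i-s)}(p^{s}\psi)^{m}$ converging $p$-adically since $i-s\geq 1$; hence $p^{r+s}\psi-p^{k+s}=p^{k+s}(p^{r-k}\psi-1)$ is a $p^{k+s}$-isomorphism with $k+s\leq r-1$. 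This also dissolves what you call the principal technical difficulty: the vertical maps commute with the de Rham differentials by construction, so once each is a $p^{r}$-isomorphism the mapping fiber is $p^{2r}$-acyclic with no separate compatibility check. Finally, for the filtration the paper invokes Lemma \ref{lem:fil_quotient_robba_u_module} (the map $\Fil^r\Mpi^{[u,v]}/\Fil^r\Mpi^{[u]}\rightarrow \Mpi^{[u,v]}/\Mpi^{[u]}$ is a $p^{r}$-isomorphism) rather than multiplication by $t^{r}$ via Lemma \ref{lem:t_over_xi_unit}; both cost a factor $p^{r}$, but the former applies directly to the quotient at hand.
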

\begin{proof}
	The map between the complexes is induced by the following diagram:
	\begin{center}
		\begin{tikzcd}[row sep=small, column sep=large]
			\Fil^r \Mpi^{[u]} \otimes \Omega_{\Rpi^{[u]}}^{\bullet} \arrow[d] \arrow[rr, "p^{r+s}\psi-p^{\bullet+s}"] & & \Mpi^{[pu]} \otimes \Omega^{\bullet}_{\Rpi^{[pu]}} \arrow[d]\\
			\Fil^r \Mpi^{[u, v]} \otimes \Omega_{\Rpi^{[u, v]}}^{\bullet} \arrow[rr, "p^{r+s}\psi-p^{\bullet+s}"] & & \Mpi^{[pu, v]} \otimes \Omega^{\bullet}_{\Rpi^{[pu, v]}},
		\end{tikzcd}
	\end{center}
	where the vertical arrows are natural maps induced by the inclusion $\Rpi^{[u]} \subset \Rpi^{[u, v]}$.
	Therefore, it suffices to show that the mapping fibre
	\begin{equation*}
		\Big[ \Fil^r \Mpi^{[u, v]} \otimes \Omega_{\Rpi^{[u, v]}}^{\bullet} / \Fil^r \Mpi^{[u]} \otimes \Omega_{\Rpi^{[u]}}^{\bullet} \xrightarrow{\hspace{1mm} p^{r+s} \psi - p^{\bullet+s} \hspace{1mm}} \Mpi^{[pu, v]} \otimes \Omega_{\Rpi^{[pu, v]}}^{\bullet} / \Mpi^{[pu]} \otimes \Omega_{\Rpi^{[pu]}}^{\bullet} \Big],
	\end{equation*}
	is $p^{2r}\textrm{-acyclic}$.
	By Lemma \ref{lem:fil_quotient_robba_u_module}, we can ignore the filtration, and by working in the basis $\{\omega_{\mathbf{i}}, \hspace{1mm} \smbfi \in I_k\}$ of $\Omega^k$, it is enough to show that $p^{r+s}\psi - p^{k+s} : \Mpi^{[u, v]} / \Mpi^{[u]} \longrightarrow \Mpi^{[pu, v]} / \Mpi^{[pu]}$ is a $p^r\textrm{-isomorphism}$ for $k \leqslant r - s - 1$.
	But note that $\Mpi^{[u, v]} / \Mpi^{[u]} \isomorphic \Mpi^{[pu, v]} / \Mpi^{[pu]}$, therefore, we see that $1 - p^i \psi$ is an endomorphism of this quotient, for $i = r - k$.
	Moreover, for $i \geqslant s + 1$, we get that $1 - p^i \psi$ is invertible on $\Mpi^{[u, v]} / \Mpi^{[u]}$ with the inverse given as $1 + p^{i-s} (p^{s} \psi) + p^{2(i-s)} (p^{s} \psi)^2 + \cdots$.
	Therefore, it follows that $p^{r+s}\psi - p^{k+s} = p^{k+s}(p^{r-k}\psi-1)$ is a $p^{k+s}\textrm{-isomorphism}$.
	Since $k+s \leqslant r-1$, we obtain that the complex in the claim is $p^{2r}\textrm{-acyclic}$.
\end{proof}

\begin{lem}\label{lem:fil_quotient_robba_u_module}
	The natural map $\Fil^r \Mpi^{[u, v]} /\hspace{0.3mm} \Fil^r \Mpi^{[u]} \rightarrow \Mpi^{[u, v]} / \Mpi^{[u]}$ is a $p^r\textrm{-isomorphism}$ for $u \leqslant 1 \leqslant v$.
\end{lem}
\begin{proof}
	The map in the claim is injective by Lemma \ref{lem:filr_induced}.
	For $p^r\textrm{-surjectivity}$, let $\{f_1, \ldots, f_h\}$ be an $R\textrm{-basis}$ of $M$ and let $x = \sum_{i=1}^h b_i \otimes f_i \in \Rpi^{[u, v]} \otimes_R M$.
	By \cite[Lemma 3.5]{colmez-niziol-nearby-cycles}, we have a $p^r\textrm{-isomorphism}$ $\Fil^r \Rpi^{[u, v]} /\hspace{0.3mm} \Fil^r \Rpi^{[u]} \isomorphic \Rpi^{[u, v]} / \Rpi^{[u]}$, so we can write $p^r b_i = b_{i1} + b_{i2}$, with $b_{i1} \in \Fil^r \Rpi^{[u, v]}$ and $b_{i2} \in \Rpi^{[u]}$.
	Since $\sum_{i=1}^h b_{i1} \otimes f_i \in \Fil^r \Mpi^{[u, v]}$, we get the desired conclusion.
\end{proof}

\begin{lem}\label{lem:syntomic_uv_psi_eigenspace}
	The commutative diagram
	\begin{center}
		\begin{tikzcd}[column sep=large]
			\Fil^r \Mpi^{[u, v]} \otimes \Omega_{\Rpi^{[u, v]}}^{\bullet} \arrow[d, "id"] \arrow[rr, "p^r-p^{\bullet}\varphi"] & & \Mpi^{[u, v/p]} \otimes \Omega^{\bullet}_{\Rpi^{[u, v/p]}} \arrow[d, "p^{s}\psi"]\\
			\Fil^r \Mpi^{[u, v]} \otimes \Omega_{\Rpi^{[u, v]}}^{\bullet} \arrow[rr, "p^{r+s}\psi-p^{\bullet+s}"] & & \Mpi^{[pu, v]} \otimes \Omega^{\bullet}_{\Rpi^{[pu, v]}},
		\end{tikzcd}
	\end{center}
	defines a $p^{2s}\textrm{-quasi-isomorphism}$ from $\Syn\big(\Mpi^{[u, v]}, r\big)$ to $\Syn^{\psi}\big(\Mpi^{[u, v]}, r\big)$.
\end{lem}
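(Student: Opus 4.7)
The plan is to mimic the proof of Lemma \ref{lem:syntomic_u_psi_eigenspace}, replacing the role of $\Rpi^{[u]}$ by $\Rpi^{[u,v]}$ throughout and adapting the disk-of-convergence arguments to the annulus. First I would verify commutativity of the diagram: by construction of $\psi$ on $\Mpi^{[u,v/p]}$ via \eqref{eq:psi_odcris}, the relation $\psi \varphi = \textup{id}$ holds on $\Mpi^{[u,v]}$, and by the coefficient-wise definition \eqref{eq:phi_psi_diff} of $\varphi,\psi$ on differential forms it extends to $\Mpi^{[u,v]} \otimes \Omega^{\bullet}$. Hence $p^s\psi \circ (p^r - p^{\bullet}\varphi) = p^{r+s}\psi - p^{\bullet+s}$, exactly the bottom row.

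To establish the $p^{2s}$-quasi-isomorphism it suffices to show that the mapping fiber of the right vertical arrow $p^s\psi$ is $p^{2s}$-acyclic, since the left vertical arrow is the identity. Equivalently, I will argue that both the cokernel and the kernel of $p^s\psi$ produce $p^s$-acyclic differential complexes. For the cokernel: surjectivity of $\psi \colon \Rpi^{[u,v/p]} \twoheadrightarrow \Rpi^{[pu,v]}$ from Lemma \ref{lem:analytic_rings_psi_action}(i), combined with $p^s M \subset \varphi^{\ast}(M)$ provided by Assumption \ref{assum:relative_crystalline_wach_free}, gives $\Mpi^{[pu,v]} \subset \psi(\Mpi^{[u,v/p]})$ up to $p^s$. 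Thus $p^s\psi(\Mpi^{[u,v/p]})$ is $p^s$-isomorphic to $\Mpi^{[pu,v]}$, and since $\psi$ acts diagonally on the basis $\{\omega_{\smbfi}\}$, the cokernel complex is killed by $p^s$.

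For the kernel, using $p^s M \subset \varphi^{\ast}(M)$ one gets a $p^s$-isomorphism $\Mpi^{[u,v/p]} \simeq \bmoplus_{j=1}^h \Rpi^{[u,v/p]}\varphi(f_j)$, so $(\Mpi^{[u,v/p]})^{\psi=0}$ is $p^s$-isomorphic to $\bmoplus_j (\Rpi^{[u,v/p]})^{\psi=0}\varphi(f_j)$. The eigenspace decomposition $(\Rpi^{[u,v/p]})^{\psi=0} = \bmoplus_{\alpha \neq 0} R^{[u,v/p]}_{\varpi,\alpha}$ of Lemma \ref{lem:analytic_rings_psi_action}(ii) then splits the kernel as $\bmoplus_{\alpha \neq 0} M_{\alpha}$ with $M_{\alpha} := \bmoplus_j R^{[u,v/p]}_{\varpi,\alpha}\varphi(f_j)$, each stable under every $\partial_i$. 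Modulo $p$, using Lemma \ref{lem:differential_mod_p} together with $\varphi\partial_M = p\,\partial_M\varphi$ from Remark \ref{rem:unusual_frobenius}, the associated de Rham complex $M_{\alpha} \otimes \Omega^{\bullet}$ becomes the total complex of a $(d+1)$-dimensional cube whose edge in direction $i$ is multiplication by $\alpha_i$; since $\alpha \neq 0$, some $\alpha_i$ is a unit mod $p$, the cube is acyclic, and $p$-adic completeness gives acyclicity integrally.

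I do not anticipate any essentially new difficulty: all the structural inputs (surjectivity of $\psi$, the $\psi=0$ eigenspace decomposition, and the mod-$p$ formula for $\partial_i$) hold verbatim for the annulus rings by the results of \S\ref{subsec:cyclotomic_frob}, so the proof is a direct transcription of the disk argument with book-keeping of the exponents $u \leadsto [u,v]$, $pu \leadsto [pu,v]$, and $\textpd \leadsto [u,v/p]$.
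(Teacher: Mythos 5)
Your proposal is correct and follows essentially the same route as the paper, which itself proves this lemma by repeating the argument of Lemma \ref{lem:syntomic_u_psi_eigenspace} with $\Mpi^{[u]}$, $\Rpi^{[u]}$ replaced by $\Mpi^{[u,v]}$, $\Rpi^{[u,v]}$: the cokernel is killed by $p^s$ via surjectivity of $\psi$ and $p^sM \subset \varphi^{\ast}(M)$, and the kernel reduces via the $\psi=0$ eigenspace decomposition to the $(d+1)$-dimensional cube with edges $\alpha_i$ modulo $p$. No gaps.
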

\begin{proof}
	Proof of the claim follows in manner similar to the proof of Lemma \ref{lem:syntomic_u_psi_eigenspace} by replacing $\Mpi^{[u]}$ with $\Mpi^{[u, v]}$ and $\Rpi^{[u]}$ with $\Rpi^{[u, v]}$.
	One only needs to note that Lemma \ref{lem:analytic_rings_psi_action} (ii) and Lemma \ref{lem:differential_mod_p} are true for the ring $\Rpi^{[u, v]}$ as well.
	We omit the proof.
\end{proof}

\subsection{Differential Koszul Complex}\label{subsec:differential_koszul}

Our next goal is to relate the syntomic complex $\Syn\big(\Mpi^{[u, v]}, r\big)$ in \S \ref{subsec:change_annulus_converge} to a complex with coefficients in the Wach module $\mbfn(T)$ from Assumption \ref{assum:relative_crystalline_wach_free} (see Proposition \ref{prop:syntomic_to_phi_gamma}).
Before stating the result we will verify some results in order to define the latter complex.

Let $\Omega^1_{\ARpi^{[u, v]}}$ denote the $p\textrm{-adic}$ completion of the module of differentials of $\ARpi^{[u, v]}$ relative to $\ZZ$.
Via the isomorphism $\iota_{\cycl} : \Rpi^{[u, v]} \isomorphic \ARpi^{[u, v]}$, we choose a basis $\{\omega_0, \omega_1, \ldots, \omega_d\}$ of $\Omega^1_{\ARpi^{[u, v]}}$ obtained as the image of $\big\{\frac{d X_0}{1+X_0}, \frac{dX_1}{X_1}, \ldots, \frac{dX_d}{X_d}\big\}$ under $\iota_{\cycl}$ (see \S \ref{subsec:pd_envelope}), in particular, we have the differential operators $\partial_i$ over $\ARpi^{[u, v]}$, for $0 \leqslant i \leqslant d$.
Moreover, from Definition \ref{defi:filtration_vanishing_varpi}, $\ARpi^{[u, v]}$ is endowed with a filtration and we have the filtered de Rham complex $\Fil^r \Omega_{\ARpi^{[u, v]}}^{\bullet}$.
The differential operators $\partial_i$ are related to the infinitesimal action of $\Gamma_R$ by the relation $\nabla_i := \log \gamma_i = t \partial_i$, for $0 \leqslant i \leqslant d$ and where $\log \gamma_i = \sum_{k \in \NN} (-1)^k (\gamma_i-1)^{k+1}/(k+1)$.

Let us set $\Npi^{[u, v]}(T) := \ARpi^{[u, v]} \otimes_{\AR^+} \mbfn(T)$ equipped with a $\Gamma_R\textrm{-stable}$ filtration as in \eqref{eq:fil_ns}.
Recall that for an indeterminate $X$ we have formal expressions $\frac{\log (1 + X)}{X}$ and $\frac{X}{\log (1 + X)}$ (see before Lemma \ref{lem:nabla_quotient_tau_invertible}).
\begin{lem}\label{lem:nabla_converges_relative}
	For $i \in \{0, 1, \ldots, d\}$ the operators $\nabla_i = \log \gamma_i$, $\nabla_i/(\gamma_i-1) = (\log \gamma_i)/(\gamma_i-1)$ and $(\gamma_i-1)/\nabla_i = (\gamma_i-1)/(\log \gamma_i)$ converge as series of operators on $\Npi^{[u, v]}(T)$.
	The same is true for $\ARpi^{[u, v]} \otimes_{\AR^+} \mbfn(T(r))$, for any $r \in \ZZ$, and $\Fil^k \Npi^{[u, v]}(T(r))$, for any $k \in \ZZ$.
\end{lem}
\begin{proof}
	We will only show the claim for the operator $\nabla_i$, the claim for the convergence of operators $\nabla_i/(\gamma_i-1)$ and $(\gamma_i-1)/\nabla_i$ follows in a manner similar to Lemma \ref{lem:nabla_quotient_tau_invertible}.
	For $0 \leqslant i \leqslant d$, we have that $\gamma_i-1$ acts as a twisted derivation, i.e.\ for any $a \in \ARpi^{[u, v]}$ and $x \in \mbfn(T)$, we have $(\gamma_i-1)(ax) = (\gamma_i-1)a \cdot x + \gamma_i(a) (\gamma_i-1)x$.
	Note that the action of $\Gamma_R$ is trivial on $\mbfn(T)/\pi\mbfn(T)$.
	So using Lemma \ref{lem:gamma_minus_1_oc} and the preceding discussion we have $(\gamma_i-1)\big(p^m, \pi_m^{p^m}\big)^k \Npi^{[u, v]}(T) \subset \big(p^m, \pi_m^{p^m}\big)^{k+1} \Npi^{[u, v]}(T)$.
	Now, similar to the proof of Lemma \ref{lem:log_gamma_converges}, for $k \geqslant 0$, it follows that we have $(\gamma_i-1)^k \Npi^{[u, v]}(T) \subset \big(p^m, \pi_m^{p^m}\big)^k \Npi^{[u, v]}(T)$.
	The same estimation of the $p\textrm{-adic}$ valuation of the coefficients as in the proof Lemma \ref{lem:log_gamma_converges} helps us in concluding that $\log \gamma_i$ converges as a series of operators on $\Npi^{[u, v]}(T)$.

	Next, from Lemma \ref{lem:fil_ns_k_twist} recall that $\Fil^k \Npi^{[u, v]}(T(r)) = \pi^{-r} \Fil^{r+k} \Npi^{[u, v]}(T)(r)$.
	As $t/\pi$ is a unit in $\ARpi^{[u, v]}$ (see Lemma \ref{lem:t_over_pi_unit}) and the action of $\Gamma_S$ is trivial on $t^{-r} \otimes \epsilon^{\otimes r}$, where $\epsilon^{\otimes r}$ denotes a $\ZZ_p\textrm{-basis}$ of $\ZZ_p(r)$, therefore, it is enough to show that $\nabla_i$ converges on $\Fil^k \Npi^{[u, v]}(T)$, for all $k \in \NN$.
	Now, recall that from Remark \ref{rem:fil_ms_ns} we have a $\Gamma_R\textrm{-equivariant}$ isomorphism of $\fERpi^{[u, v]}\textrm{-modules}$ $\alpha : \Fil^r (\fERpi^{[u, v]} \otimes_R M[1/p]) \isomorphic \Fil^k(\fERpi^{[u, v]} \otimes_{\AR^+} \mbfn(V))$ (see \eqref{eq:fil_ms_ns}).
	Moreover, note that $\nabla_i$ converges on $\fERpi^{[u, v]}$, since it converges on $\ARpi^{[u, v]}$ (see Lemma \ref{lem:log_gamma_converges}) and $\Gamma_R$ acts trivially on $\Rpi^{[u, v]}$.
	So, by using that the filtration on $\fERpi^{[u, v]} \otimes_R M[1/p]$ is given as the tensor product filtration (see Lemma \ref{lem:fil_gr_ds}), the action of $\Gamma_S$ is trivial on $M[1/p]$ and the ideal $\Fil^j \fERpi^{[u, v]}$ is closed in $\fERpi^{[u, v]}$ for all $j \in \NN$ (see Remark \ref{rem:elements_of_pd_ring} (ii)), it follows that $\nabla_i$ converges on $\Fil^r (\fERpi^{[u, v]} \otimes_R M[1/p])$, and since $\alpha$ is $\Gamma_R\textrm{-equivariant}$, therefore, $\nabla_i$ also converges on $\Fil^k(\fERpi^{[u, v]} \otimes_{\AR^+} \mbfn(V))$.
	Combining the two discussions above, it follows that $\nabla_i(\Fil^k\Npi^{[u, v]}(T)) \subset \Fil^k(\fERpi^{[u, v]} \otimes_{\AR^+} \mbfn(V)) \cap \Npi^{[u, v]}(T) = \Fil^k \Npi^{[u, v]}(T)$ (see Remark \ref{rem:ns_fil_induced}).
	A similar argument shows that the operators $\nabla_i/(\gamma_i-1)$ and $(\gamma_i-1)/\nabla_i$ also converge on $\Fil^k \Npi^{[u, v]}(T)$.
	This allows us to conclude.
\end{proof}

\begin{lem}\label{lem:nablai_griffiths_transversal}
	For the filtered modules and operators $\nabla_i$ defined above and $0 \leqslant i \leqslant d$, we have that $\nabla_i\big(\Fil^k \Npi^{[u, v]}(T)\big) \subset \pi \Fil^{k-1} \Npi^{[u, v]}(T) = t \Fil^{k-1} \Npi^{[u, v]}(T)$.
\end{lem}
\begin{proof}
	Note that the action of $\Gamma_R$ is trivial on $\Npi^{[u, v]}(T) / \pi \Npi^{[u, v]}(T)$.
	So using Lemma \ref{lem:nabla_converges_relative}, we infer that $\nabla_i\big(\Fil^k \Npi^{[u, v]}(T)\big) \subset \Fil^k \Npi^{[u, v]}(T) \cap \pi \Npi^{[u, v]}(T) = \pi \Fil^{k-1} \Npi^{[u, v]}(T)$, where the last equality follows from Lemma \ref{lem:ns_fil_pi_cap}.
	As $t/\pi$ is a unit in $\fERpi^{[u, v]}$ (see Lemma \ref{lem:t_over_pi_unit}), we can also write $\nabla_i\big(\Fil^k \Npi^{[u, v]}(T)\big) \subset t\Fil^{k-1} \Npi^{[u, v]}(T)$.
\end{proof}

For $0 \leqslant i \leqslant d$, it is easy to see that we have $\nabla_i = \log \gamma_i = \lim_{n \rightarrow +\infty} (\gamma_i^{p^n}-1)/p^n$, from which one can easily show that $\nabla_i$ satisfies a Leibniz rule (see the proof \cite[Theorem 4.2]{morrow-tsuji-coeff} for a similar argument).
Now using Lemma \ref{lem:nabla_converges_relative} we define differential operators $\partial_i$ over $\Npi^{[u, v]}(T)$ as $\partial_i := \nabla_i/t = (\log \gamma_i)/t$.
In the basis $\big\{\omega_0, \ldots, \omega_d\big\}$ of $\Omega^1_{\ARpi^{[u, v]}}$, we set $\partial = (\partial_0, \ldots, \partial_d)$ and obtain a connection $\partial : \Npi^{[u, v]}(T) \rightarrow  \Npi^{[u, v]}(T) \otimes \Omega^1_{\ARpi^{[u, v]}}$ by sending $ax \mapsto a \partial(x) + x \otimes da$.

\begin{lem}\label{lem:connection_npiuv}
	The connection $\partial$ on $\Npi^{[u, v]}(T)$ is integrable, satisfies a Leibniz rule and Griffiths transversality with respect to the filtration, i.e. $\partial_i(\Fil^k \Npi^{[u, v]}(T)) \subset \Fil^{k-1} \Npi^{[u, v]}(T)$, for $0 \leqslant i \leqslant d$.
\end{lem}
\begin{proof}
	From \S \ref{subsubsec:lie_gamma_koszul_complex} recall that $[\nabla_i, \nabla_j] = 0$ for $1 \leqslant i, j \leqslant d$ and $[\nabla_0, \nabla_i] = p^m \nabla_i$, for $1 \leqslant i \leqslant d$.
	So it follows that over $\Npi^{[u, v]}(T)$ we have a composition of operators $t^2(\partial_i \circ \partial_j - \partial_j \circ \partial_i) = t \partial_i (t \partial_j) - t \partial_j (t \partial_i) = \nabla_i \circ \nabla_j - \nabla_j \circ \nabla_i = 0$, for $1 \leqslant i, j \leqslant d$.
	Next, for $1 \leqslant i \leqslant d$, we have $\nabla_0 \circ \nabla_i - \nabla_i \circ \nabla_0 = t\partial_0 \circ (t \partial_i) - t\partial_i \circ (t\partial_0) = t p^m \partial_i + t^2 \partial_0 \circ \partial_i - t^2\partial_i \circ \partial_0 = p^m \nabla_i + t^2 (\partial_0 \circ \partial_i - \partial_i \circ \partial_0)$.
	In particular, $\partial_0 \circ \partial_i - \partial_i \circ \partial_0 = 0$.
	Since $\partial \circ \partial = (\partial_i \circ \partial_j)_{i, j}$, for $0 \leqslant i \leqslant j \leqslant d$, and $\Npi^{[u, v]}(T)$ is $t\textrm{-torsion}$ free, we conclude that the connection $\partial$ is integrable.
	Moreover, it is clear that $\partial$ satisfies a Leibniz rule and it satisfies Griffiths transversailty because we have $\partial_i\big(\Fil^k \Npi^{[u, v]}(T)\big) = t^{-1} \nabla_i\big(\Fil^k \Npi^{[u, v]}(T)\big) \subset \Fil^{k-1} \Npi^{[u, v]}(T)$, using Lemma \ref{lem:nablai_griffiths_transversal}.
\end{proof}

Let $S = \ARpi^{[u, v]}$, then from Lemma \ref{lem:connection_npiuv}, we have the filtered de Rham complex $\Fil^r \Npi^{[u, v]}(T) \otimes \Omega^{\bullet}_S$.
In the chosen basis $\{\omega_1, \ldots, \omega_d\}$ of $\Omega^1_{S}$, an element of $\Omega^q_{S} = \wedge^q \Omega^1_{S}$ can be expressed as $\sum_{\mathbf{i}} x_{\mathbf{i}} \omega_{\mathbf{i}}$ in a unique manner, where $x_{\mathbf{i}} \in S$ and $\omega_{\mathbf{i}} = \omega_{i_1} \wedge \cdots \wedge \omega_{i_q}$, for $\mathbf{i} = (i_1, \ldots, i_q) \in I_q = \{0 \leqslant i_1 < \cdots < i_q \leqslant d\}$.
In this case, the map involving differential operators becomes $(\partial_i) : \big(\Fil^{k-q} \Npi^{[u, v]}(T)\big)^{I_q} \rightarrow \big(\Fil^{k-q-1} \Npi^{[u, v]}(T)\big)^{I_{q+1}}$ for $0 \leqslant i \leqslant d$.

\begin{defi}\label{def:koszul_complex_differential}
	Define the \textit{$\partial\textrm{-Koszul complex}$} for $\Fil^k \Npi^{[u, v]}(T)$ as
	\begin{equation*}
		\Kos\big(\partial_A, \Fil^k \Npi^{[u, v]}(T)\big) : \Fil^k \Npi^{[u, v]}(T) \xrightarrow{(\partial_i)} \big(\Fil^{k-1} \Npi^{[u, v]}(T)\big)^{I_1} \longrightarrow \cdots.
	\end{equation*}
\end{defi}

\begin{rem}\phantomsection\label{rem:koszul_complex_differential}
	\begin{enumromanup}
	\item By definition, it follows that we have a natural isomorphism between complexes $\Fil^k \Npi^{[u, v]}(T) \otimes \Omega^{\bullet}_{\ARpi^{[u, v]}} \isomorphic \Kos\big(\partial_A, \Fil^k \Npi^{[u, v]}(T)\big)$.

	\item Let $I_q\prm = \{(i_1, \ldots, i_q), \hspace{1mm} \textrm{such that} \hspace{1mm} 1 \leqslant i_1 < \cdots < i_q \leqslant d\}$ and $\partial\prm = (\partial_1, \ldots, \partial_d)$.
		Set
		\begin{equation*}
			\Kos\big(\partial_A\prm, \Fil^k \Npi^{[u, v]}(T)\big) : \Fil^k \Npi^{[u, v]}(T) \xrightarrow{(\partial_i)} \big(\Fil^{k-1} \Npi^{[u, v]}(T)\big)^{I_1\prm} \longrightarrow \cdots,
		\end{equation*}
		and note that $\Kos\big(\partial_A, \Fil^k \Npi^{[u, v]}(T)\big) = \big[ \Kos\big(\partial_A\prm, \Fil^k \Npi^{[u, v]}(T)\big) \xrightarrow{\hspace{2mm}\partial_0\hspace{2mm}} \Kos\big(\partial_A\prm, \Fil^{k-1} \Npi^{[u, v]}(T)\big) \big]$.

	\item Computations carried out in this section are true over the ring $\ARpi^{[u, v/p]}$ as well.
	\end{enumromanup}
\end{rem}

\subsection{Poincar\'e Lemma}\label{subsec:poincare_lemma_app}

From Definition \ref{defi:obese_rings}, Remark \ref{rem:elements_of_pd_ring} and Lemma \ref{lem:obese_rings_struct}, recall that, for $\smstar \in \{\textpd, [u], [u, v]\}$, we have rings $\fERpi^{\bmstar}$ equipped with a filtration, Frobenius $\varphi$ sending $\fERpi^{\textpd} \rightarrow \fERpi^{\textpd}$, $\fERpi^{[u]} \rightarrow \fERpi^{[u]}$ and $\fERpi^{[u, v]} \rightarrow \fERpi^{[u, v/p]}$ and an action of $G_R$ which commutes with the Frobenius.
Moreover, from Remark \ref{rem:oarpd_erpd}, we have a subring $\OARpi^{\textpd} \subset \OAcrys(\Rbar)$ equipped with induced structures and we have a natural embedding $\OARpi^{\textpd} \subset \ERpi^{\textpd}$ compatible with the respective Frobenii, filtrations, $\ARpi^{\textpd}\textrm{-linear}$ connections and actions of $\Gamma_R$.

From Assumption \ref{assum:relative_crystalline_wach_free}, we have a natural map $\OARpi^{\textpd} \otimes_{R} M \rightarrow \OARpi^{\textpd} \otimes_{R} \mbfn(T)$, which is a $p^{n(T, e)}\textrm{-isomorphism}$ compatible with the respective Frobenii, filtrations, connections and the actions of $\Gamma_R$.
Recall that $\Mpi^{[u, v]} = \Rpi^{[u, v]} \otimes_{R} M$ and $\Npi^{[u, v]}(T) = \ARpi^{[u, v]} \otimes_{\AR^+} \mbfn(T)$ and after extension of scalars we have a map $\fERpi^{[u, v]} \otimes_{\Rpi^{[u, v]}} \Mpi^{[u, v]} \rightarrow \fERpi^{[u, v]} \otimes_{\ARpi^{[u, v]}} \Npi^{[u, v]}(T)$, which is a $p^{n(T, e)}\textrm{-isomorphism}$ compatible with the respective Frobenii, connections and the actions of $\Gamma_R$.
Moreover, in the $p^{n(T, e)}\textrm{-isomorphism}$ above, the left hand term is equipped with a filtration as described in the discussion before Lemma \ref{lem:filr_induced} and the right hand term is equipped with a filtration as in \eqref{eq:fil_ns}, which is compatible with the filtration on the left hand term by definition.

Let $R_1 := \ARpi^{[u, v]}$, $R_2 := \Rpi^{[u, v]}$ and $R_3 := \fERpi^{[u, v]}$. 
Set $X_{0, 1} := \pi_m$, $X_{0, 2} := X_0$ and set $X_{i, 1} := [X_i^{\flat}]$ and $X_{i, 2} := X_i$, for $1 \leqslant i \leqslant d$.
For $j = 1, 2$, set $\Omega^1_j := \ZZ\tfrac{dX_{0, j}}{1+X_{0, j}} \oplus_{i=1}^d \ZZ \tfrac{dX_{i, j}}{X_{i, j}}$ and $\Omega^1_3 := \Omega^1_1 \oplus \Omega^1_2$.
For $j = 1, 2, 3$, let $\Omega^k_j = \wedge^k \Omega_j$.
Therefore, we see that $\Omega^k_{R_j} = R_j \otimes \Omega^k_j$.
Recall that from \eqref{eq:deRham_complex_coeff} we have the filtered de Rham complex $\Fil^r \Mpi^{[u, v]} \otimes \Omega_1^{\bullet}$.
Set $\Delta_2 := \fERpi^{[u, v]} \otimes_{\Rpi^{[u, v]}} \Mpi^{[u, v]}$ equipped with a filtration as described in the discussion before Lemma \ref{lem:filr_induced}.
Using the $O_F\textrm{-linear}$ de Rham differential operator $\partial_{R_3} : \Fil^r \fERpi^{[u, v]} \rightarrow \Fil^{r-1} \fERpi^{[u, v]} \otimes_{\ZZ} \Omega_3^1$ and the $O_F\textrm{-linear}$ integrable connection $\partial_{R_2} : \Fil^r \Mpi^{[u, v]} \rightarrow \Fil^{r-1} \Mpi^{[u, v]} \otimes_{\ZZ} \Omega_2^1$, we obtain an $O_F\textrm{-linear}$ integrable connection on $\Delta_2$ as $\partial_{R_3} : \Delta_2 \rightarrow \Delta_2 \otimes_{\ZZ} \Omega_3^1$ by sending $ax \mapsto a \partial_{R_2}(x) + \partial_{R_3}(a) x$.
Moreover, the connection $\partial_{R_3}$ on $\Delta_2$ satisfies Griffiths transversality with respect to the filtration, i.e.\ $\partial_{R_3} : \Fil^r \Delta_2 \rightarrow \Fil^{r-1} \Delta_2 \otimes_{\ZZ} \Omega_3^1$, since the same is true for the differential operator on $\fERpi^{[u, v]}$ and the connection on $\Mpi^{[u, v]}$.
In particular, we have the filtered de Rham complex $\Fil^r \Delta_2 \otimes \Omega_3^{\bullet}$.
\begin{lem}\label{lem:poincare_lemma_2}
	The natural map $\Fil^r \Mpi^{[u, v]} \otimes \Omega_2^{\bullet} \rightarrow \Fil^r \Delta_2 \otimes \Omega_3^{\bullet}$ is a quasi-isomorphism.
\end{lem}
\begin{proof}
	In the notation of \S \ref{subsubsec:crys_fil_poincare_lem}, note that we have $A = R_1$, $B = R_2$ and $E = R_3$.
	Moreover, by definition, it is clear that $\Fil^r \Mpi^{[u, v]} = (\Fil^r \Delta_2)^{\partial_{R_1} = 0}$.
	Therefore, by using Lemma \ref{lem:fil_poincare_lem_mb}, we obtain the claim.
\end{proof}

Similar to above and using the discussion of \S \ref{subsec:differential_koszul}, it is easy to see that for $R_1 = \ARpi^{[u, v]}$ we have a filtered de Rham complex $\Fil^r \Npi^{[u, v]}(T) \otimes \Omega_1^{\bullet}$.
Let $\Delta_1 := \fERpi^{[u, v]} \otimes_{\ARpi^{[u, v]}} \Npi^{[u, v]}(T)$ equipped with the filtration described in \eqref{eq:fil_ns}.
Then similar to the case of $\Delta_2$, we have a filtered de Rham complex $\Fil^r \Delta_1 \otimes \Omega_3^{\bullet}$ and similar to Lemma \ref{lem:poincare_lemma_2} we obtain the following:
\begin{lem}\label{lem:poincare_lemma_1}
	The natural map $\Fil^r \Npi^{[u, v]}(T) \otimes \Omega_1^{\bullet} \rightarrow \Fil^r \Delta_1 \otimes \Omega_3^{\bullet}$ is a quasi-isomorphism.
\end{lem}
\begin{proof}
	In the notation of \S \ref{subsubsec:fil_poincare_lem_wach}, note that we have $A = R_1$, $B = R_2$ and $E = R_3$.
	Using the equality $\Npi^{[u, v]}(T) = \Delta_1^{\partial=0}$ and \eqref{eq:fil_ns}, we note that $\Fil^r \Npi^{[u, v]}(T) = \Fil^r \Delta_1 \cap \Delta_1^{\partial = 0} = (\Fil^r \Npi^{[u, v]}(T))^{\partial=0}$.
	Therefore, by using Lemma \ref{lem:fil_poincare_lem_na}, we obtain the claim.
\end{proof}

\begin{rem}
	Statements analogous to Lemma \ref{lem:poincare_lemma_2} and Lemma \ref{lem:poincare_lemma_1} for $\Rpi^{[u, v/p]}$ and $\ARpi^{[u, v/p]}$ (instead of $\Rpi^{[u, v]}$ and $\ARpi^{[u, v]}$) respectively, are also true.
\end{rem}

\begin{defi}\label{def:koszul_complex_phi_differential}
	Let $\Npi^{[u, v]}(T)$ as above equipped with a Frobenius-semilinear morphism $\varphi : \Npi^{[u, v]}(T) \rightarrow \Npi^{[u, v/p]}(T)$.
	Using Definition \ref{def:koszul_complex_differential} and Remark \ref{rem:koszul_complex_differential} set
	\begin{displaymath}
		\Kos\big(\varphi, \partial_A, \Fil^r \Npi^{[u, v]}(T)\big) :=
		\left[
			\vcenter
			{
				\xymatrix @C+2pc
				{
					\Kos\big(\partial_A\prm, \Fil^r \Npi^{[u, v]}(T)\big) \ar[r]^{p^r-p^{\bullet}\varphi} \ar[d]^{\partial_0} & \Kos\big(\partial_A\prm, \Npi^{[u, v/p]}(T)\big) \ar[d]^{\partial_0} \\
					\Kos\big(\partial_A\prm, \Fil^{r-1} \Npi^{[u, v]}(T)\big) \ar[r]^{p^r-p^{\bullet+1}\varphi} & \Kos\big(\partial_A\prm, \Npi^{[u, v/p](T)}\big)
				}
			}
		\right].
	\end{displaymath}
\end{defi}

\begin{prop}\label{prop:syntomic_to_phi_gamma}
	There exists a natural $p^{2n(T, e)}\textrm{-quasi-isomorphism}$ between complexes $\Syn\big(\Mpi^{[u, v]}, r\big)$ and $\Kos\big(\varphi, \partial_A, \Fil^r \Npi^{[u, v]}(T)\big)$, where $n(T, e) \in \NN$ as in Assumption \ref{assum:relative_crystalline_wach_free}.
\end{prop}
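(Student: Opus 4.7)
The plan is to bridge the two complexes by passing through their fat-ring ``thickenings'' over $\fERpi^{[u,v]}$ and $\fERpi^{[u,v/p]}$, where the comparison isomorphism \eqref{eq:isomorphism_Dcris_Wach_mod_extn} between $\Mpi^{[u,v]}$ and $\Npi^{[u,v]}(T)$ becomes available. Concretely, for $\bmstar \in \{[u,v], [u,v/p]\}$, I introduce the thickened modules
\begin{equation*}
	\Delta_1^{\bmstar} := \fERpi^{\bmstar} \otimes_{\Rpi^{\bmstar}} \Mpi^{\bmstar}, \qquad \Delta_2^{\bmstar} := \fERpi^{\bmstar} \otimes_{\ARpi^{\bmstar}} \Npi^{\bmstar}(T),
\end{equation*}
each equipped with its tensor-product filtration, its diagonal Frobenius $\varphi: \Delta_i^{[u,v]} \to \Delta_i^{[u,v/p]}$, and the total connection $\partial_{R_3} = \partial_{R_1} \otimes 1 + 1 \otimes \partial_{R_2}$. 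The associated filtered de~Rham complexes $\Fil^r \Delta_i^{\bmstar} \otimes \Omega_3^{\bullet}$ are compatible with Frobenius on source and target, and I define the auxiliary mapping fiber
\begin{equation*}
	C_i := \bigl[ \Fil^r \Delta_i^{[u,v]} \otimes \Omega_3^{\bullet} \xrightarrow{\; p^r - p^{\bullet} \varphi \;} \Delta_i^{[u,v/p]} \otimes \Omega_3^{\bullet} \bigr], \qquad i = 1, 2.
\end{equation*}

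The first step is to identify $\Syn(\Mpi^{[u,v]}, r)$ with $C_1$ up to quasi-isomorphism. The natural inclusion $\Rpi^{\bmstar} \hookrightarrow \fERpi^{\bmstar}$ induces a map of mapping fibers from the syntomic complex into $C_1$. Applying Lemma \ref{lem:poincare_lemma_1} at the exponent $[u,v]$ to the source of the Frobenius, together with the analogous statement (which holds verbatim, as noted at the end of \S\ref{subsec:poincare_lemma_app}) for the target at exponent $[u,v/p]$, shows that this map is a quasi-isomorphism on both constituents of the mapping fiber. Since the map is compatible with the Frobenius morphism (extended diagonally on each thickening), the induced map on mapping fibers is a quasi-isomorphism. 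In exactly the same way, Lemma \ref{lem:poincare_lemma_2} and its $[u,v/p]$-analogue yield a quasi-isomorphism $\Kos(\varphi, \partial_A, \Fil^r \Npi^{[u,v]}(T)) \simeq C_2$, where I use the identification of the Koszul complex with the de~Rham complex $\Fil^r \Npi^{[u,v]}(T) \otimes \Omega_2^{\bullet}$ from Remark \ref{rem:koszul_complex_differential}.

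The second step is to compare $C_1$ and $C_2$ using the $p^{n(T,e)}$-isomorphism \eqref{eq:isomorphism_Dcris_Wach_mod_extn}, which is compatible with Frobenius, filtration, and connection. Applying this at exponent $[u,v]$ yields a $p^{n(T,e)}$-quasi-isomorphism on the source of the Frobenius, and the analogous $p^{n(T,e)}$-isomorphism at exponent $[u,v/p]$ yields the same on the target. The compatibility with $\varphi$ and with $\partial$ gives a map of mapping fibers $C_1 \to C_2$ whose kernel and cokernel, via the usual long exact sequence argument on a mapping fiber, are killed by $p^{2n(T,e)}$. Chaining these three comparisons produces the asserted $p^{2n(T,e)}$-quasi-isomorphism.

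The main obstacle I anticipate is ensuring that all the structures genuinely align: in particular, that the diagonal Frobenius on the fat rings $\fERpi^{[u,v]} \to \fERpi^{[u,v/p]}$ is compatible on both factors of the tensor products defining $\Delta_i^{\bmstar}$ with the Frobenius induced from $M$ and from $\mbfn(T)$ respectively, and that the comparison \eqref{eq:isomorphism_Dcris_Wach_mod_extn} extends functorially to both exponents $[u,v]$ and $[u,v/p]$ with the same $p^{n(T,e)}$-bound. The latter requires invoking Theorem \ref{thm:crys_wach_comparison} together with the Frobenius-equivariance of the comparison so that base changing along $\varphi$ preserves the bound. Once this bookkeeping is in place, the Poincaré sandwich becomes a formal manipulation of mapping fibers.
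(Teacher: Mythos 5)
Your proposal is correct and follows essentially the same route as the paper's own proof: both pass through the fat-ring thickenings $\Delta_i$ over $\fERpi^{[u,v]}$ and $\fERpi^{[u,v/p]}$, apply Lemmas \ref{lem:poincare_lemma_1} and \ref{lem:poincare_lemma_2} (and their $[u,v/p]$-analogues) to each constituent of the mapping fiber, and then invoke the $p^{n(T,e)}$-comparison \eqref{eq:isomorphism_Dcris_Wach_mod_extn} on source and target to obtain the $p^{2n(T,e)}$-bound. The compatibility bookkeeping you flag is exactly what the paper relies on via Theorem \ref{thm:crys_wach_comparison} and the remark at the end of \S\ref{subsec:poincare_lemma_app}.
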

\begin{proof}
	Note that using Lemma \ref{lem:poincare_lemma_2} with $R_1 = \Rpi^{[u, v]}$, $R_3 = \fERpi^{[u, v]}$, $\Delta_1 = \fERpi^{[u, v]} \otimes_{\Rpi^{[u, v]}} \Mpi^{[u, v]}$ and $\Delta_1\prm = \fERpi^{[u, v/p]} \otimes_{\Rpi^{[u, v/p]}} \Mpi^{[u, v/p]}$, we have natural quasi-isomorphisms of complexes $\Syn(\Mpi^{[u, v]}, r) \simeq \Big[ \Fil^r \Mpi^{[u, v]} \otimes \Omega_1^{\bullet} \xrightarrow{p^r - p^{\bullet}\varphi} \Mpi^{[u, v/p]} \otimes \Omega_1^{\bullet} \Big] \simeq \Big[ \Fil^r \Delta_1 \otimes \Omega_3^{\bullet} \xrightarrow{p^r - p^{\bullet}\varphi} \Delta_1\prm \otimes \Omega_3^{\bullet} \Big]$.
	Next, using Lemma \ref{lem:poincare_lemma_2} with $R_2 = \ARpi^{[u, v]}$, $R_3 = \fERpi^{[u, v]}$, $\Delta_2 = \fERpi^{[u, v]} \otimes_{\ARpi^{[u, v]}} \Npi^{[u, v]}(T)$ and $\Delta_2\prm = \fERpi^{[u, v/p]} \otimes_{\ARpi^{[u, v/p]}} \Npi^{[u, v/p]}$, together with Remark \ref{rem:koszul_complex_differential}, note that we have natural quasi-isomorphisms of complexes $\Kos(\varphi, \partial_A, \Fil^r \Npi^{[u, v]}(T)) \simeq \Big[ \Fil^r \Npi^{[u, v]}(T) \otimes \Omega_2^{\bullet} \xrightarrow{p^r - p^{\bullet}\varphi} \Fil^r \Npi^{[u, v/p]} \otimes \Omega_2^{\bullet} \Big] \simeq \Big[ \Fil^r \Delta_2 \otimes \Omega_3^{\bullet} \xrightarrow{p^r - p^{\bullet}\varphi} \Delta_2\prm \otimes \Omega_3^{\bullet} \Big]$.
	Finally, using the $p^{n(T, e)}\textrm{-isomorphism}$ $\fERpi^{[u, v]} \otimes_{\Rpi^{[u, v]}} \Mpi^{[u, v]} \isomorphic \fERpi^{[u, v]} \otimes_{\ARpi^{[u, v]}} \Npi^{[u, v]}(T)$ from Assumption \ref{assum:relative_crystalline_wach_free}, we have $p^{n(T, e)}\textrm{-isomorphisms}$ $\Fil^r \Delta_1 \simeq \Fil^r \Delta_2$ and $\Delta_1\prm \simeq \Delta_2\prm$.
	Hence, from the discussion above, we obtain a natural $p^{2n(T, e)}\textrm{-quasi-isomorphism}$ of complexes $\Syn\big(\Mpi^{[u, v]}, r\big) \simeq \Kos\big(\varphi, \partial_A, \Fil^r \Npi^{[u, v]}(T)\big)$.
\end{proof}


\section{Syntomic complexes and \texorpdfstring{$(\varphi, \Gamma)\textrm{-modules}$}{-}}\label{sec:syntomic_galcoh}

In this section, we will work under the setup of Assumption \ref{assum:relative_crystalline_wach_free} and carry out the second step of the proof of Theorem \ref{thm:syntomic_complex_galois_cohomology}.
Recall that we have a finite free $\ARpi^{[u, v]}\textrm{-module}$ $\Npi^{[u, v]}(T) = \ARpi^{[u, v]} \otimes_{\AR^+} \mbfn(T)$ equipped with a $\Gamma_R\textrm{-stable}$ filtration as in \eqref{eq:fil_ns} and from Definition \ref{def:koszul_complex_phi_differential}, we have the complex $\Kos\big(\varphi, \partial_A, \Fil^r \Npi^{[u, v]}(T)\big)$.
Let $S = R[\varpi]$ and from the theory of \'etale $(\varphi, \Gamma_S)\textrm{-modules}$ in \S \ref{subsec:relative_phi_gamma_mod}, we have $\Dpi(T(r)) = \ARpi \otimes_{\AR} \mbfd(T(r))$, and from Defintion \ref{defi:koszul_phigammaD} we have the complex $\Kos\big(\varphi, \Gamma_S, \Dpi(T(r))\big)$.
In this section, our goal is to show the following:
\begin{prop}\label{prop:differential_koszul_complex_galois_cohomology}
	There exist natural $p^N\textrm{-quasi-isomorphisms}$ of complexes
	\begin{equation*}
		\tau_{\leqslant r} \Kos\big(\varphi, \partial_A, \Fil^r \Npi^{[u, v]}(T)\big) \simeq \tau_{\leqslant r} \Kos\big(\varphi, \Gamma_S, \Dpi(T(r))\big),
	\end{equation*}
	where $N = N(r, s) \in \NN$ depending only on the height $s$ of the representation $T$ and twist $r$.
\end{prop}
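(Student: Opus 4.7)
The plan is to build a chain of $p^N$-quasi-isomorphisms that transports the differential Koszul complex step by step into the relative Fontaine-Herr Koszul complex, exactly paralleling the strategy of Colmez-Nizio\l{} but with the Wach module $\mbfn(T)$ and its filtered twist playing the role of the trivial coefficient. The four stages I foresee, matching the section outline (\S \ref{subsec:diff_to_lie}--\ref{subsec:change_disk}), are: (a) pass from $\partial_A$-Koszul to a $\Lie\Gamma_S$-Koszul complex, using the identity $\nabla_i=t\partial_i$; (b) pass from $\Lie\Gamma_S$-Koszul to $\Gamma_S$-Koszul, using that $\frac{\gamma_i-1}{\nabla_i}$ and its inverse converge on $\Npi^{[u,v]}(T)$ by Lemma \ref{lem:nabla_quotient_tau_invertible} and Lemma \ref{lem:nabla_converges_relative}; (c) change the annulus of convergence from $[u,v]$ down to $[u]$ (and then shrink in the $\psi$-language); (d) descend from the analytic ring $\ARpi^{[u]}$ to the Fr\'echet ring $\ARpi$, identifying the outcome with the étale $(\varphi,\Gamma_S)$-Koszul complex $\Kos(\varphi,\Gamma_S,\DRpi(T(r)))$, which computes $\RGamma_{\cont}(G_S,T(r))$ by Proposition \ref{prop:phigamma_complex_galcoh}.

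Stage (a) is the delicate one. The operators $\partial_i$ differ from $\nabla_i$ by the factor $t$, which is (up to a unit in $\ARpi^{\textpd}\subset\ARpi^{[u,v]}$, see Lemma \ref{lem:t_over_pi_unit}) an associate of $\pi$, and multiplication by $t$ induces a $p^r$-isomorphism between $\Npi^{[u,v]}(T)$ and $\Fil^r \Npi^{[u,v]}(T)$ after the Tate twist by $r$ (by the analog of Lemma \ref{lem:t_over_xi_unit} for $\Npi^{[u,v]}(T(r))=\pi^{-r}\Npi^{[u,v]}(T)(r)$, combined with Lemma \ref{lem:div_pi_fil_wach_mod} on the interaction of $\Fil^\bullet$ with $\pi$). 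So on each line of the Koszul diagram one rewrites the $q$-th term $\Fil^{r-q}\Npi^{[u,v]}(T)$ as $t^{r-q}\Npi^{[u,v]}(T(r))$; the differential $\partial_i:\Fil^{r-q}\to \Fil^{r-q-1}\otimes\Omega^1$ transforms, via the identity $t\partial_i=\nabla_i$, into the Lie-algebra differential $\nabla_i$ on $\Npi^{[u,v]}(T(r))$, up to explicit twist terms of the form $(r-q)\partial_i(t)/t$ that are absorbed in the $p$-power factor. This gives a $p^{N_1(r,s)}$-quasi-isomorphism between $\Kos(\varphi,\partial_A,\Fil^r\Npi^{[u,v]}(T))$ and the Lie algebra $(\varphi,\Lie\Gamma_S)$-Koszul complex with values in $\Npi^{[u,v]}(T(r))$, as defined in \S \ref{subsec:lie_algebra_coh}.

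Stage (b) is comparatively formal: Lemma \ref{lem:nabla_quotient_tau_invertible} (suitably extended to $\Npi^{[u,v]}(T(r))$ as in Remark \ref{rem:nabla_converges_relative}) shows that $\log\gamma_i/(\gamma_i-1)$ is an automorphism on each term of the Lie complex; conjugating the differentials by this automorphism converts $\nabla_i$ into $\gamma_i-1$ and $\nabla_0$ into $\tau_0$ (after also inserting the $\delta_{\mathbf i_1\cdots\mathbf i_q}$ correction of Definition \ref{defi:tau_0}), yielding a quasi-isomorphism to the $(\varphi,\Gamma_S)$-Koszul complex with values in $\Npi^{[u,v]}(T(r))$. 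Stage (c) uses the now-standard $\psi$-versus-$\varphi$ trick of \cite{colmez-niziol-nearby-cycles}: introducing the operator $\psi$ on $\Npi^{[u,v]}(T(r))$ (which is well-defined on $\pi^{-r}\mbfn(T)(r)$ for $r\ge s$ by Lemma \ref{lem:psi_wach_nonpositive}) one replaces the $\varphi$-column of the Koszul bicomplex by a $\psi$-column at the cost of a bounded power of $p$, and then kills the $\Fil^r$-quotient obstruction by a geometric series in $p^{r-k}\psi$ invertible on the cohomological degrees $\le r$ considered. Stage (d) further descends from $\ARpi^{[u,v]}$ to $\ARpi^{(0,v]+}$, to $\ARpi^{\dagger}$, and finally to $\ARpi$, using the overconvergence of $(\varphi,\Gamma_S)$-modules (Theorem \ref{thm:galois_cohomology_herr_complex}); each descent is a $p^{N_i(s)}$-quasi-isomorphism on the $\psi$-complexes, and the cumulative $p$-power is absorbed into the final $N=N(r,s)$.

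The main obstacle will be Stage (a): one has to convert the Griffiths-transverse filtered differential complex into an unfiltered Lie-algebra complex on the Tate twist while tracking the accumulated correction terms arising from $\partial_i(t)$ and from the fact that $t/\pi$ is only a unit after inverting $p$ (so one picks up explicit $p$-powers depending on $s$ via $\pi^r$ versus $t^r$). A secondary source of care is that the operators $\nabla_i$ do not strictly stabilize the subcomplex $\Fil^r\Npi^{[u,v]}(T)\otimes\Omega^\bullet$, only map it into $t\Fil^{r-1}$, so the identification of subcomplexes uses Lemma \ref{lem:div_pi_fil_wach_mod} crucially; the resulting bounds on $p$-powers then propagate through the remaining stages, where additional factors enter from Lemmas \ref{lem:gamma_minus_1_pd}, \ref{lem:gamma_minus_1_oc}, and \ref{lem:pi_m_divides_p}. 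Once all these are combined, the constant $N=N(r,s)$ is explicit and depends only on the twist $r$ and the height $s$, as required.
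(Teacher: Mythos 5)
Your plan follows essentially the same route as the paper's proof: differential Koszul $\to$ $\Lie\Gamma_S$-Koszul via multiplication by powers of $t$ and the Tate twist, $\to$ $\Gamma_S$-Koszul via the invertible operators $\nabla_i/(\gamma_i-1)$, then the change of annulus to $\ARpi^{(0,v]+}$, the $\varphi$-to-$\psi$ switch, and descent to $\ARpi$, landing on the Fontaine--Herr Koszul complex. The one difficulty you flag in stage (a) largely evaporates in the paper's implementation, since $\partial_i$ on $\Npi^{[u,v]}(T)$ is \emph{defined} as $t^{-1}\nabla_i$ and the target is taken to be the subcomplex with terms $t^q\Fil^{r-q}\Npi^{[u,v]}(T)$, so multiplication by $t^q$ is a bijection and the only $p$-power loss there comes from normalizing $p^r-p^{\bullet}\varphi$ to $1-\varphi$.
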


\subsection{Proof of Theorem \ref{thm:syntomic_complex_galois_cohomology}}\label{subsec:proof_lazard_comp}

Note that by combining Proposition \ref{prop:syntomic_pd_to_u} and Proposition \ref{prop:syntomic_u_to_uv_comp}, we have a natural $p^{4r+4s}\textrm{-quasi-isomorphism}$ of complexes $\tau_{\leqslant r-s-1} \Syn\big(\Mpi^{\textpd}, r\big) \simeq \tau_{\leqslant r-s-1} \Syn\big(\Mpi^{[u, v]}, r\big)$.
Next, from Proposition \ref{prop:syntomic_to_phi_gamma}, we have a natural $p^{2n(T, e)}\textrm{-quasi-isomorphism}$ of complexes $\Syn\big(\Mpi^{[u, v]}, r\big) \simeq \Kos\big(\varphi, \partial_A, \Fil^r \Npi^{[u, v]}(T)\big)$.
Furthermore, by Proposition \ref{prop:differential_koszul_complex_galois_cohomology}, we have a natural $p^{10r+3s+2}\textrm{-quasi-isomorphism}$ of complexes $\tau_{\leqslant r} \Kos\big(\varphi, \partial_A, \Fil^r \Npi^{[u, v]}(T)\big) \simeq \tau_{\leqslant r} \Kos\big(\varphi, \Gamma_S, \Dpi(T(r))\big)$, where $\tau_{\leqslant}$ denotes the canonical truncation (for the explicit constant, see the proof of Proposition \ref{prop:differential_koszul_complex_galois_cohomology} at the end of \S \ref{subsec:change_disk}).
Finally, by Proposition \ref{prop:koszul_complex_cont_cohomology} and Theorem \ref{thm:galois_cohomology_herr_complex}, we have a natural quasi-isomorphism of complexes $\Kos\big(\varphi, \Gamma_S, \Dpi(T(r))\big) \simeq \RGamma_{\cont}(G_S, T(r))$.
Combining all these statements gives us the desired conclusion with $N = 2n(T, e) + 14r + 7s + 2$. 
\null\nobreak\hfill\ensuremath{\blacksquare}\\

In the rest of this section, we will prove Proposition \ref{prop:differential_koszul_complex_galois_cohomology}.

\subsection{From differential forms to the infinitesimal action of \texorpdfstring{$\Gamma_S$}{-}}\label{subsec:diff_to_lie}

Note that Lemma \ref{lem:nabla_converges_relative} describes the action of $\Lie \Gamma_S$ on $\Fil^r \Npi^{[u, v]}(T)$.
Then for the Lie subgroup $\Gamma_S' \subset \Gamma_S$ (see \S \ref{subsec:relative_phi_gamma_mod} for notations), using Definition \ref{defi:koszul_complex_lie_gammaR_prime} we have the complex $\Kos\big(\Lie \Gamma_S\prm, \Fil^r \Npi^{[u, v]}(T)\big)$ and we consider its subcomplex, i.e.\ a complex made of submodules in each degree stable under the differentials of the complex, as follows:
\begin{align*}
	\pazk\big(\Lie \Gamma_S\prm, \Fil^r \Npi^{[u, v]}(T)\big) := \hspace{1mm} \Fil^r \Npi^{[u, v]}(T) \xrightarrow{\hspace{1mm}(\nabla_i)\hspace{1mm}} &\big(t\Fil^{r-1} \Npi^{[u, v]}(T)\big)^{I_1\prm} \longrightarrow \cdots\\
	&\cdots \longrightarrow \big(t^k \Fil^{r-k} \Npi^{[u, v]}(T)\big)^{I_k\prm} \longrightarrow \cdots.
\end{align*}
Using the same differentials, we can define a complex $\pazk\big(\Lie \Gamma_S\prm, t\Fil^{r-1} \Npi^{[u, v]}(T)\big)$ as a subcomplex of $\Kos\big(\Lie \Gamma_S\prm, \Fil^r \Npi^{[u, v]}(T)\big)$.
Now consider a morphism of complexes $\nabla_0 : \pazk\big(\Lie \Gamma_S\prm, \Fil^r \Npi^{[u, v]}(T)\big) \rightarrow \pazk\big(\Lie \Gamma_S\prm, t\Fil^{r-1} \Npi^{[u, v]}(T)\big)$, given as $\nabla_0 = \log \gamma_0$ in degree 0 and as $\nabla_0 - kp^m : \big(t^k \Fil^{r-k} \Npi^{[u, v]}(T(r))\big)^{I_k\prm} \rightarrow \big(t^{k+1} \Fil^{r-k-1} \Npi^{[u, v]}(T(r))\big)^{I_k\prm}$ on the $k\textrm{-th}$ term of the definition above, for $1 \leqslant k \leqslant d$.
The morphism of complexes is well defined because we have $\nabla_0 \nabla_i -\nabla_i \nabla_0 = p^m \nabla_i$, for $1 \leqslant i\leqslant d$ (see \S \ref{subsubsec:lie_gamma_koszul_complex} and the discussion after Definition \ref{defi:koszul_complex_lie_gammaR_prime}).
Write the total complex of the diagram thus obtained as $\pazk\big(\Lie \Gamma_S, \Fil^r \Npi^{[u, v]}(T)\big)$, which is a subcomplex of $\Kos\big(\Lie \Gamma_S, \Fil^r \Npi^{[u, v]}(T)\big)$ by definition.
Similarly, we can define complexes $\pazk\big(\Lie \Gamma_S\prm, \Npi^{[u, v/p]}(T)\big)$ and $\pazk\big(\Lie \Gamma_S\prm, t\Npi^{[u, v/p]}(T)\big)$ and a map $\nabla_0$ from the former to the latter complex.

Recall that from Definition \ref{def:koszul_complex_phi_differential} we have the Koszul complex $\Kos\big(\varphi, \partial_A, \Fil^r \Npi^{[u, v]}(T)\big)$.
Note that we have $\nabla_i = t \partial_i$, for all $0 \leqslant i \leqslant d$ (see \S \ref{subsec:differential_koszul}).
So we consider a morphism of complexes $\Kos\big(\partial_A\prm, \Fil^r \Npi^{[u, v]}(T)\big) \rightarrow \pazk\big(\Lie \Gamma_S\prm, \Fil^r \Npi^{[u, v]}(T)\big)$, given by the identity map in degree 0 and multiplication by $t^k$ on the $k\textrm{-th}$ term of the definition above, i.e.\ $\big(\Fil^{r-k} \Npi^{[u, v]}(T(r))\big)^{I_k\prm} \xrightarrow{\times t^k} \big(t^k \Fil^{r-k} \Npi^{[u, v]}(T(r))\big)^{I_k\prm}$, for $1 \leqslant k \leqslant d$.
It is clear that the map thus defined is bijective, i.e.\ we obtain an isomorphism of complexes.
Similarly, multiplying by powers of $t$ as above, we obtain an isomorphism of complexes $\Kos\big(\partial_A\prm, \Fil^{r-1} \Npi^{[u, v]}(T)\big) \isomorphic \pazk\big(\Lie \Gamma_S\prm, t \Fil^{r-1} \Npi^{[u, v]}(T)\big)$.
Furthermore, one can do a similar construction for $\Npi^{[u, v/p]}(T)$ to obtain isomorphism of complexes $\Kos\big(\partial_A\prm, \Npi^{[u, v/p]}(T)\big) \isomorphic \pazk\big(\Lie \Gamma_S\prm, \Npi^{[u, v/p]}(T)\big)$ and $\Kos\big(\partial_A\prm, \Npi^{[u, v/p]}(T)\big) \isomorphic \pazk\big(\Lie \Gamma_S\prm, t\Npi^{[u, v/p]}(T)\big)$.
As each term of these complexes admit a Frobenius-semilinear morphism $\varphi : t^j \Fil^{r-j} \Npi^{[u, v]}(T) \rightarrow t^j \Npi^{[u, v/p]}(T)$, we obtain the following morphism of complexes (see Definition \ref{def:koszul_complex_phi_differential} for the source complex):
\begin{align*}
	\begin{split}
		\Kos\big(\varphi, \partial_A, \Fil^r \Npi^{[u, v]}(T)\big)
		\longrightarrow
		\left[
			\vcenter
			{
				\xymatrix@C+=1.3cm
				{
					\pazk\big(\Lie \Gamma_S\prm, \Fil^r \Npi^{[u, v]}(T)\big) \ar[r]^{p^r-\varphi} \ar[d]^{\nabla_0} & \pazk\big(\Lie \Gamma_S\prm, \Npi^{[u, v/p]}(T)\big) \ar[d]^{\nabla_0} \\
					\pazk\big(\Lie \Gamma_S\prm, t\Fil^{r-1} \Npi^{[u, v]}(T)\big) \ar[r]^{p^r-\varphi} & \pazk\big(\Lie \Gamma_S\prm, t\Npi^{[u, v/p]}(T)\big)
				}
			}
		\right].
	\end{split}
\end{align*}
From the discussion above, it follows that,
\begin{lem}\label{lem:diff_to_lie}
	The morphism of complexes described above is an isomorphism.
\end{lem}

Recall that $s$ is the height of $T$ and we fixed some $r \geqslant s + 1$.
Set $\Npi^{[u, v]}(T(r)) := \ARpi^{[u, v]} \otimes_{\AR^+} \mbfn(T(r))$, equipped with the natural action of $\Gamma_R$ and a $\Gamma_R\textrm{-stable}$ filtration as in \eqref{eq:fil_ns_k}.
Then, from Lemma \ref{lem:nabla_converges_relative}, recall that the operators $\nabla_i$ are well defined over $\Fil^k \Npi^{[u, v]}(T(r))$, for $0 \leqslant i \leqslant d$.
Using these operators, we consider a subcomplex of the Koszul complex $\Kos\big(\Lie \Gamma_S\prm, \Fil^0 \Npi^{[u, v]}(T(r))\big)$ (Definition \ref{defi:koszul_complex_lie_gammaR_prime}), as follows:
\begin{align*}
	\pazk\big(\Lie \Gamma_S\prm, \Fil^0 \Npi^{[u, v]}(T(r))\big) := \Fil^0 \Npi^{[u, v]}(T(r)) &\xrightarrow{\hspace{1mm}(\nabla_i)\hspace{1mm}} \big(t\Fil^{-1} \Npi^{[u, v]}(T(r))\big)^{I_1\prm} \longrightarrow \cdots\\
	&\cdots \longrightarrow \big(t^k \Fil^{-k} \Npi^{[u, v]}(T(r))\big)^{I_k\prm} \longrightarrow \cdots.
\end{align*}
Similarly, we can define a complex $\pazk\big(\Lie \Gamma_S\prm, t\Fil^{-1} \Npi^{[u, v]}(T(r))\big)$ as a subcomplex of the Koszul complex $\Kos\big(\Lie \Gamma_S\prm, \Fil^0 \Npi^{[u, v]}(T(r))\big)$.
Moreover, similar to the discussion before Lemma \ref{lem:diff_to_lie}, we can define a morphism of complexes $\nabla_0 : \pazk\big(\Lie \Gamma_S\prm, \Fil^0 \Npi^{[u, v]}(T(r))\big) \longrightarrow \pazk\big(\Lie \Gamma_S\prm, t\Fil^{-1} \Npi^{[u, v]}(T(r))\big)$.
The associated total complex, written as $\pazk\big(\Lie \Gamma_S, \Fil^r \Npi^{[u, v]}(T)\big)$, is a subcomplex of the Koszul complex $\Kos\big(\Lie \Gamma_S, \Fil^0 \Npi^{[u, v]}(T(r))\big)$.
Furthermore, by a similar construction, we can define the complexes $\pazk\big(\Lie \Gamma_S\prm, \Npi^{[u, v/p]}(T(r))\big)$ and $\pazk\big(\Lie \Gamma_S\prm, t\Npi^{[u, v/p]}(T(r))\big)$ and a morphism $\nabla_0$ from the former to the latter.

Next, from Lemma \ref{lem:fil_ns_k_twist}, recall that $\Fil^k \Npi^{[u, v]}(T(r)) = \pi^{-r} \Fil^{k+r} \Npi^{[u, v]}(T)(r)$, for each $k \in \ZZ$.
Let $\epsilon^{-r}$ denote a $\ZZ_p\textrm{-basis}$ of $\ZZ_p(-r)$, then we see that $(t^r \otimes \epsilon^{-r}) \Fil^k \Npi^{[u, v]}(T(r)) = (t/\pi)^r \Fil^{r+k} \Npi^{[u, v]}(T) = \Fil^{r+k} \Npi^{[u, v]}(T)$, where the last equality follows since $t/\pi$ is a unit in $\ARpi^{[u, v]}$ (see Lemma \ref{lem:t_over_pi_unit}).
Now, consider a morphism of complexes $\pazk\big(\Lie \Gamma_S\prm, \Fil^0 \Npi^{[u,v]}(T(r))\big) \rightarrow \pazk\big(\Lie \Gamma_S\prm, \Fil^r \Npi^{[u,v]}(T)\big)$ given as multiplication by $t^r \otimes \epsilon^{-r}$ in each degree, in particular, it is given as $\big(t^k\Fil^{-k} \Npi^{[u, v]}(T(r))\big)^{I_k\prm} \xrightarrow{\times (t^r \otimes \epsilon^{-r})} \big(t^k \Fil^{r-k} \Npi^{[u, v]}(T)\big)^{I_k\prm}$ on the $k\textrm{-th}$ term of the definition above, for $1 \leqslant k \leqslant d$.
Note that the map thus defined is bijective on each term by the preceding discussion.
Similarly, we have $(t^r \otimes \epsilon^{-r}) \Npi^{[u, v/p]}(T(r)) = (t/\pi)^r \Npi^{[u, v/p]}(T) = \Npi^{[u, v/p]}(T)$, which yields an isomorphism of complexes $\pazk\big(\Lie \Gamma_S\prm, \Npi^{[u, v/p]}(T(r))\big) \isomorphic \pazk\big(\Lie \Gamma_S\prm, t\Npi^{[u, v/p]}(T(r))\big)$.
Putting these together, we obtain that,
\begin{lem}\label{lem:twist_lie_koszul_complex}
	The morphism of complexes below, given as multiplication by $t^r \otimes \epsilon^{-r}$ on each term, is an isomorphism:
\begin{align*}
	\begin{split}
		\left[
			\vcenter
			{
				\xymatrix@C+=1.3cm
				{
					\pazk\big(\Lie \Gamma_S\prm, \Fil^0 \Npi^{[u, v]}(T(r))\big) \ar[r]^{p^r(1-\varphi)} \ar[d]^{\nabla_0} & \pazk\big(\Lie \Gamma_S\prm, \Npi^{[u, v/p]}(T(r))\big) \ar[d]^{\nabla_0} \\
					\pazk\big(\Lie \Gamma_S\prm, t\Fil^{-1} \Npi^{[u, v]}(T(r))\big) \ar[r]^{p^r(1-\varphi)} & \pazk\big(\Lie \Gamma_S\prm, t\Npi^{[u, v/p]}(T(r))\big)
				}
			}
		\right]
		&\isomorphic\\
		&\hspace{-75mm}\left[
			\vcenter
			{
				\xymatrix@C+=1.3cm
				{
					\pazk\big(\Lie \Gamma_S\prm, \Fil^r \Npi^{[u, v]}(T)\big) \ar[r]^{p^r-\varphi} \ar[d]^{\nabla_0} & \pazk\big(\Lie \Gamma_S\prm, \Npi^{[u, v/p]}(T)\big) \ar[d]^{\nabla_0} \\
					\pazk\big(\Lie \Gamma_S\prm, t\Fil^{r-1} \Npi^{[u, v]}(T)\big) \ar[r]^{p^r-\varphi} & \pazk\big(\Lie \Gamma_S\prm, t\Npi^{[u, v/p]}(T)\big)
				}
			}
		\right].
	\end{split}
\end{align*}
\end{lem}

In order to change from ``$\Lie \Gamma_S\textrm{-Koszul complexes}$'' to ``$\Gamma_S\textrm{-Koszul complexes}$'', we modify the source complex in Lemma \ref{lem:twist_lie_koszul_complex} to define $\pazk\big(\varphi, \Lie \Gamma_S, \Npi^{[u, v]}(T(r))\big)$, as follows:
\begin{displaymath}
	\left[
		\vcenter
		{
			\xymatrix
			{
				\pazk\big(\Lie \Gamma_S\prm, \Fil^0 \Npi^{[u, v]}(T(r))\big) \ar[r]^{1-\varphi} \ar[d]_{\nabla_0} & \pazk\big(\Lie \Gamma_S\prm, \Npi^{[u, v/p]}(T(r))\big) \ar[d]^{\nabla_0} \\
				\pazk\big(\Lie \Gamma_S\prm, t\Fil^{-1} \Npi^{[u, v]}(T(r))\big) \ar[r]^{1-\varphi} & \pazk\big(\Lie \Gamma_S\prm, t\Npi^{[u, v/p]}(T(r))\big)
			}
		}
	\right].
\end{displaymath}
By definition, the complex $\pazk\big(\varphi, \Lie \Gamma_S, \Npi^{[u, v]}(T(r))\big)$ is $p^{4r}\textrm{-isomorphic}$ to the source complex in Lemma \ref{lem:twist_lie_koszul_complex}.
Combining this with Lemma \ref{lem:diff_to_lie} and Lemma \ref{lem:twist_lie_koszul_complex}, we get that,
\begin{prop}\label{prop:diff_to_lie}
	There exists a natural $p^{4r}\textrm{-quasi-isomorphism}$ of complexes
	\begin{equation*}
		\Kos\big(\varphi, \partial_A, \Fil^r \Npi^{[u, v]}(T)\big) \simeq \pazk\big(\varphi, \Lie \Gamma_S, \Npi^{[u, v]}(T(r))\big).
	\end{equation*}
\end{prop}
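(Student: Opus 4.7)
The plan is to build the $p^{4r}$-quasi-isomorphism as a composition of three pieces, all of which are already set up in the preceding subsection. First I would invoke Lemma \ref{lem:diff_to_lie}, which identifies the source complex $\Kos(\varphi, \partial_A, \Fil^r \Npi^{[u,v]}(T))$ term-by-term with the target of the map \eqref{eq:differential_liegamma_comp} by multiplying the $q$-th column by $t^q$. Since $\nabla_i = t\partial_i$ and $t$ is a non-zero-divisor in $\ARpi^{[u,v]}$, these maps are bijective and hence give an honest (not just $p^N$-) isomorphism of complexes. This converts the $\partial_A$-Koszul differentials into $\nabla_i$-Koszul differentials on the subcomplex formed by the modules $t^q \Fil^{r-q} \Npi^{[u,v]}(T)$.

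Next I would apply Lemma \ref{lem:twist_lie_koszul_complex}, whose map \eqref{eq:liegamma_twist_comp} is multiplication by $t^r \otimes \epsilon^{-r}$ on each term. By \eqref{eq:fil_twist_rel_wo_twist}, the identity $(t^r \otimes \epsilon^{-r})\Fil^k \Npi^{[u,v]}(T(r)) = \Fil^{k+r}\Npi^{[u,v]}(T)$ holds because $t/\pi$ is a unit in $\ARpi^{[u,v]}$ (Lemma \ref{lem:t_over_pi_unit}), so the map is again an isomorphism term-by-term. The result is that the target of Lemma \ref{lem:diff_to_lie} is isomorphic to the bicomplex in \eqref{eq:liegamma_twist_comp} whose horizontal differentials are $p^r(1-\varphi)$ rather than $1-\varphi$.

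The final step is Remark \ref{rem:diff_to_lie_quasi_iso}: comparing the source bicomplex of \eqref{eq:liegamma_twist_comp} with $\pazk(\varphi, \Lie\Gamma_S, \Npi^{[u,v]}(T(r)))$ amounts to replacing the four horizontal arrows $p^r(1-\varphi)$ by $1-\varphi$. Each of the four corners of the bicomplex is then modified by the factor $p^r$, and a standard mapping-cone argument shows that this induces a $p^{4r}$-isomorphism on cohomology. Composing the three steps yields the desired $p^{4r}$-quasi-isomorphism.

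The only real subtlety (and the step worth checking carefully) is the bookkeeping of the $p^r$ factors in the last step: one must verify that after replacing $p^r(1-\varphi)$ by $1-\varphi$ in each of the four places of the bicomplex, the kernels and cokernels of the total-complex map are annihilated by $p^{4r}$ rather than a higher power. The other two steps are isomorphisms of complexes, so they contribute nothing to the constant. I do not anticipate any difficulty with convergence issues, since Lemma \ref{lem:nabla_converges_relative} (together with Remark \ref{rem:nabla_converges_relative}) already ensures that all the operators $\nabla_i$ and $\log\gamma_i$ act on the relevant filtered pieces of $\Npi^{[u,v]}(T)$ and $\Npi^{[u,v]}(T(r))$.
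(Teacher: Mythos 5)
Your proposal is correct and follows exactly the paper's own argument: the proof is precisely the composition of Lemma \ref{lem:diff_to_lie}, Lemma \ref{lem:twist_lie_koszul_complex} (both isomorphisms of complexes) and Remark \ref{rem:diff_to_lie_quasi_iso}, with the $p^{4r}$ coming solely from replacing $p^r(1-\varphi)$ by $1-\varphi$ in the last step.
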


\subsection{From the infinitesimal action of \texorpdfstring{$\Gamma_S$}{-} to the continuous action of \texorpdfstring{$\Gamma_S$}{-}}\label{subsec:lie_to_gamma}

In this subsection, we will study Koszul complexes involving operators $\gamma_i-1$ over $\Npi^{[u, v]}(T(r))$.
Note that we have $(\gamma_i-1) \Fil^k \Npi^{[u, v]}(T(r)) \subset \Fil^k \Npi^{[u, v]}(T(r)) \cap \pi \Npi^{[u, v]}(T(r)) = \pi \Fil^{k-1} \Npi^{[u, v]}(T(r))$, where the last equality follows from Lemma \ref{lem:ns_fil_pi_cap} and Lemma \ref{lem:fil_ns_k_twist}.
Define a subcomplex of the Koszul complex $\Kos\big(\Gamma_S\prm, \Fil^0 \Npi^{[u, v]}(T(r))\big)$ (see Definition \ref{defi:koszul_complex_gammaR_prime}), as follows:
\begin{align*}
	\pazk\big(\Gamma_S\prm, \Fil^0 \Npi^{[u, v]}(T(r))\big) := \Fil^0 \Npi^{[u, v]}(T(r)) &\xrightarrow{\hspace{1mm}(\tau_i)\hspace{1mm}} \big(\pi\Fil^{-1} \Npi^{[u, v]}(T(r))\big)^{I_1\prm} \longrightarrow\\
	&\longrightarrow \big(\pi^2\Fil^{-2} \Npi^{[u, v]}(T(r))\big)^{I_2\prm} \longrightarrow \cdots.
\end{align*}
Similarly, we can define a complex $\pazk^c\big(\Gamma_S\prm, \pi\Fil^{-1} \Npi^{[u, v]}(T(r))\big)$ as a subcomplex of the Koszul complex $\Kos^c\big(\Gamma_S\prm, \Fil^0 \Npi^{[u, v]}(T(r))\big)$ (see Definition \ref{defi:koszul_complex_gammaR_prime}), where $c = \chi(\gamma_0) = \exp(p^m)$.
Consider a morphism of complexes $\tau_0 : \pazk\big(\Gamma_S\prm, \Fil^0 \Npi^{[u, v]}(T(r))\big) \rightarrow \pazk^c\big(\Gamma_S\prm, \pi \Fil^{-1} \Npi^{[u, v]}(T(r))\big)$, which is given as $\gamma_0-1$ in degree 0 and as $\tau_0^k : \big(\pi^k \Fil^{-k} \Npi^{[u, v]}(T(r))\big)^{I_k\prm} \rightarrow \big(\pi^{k+1} \Fil^{-k-1} \Npi^{[u, v]}(T(r))\big)^{I_k\prm}$, on the $k\textrm{-th}$ term of the definition above, for $1 \leqslant k \leqslant d$ (see Definition \ref{defi:tau_0} and Definition \ref{defi:koszul_complex_gamma}).
Denote the total complex of the diagram thus obtained as $\pazk\big(\Gamma_S, \Fil^0 \Npi^{[u, v]}(T(r))\big)$, which is a subcomplex of the Koszul complex $\Kos\big(\Gamma_S, \Fil^0 \Npi^{[u, v]}(T(r))\big)$.
In a similar manner, we can define complexes $\pazk\big(\Gamma_S\prm, \Npi^{[u, v/p]}(T(r))\big)$ and $\pazk^c\big(\Gamma_S\prm, \pi \Npi^{[u, v/p]}(T(r))\big)$ and a map $\tau_0$ from the former to the latter complex.

Recall that $t/\pi$ is a unit in $\ARpi^{[u, v]}$ (see Lemma \ref{lem:t_over_pi_unit}), therefore, we see that $t^k\Fil^{-k} \Npi^{[u, v]}(T(r)) = \pi^k\Fil^{-k} \Npi^{[u, v]}(T(r))$, for all $k \in \ZZ$.
Now, define a morphism of complexes $\beta : \pazk\big(\Gamma_S\prm, \Fil^0 \Npi^{[u, v]}(T(r))\big) \rightarrow \pazk\big(\Lie \Gamma_S\prm, \Fil^0 \Npi^{[u, v]}(T(r))\big)$, which is the identity in degree 0 and given as 
\begin{align*}
	\beta_k : \big(t^k\Fil^{-k} \Npi^{[u, v]}(T(r))\big)^{I_k\prm} &\longrightarrow \big(t^k\Fil^{-k} \Npi^{[u, v]}(T(r))\big)^{I_k\prm}\\
	(a_{i_1 \cdots i_k}) &\longmapsto \big(\nabla_{i_k} \cdots \nabla_{i_1} \tau_{i_1}^{-1} \cdots \tau_{i_k}^{-1}(a_{i_1 \cdots i_k})\big),
\end{align*}
on the $k\textrm{-th}$ term of the definition above, for $1 \leqslant k \leqslant d$.
Similarly, define a morphism of complexes $\beta^c : \pazk^c\big(\Gamma_S\prm, t\Fil^{-1} \Npi^{[u, v]}(T(r))\big) \rightarrow \pazk^c\big(\Lie \Gamma_S\prm, t\Fil^{-1} \Npi^{[u, v]}(T(r))\big)$ which is given as $\beta_0^c = \nabla_0 \tau_0^{-1}$ in degree 0 and as 
\begin{align*}
	\beta_k^c : \big(t^{k+1}\Fil^{-k-1} \Npi^{[u, v]}(T(r))\big)^{I_k\prm} &\longrightarrow \big(t^{k+1}\Fil^{-k-1} \Npi^{[u, v]}(T(r))\big)^{I_k\prm}\\
	(a_{i_1 \cdots i_k}) &\longmapsto \big(\nabla_{i_k} \cdots \nabla_{i_1} \nabla_0 \tau_0^{-1} \tau_{i_1}^{c,-1} \cdots \tau_{i_k}^{c,-1}(a_{i_1 \cdots i_k})\big),
\end{align*}
on the $k\textrm{-th}$ term of the definition above, for $1 \leqslant k \leqslant d$.
Similarly, one can define the maps $\beta$ and $\beta^c$ for the $\ARpi^{[u, v/p]}\textrm{-module}$ $\Npi^{[u, v/p]}$, giving morphisms of complexes $\beta : \pazk\big(\Gamma_S\prm, \Npi^{[u, v/p]}(T(r))\big) \rightarrow \pazk\big(\Lie \Gamma_S\prm, \Npi^{[u, v/p]}(T(r))\big)$ and $\beta^c : \pazk^c\big(\Gamma_S\prm, t\Npi^{[u, v/p]}(T(r))\big) \rightarrow \pazk^c\big(\Lie \Gamma_S\prm, t \Npi^{[u, v/p]}(T(r))\big)$.

For each $j \in \NN$, we have that $t^j \Fil^{-j} \Npi^{[u, v]}(T(r)) \subset \Npi^{[u, v]}(T(r))$ and the induced Frobenius gives $\varphi(t^j \Fil^{-j} \Npi^{[u, v]}(T(r))) = \varphi(t^{j-r} \Fil^{r-j} \Npi^{[u, v]}(T)(r)) \subset t^j \Npi^{[u, v/p]}(T(r))$, where we have used Lemma \ref{lem:fil_ns_k_twist} and the fact that $t/\pi$ is a unit in $\ARpi^{[u, v]}$ (see Lemma \ref{lem:t_over_pi_unit}).
Using the Frobenius morphism and the morphism of complexes described above, we obtain an induced morphism of complexes
\begin{align*}
	\left[
		\vcenter
		{
			\xymatrix
			{
				\pazk\big(\Gamma_S\prm, \Fil^0 \Npi^{[u, v]}(T(r))\big) \ar[r]^{1-\varphi} \ar[d]^{\tau_0} & \pazk\big(\Gamma_S\prm, \Npi^{[u, v/p]}(T(r))\big) \ar[d]^{\tau_0} \\
				\pazk^c\big(\Gamma_S\prm, t\Fil^{-1} \Npi^{[u, v]}(T(r))\big) \ar[r]^{1-\varphi} & \pazk^c\big(\Gamma_S\prm, t \Npi^{[u, v/p]}(T(r))\big)
			}
		}
	\right]
	\xrightarrow{(\beta, \beta^c)}
	\pazk\big(\varphi, \Lie \Gamma_S, \Npi^{[u, v]}(T(r))\big)
\end{align*}
We denote the complex on the left as $\pazk\big(\varphi, \Gamma_S, \Npi^{[u, v]}(T(r))\big)$ and write the map as
\begin{equation*}
		\call = (\beta, \beta^c) : \pazk\big(\varphi, \Gamma_S, \Npi^{[u, v]}(T(r))\big) \longrightarrow \pazk\big(\varphi, \Lie \Gamma_S, \Npi^{[u, v]}(T(r))\big),
\end{equation*}

\begin{prop}\label{prop:quasi_iso_Lie_relative}
	The morphism of complexes $\call$ described above is an isomorphism.
\end{prop}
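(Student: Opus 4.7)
The plan is to deduce Proposition \ref{prop:quasi_iso_Lie_relative} directly from termwise bijectivity of $\call$ combined with commutativity of all squares in the defining diagram; since both complexes have the same shape and $\call$ acts component-by-component, an isomorphism on each term suffices.

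First, I would argue bijectivity of $\beta_q$ and $\beta_q^c$ on each component. By construction, $\beta_q$ acts on the component indexed by $(i_1, \ldots, i_q) \in I_q'$ as the product $\prod_{k=1}^{q} \frac{\nabla_{i_k}}{\tau_{i_k}} = \prod_{k=1}^{q} \frac{\log \gamma_{i_k}}{\gamma_{i_k} - 1}$, while $\beta_q^c$ involves the extra factor $\frac{\nabla_0}{\tau_0} = \frac{\log \gamma_0}{\gamma_0-1}$ together with the twisted factors $\frac{\nabla_{i_k}}{\tau_{i_k}^c} = \frac{1}{c}\cdot\frac{\log \gamma_{i_k}^c}{\gamma_{i_k}^c-1}$. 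Lemma \ref{lem:nabla_quotient_tau_invertible}, together with its extension to $\Npi^{[u,v]}(T(r))$ via Lemma \ref{lem:nabla_converges_relative} and Remark \ref{rem:nabla_converges_relative}, ensures that each such factor, and its inverse $\frac{\gamma_i-1}{\log \gamma_i}$, converges as an endomorphism of the relevant filtered piece $t^j \Fil^{-j}\Npi^{[u,v]}(T(r))$. Since $\gamma_1, \ldots, \gamma_d$ pairwise commute, so do any analytic functions of them; hence each product above is well-defined and invertible, giving termwise bijectivity of both $\beta_q$ and $\beta_q^c$.

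Next, I would verify that all squares in the diagram commute. For the horizontal differentials, the top row is a Koszul complex for the commuting operators $\tau_1, \ldots, \tau_d$ (resp.\ $\tau_1^c, \ldots, \tau_d^c$) and the bottom row for $\nabla_1, \ldots, \nabla_d$; since the factors $\frac{\log \gamma_i}{\gamma_i-1}$ commute with one another and with each $\tau_i$ (all being analytic functions of commuting elements), a short calculation shows $\beta$ and $\beta^c$ intertwine the differentials on the rows. For the vertical maps relating $\tau_0$ and $\nabla_0$, the factor $\nabla_0\tau_0^{-1}$ built into $\beta^c$ converts the discrete differential into the infinitesimal one, using that $\tau_0$ and $\nabla_0$ are commuting functions of $\gamma_0$, while the twist $\tau_i^c$ (instead of $\tau_i$) on the target precisely accounts for the conjugation relation $\gamma_0 \gamma_i \gamma_0^{-1} = \gamma_i^c$, which on the Lie side corresponds to $[\nabla_0,\nabla_i] = p^m \nabla_i$ from \eqref{eq:lie_operator_commutativity}. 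Finally, the Frobenius squares commute because $\beta$ and $\beta^c$ are built from convergent series in $\ZZ_p[[\Gamma_S]]$ and $\varphi$ is $\Gamma_S$-equivariant.

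The main obstacle will be bookkeeping: one must check that every composition entering these commutativity verifications converges as an operator on the correct filtered piece $t^j \Fil^{-j}\Npi^{[u,v]}(T(r))$, not merely on the ambient module, and one must track the non-commutativity between $\nabla_0$ and the $\nabla_i$ very carefully — this is exactly the reason for the choice of $\gamma_i^c$ in the $c$-version of the complex and the twist $\delta_{i_1\cdots i_q}$ appearing in $\tau_0^q$ of Definition \ref{defi:tau_0}. Once these compatibilities are written out explicitly, the diagram chase is formal and yields the isomorphism.
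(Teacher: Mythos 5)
There is a genuine gap in your argument for the bijectivity of $\beta_q^c$. You treat $\beta_q^c$ as if it were the product of the commuting invertible factors $\frac{\nabla_0}{\tau_0}$ and $\frac{\nabla_{i_k}}{\tau_{i_k}^c}$, justified by the pairwise commutativity of $\gamma_1,\ldots,\gamma_d$. But the operator is
\begin{equation*}
\beta_q^c = \nabla_{i_q}\cdots\nabla_{i_1}\,\nabla_0\tau_0^{-1}\,\tau_{i_1}^{c,-1}\cdots\tau_{i_q}^{c,-1},
\end{equation*}
with $\nabla_0\tau_0^{-1}$ — an infinite series in $\gamma_0-1$ — sandwiched between functions of the $\gamma_i$. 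Since $\gamma_0\gamma_i\gamma_0^{-1}=\gamma_i^c$, the factor $\nabla_0\tau_0^{-1}$ does not commute with the $\tau_{i_k}^{c,-1}$ to its right (nor with the $\nabla_{i_k}$ to its left), so the operator cannot be rearranged into a product of separately invertible commuting factors. The pairwise commutativity of $\gamma_1,\ldots,\gamma_d$ is irrelevant to this point. You flag the non-commutativity of $\nabla_0$ with the $\nabla_i$ at the end, but only as a bookkeeping issue for the commutativity of the squares; in fact it is the central obstruction to the very well-definedness and invertibility of $\beta_q^c$, and your proposal offers no argument for it.

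The paper resolves this as follows: after factoring out the invertible operators $\nabla_{i_k}/\tau_{i_k}$ on the left, one is reduced to showing that $\tau_{i_q}\cdots\tau_{i_1}\,\nabla_0\tau_0^{-1}\,\tau_{i_1}^{c,-1}\cdots\tau_{i_q}^{c,-1}$ is a well-defined isomorphism. Writing $\nabla_0\tau_0^{-1}=\sum_k a_k(\gamma_0-1)^k$ and using the identity $(\gamma_i^a-1)(\gamma_0-x)=(\gamma_0-x\delta(\gamma_i^a))(\gamma_i^{a/c}-1)$ with $\delta(\gamma_i^a)=\frac{\gamma_i^a-1}{\gamma_i^{a/c}-1}$, one pushes the $\tau$'s through each $(\gamma_0-1)^k$, shows the resulting term lies in $(p^m,\gamma_0-1,\ldots,\gamma_d-1)^k$, and then invokes Lemma \ref{lem:gamma_minus_1_oc} together with the valuation estimates of Lemma \ref{lem:nabla_quotient_tau_invertible} to get convergence of the series and of the explicit inverse $\sum_k b_k\tau_{i_q}^c\cdots\tau_{i_1}^c(\gamma_0-1)^k\tau_{i_1}^{-1}\cdots\tau_{i_q}^{-1}$. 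Without this (or an equivalent) argument, the termwise bijectivity of $\beta^c$, and hence the proposition, is not established.
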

\begin{proof}
	The proof follows in essentially the same manner as \cite[Lemma 4.6]{colmez-niziol-nearby-cycles}.
	One needs to use Lemma \ref{lem:gamma_minus_1_oc}, Lemma \ref{lem:nabla_quotient_tau_invertible} and Corollary \ref{lem:nabla_converges_relative} instead of \cite[Lemma 2.34]{colmez-niziol-nearby-cycles} in the proof.
	We omit the details.
\end{proof}

\subsection{Change of the annulus of convergence : Part 1}\label{subsec:change_annulus_1}

In this subsection, we will pass from the analytic ring $\ARpi^{[u, v]}$ to the overconvergent ring $\ARpi^{(0, v]+}$ and also twist our module by $\ZZ_p(r)$.
Let us set $\Npi^{(0, v]+}(T(r)) := \ARpi^{(0, v]+} \otimes_{\AR^+} \mbfn(T(r))$ equipped with the natural action of $\Gamma_R$ and a $\Gamma_R\textrm{-stable}$ filtration as in \eqref{eq:fil_ns_k}.
Define a subcomplex of the Koszul complex $\Kos\big(\Gamma_S\prm, \Fil^0 \Npi^{(0, v]+}(T(r))\big)$ (see Definition \ref{defi:koszul_complex_gammaR_prime}), as follows:
\begin{align*}
	\pazk\big(\Gamma_S\prm, \Fil^0 \Npi^{(0, v]+}(T(r))\big) := \Fil^0 \Npi^{(0, v]+}(T(r)) &\xrightarrow{\hspace{1mm}(\tau_i)\hspace{1mm}} \big(\pi\Fil^{-1} \Npi^{(0, v]+}(T(r))\big)^{I_1\prm} \longrightarrow\\
	&\longrightarrow \big(\pi^2\Fil^{-2} \Npi^{(0, v]+}(T(r))\big)^{I_2\prm} \longrightarrow \cdots.
\end{align*}
Similarly, we can define a complex $\pazk^c\big(\Gamma_S\prm, \pi\Fil^{-1} \Npi^{(0, v]+}(T(r))\big)$ as a subcomplex of the Koszul complex $\Kos^c\big(\Gamma_S\prm, \Fil^0 \Npi^{(0, v]+}(T(r))\big)$ (see Definition \ref{defi:koszul_complex_gammaR_prime}).
Now, consider a morphism of complexes $\tau_0 : \pazk\big(\Gamma_S\prm, \Fil^0 \Npi^{(0, v]+}(T(r))\big) \rightarrow \pazk^c\big(\Gamma_S\prm, \pi \Fil^{-1} \Npi^{(0, v]+}(T(r))\big)$ which is given as $\gamma_0-1$ in degree 0 and as $\tau_0^k : \big(\pi^k \Fil^{-k} \Npi^{(0, v]+}(T(r))\big)^{I_2\prm} \rightarrow \big(\pi^k \Fil^{-k-1} \Npi^{(0, v]+}(T(r))\big)^{I_2\prm}$, on the $k\textrm{-th}$ term of the definition above, for $1 \leqslant k \leqslant d$ (see Definition \ref{defi:tau_0} and Definition \ref{defi:koszul_complex_gamma}).
Write the total complex of the diagram thus obtained as $\pazk\big(\Gamma_S, \Fil^0 \Npi^{(0, v]+}(T(r))\big)$, a subcomplex of the Koszul complex $\Kos\big(\Gamma_S, \Fil^0 \Npi^{(0, v]+}(T(r))\big)$.
In a similar manner, we can define the complexes $\pazk\big(\Gamma_S\prm, \Npi^{(0, v/p]+}(T(r))\big)$ and $\pazk^c\big(\Gamma_S\prm, \pi \Npi^{(0, v/p]+}(T(r))\big)$ and a map $\tau_0$ from the former to the latter complex.

For each $j \in \NN$, we have that $\pi^j \Fil^{-j} \Npi^{(0, v]+}(T(r)) \subset \Npi^{(0, v]+}(T(r))$ and the induced Frobenius gives $\varphi(\pi^j \Fil^{-j} \Npi^{(0, v]+}(T(r))) = \varphi(\pi^{j-r} \Fil^{r-j} \Npi^{(0, v]+}(T)(r)) \subset \pi^j \Npi^{(0, v/p]+}(T(r))$, where the equality follows from Lemma \ref{lem:fil_ns_k_twist}.
So we define the complex,
\begin{displaymath}
	\pazk\big(\varphi, \Gamma_S, \Npi^{(0, v]+}(T(r))\big) :=
	\left[
		\vcenter
		{
			\xymatrix
			{
				\pazk\big(\Gamma_S\prm, \Fil^0 \Npi^{(0, v]+}(T(r))\big) \ar[r]^{1-\varphi} \ar[d]_{\tau_0} & \pazk\big(\Gamma_S\prm, \Npi^{(0, v/p]+}(T(r))\big) \ar[d]^{\tau_0} \\
				\pazk^c\big(\Gamma_S\prm, \pi \Fil^{-1} \Npi^{(0, v]+}(T(r))\big) \ar[r]^{1-\varphi} & \pazk^c\big(\Gamma_S\prm, \pi \Npi^{(0, v/p]+}(T(r))\big)
			}
		}
	\right].
\end{displaymath}

\begin{prop}\label{prop:quasi_iso_oc_robba_relative}
	The natural morphism of complexes $\pazk\big(\varphi, \Gamma_S, \Npi^{(0, v]+}(T(r))\big) \rightarrow \pazk\big(\varphi, \Gamma_S, \Npi^{[u, v]}(T(r))\big)$, induced by the inclusion $\Npi^{(0, v]+}(T(r)) \subset \Npi^{[u, v]}(T(r))$, is a $p^{3r}\textrm{-quasi-isomorphism}$.
\end{prop}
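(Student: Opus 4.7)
The plan is to establish the quasi-isomorphism by proving that the termwise mapping fiber of the natural inclusion is $p^{3r}$-acyclic. Forming this mapping fiber componentwise, one identifies it with a complex of the same shape as $\pazk(\varphi, \Gamma_S, -)$ but with coefficients in the quotient module $Q_r := \Npi^{[u, v]}(T(r))/\Npi^{(0, v]+}(T(r))$ (respectively $Q_r' := \Npi^{[u, v/p]}(T(r))/\Npi^{(0, v/p]+}(T(r))$ on the Frobenius target), equipped with the induced filtration, Frobenius and $\Gamma_S$-action. Since this is a $2 \times 2$ double complex whose horizontal arrows are both $1-\varphi$ and whose vertical arrow is $\tau_0$, it will suffice to show that $1-\varphi : Q_r \to Q_r'$ is a $p^r$-isomorphism, and likewise on the $\pi\Fil^{-1}$-summand appearing in the lower row. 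A standard mapping-fiber argument then promotes two horizontal $p^r$-acyclicities, linked by $\tau_0$, into a $p^{3r}$-acyclicity of the total complex.

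To establish this invertibility, I would first untwist via the identification $(t^r\otimes \epsilon^{-r})\Fil^k \Npi^{[u, v]}(T(r)) = \Fil^{r+k}\Npi^{[u, v]}(T)$ from \eqref{eq:fil_twist_rel_wo_twist}, which is compatible with $\varphi$ up to a harmless unit factor stemming from the fact that $t/\pi$ is a unit in $\ARpi^{[u, v]}$ by Lemma \ref{lem:t_over_pi_unit}. Since $\mbfn(T)$ is a free $\AR^+$-module under Assumption \ref{assum:relative_crystalline_wach_free}, after fixing a basis the claim reduces to showing that $1-\varphi$ is invertible, up to a bounded power of $p$ depending only on $r$, on the ring-level quotient $\ARpi^{[u, v]}/\ARpi^{(0, v]+}$ (and analogously $\ARpi^{[u, v/p]}/\ARpi^{(0, v/p]+}$ on the target side).

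The key and most delicate point is this last invertibility statement, where the main work of the proof lies. Heuristically, under $\iota_{\cycl}$ an element of $\ARpi^{[u, v]}$ fails to lie in $\ARpi^{(0, v]+}$ precisely when it has a sufficiently large polar contribution near $X_0 = 0$. Since $\varphi$ sends $X_0$ to $(1+X_0)^p - 1$, the $k$-th iterate $\varphi^k$ multiplies the effective polar order by $p^k$, so after finitely many iterations the image sits in $\ARpi^{(0, v]+}$ modulo any prescribed power of $p$. This is exactly the kind of statement established in Lemma \ref{lem:id_minus_frob_bijective} for $1 - p^{-s}\varphi$ acting on $X_0^N$-ideals, and I would adapt its proof to the current setting, allowing for the mild denominators produced by the twist by $T(r)$. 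Once this topological nilpotency is in hand, the geometric series $(1-\varphi)^{-1} = \sum_{k \geq 0}\varphi^k$ converges in the quotient and provides the desired inverse with a $p^r$-denominator; compatibility with the $\Gamma_S$-action is automatic since $\Gamma_S$ acts trivially modulo $\pi$ on $\mbfn(T)$ and continuously on the rings, so no extra denominators enter from the Koszul side.
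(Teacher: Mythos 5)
Your overall architecture matches the paper's: reduce to showing that the termwise maps $1-\varphi$ on the quotient modules are $p$-power bijections, untwist by $t^r\otimes\epsilon^{-r}$, and conclude by topological nilpotency of $\varphi$ on $\ARpi^{[u,v]}/\ARpi^{(0,v]+}$. The last ingredient is indeed correct and is the content of the paper's Lemma \ref{lem:id_minus_frob_bijective_robba_by_oc} (not Lemma \ref{lem:id_minus_frob_bijective}, which concerns $R_{\varpi,N}^{\textpd}$ and $R_{\varpi,N}^{[u]}$); besides $\varphi(Q)\subset pQ$ you also need that the inclusion $\ARpi^{[u,v]}\hookrightarrow\ARpi^{[u,v/p]}$ identifies the source and target quotients, and that $Q$ contains no $p$-divisible elements, so that $\sum_{k\geq 0}\varphi^k$ genuinely converges to an inverse — and then $1-\varphi$ on the unfiltered quotient is \emph{bijective}, with no $p^r$-denominator at all.

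The genuine gap is in the sentence ``after fixing a basis the claim reduces to showing that $1-\varphi$ is invertible \ldots on the ring-level quotient.'' After untwisting, the source term in degree $k$ is $\pi^{-n}\Fil^{n}\Npi^{[u,v]}(T)(r)$ with $n=r-k>0$, and the filtration here is the tensor-product filtration built from $\Fil^i\ARpi^{[u,v]}$ \emph{and} $\Fil^j\mbfn(T)$; it does not split along an $\AR^+$-basis of $\mbfn(T)$ into copies of a single filtered piece of the ring, and $\Fil^n$ of the ring is in any case a proper submodule. Removing this filtration is where essentially all of the work and all of the $p$-powers in the paper's proof live: one first shows that $\pi_1^{-n}\Npi^{[u,v]}(T)(r)/\pi_1^{-n}\Npi^{(0,v]+}(T)(r)\to\pi^{-n}\Fil^n\Npi^{[u,v]}(T)(r)/\pi^{-n}\Fil^n\Npi^{(0,v]+}(T)(r)$ is a $p^n$-isomorphism (using $\xi=\pi/\pi_1$ and a decomposition as in Lemma \ref{lem:split_pd_elements}), and then removes the factors $\pi_1^{-n}$ and $\pi^{-n}$ at the cost of $p^{2n}$ via Lemma \ref{lem:pi_m_divides_p}. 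That is the origin of the exponent $3$ in $p^{3r}$. Your bookkeeping — a $p^r$-isomorphism per horizontal arrow, ``promoted'' to $p^{3r}$ by the mapping-fiber structure — does not correspond to any step of the actual argument and is not justified as stated; without an explicit treatment of the filtration you have no control on the constant, and the claimed $p^r$-invertibility of $1-\varphi$ on $Q_r$ is not something the geometric-series argument delivers for the \emph{filtered} quotient.
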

\begin{proof}
	The map in the claim is injective on each term, so we need to show that the cokernel complex is killed by $p^{3r}$.
	In the cokernel complex, for $k \in \NN$, we have maps
	\begin{equation}\label{eq:id_minus_frob_robba_over_oc1}
		1-\varphi : \pi^k \Fil^{-k} \Npi^{[u, v]}(T(r)) / \pi^k \Fil^{-k} \Npi^{(0, v]+}(T(r)) \longrightarrow \pi^k \Npi^{[u, v/p]}(T(r)) / \pi^k \Npi^{(0, v/p]+}(T(r)),
	\end{equation}
	and it is enough to show that these are $p^{3r}\textrm{-bijective}$.
	Let us set $\Npi^{(0, v]+}(T) := \ARpi^{(0, v]+} \otimes_{\AR^+} \mbfn(T)$, $\Npi^{(0, v]+}(T)(r) := \Npi^{(0, v]+}(T) \otimes_{\ZZ_p} \ZZ_p(r)$ and $\Npi^{[u, v]}(T)(r) := \Npi^{[u, v]}(T) \otimes_{\ZZ_p} \ZZ_p(r)$, equipped with the filtration as in \eqref{eq:fil_ns} (upto twisting the filtered pieces by $\ZZ_p(r)$ in the latter cases).
	Moreover, for any $k \in \NN$, by Lemma \ref{lem:fil_ns_k_twist} we have that $\pi^k \Fil^{-k} \Npi^{(0, v]+}(T(r)) = \pi^{k-r} \Fil^{r-k} \Npi^{(0, v]+}(T)(r)$ and $\pi^k \Fil^{-k} \Npi^{[u, v]}(T(r)) = \pi^{k-r} \Fil^{r-k} \Npi^{[u, v]}(T)(r)$.
	So, for $n = r-k$, we can rewrite \eqref{eq:id_minus_frob_robba_over_oc1} as,
	\begin{equation}\label{eq:id_minus_frob_robba_over_oc2}
		1 - \varphi : \pi^{-n} \Fil^{n} \Npi^{[u, v]}(T) / \pi^{-n} \Fil^{n} \Npi^{(0, v]+}(T) \longrightarrow \pi^{-n} \Npi^{[u, v/p]}(T) / \pi^{-n} \Npi^{(0, v/p]+}(T).
	\end{equation}
	Note that the twist has disappeared since $\varphi$ acts trivially on it.
	For $n \leqslant 0$, the claim follows from Lemma \ref{lem:id_minus_frob_bijective_robba_by_oc}.
	For $n > 0$, we first claim that the following natural map is $p^{n}\textrm{-bijective}$:
	\begin{equation}\label{eq:id_robba_over_oc2}
		\pi_1^{-n} \Npi^{[u, v]}(T) / \pi_1^{-n} \Npi^{(0, v]+}(T) \longrightarrow \pi^{-n} \Fil^{n} \Npi^{[u, v]}(T) / \pi^{-n} \Fil^{n} \Npi^{(0, v]+}(T),
	\end{equation}
	Indeed, recall that $\xi = \pi/\pi_1$ and from \eqref{eq:fil_ns} and Lemma \ref{lem:fil_as_prod}, it is clear that $\xi^n \Npi^{(0, v]+}(T) \subset \Fil^n \Npi^{(0, v]+}(T)$, in particular, we have $\Npi^{(0, v]+}(T) \subset \Npi^{[u, v]}(T) \cap \xi^{-n} \Fil^n \Npi^{(0, v]+}(T) = (\ARpi^{[u, v]} \cap \xi^{-n} \ARpi^{(0, v]+}) \otimes_{\AR^+} \mbfn(T) = \Npi^{(0, v]+}(T)$, where the first equality follows because $\mbfn(T)$ is free over $\AR^+$ and the second equality follows because $\xi^n \ARpi^{[u, v]} \cap \ARpi^{(0, v]+} \subset \Fil^n \ARpi^{(0, v]+} = \xi^n \ARpi^{(0, v]+}$ (see Definition \ref{defi:filtration_vanishing_varpi} and Remark \ref{rem:fil_pd_u}).
	In particular, we see that $\pi_1^{-n} \Npi^{[u, v]}(T) \cap \pi^{-n} \Fil^n \Npi^{(0, v]+}(T) = \pi_1^{-n} \Npi^{(0, v]+}(T)$, i.e.\ \eqref{eq:id_robba_over_oc2} is injective.
	Next, to show the $p^n\textrm{-surjectivity}$ of \eqref{eq:id_robba_over_oc2}, write $\ARpi^{[u, v]} = \ARpi^{[u]} + \ARpi^{(0, v]+}$ and set $\Npi^{[u]}(T) := \ARpi^{[u]} \otimes_{\AR^+} \mbfn(T)$ and $\Npi^+(T) := \ARpi^+ \otimes_{\AR^+} \mbfn(T)$, equipped with the induced filtration as in \eqref{eq:fil_ns}.
	Then, to obtain the $p^n\textrm{-surjectivity}$ of \eqref{eq:id_robba_over_oc2}, it is enough to show that the natural map $\pi_1^{-n} \Npi^{[u]}(T) + \pi^{-n} \Fil^n \Npi^+(T) \rightarrow \pi^{-n} \Fil^n \Npi^{[u]}(T)$ is $p^n\textrm{-surjective}$, or equivalently, that the natural map $\xi^n \Npi^{[u]}(T) + \Fil^n \Npi^+(T) \rightarrow \Fil^n \Npi^{[u]}(T)$ is $p^n\textrm{-surjective}$.
	To show the latter claim, let $\{e_1, \ldots, e_h\}$ be an $\AR^+\textrm{-basis}$ of $\mbfn(T)$, and take $x \in \Fil^n \Npi^{[u]}(T)$ and write $x = \sum_{i=1}^h a_i e_i$, with $a_i \in \ARpi^{[u]}$.
	Note that from Lemma \ref{lem:split_pd_elements} we can write $a_i = a_{i1} + a_{i2}$, with $a_{i1} \in \Fil^n \ARpi^{[u]} \subset p^{-n} \xi^n \ARpi^{[u]}$ (see Remark \ref{rem:fil_pd_u}) and $a_{i2} \in p^{-\lfloor nu \rfloor} \ARpi^+$.
	So we see that $x_1 = \sum_{i=1}^h a_{i1} e_i$ is in $p^{-n} \xi^n \Npi^{[u]}(T)$ and $x_2 = \sum_{i=1}^h a_{i2} e_i = x - x_1$ is in $p^{-\lfloor nu \rfloor} \Npi^+(T) \cap \Fil^n \Npi^{[u]}(T) \subset \Npi^{[u]}(T)[1/p]$.
	Now, as we have $u = (p-1)/p < 1$, therefore, it follows that $p^n x_2$ is in $\Npi^+(T) \cap \Fil^n \Npi^{[u]}(T) = \Fil^n \Npi^+(T)$ (see Lemma \ref{lem:ns_fil_pi_cap}), i.e.\ $p^n x = p^n x_1 + p^n x_2$ is in $\xi^n \Npi^{[u]}(T) + \Fil^n \Npi^+(T)$.
	In particular, we get that \eqref{eq:id_robba_over_oc2} is $p^n\textrm{-bijective}$, and therefore, \eqref{eq:id_minus_frob_robba_over_oc2} is $p^n\textrm{-isomorphic}$ to 
	\begin{equation*}
		1 - \varphi : \pi_1^{-n} \Npi^{[u, v]}(T) / \pi_1^{-n} \Npi^{(0, v]+}(T) \longrightarrow \pi^{-n} \Npi^{[u, v/p]}(T) / \pi^{-n} \Npi^{(0, v/p]+}(T).
	\end{equation*}
	Recall that we have $v = p-1$, so by Lemma \ref{lem:pi_m_divides_p} (iii) it follows that $\pi$ divides $p$ in $\ARpi^{(0, v/p]+}$ and $\pi_1$ divides $p$ in $\ARpi^{(0, v]+}$, therefore, \eqref{eq:id_minus_frob_robba_over_oc2} is $p^{2n}\textrm{-isomorphic}$ to the following map:
	\begin{equation*}
		1 - \varphi : \Npi^{[u, v]}(T) / \Npi^{(0, v]+}(T) \longrightarrow \Npi^{[u, v/p]}(T) / \Npi^{(0, v/p]+}(T).
	\end{equation*}
	Now, from Lemma \ref{lem:id_minus_frob_bijective_robba_by_oc}, the map above is bijective (note that Frobenius has no effect on twist).
	Therefore, we conclude that \eqref{eq:id_minus_frob_robba_over_oc1} is $p^{3n}\textrm{-bijective}$.
	As $n = r-k \leqslant r$, it follows that the cokernel complex of the map in the claim of the lemma is killed by $p^{3r}$.
	This allows us to conclude.
\end{proof}

\begin{lem}\label{lem:id_minus_frob_bijective_robba_by_oc}
	For each $k \in \NN$, the following natural map is bijective
	\begin{equation*}
		1 - \varphi : \pi^k \Npi^{[u, v]}(T) / \pi^k \Npi^{(0, v]+}(T) \isomorphic \pi^k \Npi^{[u, v/p]}(T) / \pi^k \Npi^{(0, v/p]+}(T),
	\end{equation*}
\end{lem}
\begin{proof}
	For $k = 0$, using a basis of $\mbfn(T)$, one first shows that the natural map $\Npi^{[u, v]}(T) / \Npi^{(0, v]+}(T) \rightarrow \Npi^{[u, v/p]}(T) / \Npi^{(0, v/p]+}(T)$ is bijective, in particular, $1-\varphi$ is an endomorphism of $\Npi^{[u, v]}(T) / \Npi^{(0, v]+}(T)$.
	Then, following the strategy of \cite[Lemma 4.8]{colmez-niziol-nearby-cycles} one shows that on the preceding quotient, $1 + \varphi + \varphi^2 + \cdots$ converges as an inverse to $1-\varphi$.
	We omit the details.
	For $k > 0$, note that $\varphi$ preserves the quotient $\pi^k \Npi^{[u, v]}(T) / \pi^k \Npi^{(0, v]+}(T)$.
	So, from the case $k=0$, it follows that $1 + \varphi + \varphi^2 + \cdots$ converges on the preceding quotient as well.
\end{proof}

\subsection{Change of the annulus of convergence : Part 2}\label{subsec:change_annulus_2}

In this subsection, we will change the ring of coefficients from $\ARpi^{(0, v]+}$ to $\ARpi^{(0, v/p]+}$ by replacing $\varphi$ with its left inverse $\psi$ (under the asssumption that $m \geqslant 2$).

\subsubsection{From \texorpdfstring{$(\varphi, \Gamma_S)\textrm{-complexes}$}{-} to \texorpdfstring{$(\psi, \Gamma_S)\textrm{-complexes}$}{-}}\label{subsubsec:phigamma_to_psigamma}

From Proposition \ref{prop:psi_oper}, recall that we have the left inverse $\psi$ of the Frobenius endomorphism on $\mbfa$, satisfying $\psi(\mbfa) \subset \mbfa$.
This induces an operator $\psi : \ARpi^{(0, v/p]+} \rightarrow \ARpi^{(0, v]+}$, which commutes with the action of $\Gamma_R$, in particular, we have $\psi(\ARpi^{(0, v]+}) \subset \ARpi^{(0, v]+}$.
Equivalently, one can also define the operator $\psi$ by first identifying $\iota_{\cycl} : R_{\varpi}^{(0, v/p]+} \isomorphic \ARpi^{(0, v/p]+}$ and then considering the left inverse of the cyclotomic Frobenius over $R_{\varpi}^{(0, v/p]+}$ (see \S \ref{subsec:cyclotomic_frob} and \S \ref{subsec:cyclotomic_embeddings}).

Next, from Lemma \ref{lem:psi_wach_nonpositive} recall that the operator $\psi$ extends to $\mbfn(T(r))$ and we have $\psi(\mbfn(T(r))) \subset \mbfn(T(r))$.
By extending scalars to $\ARpi^{(0, v]+}$ and from the discussion above we see that $\psi(\Npi^{(0, v]+}(T(r))) \subset \psi(\Npi^{(0, v/p]+}(T(r))) \subset \Npi^{(0, v]+}(T(r))$.
Moreover, using the description of the filtration on $\Npi^{(0, v]+}(T)$ from Lemma \ref{lem:fil_as_prod}, it follows that for $0 \leqslant k \leqslant r$, we have $\varphi(\Fil^{r-k} \Npi^{(0, v]+}(T)) \subset q^{r-k} \Npi^{(0, v/p]+}(T)$.
Upon multiplying the terms of the preceding inclusion by $\varphi(\pi^{k-r})$ and twisting by $\ZZ_p(r)$, we get that $\varphi(\pi^{k-r} \Fil^{r-k} \Npi^{(0, v]+}(T)(r)) \subset \pi^{k-r} \Npi^{(0, v/p]+}(T)(r)$.
In particular, by using Lemma \ref{lem:fil_ns_k_twist}, we note that $\pi^k \Fil^{-k} \Npi^{(0, v]+}(T(r)) \subset \psi(\pi^k \Npi^{(0, v/p]+}(T(r)))$ and since $\Fil^{-k} \Npi^{(0, v]+}(T(r)) \subset \Npi^{(0, v/p]+}(T(r))$, therefore, it follows that $(\psi - 1) (\pi^k \Fil^{-k} \Npi^{(0, v]+}(T(r))) \subset \psi(\pi^k \Npi^{(0, v/p]+}(T(r)))$.

Set $\pazk(\Gamma_S', N_{\psi}) := \psi(\pazk(\Gamma_S', \Npi^{(0, v/p]+}(T(r))))$ and $\pazk^c(\Gamma_S', N_{\psi}) := \psi(\pazk^c(\Gamma_S', \Npi^{(0, v/p]+}(T(r))))$.
From \S \ref{subsec:change_annulus_1}, recall that we defined maps $\tau_0: \pazk\big(\Gamma_S\prm, \Fil^0 \Npi^{(0, v]+}(T(r))\big) \rightarrow \pazk^c\big(\Gamma_S\prm, \pi \Fil^{-1} \Npi^{(0, v]+}(T(r))\big)$ and $\tau_0 : \psi(\pazk(\Gamma_S', \Npi^{(0, v/p]+}(T(r)))) \rightarrow \psi(\pazk^c(\Gamma_S', \Npi^{(0, v/p]+}(T(r))))$.
As $\psi$ commutes with the action of $\Gamma_S$, therefore, from the latter map, we obtain an induced morphism $\tau_0: \pazk(\Gamma_S', N_{\psi}) \rightarrow \pazk^c(\Gamma_S', N_{\psi})$.
Now, using the discussion above, note that we have a well-defined map between source complexes of the maps $\tau_0$ above, given as $\psi - 1 : \pazk\big(\Gamma_S\prm, \Fil^0 \Npi^{(0, v]+}(T(r))\big) \rightarrow \pazk(\Gamma_S', N_{\psi})$, and similarly for the target complexes of $\tau_0$.
Therefore, similar to the complex $\pazk\big(\varphi, \Gamma_S, N^{(0,v]+}(T(r))\big)$ in \S \ref{subsec:change_annulus_1}, we define the following complex:
\begin{displaymath}
	\pazk\big(\psi, \Gamma_S, \Npi^{(0, v]+}(T(r))\big) :=
	\left[
		\vcenter
		{
			\xymatrix
			{
				\pazk\big(\Gamma_S\prm, \Fil^0 \Npi^{(0, v]+}(T(r))\big) \ar[r]^{\hspace{10mm}\psi-1} \ar[d]_{\tau_0} & \pazk\big(\Gamma_S', N_{\psi}\big) \ar[d]^{\tau_0} \\
				\pazk^c\big(\Gamma_S\prm, \pi \Fil^{-1} \Npi^{(0, v]+}(T(r))\big) \ar[r]^{\hspace{12mm}\psi-1} & \pazk^c\big(\Gamma_S', N_{\psi}\big)\big)
			}
		}
	\right].
\end{displaymath}

\begin{prop}\label{prop:oc_psi_gamma_phi_gamma}
	The morphism $\tau_{\leqslant r} \pazk\big(\varphi, \Gamma_S, \Npi^{(0, v]+}(T(r))\big) \longrightarrow \tau_{\leqslant r} \pazk\big(\psi, \Gamma_S, \Npi^{(0, v]+}(T(r))\big)$, induced by the identity in the first column and $\psi$ in the second column is a $p^{r+2}\textrm{-quasi-isomorphism}$.
\end{prop}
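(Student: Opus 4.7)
The plan is to analyze the mapping fiber of the given morphism of bicomplexes and show it is $p^{r+2}$-acyclic in degrees $\leq r$. First, observe that the morphism really is a chain map: since $\psi \circ \varphi = \mathrm{id}$ on $\mbfa$ by Proposition \ref{prop:psi_oper}(i), we have $\psi \circ (1-\varphi) = \psi - 1 = (\psi - 1) \circ \mathrm{id}$, which provides the compatibility between the left vertical column (where the map is the identity) and the right vertical column (where the map is $\psi$). The $\tau_0$ rows also commute with the morphism because $\psi$ is $\Gamma_S$-equivariant by Proposition \ref{prop:psi_oper}(ii).

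Next, since the left columns of source and target coincide and the map between them is the identity, a standard cone computation reduces the mapping fiber, up to quasi-isomorphism, to the two-term complex formed by the $\psi$-kernel pieces of the right columns, namely
\[
\bigl[\pazk(\Gamma_S\prm, \Npi^{(0, v/p]+}(T(r))^{\psi=0}) \xrightarrow{\tau_0} \pazk^c(\Gamma_S\prm, \pi \Npi^{(0, v/p]+}(T(r))^{\psi=0})\bigr],
\]
appropriately shifted. Here I use that $\psi : \Npi^{(0, v/p]+}(T(r)) \twoheadrightarrow N_\psi$ is surjective by the very definition of $N_\psi$, so the cokernel part of the cone is trivial.

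I would then use the eigenspace decomposition from Proposition \ref{prop:ring_psi_eigenspace_decomp}(iii), extended to the relative setting by tensoring with the free $\AR^+$-module $\mbfn(T)$ and accounting for the $\pi^{-r}$ twist via Lemma \ref{lem:psi_pi_power}, to write
\[
\Npi^{(0, v/p]+}(T(r))^{\psi=0} \simeq \bigoplus_{\alpha \neq 0} \bigl(\varphi(\ARpi^{(0, v]+}) u_\alpha\bigr) \otimes_{\AR^+} \mbfn(T(r)).
\]
For each nonzero $\alpha \in \{0, \ldots, p-1\}^{[0, d]}$, some coordinate $\alpha_i$ is invertible in $\ZZ/p$. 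On the $\alpha$-summand the operator $\tau_i = \gamma_i - 1$ acts modulo $p$ and $\pi^2$ as multiplication by $\alpha_i \pi$, tensored with the trivial action on $\mbfn(T)/\pi\mbfn(T)$ coming from Definition \ref{defi:wach_reps}(iii), and hence yields a contracting homotopy for the Koszul complex $\pazk(\Gamma_S\prm, -)$ on each $\alpha$-summand, modulo a bounded power of $p$. This is completely analogous to the $\alpha$-eigenspace argument used in Lemma \ref{lem:syntomic_u_psi_eigenspace}.

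The main obstacle will be bookkeeping the constants. The twist by $r$ forces inverting $\pi^r$, which in $\ARpi^{(0, v]+}$ costs up to $p^r$ via Lemma \ref{lem:pi_m_divides_p}. Once this is absorbed, the contracting homotopy on each $\alpha$-summand, together with the diagonal $\tau_0$ morphism (controlled by the operators $\delta_j$ introduced in the proof of Proposition \ref{prop:quasi_iso_Lie_relative}), accounts for the remaining $p^2$, and the truncation in degrees $\leq r$ ensures no further accumulation. A careful combination of these estimates through the spectral sequence of the double complex should yield the asserted $p^{r+2}$-quasi-isomorphism.
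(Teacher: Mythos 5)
Your overall strategy --- reduce to the kernel complex of $\psi$, decompose the $\psi=0$ part into $\alpha$-eigenspaces, and kill each summand --- is the same as the paper's, and both the reduction to the kernel complex and your accounting for the factor $p^{r}$ (from untwisting $\pi^{k}$ for $k\leq r$) are correct. But there are two genuine gaps. First, the decomposition $\Npi^{(0, v/p]+}(T(r))^{\psi=0} \simeq \bigoplus_{\alpha \neq 0} \big(\varphi(\ARpi^{(0, v]+}) u_\alpha\big) \otimes_{\AR^+} \mbfn(T(r))$ is not correct: $\psi$ does not act coordinatewise in an $\AR^+$-basis $\{e_j\}$ of $\mbfn(T)$, since the $e_j$ are not in the image of $\varphi$, so the condition $\psi=0$ only splits coordinatewise in the basis $\{\varphi(e_j)\}$ furnished by the \'etale structure $\DRpi(T)=\sum_j \ARpi\,\varphi(e_j)$. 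One must then identify the $\alpha$-component of $\Npi^{(0, v/p]+}(T)^{\psi=0}$ with $\varphi\big(\Npi^{(0, v]+}(T)\big)[X^{\flat}]^{\alpha}$, and this rests on a nontrivial integrality statement (if $\varphi(x)\in \Npi^{(0, v/p]+}(T)$ then $x\in \Npi^{(0, v]+}(T)$, proved by successive approximation in powers of $p/\pi_m^{\lfloor ne/v\rfloor}$) that your proposal does not address. The distinction is not cosmetic: the acyclicity argument needs $(\gamma_i-1)$ to map each $\alpha$-component into $\varphi(\pi)$ times itself, which holds precisely because its generators are $\varphi(e_j)$, and $\frac{\varphi(\pi)}{\pi^2}$ lies in $\varphi(\ARpi^{(0,v]+})$.

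Second, the claim that some invertible coordinate $\alpha_i$ yields a contracting homotopy for $\pazk(\Gamma_S\prm,-)$ on each $\alpha$-summand fails for $\alpha$ supported only in the arithmetic coordinate, i.e.\ $\alpha_0\neq 0$ and $\alpha_i=0$ for $1\leq i\leq d$. The rows $\pazk(\Gamma_S\prm,-)$ and $\pazk^c(\Gamma_S\prm,-)$ involve only $\gamma_1,\dots,\gamma_d$, so for such $\alpha$ they are not individually acyclic; the acyclicity of the total complex has to come from the connecting map $\tau_0$, whose component in degree $q$ is $\gamma_0-\delta_{i_1}\cdots\delta_{i_q}$, being injective with cokernel killed by $p$. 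That is a separate and more delicate argument (one writes $\pi^{-1}F = z + \pi^{-1}F\prm$ with $z\in\ZZ_p^{\ast}$ and shows $F\prm$ vanishes on $\pi_m^{a}D/\pi_m^{a+p^m}D$, using the divisibilities of Lemma \ref{lem:pi_m_divides_p}; this is also where the restriction $m\geq 2$ enters). A vague appeal to ``$\tau_0$ controlled by the $\delta_j$'' does not cover this case, and it is exactly the two cases of the $\alpha$-analysis --- geometric coordinate nonzero versus only the arithmetic coordinate nonzero --- that each contribute one factor of $p$ to give the $p^{2}$.
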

\begin{proof}
	By definition, note that the map is surjective on each term, so we need to show that the kernel complex is $p^{r+2}\textrm{-acyclic}$.
	As the map in the claim is identity on the first column, therefore, the kernel complex can be written as
	\begin{equation*}
		\tau_{\leqslant r} \big[\pazk\big(\Gamma_S\prm, \big(\Npi^{(0, v/p]+}(T(r))\big)^{\psi=0}\big) \xrightarrow{\hspace{1mm}\tau_0\hspace{1mm}} \pazk^c\big(\Gamma_S\prm, \big(\pi \Npi^{(0, v/p]+}(T(r))\big)^{\psi=0}\big)\big].
	\end{equation*}
	Clearly the terms of the complex above are $\varphi(\ARpi^{(0, v]+})\textrm{-modules}$.
	Recall that $p/\pi \in \varphi(\ARpi^{(0, v]+})$ (since $\pi_1$ divides $p$ in $\ARpi^{(0, v]+}$, see Lemma \ref{lem:pi_m_divides_p} (ii) for $v = p-1$), so we obtain that $(\pi^k \Npi^{(0, v/p]+}(T(r)))^{\psi=0}$ is $p^{r-k}\textrm{-isomorphic}$ to $(\Npi^{(0, v/p]+}(T)(r))^{\psi=0}$, for $k \leqslant r$.
	In particular, the complex above is $p^r\textrm{-quasi-isomorphic}$ to the following complex:
	\begin{equation}\label{eq:kercomplex}
		\tau_{\leqslant r} \big[\Kos\big(\Gamma_S\prm, \big(\Npi^{(0, v/p]+}(T)(r)\big)^{\psi=0}\big) \xrightarrow{\hspace{1mm}\tau_0\hspace{1mm}} \Kos^c\big(\Gamma_S\prm, \big(\Npi^{(0, v/p]+}(T)(r)\big)^{\psi=0}\big)\big].
	\end{equation}
	We will show that the complex in \eqref{eq:kercomplex} is $p^2\textrm{-acyclic}$, but to prove our claim we will need a simpler description of the $\varphi(\ARpi^{(0, v]+})\textrm{-module}$ $\big(\Npi^{(0, v/p]+}(T)\big)^{\psi=0}$.

	Let $\{e_1, \ldots, e_h\}$ denote an $\AR^+\textrm{-basis}$ of $\mbfn(T)$.
	As the attached $(\varphi, \Gamma_S)\textrm{-module}$ $\Dpi(T) = \ARpi \otimes_{\AR} \mbfd(T)$ over $\ARpi$ is \'etale, so we see that $\{\varphi(e_1), \ldots \varphi(e_h)\}$ is an $\ARpi\textrm{-basis}$ of $\Dpi(T)$.
	Now, let us note that $z = \sum_{j=1}^h z_j \varphi(e_j)$ is in $\Dpi(T)^{\psi=0}$ if and only if $z_j \in \big(\ARpi\big)^{\psi=0}$, for each $1 \leqslant j \leqslant h$.
	Indeed, $\psi(z) = 0$ if and only if $\sum_{j=1}^h \psi(z_j) e_j = 0$, and since $e_j$ are linearly independent over $\ARpi$, therefore, we see that $\psi(z) = 0$ if and only if $\psi(z_j) = 0$, for all $1 \leqslant j \leqslant h$.
	Next, using Lemma \ref{lem:analytic_rings_psi_action} (ii), note that we have a decomposition $\ARpi^{\psi=0} = \oplus_{\alpha \neq 0} \varphi\big(\ARpi\big)[X^{\flat}]^{\alpha}$, where $[X^{\flat}]^{\alpha} = (1+\pi_m)^{\alpha_0}[X_1^{\flat}]^{\alpha_0} \cdots [X_d^{\flat}]^{\alpha_d}$ and $\alpha =(\alpha_0, \ldots, \alpha_d)$ is a $(d+1)\textrm{-tuple}$ with $\alpha_i \in \{0, \ldots, p-1\}$.
	Therefore, we see that $\Dpi(T)^{\psi=0} = \big(\sum_{j=1}^h \ARpi \varphi(e_j)\big)^{\psi=0} = \oplus_{\alpha \neq 0} \sum_{j=1}^h \varphi\big(\ARpi e_j\big)[X^{\flat}]^{\alpha} = \oplus_{\alpha \neq 0} \varphi\big(\Dpi(T)\big)[X^{\flat}]^{\alpha}$.
	Note that inside $\Dpi(T)$ we have $\big(\Npi^{(0, v/p]+}(T)\big)^{\psi=0} = \Dpi(T)^{\psi=0} \cap \Npi^{(0, v/p]+}(T)$.
	So using the decomposition above, we set $N[X^{\flat}]^{\alpha} := \varphi\big(\Dpi(T)\big)[X^{\flat}]^{\alpha} \cap \Npi^{(0, v/p]+}(T)$, for $\alpha \neq 0 $, where the intersection is taken inside $\Dpi(T)^{\psi=0}$.
	Note that we have $\varphi(\ARpi^{(0, v]+}) \subset \varphi(\ARpi) \cap \ARpi^{(0, v/p]+}$.
	Therefore, it follows that $N := N[X^{\flat}]^{\alpha} [X^{\flat}]^{-\alpha}$ is a $\varphi(\ARpi^{(0, v]+})\textrm{-module}$ contained in $\Npi^{(0, v/p]+}(T)$, stable under the action of $\Gamma_S$ and independent of $\alpha$.
	Indeed, for the last part note that for $\alpha \neq \alpha\prm$, we have $\sum_{i=1}^h \varphi(x_i e_i)[X^{\flat}]^{\alpha} \in N[X^{\flat}]^{\alpha}$ if and only if $\sum_{i=1}^h \varphi(x_i e_i) [X^{\flat}]^{\alpha\prm} \in N[X^{\flat}]^{\alpha\prm}$.
	In conclusion, we get the equalities $\big(\Npi^{(0, v/p]+}(T)\big)^{\psi=0} = \oplus_{\alpha \neq 0} N [X^{\flat}]^{\alpha} = \oplus_{\alpha \neq 0} \varphi(\Npi^{(0, v]+}) [X^{\flat}]^{\alpha}$, where the last equality follows from the following:

\begin{lem}\label{lem:Npivp_psi0}
	For $v = p-1$, let $x \in \Dpi(T)$ such that $\varphi(x) \in \Npi^{(0, v/p]+}(T)$, then $x \in \Npi^{(0, v]+}(T)$.
	In particular, we have $N = \varphi(\Npi^{(0, v]+}(T))$.
\end{lem}
\begin{proof}
	Let $\Npi^+(T) = \ARpi^+ \otimes_{\AR^+} \mbfn(T)$ and note that $\Dpi(T)/p = (\Npi^+(T)/p)[1/\pi_m]$ and $\Npi^{(0, v]+}(T) = \sum_{n \in \NN} p^n \pi_m^{-\lfloor ne/v \rfloor} \Npi^+(T)$ (since $\mbfn(T)$ is finite free over $\mbfa_R^+$).
	Then the proof of \cite[Lemma 2.14]{colmez-niziol-nearby-cycles} can easily be adapted to obtain the claim.
	We omit the details.
\end{proof}

\begin{rem}\label{rem:Npivp_psi0_gamma}
	From Lemma \ref{lem:Npivp_psi0}, we have $N = \varphi(\Npi^{(0, v]+}(T))$.
	Then for any $i \in \{0, \ldots, d\}$, using Lemma \ref{lem:gamma_minus_1_oc} (i), note that $(\gamma_i-1) \ARpi^{(0, v]+} \subset \pi \ARpi^{(0, v]+}$ and from Definition \ref{defi:wach_reps} note that $(\gamma_i-1) \mbfn(T) \subset \pi \mbfn(T)$.
	As $\varphi$ commutes with the action of $\Gamma_S$, therefore, we conclude that $(\gamma_i-1) N \subset \varphi(\pi) N$.
\end{rem}

	From the discussion above, it follows that the complex in \eqref{eq:kercomplex} is isomorphic to the complex
	\begin{equation}\label{eq:phigamma_complex_psi0_oc}
		\tau_{\leqslant r} \bigoplus_{\alpha \neq 0}
		\left[
			\vcenter
			{
				\xymatrix
				{
					\Kos\big(\Gamma_S\prm, N(r)[X^{\flat}]^{\alpha}\big) \ar[r]^{\tau_0} & \Kos^c\big(\Gamma_S\prm, N(r)[X^{\flat}]^{\alpha}\big)
				}
			}
		\right].
	\end{equation}

\begin{lem}\label{lem:psi0_kernel_complex_oc}
	The complex described in \eqref{eq:phigamma_complex_psi0_oc} is $p^2\textrm{-acyclic}$.
\end{lem}
\begin{proof}
	Our proof is motivated by the proof of \cite[Lemma 4.10]{colmez-niziol-nearby-cycles}.
	One can treat the terms of \eqref{eq:phigamma_complex_psi0_oc} corresponding to each $\alpha$ separately.
	The case of $\alpha_k \neq 0$, for some $k \neq 0$, follows similar to the proof of \cite[Lemma 4.10]{colmez-niziol-nearby-cycles}, where one shows that both the complexes $\Kos\big(\Gamma_S\prm, N(r)[X^{\flat}]^{\alpha})$ and $\Kos^c\big(\Gamma_S\prm, N(r)[X^{\flat}]^{\alpha})$ are $p\textrm{-acyclic}$, by using the facts that $(\gamma_k-1)N \subset \varphi(\pi) N$ (see Remark \ref{rem:Npivp_psi0_gamma}) and $\pi$ divides $p$ in $\varphi(\ARpi^{(0, v]+})$ (since $\pi_1$ divides $p$ in $\ARpi^{(0, v]+}$, see Lemma \ref{lem:pi_m_divides_p} (ii) for $v = p-1$).
	We omit the details.

	Now, let $\alpha_k = 0$, for all $k \neq 0$, and $\alpha_0 \neq 0$.
	To prove that the complex in \eqref{eq:phigamma_complex_psi0_oc} is $p\textrm{-acyclic}$, we will show that $\tau_0 : \Kos \rightarrow \Kos^c$ is injective and the cokernel complex is killed by $p$.
	This amounts to showing the same statement for the following map:
	\begin{equation}\label{eq:gamma_0_midus_deltas}
		\gamma_0 - \delta_{i_1} \cdots \delta_{i_q} : N[X^{\flat}]^{\alpha}(r) \longrightarrow N[X^{\flat}]^{\alpha}(r), \hspace{2mm} \delta_{i_j} = \tfrac{\gamma_{i_j}^c-1}{\gamma_{i_j}-1}.
	\end{equation}
	Let $n = p^{-m}(c-1)\alpha_0 \in \ZZ_p^{\times}$, $F = c^{r}(1+\pi)^n\gamma_0 - \delta_{i_1} \cdots \delta_{i_q}$ and $\epsilon^{\otimes r}$ a $\ZZ_p\textrm{-basis}$ of $\ZZ_p(r)$.
	Then we note that $(\gamma_0 - \delta_{i_1} \cdots \delta_{i_q}) \big(x [X^{\flat}]^{\alpha} \otimes \epsilon^{\otimes r}\big) = F(x) \cdot [X^{\flat}]^{\alpha} \otimes \epsilon^{\otimes r}$, for any $x \in N$.
	Moreover, we have that $c^r-1$ is divisible by $p^m$, $(1+\pi)^n = 1 + n\pi \hspace{1mm} \textrm{mod} \hspace{1mm} \pi^2$ and $\delta_{i_j} - 1 \in (\gamma_{i_j} - 1)\ZZ_p \llbracket \gamma_{i_j}-1 \rrbracket$.
	Therefore, we can write $\pi^{-1} F$ in the form $\pi^{-1}F = n + \pi^{-1}F\prm$, with $F\prm \in \big(p^m, \pi^2, \gamma_0-1, \ldots, \gamma_d-1\big)\ZZ_p \llbracket \pi, \Gamma_S \rrbracket$.
	Now, let $f = p/\pi \in \varphi(\mbfa_R^{(0, v]+})$ and note that $\pi^{-1} p^m x = \pi^{m-1} f^m x$ is in $\pi^{m-1} N$.
	Moreover, we have that $(\gamma_j-1) N \subset \varphi(\pi)N$, for $0 \leqslant j \leqslant d$ (see Remark \ref{rem:Npivp_psi0_gamma}) and $\varphi(\pi)/\pi^2 \in \varphi(\ARpi^{(0, v]+})$ (since $\pi_1$ divides $p$ in $\ARpi^{(0, v]+}$, see Lemma \ref{lem:pi_m_divides_p} (ii) for $v = p-1$).
	Furthermore, $\pi_m^{p^m}$ divides $\pi$ and $p$ in $\varphi(\ARpi^{(0, v]+})$ (see Lemma \ref{lem:pi_m_divides_p} (ii) for $v = p-1$).
	So we get that $\pi^{-1}F'(x) \in \pi_m^{p^m} N$ (since we assumed $m \geqslant 2$).
	In particular, we see that $\pi^{-1}F' = 0$ on $\pi_m^a N / \pi_m^{a+b} N$, for all $a \in \NN$ and $b = p^m$.
	Hence, $\pi^{-1} F$ induces multiplication by $n$ on $\pi_m^a N / \pi_m^{a+b} N$, for all $a \in \NN$, which implies that it is an isomorphism on $N$.
	From the preceding discussion, we conclude that the map in \eqref{eq:gamma_0_midus_deltas} is injective and its image is contained in $\pi N[X^{\flat}]^{\alpha}(r)$.
	But, as $\pi$ divides $p$ in $\varphi(\ARpi^{(0, v]+})$, therefore, we get that the cokernel of \eqref{eq:gamma_0_midus_deltas} is killed by $p$, as claimed.
\end{proof}

	Using Lemma \ref{lem:psi0_kernel_complex_oc}, we conclude that the natural morphism of complexes, in the claim of Proposition \ref{prop:oc_psi_gamma_phi_gamma}, is a $p^{r+2}\textrm{-quasi-isomorphism}$.
\end{proof}

\subsubsection{Changing the overconvergence radius}

Recall that $m \geqslant 2$ and let $\ell = p^{m-1}$.
Then from Proposition \ref{prop:ring_psi_eigenspace_decomp} (i), we have inclusions $\psi\big(\pi_m^{-\ell} \ARpi^{(0, v]+}\big) \subset \psi\big(\pi_m^{-\ell} \ARpi^{(0, v/p]+}\big) \subset \pi_m^{-p^{m-2}} \ARpi^{(0, v]+} \subset \pi_m^{-\ell} \ARpi^{(0, v/p]+}$.
In other words, $\pi_m^{-\ell} \ARpi^{(0, v]+}$ is stable under $\psi$.
Set $\Dpi^{(0, v]+}(T(r)) := \ARpi^{(0, v]+} \otimes_{\AR^+} \mbfd^+(T(r))$ and note that it is stable under the action of $\Gamma_S$.
Next, from Lemma \ref{lem:analytic_rings_psi_action} we have that $\psi\big(\ARpi^{(0, v/p]+}\big) = \ARpi^{(0, v]+}$, and for $v = p-1$, using Lemma \ref{lem:pi_m_divides_p} (iii), we have that $\pi_m^{-p\ell} \pi$ is a unit in $\ARpi^{(0, v/p]+}$.
So from Proposition \ref{prop:ring_psi_eigenspace_decomp}, it follows that $\psi\big(\pi^{-r} \ARpi^{(0, v/p]+}\big) = \pi_1^{-r}\ARpi^{(0, v]+}$, and therefore, $\psi\big(\pi^{-r}\Dpi^{(0, v/p]+}(T(r))\big) \subset \pi_1^{-r}\Dpi^{(0, v]+}(T(r))$.
Moreover, as we have $\psi(\mbfn(T)) \subset \mbfd^+(T)$, so from the discussion above we see that $\psi\big(\Npi^{(0, v/p]+}(T(r))\big) \subset \psi\big(\pi^{-r}\Dpi^{(0, v/p]+}(T(r))\big) \subset \pi_1^{-r}\Dpi^{(0, v]+}(T(r))$.
Furthermore, for $k \in \NN$ and $k \leqslant r$, it follows that we have $\pi^k \Npi^{(0, v/p]+}(T(r)) \subset \pi^{k-r} \Dpi^{(0, v/p]+}(T(r))$ and $\psi\big(\pi^k \Npi^{(0, v/p]+}(T(r))\big) \subset \pi_1^{k-r} \Dpi^{(0, v]+}(T(r)) \subset \pi^{k-r} \Dpi^{(0, v/p]+}(T(r))$.

By replacing $v$ by $v/p$ in \S \ref{subsec:change_annulus_1}, we define a complex $\pazk\big(\Gamma_S\prm, \Npi^{(0, v/p]+}(T(r))\big)$ as follows:
\begin{equation*}
	\Npi^{(0, v/p]+}(T(r)) \xrightarrow{\hspace{1mm}(\tau_i)\hspace{1mm}} \big(\pi \Npi^{(0, v/p]+}(T(r))\big)^{I_1\prm} \longrightarrow \big(\pi^2 \Npi^{(0, v/p]+}(T(r))\big)^{I_2\prm} \longrightarrow \cdots.
\end{equation*}
Similarly, we define a complex $\pazk^c\big(\Gamma_S\prm, \Npi^{(0, v/p]+}(T(r))\big)$ and a map $\tau_0$ from the former to the latter complex.
Note that from the discussion above and the inclusion $\Npi^{(0, v/p]+}(T(r)) \subset \pi^{-r} \Dpi^{(0, v/p]+}(T(r))$, we have that $(\psi - 1)\big(\pi^k \Npi^{(0, v/p]+}(T(r))\big) \subset \pi^{-r}\Dpi^{(0, v/p]+}(T(r))$.
So we define the following complex:
\begin{displaymath}
	\pazk\big(\psi, \Gamma_S, \Npi^{(0, v/p]+}(T(r))\big) :=
	\left[
		\vcenter
		{
			\xymatrix
			{
				\pazk\big(\Gamma_S\prm, \Npi^{(0, v/p]+}(T(r))\big) \ar[r]^{\psi-1\hspace{5mm}} \ar[d]_{\tau_0} & \Kos\big(\Gamma_S\prm, \pi^{-r}\Dpi^{(0, v/p]+}(T(r))\big) \ar[d]^{\tau_0} \\
				\pazk^c\big(\Gamma_S\prm, \pi \Npi^{(0, v/p]+}(T(r))\big) \ar[r]^{\psi-1\hspace{4mm}} & \Kos^c\big(\Gamma_S\prm, \pi^{-r} \Dpi^{(0, v/p]+}(T(r))\big)
			}
		}
	\right].
\end{displaymath}

\begin{lem}\label{lem:psi_gamma_oc_radius_wach}
	The morphism of complexes $\tau_{\leqslant r} \pazk\big(\psi, \Gamma_S, \Npi^{(0, v]+}(T(r))\big) \rightarrow \tau_{\leqslant r} \pazk\big(\psi, \Gamma_S, \Npi^{(0, v/p]+}(T(r))\big)$, induced by the inclusions $\Npi^{(0, v]+}(T(r)) \subset \Npi^{(0, v/p]+}(T(r))$ and $\psi(\Npi^{(0, v/p]+}(T(r))) \subset \pi^{-r} \Dpi^{(0, v/p]+}(T(r))$, is a $p^{r+2s}\textrm{-quasi-isomorphism}$.
\end{lem}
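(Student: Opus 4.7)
The plan is to analyze the mapping fiber of the given inclusion of double complexes and show it is $p^{r+s}$-acyclic in cohomological degrees $\leq r$. Since the $2 \times 2$ diagram structure is preserved by the morphism, this fiber is the total complex of a diagram of quotients at each corner of the double complex, with horizontal arrows induced by $\psi - 1$ and vertical arrows by $\tau_0$. It will therefore suffice to show that in each of the two rows the induced horizontal arrow on the quotient terms is a $p^{r+s}$-quasi-isomorphism, after which the total fiber inherits acyclicity with the same $p$-power bound.

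First I would absorb the filtration appearing on the left columns into powers of $\pi$. Using Lemma \ref{lem:wach_mod_twist_fil} one has the identity $\pi^k \Fil^{-k}\Npi^{(0, v]+}(T(r)) = \pi^{k-r}\Fil^{r-k}\Npi^{(0, v]+}(T)(r)$, and by the height-$s$ condition on the Wach module $\mbfn(T)$ together with the fact that $q$ and $p$ are associates in the relevant rings, one obtains a $p^s$-isomorphism $\Fil^{r-k}\Npi^{(0, v]+}(T)(r) \simeq \Npi^{(0, v]+}(T)(r)$ valid for $0 \leq k \leq r$. Thus, up to a $p^s$-factor, the left-column quotient becomes the untwisted ring-extension quotient $\pi^{k-r}\Npi^{(0, v/p]+}(T)(r)/\pi^{k-r}\Npi^{(0, v]+}(T)(r)$, which is a more tractable object.

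Next I would show that $\psi - 1$ is a quasi-isomorphism on the resulting quotient. The crucial input is that $\psi$ is contracting: by Proposition \ref{prop:ring_psi_eigenspace_decomp}(i) together with \eqref{eq:action_psi_oc_rings} and \eqref{eq:psi_oc_wach}, the operator $\psi$ maps $\Npi^{(0, v/p]+}(T)(r)$ into $\Npi^{(0, v]+}(T)(r)$ up to a controlled defect, so it acts as essentially zero on the ring-extension quotient. Hence $1 - \psi$ is invertible on the quotient via the geometric series $\sum_{n \geq 0} \psi^n$, which converges since $\psi$ becomes topologically nilpotent modulo $\Npi^{(0, v]+}(T)(r)$. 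This furnishes the desired quasi-isomorphism row by row, and combining with the earlier $p^s$-factor produces the overall $p^{r+s}$-bound.

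The main obstacle will be tracking the precise $p$-power losses at each reduction, and in particular verifying the defect of $\psi$ between $\pi^{-r}\Dpi^{(0, v/p]+}(T(r))$ and $\pi^{-r}\Dpi^{(0, v]+}(T(r))$ using Lemma \ref{lem:psi_pi_power} together with the divisibility $\pi_1 \mid p$ in $\ARpi^{(0, v]+}$ from Lemma \ref{lem:pi_m_divides_p}. One must also confirm that the $\Gamma_S$-equivariance of $\psi$ from Proposition \ref{prop:psi_oper} guarantees compatibility with the Koszul differentials $\tau_i$ and $\tau_0$, so that the row-wise quasi-isomorphism assembles into a fiber-wise acyclicity statement. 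The truncation to degrees $\leq r$ arises naturally, since the step replacing $\Fil^{-k}$ by $\pi^{k-r}$ (times the unfiltered module) requires $0 \leq k \leq r$ for the height-$s$ reduction to hold with the stated $p^s$-bound.
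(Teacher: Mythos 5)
Your overall framing (the map is injective, so reduce to showing the cokernel complex is killed by $p^{r+s}$, term by term) matches the paper, but the central reduction you propose is false. You claim that the height-$s$ condition gives a $p^s$-isomorphism $\Fil^{r-k}\Npi^{(0, v]+}(T)(r) \simeq \Npi^{(0, v]+}(T)(r)$ for $0 \leq k \leq r$, so that the filtration on the left column can be discarded. The quotient $\Npi^{(0,v]+}(T)/\Fil^{n}\Npi^{(0,v]+}(T)$ is not $p$-power torsion: already for $T = \ZZ_p$ one has $\mbfn(T) = \AR^+$ and $\Fil^{n}$ of the base ring is generated by $\xi^{n}$ (up to the tensor-product conventions), so the quotient is a large $\xi$-torsion module, not killed by any power of $p$. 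The height-$s$ condition controls the gap between $\mbfn(T)$ and $\mbfd^+(T)$ (namely $\pi^s\mbfd^+(T) \subset \mbfn(T) \subset \mbfd^+(T)$), not the gap between $\Fil^{n}\mbfn$ and $\mbfn$. Once this reduction fails, your geometric-series argument has nothing to act on; moreover it is not applicable in the first place, because in the cokernel complex the source and target of $\psi-1$ are \emph{different} quotients --- the target is $\pi^{-r}\Dpi^{(0,v/p]+}(T(r))/\psi(\pi^{k-r}\Npi^{(0,v/p]+}(T(r)))$, involving $\Dpi$ and the image of $\psi$, not the same "ring-extension quotient" as the source --- so $1-\psi$ is not an endomorphism you can invert by $\sum_n \psi^n$.

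The argument actually needed splits into two halves that your proposal does not contain. For surjectivity up to $p^{k+s}$: the cokernel of $\psi-1$ on these terms is $\pi^{-r}\Dpi^{(0,v/p]+}(T)/\pi^{k-r}\Npi^{(0,v/p]+}(T)$, and the sandwich $\pi^{s-r}\Dpi^{(0,v/p]+}(T) \subset \pi^{-r}\Npi^{(0,v/p]+}(T)$ (from $\pi^s\mbfd^+(T) \subset \mbfn(T)$) shows this is killed by $\pi^{k+s}$, hence by $p^{k+s}$ since $\pi \mid p$ in $\ARpi^{(0,v/p]+}$; this is where both the $r$ (via $k \leq r$, forcing the truncation) and the $s$ in the exponent come from. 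For injectivity, the map is injective on the nose: if $(\psi-1)(\pi^{k-r}x) = \psi(\pi^{k-r}y)$ then $x = \xi^{r-k}\psi(x-y)$, and one deduces $x \in \Fil^{r-k}\Npi^{(0,v]+}(T)$ from $r \geq s+1$, the freeness of $\mbfn(T)$, and the fact that the filtration is precisely cut out by $\xi$-divisibility inside $\Dpi^{(0,v]+}(T)$. In other words, the filtration you tried to discard is exactly what identifies the kernel; it cannot be absorbed into a $p$-power.
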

\begin{proof}
	As the map in the claim is injective on each term, we need to show that the cokernel complex is killed by $p^{r+2s}$.
	For $k \in \NN$ and $k \leqslant r$, in the cokernel complex, we have maps
	\begin{equation}\label{eq:psi_minus_id_oc}
		\psi - 1 : \pi^{k-r} \Npi^{(0, v/p]+}(T) / \pi^{k-r} \Fil^{r-k} \Npi^{(0, v]+}(T) \rightarrow \pi^{-r}\Dpi^{(0, v/p]+}(T) / \psi(\pi^{k-r} \Npi^{(0, v/p]+}(T)),
	\end{equation}
	and to prove the claim it is enough to show that \eqref{eq:psi_minus_id_oc} is $p^{r+2s}\textrm{-bijective}$ (the twist $(r)$ has disappeared because $\psi$ acts trivially on it).
	First, we will show the $p^{r+s}\textrm{-surjectivity}$.
	Recall that we have $\pi^s \mbfd^+(T) \subset \mbfn(T) \subset \mbfd^+(T)$ (see \cite[Corollary 4.11]{abhinandan-crystalline-wach}), and by extending scalars to $\ARpi^{(0, v/p]+}$ and dividing out by $\pi^r$, we see that $\pi^{s-r} \Dpi^{(0, v/p]+}(T) \subset \pi^{-r} \Npi^{(0, v/p]+}(T)$.
	So, it follows that $\pi^{-r}\Dpi^{(0, v/p]+}(T) / \pi^{k-r} \Npi^{(0, v/p]+}(T)$ is killed by $\pi^{k+s}$, and since $\pi$ divides $p$ in $\ARpi^{(0, v/p]+}$ (see Lemma \ref{lem:pi_m_divides_p} for $v = p-1$), therefore, we get that the preceding quotient is killed by $p^{k+s}$.
	Note that the quotient $\pi^{-r}\Dpi^{(0, v/p]+}(T) / \pi^{k-r} \Npi^{(0, v/p]+}(T)$ surjects onto the cokernel of \eqref{eq:psi_minus_id_oc}.
	Hence, for $k \leqslant r$, we see that the cokernel of \eqref{eq:psi_minus_id_oc} is killed by $p^{r+s}$ (this also shows that the truncation in degree $\leqslant r$ is necessary in order to bound the power of $p$).

	Next, to show the $p^s\textrm{-injectivity}$ of \eqref{eq:psi_minus_id_oc}, let $x \in \Npi^{(0, v/p]+}(T)$ such that there is a $y \in \Npi^{(0, v/p]+}(T)$ satisfying $(\psi-1)(\pi^{k-r} x) = \psi(\pi^{k-r} y)$, or equivalently, we have that $x = \xi^{r-k} \psi(x-y)$ belongs to $\xi^{r-k} \psi(\Npi^{(0, v/p]+}(T))$.
	Note that $\psi(\Npi^{(0, v/p]+}(T)) \subset \psi(\Dpi^{(0, v/p]+}(T)) \subset \Dpi^{(0, v]+}$, so we see that $\varphi(x) \in \Dpi^{(0, v/p]+}$.
	Moreover, from the discusson above, we know that the natural inclusion $\Npi^{(0, v/p]+}(T) \subset \Dpi^{(0, v/p]+}(T)$ is $p^s\textrm{-surjective}$.
	Therefore, it follows that $\varphi(p^s x) = p^s\varphi(x)$ is in $\Npi^{(0, v/p]+}(T)$, in particular, we see that $\psi(\varphi(p^s x)) = \psi(p^s q^{r-k}(x - y))$, i.e.\ $\varphi(p^s x) - q^{r-k} p^s (x - y)$ is in $(\Npi^{(0, v/p]+}(T))^{\psi=0}$.
	From the description of $(\Npi^{(0, v/p]+}(T))^{\psi=0}$ before Lemma \ref{lem:Npivp_psi0}, we can write $\varphi(p^s x) =  p^s q^{r-k}(x - y) + \sum_{\alpha \neq 0} \varphi(x_{\alpha}) [X^{\flat}]^{\alpha}$, for some $x_{\alpha} \in \Npi^{(0, v]+}(T)$.
	In particular, we see that $\varphi(p^s x)$ is in $\Npi^{(0, v/p]+}(T)$ and from Lemma \ref{lem:Npivp_psi0} we get that $p^s x$ is in $\Npi^{(0, v]+}(T)$.
	Furthermore, as we have that $\psi(\Npi^{(0, v/p]+}(T)) \subset \Dpi^{(0, v]+}(T)$, therefore, we see that $p^s x$ is in $\Npi^{(0, v]+}(T) \cap \xi^{r-k} \Dpi^{(0, v]+}(T) \subset \Npi^{(0, v]+}(T) \cap \big(\Fil^{r-k} \ARbar^{(0, v]+} \otimes_{\ZZ_p} V\big) \subset \Fil^{r-k} \Npi^{(0, v]+}(T)$, where the last inclusion follows from the definition of the filtration on $\Npi^{(0, v]+}(T)$ in \eqref{eq:fil_ns}.
	In particular, we have shown that $p^s \pi^{k-r}x$ belongs to $\pi^{k-r} \Fil^{k-r} \Npi^{(0, v]+}(T)$, and hence, \eqref{eq:psi_minus_id_oc} is $p^s\textrm{-injective}$.
	This allows us to conclude.
\end{proof}

From the discussion before Lemma \ref{lem:psi_gamma_oc_radius_wach}, recall that we have inclusions $\psi\big(\pi^{-r} \Dpi^{(0, v/p]+}(T(r))\big) \subset \pi_1^{-r} \Dpi^{(0, v]+}(T(r)) \subset \pi^{-r} \Dpi^{(0, v/p]+}(T(r))$.
So using the constuctions in \S \ref{sec:galois_cohomology}, we define the complex:
\begin{displaymath}
	\Kos\big(\psi, \Gamma_S, \Dpi^{(0, v/p]+}(T(r))\big) :=
	\left[
		\vcenter
		{
			\xymatrix
			{
				\Kos\big(\Gamma_S\prm, \pi^{-r}\Dpi^{(0, v/p]+}(T(r))\big) \ar[r]^{\psi-1\hspace{2mm}} \ar[d]_{\tau_0} & \Kos\big(\Gamma_S\prm, \pi^{-r}\Dpi^{(0, v/p]+}(T(r))\big) \ar[d]^{\tau_0} \\
				\Kos^c\big(\Gamma_S\prm, \pi^{-r}\Dpi^{(0, v/p]+}(T(r))\big) \ar[r]^{\psi-1\hspace{2mm}} & \Kos^c\big(\Gamma_S\prm, \pi^{-r}\Dpi^{(0, v/p]+}(T(r))\big)
			}
		}
	\right].
\end{displaymath}

\begin{lem}\label{lem:psi_gamma_oc_radius}
	The morphism of complexes $\tau_{\leqslant r} \pazk\big(\psi, \Gamma_S, \Npi^{(0, v/p]+}(T(r))\big) \rightarrow \tau_{\leqslant r} \Kos\big(\psi, \Gamma_S, \Dpi^{(0, v/p]+}(T(r))\big)$, induced by the inclusion $\Npi^{(0, v/p]+}(T(r))\subset \pi^{-r} \Dpi^{(0, v/p]+}(T(r))$, is a $p^{r+s}\textrm{-quasi-isomorphism}$.
\end{lem}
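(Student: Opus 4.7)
The plan is to adapt the strategy of Lemma \ref{lem:psi_gamma_oc_radius_wach}. In contrast with that lemma, the comparison here affects only the left column (the $(\psi-1)$-source) of the double complex, since the right column already involves $\pi^{-r}\Dpi^{(0, v/p]+}(T(r))$ in both source and target. First I would check that the map is term-wise injective: the inclusion $\mbfn(T) \subset \mbfd^+(T)$, after twisting by $r$ and multiplying by $\pi^{k}$ (for $k \geq 0$), yields an injection
\[
    \pi^k\Npi^{(0, v/p]+}(T(r)) \;=\; \pi^{k-r}\Npi^{(0, v/p]+}(T) \;\hookrightarrow\; \pi^{-r}\Dpi^{(0, v/p]+}(T) \;=\; \pi^{-r}\Dpi^{(0, v/p]+}(T(r)),
\]
in each term of both the $\pazk$ and $\pazk^c$ rows (using the extra $\pi$-prefactor in the latter). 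Consequently the cofiber of the map of complexes is concentrated on the left column, and at degree $k$ its $I_k'$-component is a direct sum of copies of
\[
    Q_k \;:=\; \pi^{-r}\Dpi^{(0, v/p]+}(T)\big/\pi^{k-r}\Npi^{(0, v/p]+}(T),
\]
while the $\pazk^c$-row contributes copies of $Q_{k+1}$.

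The key estimate, which will also be the main technical step, is to bound $Q_k$. Starting from \cite[Corollary 4.12]{abhinandan-crystalline-wach}, which gives $\pi^s\mbfd^+(T) \subset \mbfn(T)$, and extending scalars to $\ARpi^{(0, v/p]+}$ followed by multiplication by $\pi^k$, I obtain the inclusion $\pi^{s+k}\Dpi^{(0, v/p]+}(T) \subset \pi^k\Npi^{(0, v/p]+}(T)$. Hence $Q_k$ is annihilated by $\pi^{s+k}$. Since $v = p-1$, Lemma \ref{lem:pi_m_divides_p}(v) ensures that $\pi$ divides $p$ in $\ARpi^{(0, v/p]+}$, so in fact $Q_k$ is killed by $p^{s+k}$.

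Finally, in the total complex of the cofiber, every term in total degree $n$ arises from some $Q_k$ or $Q_{k+1}$ with $k \leq n$ (the vertical shift by $\tau_0$ relabels degrees by at most one but stays within the bound); for $n \leq r$ each such term is therefore annihilated by $p^{r+s}$. This shows that $\tau_{\leq r}$ of the cofiber is $p^{r+s}$-acyclic, which is exactly the claimed $p^{r+s}$-quasi-isomorphism. The only subtle compatibility to verify beforehand is the well-definedness of $\psi$ on both $\Npi^{(0, v/p]+}(T(r))$ and $\pi^{-r}\Dpi^{(0, v/p]+}(T(r))$, but this is already recorded in \eqref{eq:psi_oc_wach} and the surrounding discussion (in particular $\psi(\Npi^{(0, v/p]+}(T(r))) \subset \pi_1^{-r}\Dpi^{(0, v]+}(T(r)) \subset \pi^{-r}\Dpi^{(0, v/p]+}(T(r))$), so no further obstacle is anticipated.
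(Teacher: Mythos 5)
Your proposal is correct and follows essentially the same route as the paper: the map is termwise injective, the cokernel lives only in the $\pazk$- and $\pazk^c$-columns with quotients $\pi^{-r}\Dpi^{(0, v/p]+}(T(r))/\pi^k\Npi^{(0, v/p]+}(T(r))$ for $k\leq r$, and these are killed by $p^{k+s}$ via $\pi^s\mbfd^+(T)\subset\mbfn(T)$ together with $\pi\mid p$ in $\ARpi^{(0, v/p]+}$ (Lemma \ref{lem:pi_m_divides_p} (v)). The bookkeeping of degrees in the total complex and the final $p^{r+s}$-bound match the paper's argument.
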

\begin{proof}
	Note that for the map of truncated complexes, the cokernel complex consists of $\ARpi^{(0, v/p]+}\textrm{-modules}$, given as $\pi^{-r} \Dpi^{(0, v/p]+}(T(r)) / \pi^k \Npi^{(0, v/p]+}(T(r))$, for $k \leqslant r$.
	Recall that we have $\pi^s \mbfd^+(T) \subset \mbfn(T) \subset \mbfd^+(T)$ (see \cite[Corollary 4.11]{abhinandan-crystalline-wach}), and by extending scalars to $\ARpi^{(0, v/p]+}$, dividing out by $\pi^r$ and twisting by $\ZZ_p(r)$, we see that $\pi^{s-r} \Dpi^{(0, v/p]+}(T(r)) \subset \Npi^{(0, v/p]+}(T(r))$.
	So, it follows that the quotient $\pi^{-r}\Dpi^{(0, v/p]+}(T(r)) / \pi^k \Npi^{(0, v/p]+}(T(r))$ is killed by $\pi^{k+s}$, and since $\pi$ divides $p$ in $\ARpi^{(0, v/p]+}$ (see Lemma \ref{lem:pi_m_divides_p} for $v = p-1$), therefore, we get that the preceding quotient is killed by $p^{k+s}$.
	As $k \leqslant r$, hence, we conclude that the cokernel complex is $p^{r+s}\textrm{-acyclic}$.
\end{proof}

\subsection{Change of the disk of convergence}\label{subsec:change_disk}

In this subsection, we will relate complexes in previous subsections to the Koszul complex computing continuous $G_S\textrm{-cohomology}$ of $T(r)$.
Recall that in \S \ref{sec:operator_psi_phi_gamma}, we defined an operator $\psi : \Dpi(T(r)) \rightarrow \Dpi(T(r))$ as a left inverse of $\varphi$.
Using this operator, we define the following complex:
\begin{displaymath}
	\Kos\big(\psi, \Gamma_S, \Dpi(T(r))\big) :=
	\left[
		\vcenter
		{
			\xymatrix
			{
				\Kos\big(\Gamma_S\prm, \Dpi(T(r))\big) \ar[r]^{\psi-1\hspace{1mm}} \ar[d]_{\tau_0} & \Kos\big(\Gamma_S\prm, \Dpi(T(r))\big) \ar[d]^{\tau_0} \\
				\Kos^c\big(\Gamma_S\prm, \Dpi(T(r))\big) \ar[r]^{\psi-1\hspace{1mm}} & \Kos^c\big(\Gamma_S\prm, \Dpi(T(r))\big)
			}
		}
	\right].
\end{displaymath}

\begin{lem}\label{lem:psi_gamma_oc}
	The natural morphism of complexes $\Kos\big(\psi, \Gamma_S, \Dpi^{(0, v/p]+}(T(r))\big) \rightarrow \Kos\big(\psi, \Gamma_S, \Dpi(T(r))\big)$, induced by the inclusion $\pi^{-r} \Dpi^{(0, v/p]+}(T(r)) \subset \Dpi(T(r))$, is a quasi-isomorphism.
\end{lem}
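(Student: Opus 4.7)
The plan is to prove the lemma by showing that the mapping fiber of the natural morphism is acyclic. Observe that the inclusion $\pi_m^{-p\ell r}\Dpi^{(0, v/p]+}(T(r)) \subset \DRpi(T(r))$ is stable under both $\psi$ and the $\Gamma_S$-action: stability under $\psi$ follows from \eqref{eq:action_psi_oc_rings} together with the inclusion $\psi(\mbfd^+(T(r))) \subset \mbfd^+(T(r))$, and stability under $\Gamma_S$ is built in. Consequently, the mapping fiber assembles termwise into a $\Gamma_S$-Koszul complex of the form
\begin{displaymath}
	\left[
		\vcenter
		{
			\xymatrix
			{
				\Kos\big(\Gamma_S\prm, Q\big) \ar[r]^{\psi-1} \ar[d]_{\tau_0} & \Kos\big(\Gamma_S\prm, Q\big) \ar[d]^{\tau_0} \\
				\Kos^c\big(\Gamma_S\prm, Q\big) \ar[r]^{\psi-1} & \Kos^c\big(\Gamma_S\prm, Q\big)
			}
		}
	\right],
\end{displaymath}
where $Q := \DRpi(T(r))/\pi_m^{-p\ell r}\Dpi^{(0, v/p]+}(T(r))$. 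If $\psi-1$ is bijective on $Q$, then the horizontal arrows above are termwise bijective, so both rows and hence the total complex are acyclic, which is what we want.

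Thus the heart of the proof reduces to verifying that $\psi - 1 \colon Q \to Q$ is an isomorphism. The strategy, following \cite[Lemma 4.13]{colmez-niziol-nearby-cycles}, is to show that $\psi$ is \emph{topologically nilpotent} on $Q$; then $-\sum_{n \geq 0} \psi^n$ converges on $Q$ and furnishes a two-sided inverse to $\psi - 1$. Topological nilpotency amounts to the claim that for every $x \in \DRpi(T(r))$ and every $k \in \NN$, one has $\psi^n(x) \in \pi_m^{-p\ell r}\Dpi^{(0, v/p]+}(T(r)) + p^k \DRpi(T(r))$ for $n$ sufficiently large. Conceptually this is an incarnation of the overconvergence theorem of Andreatta-Brinon \cite{andreatta-brinon-surconvergence} (see \S \ref{subsubsec:overconvergence}): the iterated $\psi$-operator on $\DRpi(T)$ eventually brings any element into an overconvergent submodule.

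The technical obstacle is the quantitative rate of this $\psi$-sweeping. My approach would be to reduce, via passing to the finite \'etale cover $R\prm/R$ furnished by Definition \ref{defi:wach_reps}(iv), to the case where $\mbfn(T)$ is free over $\AR^+$, and then use the $\psi$-eigenspace decomposition of Proposition \ref{prop:psi_oper} together with the basis $u_{\alpha/p}$ of $\varphi^{-1}(\mbfa)$ over $\mbfa$ to expand any element of $\DRpi(T(r)) = \ARpi \otimes_{\AR^+} \mbfd^+(T(r))$ coefficient-wise. The sandwich $\pi^s\mbfd^+(T) \subset \mbfn(T) \subset \mbfd^+(T)$ from \cite[Corollary 4.12]{abhinandan-crystalline-wach} lets us transfer the required estimates for $\psi^n$ on $\ARpi$ (essentially that $\psi^n$ increases the Gauss-valuation radius) to the module $\DRpi(T(r))$, with an explicit loss of a bounded power of $\pi$. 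Combined with the fact that $p$ is divisible by $\pi_m^{(p-1)p^{m-2}}$ in $\ARpi^{(0,v/p]+}$ (Lemma \ref{lem:pi_m_divides_p}), this provides the desired decay of $\psi^n$ modulo $\pi_m^{-p\ell r}\Dpi^{(0, v/p]+}(T(r))$, yielding the bijectivity of $\psi-1$ on $Q$ and completing the proof.
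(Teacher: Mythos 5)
Your reduction is the same as the paper's: the map of complexes is injective, all four corners of the mapping cone are Koszul complexes built from the single quotient $Q = \DRpi(T(r))/\pi_m^{-p\ell r}\Dpi^{(0, v/p]+}(T(r))$, and everything comes down to $\psi-1$ being bijective on $Q$, which both you and the paper get from topological nilpotency of $\psi$ on $Q$ and the geometric series $-\sum_{n\geq 0}\psi^n$.

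Where you diverge is in how you propose to verify the topological nilpotency, and here your route is considerably heavier than necessary. The paper needs only three ingredients, all already established: (a) $\psi\big(\mbfd^+(T(r))\big) \subset \mbfd^+(T(r))$, which follows from $\psi(\mbfa^+)\subset\mbfa^+$ (Proposition \ref{prop:psi_oper}(iii)) — no Wach module required; (b) the ring-level contraction $\psi\big(\pi_m^{-pN}\ARpi^{(0,v/p]+}\big) \subset \pi_m^{-N}\ARpi^{(0,v]+}$ (Proposition \ref{prop:ring_psi_eigenspace_decomp}(i), via Lemma \ref{lem:psi_pi_power}); and (c) the fact that $\pi_m^{-p\ell}\pi$ is a unit in $\ARpi^{(0,v/p]+}$ (Lemma \ref{lem:pi_m_divides_p}(iii)). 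Writing $\DRpi(T(r))$ as the $p$-adic completion of $\Dpi^{(0,v/p]+}(T(r))\big[\frac{1}{\pi_m}\big]$, one gets $\psi\big(\pi_m^{-p^k\ell r}\Dpi^{(0,v/p]+}(T(r))\big) \subset \pi_m^{-p^{k-1}\ell r}\Dpi^{(0,v/p]+}(T(r))$ for $k\geq 1$, so iterating $\psi$ sweeps any element into the submodule modulo any power of $p$; this is the whole argument. Your proposed detour through the finite étale cover $R'/R$, the sandwich $\pi^s\mbfd^+(T)\subset\mbfn(T)\subset\mbfd^+(T)$, and the overconvergence theorem of Andreatta--Brinon is a red herring: the complexes in this lemma are built from $\mbfd^+(T(r))$ and $\mbfd(T(r))$, not from $\mbfn(T)$, so no transfer of estimates between the two is needed, and the "quantitative rate of $\psi$-sweeping" you flag as the technical obstacle is exactly the already-proved Proposition \ref{prop:ring_psi_eigenspace_decomp}(i). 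Since you leave that step as a sketch, I would not call your write-up complete as it stands, but the missing piece is available in the paper and your overall architecture is sound.
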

\begin{proof}
	The map in the claim is injective on each term, so we examine the cokernel complex.
	Write $\Dpi(T(r)) = \Dpi^{(0, v/p]+}(T(r))[1/\pi_m]^{\wedge}$, where ${ }^{\wedge}$ denotes the $p\textrm{-adic}$ completion.
	By Lemma \ref{lem:analytic_rings_psi_action}, we have that $\psi\big(\ARpi^{(0, v/p]+}\big) = \ARpi^{(0, v]+} \subset \ARpi^{(0, v/p]+}$, and for $\ell = p^{m-1}$ note that by Lemma \ref{lem:pi_m_divides_p} (iii), we have that $\pi_m^{-p\ell} \pi$ is a unit in $\ARpi^{(0, v/p]+}$.
	So, for $k \geqslant 1$, we get that $\psi\big(\pi_m^{-p^k\ell r}\ARpi^{(0, v/p]+}\big) \subset \pi_m^{-p^{k-1}\ell r}\ARpi^{(0, v/p]+}$ (see Proposition \ref{prop:ring_psi_eigenspace_decomp}).
	Moreover, recall that we have $\psi\big(\Dpi^{(0, v/p]+}(T(r))\big) \subset \Dpi^{(0, v/p]+}(T(r))$.
	Coupling this with the observation above, we get that $\psi\big(\pi_m^{-p^k\ell r}\Dpi^{(0, v/p]+}(T(r))\big) \subset \pi_m^{-p^{k-1}\ell r}\Dpi^{(0, v/p]+}(T(r))$.
	Therefore, it follows that the natural map
	\begin{equation*}
		\psi : \Dpi(T(r)) / \pi^{-r}\Dpi^{(0, v/p]+}(T(r)) \longrightarrow \Dpi(T(r)) / \pi^{-r}\Dpi^{(0, v/p]+}(T(r)),
	\end{equation*}
	is (pointwise) topologically nilpotent and $1-\psi$ is bijective over this quotient.
	So, we obtain that the following complexes are acyclic:
	\begin{align*}
		\big[\Kos\big(\Gamma_S\prm, \Dpi(T(r)) / \pi^{-r}\Dpi^{(0, v/p]+}(T(r))\big) &\xrightarrow{\hspace{1mm} \psi-1 \hspace{1mm}} \Kos\big(\Gamma_S\prm, \Dpi(T(r)) / \pi^{-r}\Dpi^{(0, v/p]+}(T(r))\big)\big],\\
		\big[\Kos^c\big(\Gamma_S\prm, \Dpi(T(r)) / \pi^{-r}\Dpi^{(0, v/p]+}(T(r))\big) &\xrightarrow{\hspace{1mm} \psi-1 \hspace{1mm}} \Kos^c\big(\Gamma_S\prm, \Dpi(T(r)) / \pi^{-r}\Dpi^{(0, v/p]+}(T(r))\big)\big].
	\end{align*}
	Hence, we conclude that the cokernel complex of the map in the claim is acyclic.
\end{proof}

Recall that we have the complex $\Kos\big(\varphi, \Gamma_S, \Dpi(T(r))\big)$ from Definition \ref{defi:koszul_phigammaD} and we make the following claim:
\begin{prop}\label{prop:phi_gamma_psi_gamma_iso}
	The natural morphism of complexes $\Kos\big(\varphi, \Gamma_S, \Dpi(T(r))\big) \longrightarrow \Kos\big(\psi, \Gamma_S, \Dpi(T(r))\big)$, induced by the identity on the first column and $\psi$ on the second column, is a quasi-isomorphism.
\end{prop}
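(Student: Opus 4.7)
The strategy mirrors that of Proposition \ref{prop:oc_psi_gamma_phi_gamma}, but with the full \'etale $(\varphi, \Gamma_S)$-module $\DRpi(T(r))$ over $\ARpi$ replacing its overconvergent counterpart; correspondingly, one now expects an honest quasi-isomorphism rather than a $p^N$-quasi-isomorphism, because no convergence-radius integrality obstruction intervenes. The map of two-term (horizontal) complexes is $(\mathrm{id}, \psi)$ on each row, and since $\psi$ is a left inverse of $\varphi$ it is in particular surjective. Hence the morphism of total complexes is termwise surjective, and I will show that the kernel complex is acyclic. Because the first column of the map is the identity, the kernel complex is simply the vertical two-term complex
\[
\tau_0 : \Kos\big(\Gamma_S\prm, \DRpi(T(r))^{\psi=0}\big) \longrightarrow \Kos^c\big(\Gamma_S\prm, \DRpi(T(r))^{\psi=0}\big).
\]

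The next step is to exploit the $\varphi(\ARpi)$-module decomposition
\[
\DRpi(T(r))^{\psi=0} \;=\; \bigoplus_{\alpha \in \{0,\ldots,p-1\}^{[0,d]},\ \alpha \neq 0}\; \varphi(\DRpi(T(r)))\,[X^\flat]^\alpha,
\]
following from Lemma \ref{lem:analytic_rings_psi_action}, the freeness of $\mbfn(T)$, and the étale base change to $\ARpi$ exactly as in the proof of Proposition \ref{prop:oc_psi_gamma_phi_gamma}. This reduces the acyclicity to that of each summand complex
\[
\big[\,\Kos(\Gamma_S\prm, D(r)[X^\flat]^\alpha) \xrightarrow{\hspace{1mm}\tau_0\hspace{1mm}} \Kos^c(\Gamma_S\prm, D(r)[X^\flat]^\alpha)\,\big], \qquad \alpha \neq 0,
\]
where $D = \varphi(\DRpi(T))$.

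The core of the argument is a case analysis on the multi-index $\alpha$, parallel to Lemma \ref{lem:psi0_kernel_complex_oc}. If $\alpha_k \neq 0$ for some $k \in \{1,\ldots,d\}$, then $\gamma_k$ acts on $D[X^\flat]^\alpha$ as multiplication by $(1+\pi)^{\alpha_k}$ composed with its action on $D$, so $\gamma_k - 1$ factors through an operator which on modding out by $\pi$ is multiplication by $\alpha_k \in \mathbb{F}_p^\times$; combined with the topological nilpotence of $(\gamma_k - 1)$ on $D$ modulo powers of $\pi$, this yields genuine bijectivity of $\gamma_k - 1$, and hence both $\Kos$ and $\Kos^c$ columns are acyclic. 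If instead $\alpha_j = 0$ for all $j \geq 1$ and $\alpha_0 \neq 0$, then $\tau_0$ is built out of operators of the form $\gamma_0 - \delta_{i_1}\cdots \delta_{i_q}$ which, after writing $\gamma_0$ in terms of $(1+\pi)^{p^{-m}(c-1)\alpha_0}$ and the $\delta_{i_j}$ as units congruent to $1$ modulo $(p^m, \gamma_1 - 1, \ldots, \gamma_d - 1)$, become bijective by a convergent Neumann-type expansion.

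The main obstacle will be executing this last step exactly, rather than only up to a power of $p$: the proof of Lemma \ref{lem:psi0_kernel_complex_oc} relied crucially on $\pi_1$ dividing $p$ inside $\ARpi^{(0,v]+}$, which fails in the full ring $\ARpi$. The expected workaround is that in the complete ring $\ARpi$ the operator $\pi$ itself no longer needs to divide $p$: instead, one observes that $(1+\pi)^{\alpha_k} - 1$ acts invertibly on the free $\mbfa_S$-module $D[X^\flat]^\alpha$ because it is the product of a unit $\alpha_k + O(\pi)$ with $\pi$, and $\pi$ is a nonzerodivisor acting invertibly on $D^{\psi=0}$ (which is already pure of ``$\varphi$-height $0$'' after the decomposition). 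This is precisely the content of the companion Lemma \ref{lem:psi0_kernel_complex}, whose conclusion assembled over all $\alpha \neq 0$ gives the acyclicity of the kernel complex and hence the proposition.
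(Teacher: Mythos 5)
Your proposal is correct and follows essentially the same route as the paper: surjectivity of the map from surjectivity of $\psi$, identification of the kernel with the $\tau_0$-complex on $\DRpi(T(r))^{\psi=0}$, the decomposition into summands $\varphi(\DRpi(T))(r)[X^{\flat}]^{\alpha}$ for $\alpha \neq 0$, and the same two-case analysis on $\alpha$ as in Lemma \ref{lem:psi0_kernel_complex}. You also correctly pinpoint why the result is an honest quasi-isomorphism here: the argument of Lemma \ref{lem:psi0_kernel_complex_oc} needed $\pi_1$ to divide $p$ in $\ARpi^{(0,v]+}$, whereas in $\ARpi$ the element $\pi_1$ is outright invertible, which is exactly what the paper's proof exploits.
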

\begin{proof}
	Notice that the map $\psi$ is surjective on $\Dpi(T(r))$, so the cokernel complex is 0.	
	To obtain the acylicity of the kernel complex, we need to show that the complex $\big[ \Kos\big(\Gamma_S\prm, \Dpi(T(r))^{\psi=0}\big) \xrightarrow{\hspace{1mm} \tau_0 \hspace{1mm}} \Kos\big(\Gamma_S\prm, \Dpi(T(r))^{\psi=0}\big) \big]$ is acyclic.
	To show our claim, we will analyze the module $\Dpi(T(r))^{\psi=0}$.
	Let $\{e_1, \ldots, e_h\}$ denote an $\AR^+\textrm{-basis}$ $\mbfn(T)$ and set $f_i = e_i \otimes \epsilon^{\otimes r}$, for each $1 \leqslant i \leqslant h$ and where $\epsilon^{\otimes r}$ is a $\ZZ_p\textrm{-basis}$ of $\ZZ_p(r)$.
	Since we have that $\AR \otimes_{\AR^+} \mbfn(T)(r) \isomorphic \mbfd(T)(r) = \mbfd(T(r))$, therefore, it follows that $\{f_1, \ldots, f_h\}$ is an $\AR\textrm{-basis}$ of $\mbfd(T(r))$.
	Furthermore, as $\Dpi(T(r)) = \ARpi \otimes_{\AR} \mbfd(T(r))$ is an \'etale $(\varphi, \Gamma_R)\textrm{-module}$ over $\ARpi$, so we see that $\{\varphi(f_1), \ldots, \varphi(f_h)\}$ is an $\ARpi\textrm{-basis}$ of $\Dpi(T(r))$.
	In this basis, we have that $z = \sum_{j=1}^h z_j \varphi(f_j)$ is in $\Dpi(T(r))^{\psi=0}$ if and only if $z_j$ is in $\ARpi^{\psi=0}$, for each $1 \leqslant j \leqslant h$.
	Indeed, $\psi(z) = 0$ if and only if $\sum_{j=1}^h \psi(z_j \varphi(f_j)) = \sum_{j=1}^h \psi(z_j) f_j = 0$, and since $f_j$ are linearly independent over $\ARpi$, therefore, we see that $\psi(z) = 0$ if and only if $\psi(z_j) = 0$, for all $1 \leqslant j \leqslant h$.

	Next, from Proposition \ref{prop:ring_psi_eigenspace_decomp}, we have a decomposition $\ARpi^{\psi=0} = \oplus_{\alpha} \varphi\big(\ARpi\big)[X^{\flat}]^{\alpha}$, where $[X^{\flat}]^{\alpha} = (1+\pi_m)^{\alpha_0}[X_1^{\flat}]^{\alpha_0} \cdots [X_d^{\flat}]^{\alpha_d}$ and $\alpha =(\alpha_0, \ldots, \alpha_d)$ is a $(d+1)\textrm{-tuple}$ with $\alpha_i \in \{0, \ldots, p-1\}$.
	Therefore, we get that $\big(\Dpi(T(r))\big)^{\psi=0} = \big(\sum_{i=1}^h \ARpi f_j\big)^{\psi=0} = \oplus_{\alpha \neq 0} \sum_{i=1}^h \varphi\big(\ARpi f_j\big)[X^{\flat}]^{\alpha}$.
	Note that the last term identifies with $\oplus_{\alpha \neq 0} \sum_{i=1}^h \varphi(\Dpi(T))(r)[X^{\flat}]^{\alpha}$.
	So, we obtain that the kernel complex of the map in the claim is isomorphic to the following complex:
	\begin{align}\label{eq:phigamma_complex_psi0}
		\bigoplus_{\alpha \neq 0}
		\left[
			\vcenter
			{
				\xymatrix
				{
					\Kos\big(\Gamma_S\prm, \varphi\big(\Dpi(T)\big)(r)[X^{\flat}]^{\alpha}\big) \ar[r]^{\tau_0} & \Kos^c\big(\Gamma_S\prm, \varphi\big(\Dpi(T)\big)(r)[X^{\flat}]^{\alpha}\big)
				}
			}
		\right].
	\end{align}
\begin{lem}\label{lem:psi0_kernel_complex}
	The complex described in \eqref{eq:phigamma_complex_psi0} is acyclic.
\end{lem}
\begin{proof}
	The proof follows in a manner similar to Lemma \ref{lem:psi0_kernel_complex_oc}, where one notes that it is enough to show the claim modulo $p$, and for the latter, one uses the fact that $\Dpi(T)/p = (\Npi^+(T)/p)[1/\pi_m]$, for $\Npi^+(T) = \ARpi^+ \otimes_{\AR^+} \mbfn(T)$.
	We omit the details to avoid repetition.
\end{proof}

	Using Lemma \ref{lem:psi0_kernel_complex}, we conclude that the natural morphism of complexes, in the claim of Proposition \ref{prop:phi_gamma_psi_gamma_iso}, is a quasi-isomorphism.
\end{proof}

\begin{proof}[Proof of Proposition \ref{prop:differential_koszul_complex_galois_cohomology}]
	Recall that $s$ is the height of the representation $T$ and $r$ is the twist (see Assumption \ref{assum:relative_crystalline_wach_free}).
	Note that from Proposition \ref{prop:diff_to_lie}, we have a natural $p^{4r}\textrm{-quasi-isomorphism}$ of complexes $\Kos\big(\varphi, \partial_A, \Fil^r \Npi^{[u, v]}(T)\big) \simeq \pazk\big(\varphi, \Lie \Gamma_S, \Npi^{[u, v]}(T(r))\big)$.
	Then, in Proposition \ref{prop:quasi_iso_Lie_relative}, we replace the infinitesimal action of $\Gamma_S$ with the continuous action of $\Gamma_S$ and obtain a natural isomorphism of complexes $\pazk\big(\varphi, \Lie \Gamma_S, \Npi^{[u, v]}(T(r))\big) \simeq \pazk\big(\varphi, \Gamma_S, \Npi^{[u, v]}(T(r))\big)$.
	Furthermore, in Proposition \ref{prop:quasi_iso_oc_robba_relative}, we switch from analytic coefficients rings to overconvergent coefficient rings to obtain a natural $p^{3r}\textrm{-quasi-isomorphism}$ of complexes $\pazk\big(\varphi, \Gamma_S, \Npi^{[u, v]}(T(r))\big) \simeq \pazk\big(\varphi, \Gamma_S, \Npi^{(0, v]+}(T(r))\big)$.
	Next, in Proposition \ref{prop:oc_psi_gamma_phi_gamma} and Lemma \ref{lem:psi_gamma_oc_radius_wach} and Lemma \ref{lem:psi_gamma_oc_radius}, we change the overconvergence radius to obtain a $p^{3r+3s+2}\textrm{-quasi-isomorphism}$ of complexes $\tau_{\leqslant r} \pazk\big(\varphi, \Gamma_S, \Npi^{(0, v]+}(T(r))\big) \simeq \tau_{\leqslant r} \Kos\big(\psi, \Gamma_S, \Dpi^{(0, v/p]+}(T(r))\big)$, where $\tau_{\leqslant}$ denotes the canonical truncation.
	Finally, in Lemma \ref{lem:psi_gamma_oc} and Proposition \ref{prop:phi_gamma_psi_gamma_iso} we change the disk of convergence to obtain natural quasi-isomorphisms of complexes $\Kos\big(\psi, \Gamma_S, \Dpi^{(0, v/p]+}(T(r))\big) \simeq \Kos\big(\psi, \Gamma_S, \Dpi(T(r))\big) \simeq \Kos\big(\varphi, \Gamma_S, \Dpi(T(r))\big)$.
	Combining these statements, we get the claim of Proposition \ref{prop:differential_koszul_complex_galois_cohomology} with $N = 10r+3s+2$.
\end{proof}

\subsection{Comparison with the Fontaine--Messing period map}\label{subsec:fm_maps_comparison}

The aim of this subsection is to show that the comparison map from $\Syn(S, M, r)$ to $\RGamma_{\cont}(G_S, (T(r)))$, in Theorem \ref{thm:syntomic_complex_galois_cohomology}, coincides with the Fontaine--Messing period map.
We will follow the strategy of Colmez--Niziol{\l} (see \cite[\S 4.7]{colmez-niziol-nearby-cycles}).
Recall that we have $S = R[\varpi]$, $\Sbar = \Rbar \subset \overline{\Fr R}$ and $S_{\infty} = R_{\infty} \subset \overline{\Fr R}$.
Note that by Definition \ref{defi:obese_rings}, we have rings $\fESbar^{\bmstar} := \fERbar^{\bmstar}$, for $\smstar \in \{\textpd, [u], [u, v]\}$, equipped with a Frobenius, a filtration and an action of $G_S \triangleleft G_R$.

Let us recall that $T$ is a positive finite $q\textrm{-height}$ $\ZZ_p\textrm{-representation}$ of $G_R$ as in Assumption \ref{assum:relative_crystalline_wach_free} and $V = T[1/p]$.
Note that by tensoring the fundamental exact sequence in \eqref{eq:fes_acrys} with $T$, we get the following $p^r\textrm{-exact}$ sequence,
\begin{equation}\label{rem:fes_acrys_rep}
	0 \longrightarrow T(r)\prm \longrightarrow \Fil^r \Acrys(\Sbar) \otimes_{\ZZ_p} T \xrightarrow{\hspace{1mm} p^r-\varphi \hspace{1mm}} \Acrys(\Sbar) \otimes_{\ZZ_p} T \longrightarrow 0.
\end{equation}
Next, from Assumption \ref{assum:relative_crystalline_wach_free} we have a finite free $R\textrm{-module}$ $M \subset \ODcrys(V)$ such that $M[1/p] = \ODcrys(V)$.
Moreover, we have a natural injective map $\OARpi^{\textpd} \otimes_R M \rightarrow \OARpi^{\textpd} \otimes_{\AR^+} \mbfn(T)$, compatible with the respective Frobenii, filtrations, $\ARpi^{\textpd}\textrm{-linear}$ connections and actions of $\Gamma_R$.
Additionally, by definition, we have a natural inclusion $\mbfa^+ \otimes_{\AR^+} \mbfn(T) \subset \mbfa^+ \otimes_{\ZZ_p} T$, compatible with the respective Frobenii and actions of $G_R$.
Extending scalars to $\OAcrys(\Sbar)$ in both the maps and composing them, we obtain the top horizontal arrow in the following diagram:
\begin{equation}\label{eq:oacris_M_T}
	\begin{tikzcd}
		\OAcrys(\Sbar) \otimes_R M \arrow[r] \arrow[d] & \OAcrys(\Sbar) \otimes_{\ZZ_p} T \arrow[d] \\
		\OBcrys(\Sbar) \otimes_R \ODcrys(V) \arrow[r, "\sim"] & \OBcrys(\Sbar) \otimes_{\QQ_p} V,
	\end{tikzcd}
\end{equation}
where the vertical arrows are natural inclusions and the lower horizontal arrow is a natural isomorphism (since $V$ is crystalline), compatible with the respective Frobenii, filtrations, actions of $G_R$ and $\Bcrys(\Sbar)\textrm{-linear}$ connections satisfying Griffiths transversality with respect to the filtrations (see \cite[Proposition 8.4.3]{brinon-padicrep-relatif}).
The diagram commutes by definition (see \cite[\S 4.5]{abhinandan-crystalline-wach} for a similar diagram), and it follows that the top horizontal arrow is injective.
Now, recall that the filtration on the bottom left object is given by the tensor product filtration (see Lemma \ref{lem:fil_gr_ds} and Remark \ref{rem:fil_gr_ds_rat}) and the filtration on the bottom right object is induced by the natural filtration on $\OBcrys(\Sbar)$.
As the filtration on the objects in the top row are induced from the filtration on the objects in the bottom row of their respective columns (see the discussion before Lemma \ref{lem:filr_induced} for the top left corner), therefore, it follows that the filtration on $\OAcrys(\Sbar) \otimes_R M$ matches with the indued filtration from $\OAcrys(\Sbar) \otimes_{\ZZ_p} T$.

Now, we consider the following commutative diagram:
\begin{center}
	\begin{tikzcd}[row sep=tiny]
		& \fESbarn^{\textpd} \arrow[rrd]\\
		\Acrys(\Sbar)_n \otimes_{O_{F, n}} R_{\varpi, \hspace{0.1mm} n}^+ \arrow[ru] \arrow[rrr] & & & \Sbar_n\\
		& \Rpin^{\PD} \arrow[rrd] \arrow[uu]\\
		\Rpin^+ \arrow[ru] \arrow[rrr] \arrow[uu] & & & S_n \arrow[uu],
	\end{tikzcd}
\end{center}
where the subscript $n$ denotes the reduction modulo $p^n$, the bottom horizontal arrow is induced by $X_0 \mapsto \varpi$ and the top horizontal arrow is the extension of the $\theta\textrm{-map}$ by the bottom horizontal arrow.

Using the rings discussed above, we will define the local Fontaine--Messing period map.
Set $\Omega_{\fESbarn^{\textpd}} := \fESbarn^{\textpd} \otimes_{\Rpin^+} \Omega_{\Rpin^+}$, $\Delta^{\textpd} := \fESbar^{\textpd} \otimes_R M$ and $\Delta^{\textpd}_n = \Delta^{\textpd}/p^n$ equipped with the induced filtration, Frobenius, $G_S\textrm{-action}$ and $\Acrys(\Sbar)_n\textrm{-linear}$ integrable connection $\partial$ satisfying Griffiths transversailty with respect to the filtration.
In particular, for $r \in \ZZ$, we have the following filtered de Rham complex,
\begin{equation*}
	\Fil^r \cald_{\Sbar, M, n}^{\bullet} : \Fil^r \Delta^{\textpd}_n \rightarrow \Fil^{r-1} \Delta^{\textpd}_n \otimes_{\Rpin^+} \Omega^1_{\Rpin^+} \rightarrow \Fil^{r-2} \Delta^{\textpd}_n \otimes_{\Rpin^+} \Omega^2_{\Rpin^+} \rightarrow \cdots.
\end{equation*}
Let us note that by extending the diagram \eqref{eq:oacris_M_T} along the natural inclusion $\OAcrys(\Sbar) \subset \fESbar^{\textpd}$ (see Remark \ref{rem:oarpd_erpd}), we obtain an $\fESbar^{\textpd}\textrm{-linear}$ injective map $\fESbar^{\textpd} \otimes_R M \rightarrow \fESbar^{\textpd} \otimes_{\ZZ_p} T$ compatible with the respective Frobenii, filtrations, $\Acrys(\Sbar)\textrm{-linear}$ connections and actions of $G_R$.
Then, for each $r \in \ZZ$, by reducing the induced map on the $r\textrm{-th}$ filtered part, modulo $p^n$, and taking horizontal sections for the $\Acrys(\Sbar)_n\textrm{-linear}$ connections, we obtain a natural map,
\begin{equation}\label{eq:pdpartil0_acrist}
	(\Fil^r \Delta^{\textpd}_n)^{\partial=0} = (\Fil^r(\fESbarn^{\textpd} \otimes_R M))^{\partial=0} \longrightarrow (\Fil^r \fESbarn^{\textpd} \otimes_{\ZZ_p} T)^{\partial=0} = \Fil^r \Acrys(\Sbar)_n \otimes_{\ZZ_p} T.
\end{equation}
In particular, from the discussion above and the filtered Poincar\'e Lemma \ref{lem:fil_poincare_lem_na}, we get a natural map,
\begin{equation}\label{eq:poincare_lemma_acrys_rep}
	\Fil^r \cald_{\Sbar, M, n}^{\bullet} \lisomorphic (\Fil^r \Delta^{\textpd}_n)^{\partial=0} \longrightarrow \Fil^r \mbfa_{\crys}(\Sbar)_n \otimes_{\ZZ_p} T.
\end{equation}

\begin{nota}
	For a $G_S\textrm{-module}$ $D$, let $C(G_S, D)$ denote the complex of continuous cochains of $G_S$ with values in $D$.
\end{nota}

\begin{defi}\label{defi:fontaine_messing_map}
	Define the syntomic complex with coefficients in $M$ as,
	\begin{equation}\label{eq:bar_syntomic_complex}
		\Syn(\Sbar, M, r)_n := \big[\Fil^r \cald_{\Sbar, M, n}^{\bullet} \xrightarrow{p^r-p^{\bullet}\varphi} \cald_{\Sbar, M, n}^{\bullet}\big].
	\end{equation}
	Define the Fontaine--Messing period map,
	\begin{equation}\label{eq:fm_period_map}
		\tilde{\alpha}^{\FM}_{r, n, S}: \Syn(S, M, r)_n \longrightarrow C(G_S, T/p^n(r)\prm),
	\end{equation}
	as the composition
	\begin{align*}
		\Syn(S, M, r)_n &= \big[\Fil^r \cald_{S, M, n}^{\bullet} \xrightarrow{p^r-p^{\bullet}\varphi} \cald_{S, M, n}^{\bullet}\big] \longrightarrow C\big(G_S, \big[\Fil^r \cald_{\Sbar, M, n}^{\bullet} \xrightarrow{p^r-p^{\bullet}\varphi} \cald_{\Sbar, M, n}^{\bullet}\big]\big) \longrightarrow\\
		& \longrightarrow C\big(G_S, \big[\Fil^r \Acrys(\Sbar)_n \otimes T \xrightarrow{p^r - \varphi} \Acrys(\Sbar)_n \otimes T \big]\big) \lisomorphic C\big(G_S, T/p^n(r)\prm),
	\end{align*}
	where the second right arrow is induced by \eqref{eq:poincare_lemma_acrys_rep} and the only left arrow is a $p^r\textrm{-quasi-isomorphism}$ as noted in \eqref{rem:fes_acrys_rep}.
\end{defi}

\begin{rem}\label{rem:fm_period_map_R}
	The definition of the Fontaine--Messing period map in \eqref{eq:fm_period_map} can also be given for $R$: 
	we use the ring $\OAcrys(\Rbar)$ instead of $\fESbar^{\textpd}$ and set $\Delta^{\textpd} = \OAcrys(\Rbar) \otimes_R M$.
	Then the map in \eqref{eq:poincare_lemma_acrys_rep} gets replaced by $\Fil^r \cald^{\bullet}_{\Rbar, M, n} \isomorphic \Fil^r \Acrys(\Rbar)_n \otimes T$ (where the filtered de Rham complex is obtained similar to modulo $p^n$ version of the complex $\Fil^r \cald^{\bullet}_{R, M}$ in \eqref{eq:fildeRham_R}).
	The definition of $\Syn(\Rbar, M, r)_n$ follows naturally and since the fundamental exact sequence is $G_R\textrm{-equivariant}$, we obtain the Fontaine--Messing period map,
	\begin{equation*}
		\tilde{\alpha}^{\FM}_{r, n, R}: \Syn(R, M, r)_n \longrightarrow C(G_R, T/p^n(r)\prm).
	\end{equation*}
\end{rem}

\begin{thm}\label{thm:lazard_fmlocal_comparison}
	The map $\tilde{\alpha}_{r, n, S}^{\FM}$ in \eqref{eq:fm_period_map} is $p^{N(T, e, r)}\textrm{-equal}$ to $\alpha_{r, n}^{\Laz}$ from Theorem \ref{thm:syntomic_complex_galois_cohomology}.
\end{thm}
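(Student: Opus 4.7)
The plan is to construct an intermediate "large" complex through which both $\tilde\alpha_{r,n,S}^{\FM}$ and $\alpha_{r,n}^{\Laz}$ factor compatibly, following the strategy of Colmez–Nizio{\l} in \cite[\S 4.7]{colmez-niziol-nearby-cycles}. Concretely, I would first enlarge $\Syn(S,M,r)_n$ to a "bar" syntomic complex $\Syn(\Sbar,M,r)_n$ built from $\Delta^{\textpd}=\fESbar^{\textpd}\otimes_R M$ (cf.\ \eqref{eq:bar_syntomic_complex}) and an analogous "bar" complex $\Syn\big(\Npi^{[u,v]}(T)/\Sbar,r\big)_n$ built from $\fESbar^{[u,v]}\otimes_{\AR^+}\mbfn(T)$, and check via the filtered Poincar{\'e} Lemma \ref{lem:fil_poincare_lem} that both are quasi-isomorphic to their respective horizontal-section subcomplexes. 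By the comparison \eqref{eq:isomorphism_Dcris_Wach_mod_extn} tensored up to $\fESbar^{[u,v]}$, the two bar complexes are $p^{2n(T,e)}$-quasi-isomorphic, so they glue the Lazard side of the proof (Sections \ref{subsec:syntomic_coefficents}–\ref{subsec:poincare_lemma_app}, \ref{subsec:diff_to_lie}–\ref{subsec:change_disk}) into a single picture after applying continuous cochains $C(G_S,-)$.

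Next I would show that the chain of quasi-isomorphisms constructed in Sections \ref{subsec:change_disk_convergence}–\ref{subsec:change_disk} is equivariant for the inclusion of the "unbar" complexes into their bar counterparts, so that $\alpha_{r,n}^{\Laz}$ is, up to the explicit power of $p$, the composition
\[
\Syn(S,M,r)_n \longrightarrow C\big(G_S,\Syn(\Sbar,M,r)_n\big) \longrightarrow C\big(G_S,\big[\Fil^r\Delta^{\textpd}_n\xrightarrow{p^r-\varphi}\Delta^{\textpd}_n\big]\big),
\]
followed by the identification of the target with $C(G_S,T/p^n(r)')$ coming from \eqref{eq:pdpartil0_acrys_rep} together with the fundamental exact sequence \eqref{eq:fes_acrys} twisted by $T$. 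By construction, $\tilde\alpha_{r,n,S}^{\FM}$ is defined by precisely the same composition (see \eqref{eq:fm_period_map}). Hence the comparison reduces to checking that the horizontal-section/Poincar{\'e} map \eqref{eq:poincare_lemma_acrys_rep} and the injection $(\Delta^{\textpd})^{\partial=0}\hookrightarrow\Acrys(\Rbar)\otimes_{\ZZ_p}T$ in \eqref{eq:pdpartil0_acrist} agree with the composition one reads off from the Koszul/$(\varphi,\Gamma)$-module side, which in turn is provided by Theorem \ref{thm:crys_wach_comparison} and the $G_S$-equivariant identification $\mbfa^+\otimes_{\AR^+}\mbfn(T)\subset \mbfa^+\otimes_{\ZZ_p}T$.

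To make the bookkeeping rigorous, I would package the argument as a commutative diagram (up to explicit powers of $p$) whose top row is the chain constructed in the proof of Theorem \ref{thm:syntomic_complex_galois_cohomology}, whose bottom row is the Fontaine–Messing composition, and whose vertical maps are the natural inclusions from unbar to bar objects and from bar objects to $C(G_S,\,\cdot\,)$. Each square is either definitionally commutative (Poincar{\'e} lemma, extension of scalars, change of annulus) or commutative by functoriality of $\iota_{\cycl}$, Theorem \ref{thm:crys_wach_comparison}, and the commutation of $\varphi$ with $\partial$ on $\OARpi^{\textpd}$. Tracking the powers of $p$ introduced at each step—$p^{4r+4s}$ from Propositions \ref{prop:syntomic_pd_to_u}–\ref{prop:syntomic_u_to_uv_comp}, $p^{2n(T,e)}$ from Proposition \ref{prop:syntomic_to_phi_gamma}, $p^{10r+2s+2}$ from Proposition \ref{prop:differential_koszul_complex_galois_cohomology}, and the $p^r$ loss in \eqref{eq:fes_acrys}—yields an explicit constant $N(T,e,r)$ of the same shape as in Theorem \ref{thm:syntomic_complex_galois_cohomology}.

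The main obstacle will be the middle step: verifying that the $p^r$-quasi-isomorphism coming from the fundamental exact sequence over $\Acrys(\Rbar)\otimes T$ is compatible, inside $C(G_S,-)$, with the chain of $p$-power quasi-isomorphisms used to build $\alpha_{r,n}^{\Laz}$, since the latter passes through the operators $\psi$, $\tau_0$, and the Lie-algebra-to-group comparison $\call$ of Proposition \ref{prop:quasi_iso_Lie_relative}. Here one must combine the fact that $\iota_{\cycl}$ intertwines the Frobenius on $\Rpi^+$ with the Frobenius on $\mbfa^+$ with the $G_S$-equivariance of the Poincar{\'e} homotopy of Lemma \ref{lem:fil_poincare_lem}; the $p$-power discrepancy produced by the fundamental exact sequence then matches the discrepancy produced by the passage from $\Kos(\varphi,\Gamma_S,\DRpi(T(r)))$ to $\RGamma_{\cont}(G_S,T(r))$ via Theorem \ref{thm:galois_cohomology_herr_complex}. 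Once that compatibility is in place, the theorem follows by assembling the constants.
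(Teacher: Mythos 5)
Your proposal follows essentially the same route as the paper: the published proof is precisely a large commutative diagram whose top row composes to $\tilde{\alpha}_{r,n,S}^{\FM}$ and whose lower boundary composes to $\alpha_{r,n}^{\Laz}$, with the squares commuting by the filtered Poincaré Lemma, the comparison \eqref{eq:isomorphism_Dcris_Wach_mod_extn}, the fundamental exact sequence, and functoriality of $\iota_{\cycl}$, followed by a diagram chase and bookkeeping of the powers of $p$. Your identification of the delicate point (compatibility of the fundamental-exact-sequence identification with the $\psi$/$\tau_0$/Lazard chain inside $C(G_S,-)$) matches the right-hand column of the paper's diagram, so no further comment is needed.
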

\begin{proof}
	The $p\textrm{-power}$ equality of the two maps follows from the diagram below (where we only show the $\padic$ version to simplify notations).
	The objects and morphisms are described after the diagram.
	Note that we have $\Kdphi(\Ftor \Mpi^{\textpd}) = \Syn(S, M, r)$ and the map $\tilde{\alpha}_{r, S}^{\FM}$ is obtained by composing the arrows in the top row (note that $C_G(T(r))$ is $p^r\textrm{-isomorphic}$ to $C_G(T(r)\prm))$.
	Furthermore, the map $\alpha_r^{\Laz}$ is obtained by composing the maps in the outer left vertical, bottom horizontal and right vertical boundary.
	The isomorphisms in the diagram indicate a $p\textrm{-power}$ quasi-isomorphism between complexes.
	Finally, a simple diagram chase gives us the claim.
\end{proof}

\begin{center}
	\begin{tikzcd}[column sep=small]
		\Kdphi(\Ftor \Mpi^{\textpd}) \arrow[r] \arrow[d, "\tau_{\leqslant r}", "\wr"'] & C_G(\Kdphi(\Ftor \Delta^{\textpd})) \arrow[d] & C_G(\Kphi(\Ftor \Delta^{\textpd, \partial})) \arrow[l, "\sim", "\textup{PL}"'] \arrow[d] \arrow[r] & C_G(\Kphi(\Ftor TA_{\crys}))\\
		\Kdphi(\Ftor \Mpi^{[u, v]}) \arrow[d, "\wr"', "\textup{PL}"] \arrow[r] & C_G(\Kdphi(\Ftor \Delta^{[u, v]})) & C_G(\Kphi(\Ftor \Delta^{[u, v], \partial})) \arrow[l, "\sim", "\textup{PL}"'] \arrow[d] & C_G(T(r)) \arrow[ld, bend right=10, "\sim"', "\textup{FES}"] \arrow[u, "\wr", "\textup{FES}"']\\
		\Kdphid(\Ftor \Delta_{\varpi}^{[u, v]}) \arrow[ru, bend right=20] & & C_G(\Kphi(\Ftor TA^{[u, v]})) & C_G(\Kphi(TA_{\Sbar}(r))) \arrow[u, "\wr", "\textup{AS}"']\\
		\Kphid(\Ftor \Npi^{[u, v]}) \arrow[u, "\wr", "\textup{PL}"'] \arrow[d, "t^{\bullet}", "\tau_{\leqslant r} \hspace{0.2mm} \wr"'] & & & C_{\Gamma}(\Kphi(D_{R_{\infty}}(r))) \arrow[u, "\wr"]\\
		\pazk_{\varphi, \Lie \Gamma}(\Ftor \Npi^{[u, v]}) & \pazk_{\varphi, \Gamma}(\Ftor \Npi^{[u, v]}) \arrow[l, "\sim"', "\Laz"] \arrow[uur, bend right=40] & & C_{\Gamma}(\Kphi(\Dpi(r))) \arrow[u, "\wr"]\\
		\pazk_{\varphi, \Lie \Gamma}(\Npi^{[u, v]}(r)) \arrow[u, "\wr", "t^r"'] & \pazk_{\varphi, \Gamma}(\Npi^{[u, v]}(r)) \arrow[l, "\sim"', "\Laz"] \arrow[u, "t^r"] & \pazk_{\varphi, \Gamma}(\Npi^{(0, v]+}(r)) \arrow[l, "\sim"', "\textup{can}"] \arrow[r, "\sim"] & \textup{K}_{\varphi, \Gamma}(D_{\varpi}(r)) \arrow[u, "\wr"].
	\end{tikzcd}
\end{center}

In the diagram, we take $\Delta^{\textpd} = \fESbar^{\textpd} \otimes_R M$, $\Delta^{\textpd, \partial} = (\Delta^{\textpd})^{\partial=0}$, $TA_{\crys} = \Acrys(\Sbar) \otimes_{\ZZ_p} T$, $\Delta^{[u, v]} = \fESbar^{[u, v]} \otimes_R M$, $\Delta^{[u, v], \partial} = (\Delta^{[u, v]})^{\partial=0}$, $TA^{[u, v]} = \ASbar^{[u, v]} \otimes_{\ZZ_p} T$, $\Delta_{\varpi}^{[u, v]} = \fERpi^{[u, v]} \otimes_R M$ (see Definition \ref{defi:obese_rings}), $TA_{\Sbar}(r) = \ASbar \otimes_{\ZZ_p} T(r)$, $\Dpi(r) = \Dpi(T(r))$, $\Npi^{\bmstar}(r) = \Npi^{\bmstar}(T(r))$ and $D_{R_{\infty}}(r) = \ASinfty \otimes_{\ARpi} \Dpi(r)$.
Moreover, $G = G_S$, $\Gamma = \Gamma_S$ with $C_G$ and $C_{\Gamma}$ denoting the complex of continuous cochains for $G$ and $\Gamma$, respectively.
The letter ``K'' denotes the Koszul complex with subscripts: $\partial$ denotes the operators $((1+X_0) \frac{\partial}{\partial X_0}, \ldots, X_d \frac{\partial}{\partial X_d})$, the subscript $\Gamma$ denotes the operators $(\gamma_0-1, \ldots, \gamma_d-1)$ for our choice of topological generators of $\Gamma$, the subscript $\Lie \Gamma$ denotes the operators $(\nabla_0, \ldots, \nabla_d)$, with $\nabla_i = \log \gamma_i$ and the subscript $\partial_A$ denotes $((1+X_0)\frac{\partial}{\partial X_0}, X_1\frac{\partial}{\partial X_1}, \ldots, X_d \frac{\partial}{\partial X_d})$ as operators on $\AR^{[u, v]}$ and $E_R^{[u, v]}$ via the isomorphism $\iota_{\cycl} : \Rpi^{[u, v]} \isomorphic \ARpi^{[u, v]}$.
The letter ``$\pazk$'' denotes a certain subcomplex of the Koszul complex (see \S \ref{subsec:diff_to_lie}, \S \ref{subsec:lie_to_gamma}, \S \ref{subsec:change_annulus_1}, \S \ref{subsec:change_annulus_2}).

Next, let us describe the maps between the rows.
FES denotes a map coming from the fundamental exact sequences in \eqref{eq:fes_acrys} and \eqref{eq:fes_aruv}.
AS denotes a map originating from the Artin-Schreier theory in \eqref{eq:artin_schreier_arbar}.
PL denotes maps coming from the filtered Poincar\'e Lemma of \S \ref{subsec:fat_period_rings}.
In the first column, going from the first row to the second row is induced by the inclusion $\Rpi^{\textpd} \subset \Rpi^{[u, v]}$.
The leftmost slanted vertical map from the third to the second row is induced by the inclusion $\fERpi^{[u, v]} \subset \fESbar^{[u, v]}$.
From the second to the third row, the map in the third column is induced similar to \eqref{eq:pdpartil0_acrist}.
The leftmost vertical map from the second to the third row is the content of Lemma \ref{lem:poincare_lemma_2} and the leftmost vertical map from the fourth to the third row is the content of Lemma \ref{lem:poincare_lemma_1}; the composition being the content of Proposition \ref{prop:syntomic_to_phi_gamma}.
The rightmost vertical map from the fourth to the third row is the inflation map from $\Gamma_S$ to $G_S$, using the inclusion $\ASinfty \subset \ASbar$ (one could use almost \'etale descent to obtain the quasi-isomorphism) and the rightmost vertical map from the fifth to the fourth row uses the inclusion $\ARpi \subset \ASinfty$ (the quasi-isomorphism is obtained by decompletion techniques).
The leftmost vertical arrow from the fourth to the fifth row is given by multplication by suitable powers of $t$ as in Lemma \ref{lem:diff_to_lie} and the rightmost vertical arrow from the sixth to the fifth row is the comparison between the complex computing the continuous cohomology of $\Gamma_S$ and the Koszul complex as in \S \ref{subsec:gal_coho_kos_complex}.
The inclusions $\ARpi^+ \subset \Ainf(\Sbar) \subset \ASbar^{[u, v]}$ and $\Ainf(\Sbar) \otimes_{\AR^+} \mbfn(T) \subset \Ainf(\Sbar) \otimes_{\ZZ_p} T$ induce the slanted vertical arrow from the fifth to the third row.

Finally, let us describe the maps between the columns.
The top two maps from the first to the second column are induced by the respective inclusions $\Rpi^{\textpd} \subset \fESbar^{\textpd}$ and $\Rpi^{[u, v]} \subset \fESbar^{[u, v]}$.
The bottom two maps $\Laz$ between the first and the second column are Lazard isomorphisms discussed in \S \ref{subsec:diff_to_lie}.
The bottom map from the third to the second column is induced canonically from the inclusion $\ARpi^{(0, v]+} \subset \ARpi^{[u, v]}$.
From the third to the fourth column, the top horizontal map is induced similar to \eqref{eq:pdpartil0_acrist} and the bottom horizontal map is induced by the inclusion $\ARpi^{(0, v]+} \subset \ARpi$ (see \S \ref{subsec:change_annulus_2} and \S \ref{subsec:change_disk}).

\begin{cor}\label{cor:lazard_fmlocal_comparison_R}
	The morphism of complexes $\tilde{\alpha}_{r, n, R}^{\FM}$ in Remark \ref{rem:fm_period_map_R} is a $p^{N(p, r, s)}\textrm{-quasi-isomorphism}$.
\end{cor}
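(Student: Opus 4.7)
The plan is to deduce the corollary from Theorem \ref{thm:lazard_fmlocal_comparison} (proving the case of $S$) by Galois descent along the extension $S = R[\zeta_{p^2}-1]$ over $R$, matching the route used to pass from Theorem \ref{thm:syntomic_complex_galois_cohomology} to Corollary \ref{cor:syntomic_complex_galois_cohomology}. Concretely, I would specialise to $m = 2$ (so $\varpi = \zeta_{p^2}-1$ and $e = p(p-1)$), and the Assumption \ref{assum:relative_crystalline_wach_free} is automatic with $n(T,e) = 2s$ by Example \ref{exm:choice_odcrist}(ii). Apply Theorem \ref{thm:lazard_fmlocal_comparison} to obtain that $\tilde{\alpha}^{\FM}_{r,n,S}$ is $p^{N(T,e,r)}$-equal to $\alpha^{\Laz}_{r,n}$ after truncation in degrees $\leq r-s-1$, hence a $p$-power quasi-isomorphism by Theorem \ref{thm:syntomic_complex_galois_cohomology}.

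Next I would observe that $\tilde{\alpha}^{\FM}_{r,n,R}$ and $\tilde{\alpha}^{\FM}_{r,n,S}$ fit into a natural commutative diagram induced by the faithfully flat morphism $R \rightarrowtail S$: on the left-hand side by functoriality of the de Rham and syntomic complex constructions via the scalar extension $M \mapsto \Mpi^{\textpd} = \Rpi^{\textpd} \otimes_R M$, on the right-hand side by the restriction $G_S \subset G_R$. Since $\Acrys(\Rbar) = \Acrys(\Sbar)$ and the fundamental exact sequence \eqref{eq:fes_acrys} is $G_R$-equivariant, the period map is visibly compatible with the $\Gal(K/F)$-action on both sides, so it is in fact $\Gal(K/F)$-equivariant.

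Now I would invoke the Galois descent lemma referred to in the text (Lemma \ref{lem:syn_galois_descent}): it provides a $p^{c(e)}$-quasi-isomorphism $\Syn(R, M, r)_n \simeq \RGamma(\Gal(K/F), \Syn(S, M, r)_n)$, for some explicit constant $c(e)$ depending on $e = p(p-1)$, matching the standard Hochschild--Serre identification $\RGamma_{\cont}(G_R, T/p^n(r)) \simeq \RGamma(\Gal(K/F), \RGamma_{\cont}(G_S, T/p^n(r)))$ on the Galois side. Applying $\RGamma(\Gal(K/F), -)$ to the $p^N$-equality of Theorem \ref{thm:lazard_fmlocal_comparison} and composing with the two descent quasi-isomorphisms above then yields that $\tilde{\alpha}^{\FM}_{r,n,R}$ is $p^{N'}$-equal to the descended Lazard map after truncation in degrees $\leq r-s-1$, with $N' = N'(p,r,s)$, which combined with Corollary \ref{cor:syntomic_complex_galois_cohomology} proves the claim.

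The main obstacle will be controlling the power of $p$ lost in the descent step: because $|\Gal(K/F)| = p(p-1)$ is not coprime to $p$, the spectral sequence for $\RGamma(\Gal(K/F), -)$ introduces $p$-torsion contributions that must be uniformly bounded in terms of $p, r, s$ only (independently of $n$). This is precisely what Lemma \ref{lem:syn_galois_descent} is designed to do on the syntomic side, and the parallel bound on the Galois side is classical; the core verification is therefore the commutativity of the descent diagram with the period map, which reduces to naturality of the fundamental exact sequence and of the filtered Poincaré lemma in \S \ref{subsec:fat_period_rings} under scalar extension along $R \rightarrowtail S$.
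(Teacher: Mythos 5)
Your proposal is correct and follows essentially the same route as the paper: specialise to $m=2$ so that $e=p(p-1)$, apply Theorem \ref{thm:lazard_fmlocal_comparison} together with Theorem \ref{thm:syntomic_complex_galois_cohomology} and Example \ref{exm:choice_odcrist}(ii) over $S$, and then descend via the $\Gal(K/F)$-equivariance of the period map, using Lemma \ref{lem:syn_galois_descent} on the syntomic side and the standard quasi-isomorphism on the Galois-cohomology side. Your additional remarks on controlling the $p$-torsion in the descent (since $|\Gal(K/F)|$ is divisible by $p$) and on the naturality underlying the commutative square are exactly the points the paper's proof relies on implicitly.
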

\begin{proof}
	Let $m = 2$, i.e.\ $K = F(\zeta_{p^2}-1)$ and $e = p(p-1)$.
	Then, over $S = O_K \otimes_{O_F} R$ we know that the local Fontaine--Messing period map $\tilde{\alpha}_{r, n, S}^{\FM}$ is $p^N\textrm{-isomorphic}$ to the Lazard map $\alpha_{r, n}^{\Laz}$ from Theorem \ref{thm:lazard_fmlocal_comparison}.
	Moreover, the Lazard map $\alpha_{r, n}^{\Laz}$ is a $p^N\textrm{-quasi-isomorphism}$ by Theorem \ref{thm:syntomic_complex_galois_cohomology}.
	As we fixed $m$, therefore, it follows that $N = 2n(T, e) + 14r + 7s + 2$ only depends on $p$, $r$ and $s$ (see \S \ref{subsec:proof_lazard_comp} for the explicit constant).
	Next, to descend to $R$, we note that the Fontaine--Messing period map is $G = \Gal(F(\zeta_{p^2})/F)\textrm{-equivariant}$, i.e.\ the following diagram commutes:
	\begin{center}
		\begin{tikzcd}
			\Syn(R, M, r)_n \arrow[d] \arrow[r, "\tilde{\alpha}_{r, n, R}^{\FM}"] & C(G_R, T/p^n(r)') \arrow[d, "\wr"]\\
			\RGamma(G, \Syn(S, M, r)_n) \arrow[r, "\tilde{\alpha}_{r, n, S}^{\FM}"] & \RGamma(G, C(G_S, T/p^n(r)')),
		\end{tikzcd}
	\end{center}
	where the right vertical map is a quasi-isomorphism.
	So, from the Galois descent argument in Lemma \ref{lem:syn_galois_descent} (for $e=p(p-1)$), it follows that the left vertical arrow is a $p^{4r+3p(p-1)}\textrm{-quasi-isomorphism}$.
	Hence, we obtain that the morphism of complexes $\tilde{\alpha}_{r, n, R}^{\FM}$ in Remark \ref{rem:fm_period_map_R} is a $p^{N(p,r,s)}\textrm{-quasi-isomorphism}$, for $N(p,r,s) = 2N+4r+3p(p-1)$.
\end{proof}

\subsection{Galois descent}\label{subsec:galois_descent}

Let $e = [K:F] = p^{m-1}(p-1)$, $G = \Gal(K/F)$ and $S = O_K \otimes_{O_F} R$.
For notational convenience, we will use crystalline and syntomic complexes as in \S \ref{subsec:syntomic_complex}.
We view the $R\textrm{-module}$ $M$ in Assumption \ref{assum:relative_crystalline_wach_free} as an object in $\CR(R/O_F, \Fil, \varphi)$, i.e.\ a filtered crystal equipped with Frobenius (see Remark \ref{rem:mic_conv_fil} and Definition \ref{defi:crystal_frob}).

\begin{lem}\label{lem:syn_galois_descent}
	The following natural map is a $p^{4r+3e}\textrm{-quasi-isomorphism}$
	\begin{equation*}
		\RGamma_{\syn}(R, M, r) \longrightarrow \RGamma(G, \RGamma_{\syn}(S, M, r)).
	\end{equation*}
\end{lem}
\begin{proof}
	The claim can be shown by closely following the proof of \cite[Lemma 5.9]{colmez-niziol-nearby-cycles}.
	We omit the details.
\end{proof}

\section{Crystals and syntomic cohomology}\label{sec:crystals_syntomic_cohomology}

\subsection{Filtered Frobenius crystals}

Let $\kappa$ be a perfect field of characteristic $p$, set $O_F = W(\kappa)$ and $F = \Fr O_F$.
Furthermore, let $K$ be a finite extension of $F$ such that $K \cap F^{\unrami} = F$ and let $O_K$ denote its ring of integers.

\begin{nota}
	In \S \ref{sec:crystals_syntomic_cohomology} and \S \ref{sec:padic_nearby_cycles} we will use letters $\frakX, \frakY, \frakZ$, etc.\ to denote schemes as well as $\padic$ formal schemes.
\end{nota}

Let $\frakX$ be a ($\padic$ formal) scheme over $O_K$ with $X$ its (rigid) generic fibre and $\frakX_{\kappa}$ its special fibre.
Set $\Sigma = \Spec O_F$ (resp. $\Sigma = \Spf O_F$) and for $n \in \NN$, let $\frakX_n = \frakX \otimes_{\ZZ_p} \ZZ/p^n$ and $\Sigma_n = \Spec(O_F/p^n)$.
Consider the big (\'etale) crystalline site $\CRYS(\frakX_n / \Sigma_n)$ with the PD-ideal $(p(O_F/p^n), [\hspace{1mm}])$ and the category of crystals of $\pazo_{\frakX_n/\Sigma_n}\textrm{-modules}$ (see \cite[\S III.4.2]{berthelot-cristalline}, \cite[\S 1.1.18, \S 1.1.19]{berthelot-breen-messing-dieudonne}, \cite[Corollary 1.15, Proposition 1.17]{bauer-bsd}).
Set $\CR(\frakX_n / \Sigma_n)$ to be the full subcategory of finite locally free crystals.
The homomorphisms $\frakX_n \rightarrow \frakX_{n+1}$ and $\Sigma_n \rightarrow \Sigma_{n+1}$ induce a pullback functor $i_{n, n+1}^{\ast} : \CR(\frakX_{n+1} / \Sigma_{n+1}) \rightarrow \CR(\frakX_n / \Sigma_n)$.
Similarly, define the big crystalline site $\CRYS(\frakX_1 / \Sigma_n)$ and the category of finite locally free crystals $\CR(\frakX_1 / \Sigma_n)$.
Note that the natural pullback functor $i_n^{\ast} : \CR(\frakX_n / \Sigma_n) \rightarrow \CR(\frakX_1 / \Sigma_n)$ induces an equivalence of categories by \cite[Chapitre IV, Th\'eor\`em 1.4.1]{berthelot-cristalline}.

\begin{defi}
	A finite locally free crystal on $\CRYS(\frakX / \Sigma)$ is the data $\pazf = (\pazf_n)_{n \geqslant 1}$, where $\pazf_n$ is an object of $\CR(\frakX_n / \Sigma_n)$ and we have isomorphisms $i_{n, n+1}^{\ast}(\pazf_{n+1}) \isomorphic \pazf_n$.
	A morphism between two crystals $\pazf$ and $\pazg$ on $\CRYS(\frakX / \Sigma)$ is a collection of morphisms $\pazf_n \rightarrow \pazg_n$, for each $n \geqslant 1$, compatible with the pullback isomorphisms.
	Denote the category of such objects by $\CR(\frakX / \Sigma)$.
	A finite locally free crystal on $\CRYS(\frakX_1 / \Sigma)$ is defined similarly and the pullback functor $i^{\ast} : \CR(\frakX / \Sigma) \rightarrow \CR(\frakX_1 / \Sigma)$ induces an equivalence of categories.
\end{defi}

Consider the category of filtered crystals on $\CRYS(\frakX/\Sigma)$ in the sense of \cite[Definition 16]{tsuji-ainf-genrep} (for relation between this category and Ogus' book \cite{ogus-ftcrystals}, see \cite[Remark 19]{tsuji-ainf-genrep}).
Take $\CR(\frakX_n / \Sigma_n, \Fil)$ to be the full subcategory of finite locally free filtered crystals on $\CRYS(\frakX_n / \Sigma_n)$.
We have the natural pullback functor $i_{n, n+1}^{\ast} : \CR(\frakX_{n+1} / \Sigma_{n+1}, \Fil) \rightarrow \CR(\frakX_n / \Sigma_n, \Fil)$.

\begin{defi}\label{defi:filtered_crys}
	A finite locally free filtered crystal on $\CRYS(\frakX / \Sigma)$ is the data $(\pazf_n)_{n \geqslant 1}$ in $\CR(\frakX / \Sigma, \Fil)$ such that the isomorphisms $i_{n, n+1}^{\ast}(\pazf_{n+1}) \isomorphic \pazf_n$ are compatible with filtration.
	A morphism between two filtered crystals is defined in an obvious way and we denote this category by $\CR(\frakX / \Sigma, \Fil)$.
\end{defi}

\begin{rem}\label{rem:mic_conv_fil}
	Let $R = \padic \textrm{ completion of an \'etale algebra over } O_F[X_1^{\pm 1}, \ldots, X_d^{\pm 1}]$ and let $\MIC(R)$ be the category of finite projective $R\textrm{-modules}$ equipped with an integrable connection and let $\MIC_{\converge}(R) \subset \MIC(R)$ denote the full subcategory of modules whose connection is $p\textrm{-adically}$ quasi-nilpotent.
	Let $\frakX = \Spf R$, then from \cite[Chapitre IV, Th\'eor\`em 1.6.5]{berthelot-cristalline} and \cite[Lemma 1.9]{morrow-tsuji-coeff} we obtain an equivalence of categories $\CR(\frakX/\Sigma) \isomorphic \MIC_{\converge}(R)$.
	This equivalence restricts to an equivalence $\CR(\frakX/\Sigma, \Fil) \isomorphic \MIC_{\converge}(R, \Fil)$.
\end{rem}

Finally, we will consider crystals equipped with a Frobenius structure.
The Frobenius endomorphism of $O_F$ and the absolute Frobenius on $\frakX_1$ induce Frobenius pullbacks $F^{\ast}_{\frakX_1} : \CR(\frakX_1 / \Sigma_n) \rightarrow \CR(\frakX_1 / \Sigma_n)$ and $F^{\ast}_{\frakX_1} : \CR(\frakX_1 / \Sigma) \rightarrow \CR(\frakX_1 / \Sigma)$.
Recall that we have the natural pullback functor $i^{\ast} : \CR(\frakX / \Sigma) \rightarrow \CR(\frakX_1 / \Sigma)$.

\begin{defi}\label{defi:crystal_frob}
	A \textit{Frobenius structure} on a finite locally free crystal $\pazf$ on $\CRYS(\frakX/\Sigma)$ is a morphism $\varphi_{\pazf} : F^{\ast}_{\frakX_1} i^{\ast} \pazf \rightarrow i^{\ast} \pazf$ such that it becomes an isomorphism in the isogeny category $\CR(\frakX/\Sigma)_{\QQ}$.
	A morphism between two crystals with Frobenius structure is taken to be a morphism in $\CR(\frakX/\Sigma)$ compatible with respective Frobenius structures.
	Denote the category of finite locally free crystals (resp.\ filtered crystals) equipped with a Frobenius structure as $\CR(\frakX/\Sigma, \varphi)$ (resp.\ $\CR(\frakX/\Sigma, \Fil, \varphi)$).
\end{defi}

\subsection{Syntomic complex}\label{subsec:syntomic_complex}

Let $\frakX$ be a smooth ($\padic$ formal) scheme over $O_K$, let $\Sigma = \Spec O_F$ (resp. $\Sigma = \Spf O_F$) and let $\pazf$ be an object of $\CR(\frakX/\Sigma, \Fil, \varphi)$, i.e.\ a finite locally free filtered crystal on $\CRYS(\frakX/\Sigma)$ equipped with a Frobenius structure.
In this subsection, we will define the syntomic cohomology of $\frakX$ with coefficients in $\pazf$.

Let $u_{\frakX_n/\Sigma_n} : (\frakX_n/\Sigma_n)_{\crys} \rightarrow \frakX_{n, \etale}$ denote the projection from the crystalline topos to the \'etale topos.
In the following, we regard sheaves on $\frakX_{n, \etale}$ as sheaves on $\frakX_{\kappa, \etale}$.
For $r \geqslant 0$, we have filtered crystalline cohomology complexes of $\pazf$:
\begin{align*}
	\RGamma_{\crys}(\frakX, \Fil^r \pazf)_n := \RGamma\big(\frakX_{n, \etale}, \Rup u_{\frakX_n/\Sigma_n \ast} \Fil^r \pazf_n\big), \hspace{2mm} \RGamma_{\crys}(\frakX, \Fil^r \pazf) := \holim \RGamma_{\crys}(\frakX, \Fil^r \pazf)_n.
\end{align*}

\begin{defi}
	Define the modulo $p^n$ and the completed syntomic complex with coefficients as,
	\begin{align*}
		\RGamma_{\syn}(\frakX, \pazf, r)_n &:= \big[\RGamma_{\crys}(\frakX, \Fil^r \pazf)_n \xrightarrow{p^r-\varphi} \RGamma_{\crys}(\frakX, \pazf)_n\big], \\
		\RGamma_{\syn}(\frakX, \pazf, r) &:= \holim \RGamma_{\syn}(\frakX, \pazf, r)_n.
	\end{align*}
	The mapping fibres are taken in the derived $\infty\textrm{-category}$ of abelian groups.
\end{defi}

\begin{rem}\label{rem:syn_complex_alt}
	In the derived category $D^+(\frakX_{\kappa, \etale}, \ZZ/p^n)$, we have quasi-isomorphisms $\RGamma_{\syn}(\frakX, \pazf, r)_n \simeq \RGamma_{\syn}(\frakX, \pazf, r) \otimes_{\ZZ_p}^{L} \ZZ/p^n$ and $\RGamma_{\syn}(\frakX, \pazf, r)_n \simeq [\RGamma_{\crys}(\frakX, \pazf)_n \xrightarrow{(p^r-\varphi, \textup{can})} \RGamma_{\crys}(\frakX, \pazf)_n \oplus \RGamma_{\crys}(\frakX, \pazf/\Fil^r \pazf)_n]$.
\end{rem}

\begin{defi}\label{defi:syntomic_complex_etale_site}
	Define $\calf_{n, \etale, \frakX}$ to be \'etale sheafification of $(\frakU \rightarrow \frakX) \mapsto \RGamma_{\crys}(\frakU, \pazf)_n$ and $\Fil^r \calf_{n, \etale, \frakX}$ to be \'etale sheafification of $(\frakU \rightarrow \frakX) \mapsto \RGamma_{\crys}(\frakU, \Fil^r \pazf)_n$, for $\frakU \rightarrow \frakX$ any \'etale map.
	Similarly, define $\cals_{n, \etale}(\pazf, r)_\frakX$ to be the \'etale sheafification of $(\frakU \rightarrow \frakX) \mapsto \RGamma_{\syn}(\frakU, \pazf, r)_n$.
\end{defi}

\begin{lem}
	In the setting above, we have $\cals_{n, \etale}(\pazf, r)_\frakX = \big[\Fil^r \calf_{n, \etale, \frakX} \xrightarrow{p^r-\varphi} \calf_{n, \etale, \frakX}\big]$ and $\RGamma_{\syn}(\frakX, \pazf, r)_n = \RGamma(\frakX_{\kappa, \etale}, \cals_{n, \etale}(\pazf, r)_\frakX)$.
\end{lem}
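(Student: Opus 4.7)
The plan is to deduce both equalities from two standard facts about étale sheafification: first, that sheafification (viewed as a left adjoint between the appropriate stable $\infty$-categories, or as an exact functor between derived categories with a suitable model) preserves finite limits, in particular mapping fibers; and second, that the presheaf $\frakU \mapsto \RGamma_{\crys}(\frakU, \pazf)_n$ sheafifies to the derived pushforward $\textup{R} u_{\frakX_n/\Sigma_n \ast} \pazf_n$ on $\frakX_{\kappa, \etale} = \frakX_{n, \etale}$ (and likewise for the filtered piece).

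First I would establish the first equality. By Definition \ref{defi:syntomic_complex_etale_site}, $\cals_{n, \etale}(\pazf, r)_\frakX$ is the étale sheafification of the presheaf
\begin{equation*}
	\frakU \longmapsto \RGamma_{\syn}(\frakU, \pazf, r)_n = \big[\RGamma_{\crys}(\frakU, \Fil^r \pazf)_n \xrightarrow{p^r - \varphi} \RGamma_{\crys}(\frakU, \pazf)_n\big].
\end{equation*}
Since the Frobenius structure on $\pazf$ is functorial with respect to étale morphisms, the map $p^r - \varphi$ defines a morphism of presheaves from $\frakU \mapsto \RGamma_{\crys}(\frakU, \Fil^r \pazf)_n$ to $\frakU \mapsto \RGamma_{\crys}(\frakU, \pazf)_n$. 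Because sheafification commutes with finite limits, the sheafification of the mapping fiber is the mapping fiber of the sheafifications, giving the first identity.

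Next, for the second equality, I would identify $\Fil^r \calf_{n, \etale, \frakX}$ with $\textup{R} u_{\frakX_n/\Sigma_n \ast} \Fil^r \pazf_n$, and similarly $\calf_{n, \etale, \frakX}$ with $\textup{R} u_{\frakX_n/\Sigma_n \ast} \pazf_n$. This is the standard compatibility: for any étale $\frakU \to \frakX$, the functoriality of the crystalline topos yields $\RGamma_{\crys}(\frakU, \Fil^r \pazf)_n = \RGamma\big(\frakU_{n, \etale}, (\textup{R} u_{\frakX_n/\Sigma_n \ast} \Fil^r \pazf_n)|_{\frakU_n}\big)$, so the defining presheaf of $\Fil^r \calf_{n, \etale, \frakX}$ is already the presheaf of sections of the sheaf $\textup{R} u_{\frakX_n/\Sigma_n \ast} \Fil^r \pazf_n$, whose sheafification is itself.

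Finally, I would combine the above with the fact that $\RGamma(\frakX_{\kappa, \etale}, -)$ preserves mapping fibers (as a right adjoint between stable $\infty$-categories, it commutes with finite limits). Unfolding,
\begin{equation*}
	\RGamma(\frakX_{\kappa, \etale}, \cals_{n, \etale}(\pazf, r)_\frakX) = \big[\RGamma(\frakX_{\kappa, \etale}, \textup{R} u_{\ast} \Fil^r \pazf_n) \xrightarrow{p^r - \varphi} \RGamma(\frakX_{\kappa, \etale}, \textup{R} u_{\ast} \pazf_n)\big],
\end{equation*}
which by the very definition of crystalline and syntomic cohomology at the beginning of \S \ref{subsec:syntomic_complex} equals $\RGamma_{\syn}(\frakX, \pazf, r)_n$. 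The only mild point to verify is that the identifications above are compatible with the Frobenius $\varphi$, but this is automatic because $\varphi$ is a morphism of crystals and therefore commutes with pullback along étale morphisms and pushforward along $u$; consequently there is no real obstacle, the lemma being a formal consequence of the definitions and the exactness properties of sheafification and global sections.
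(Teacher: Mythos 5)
Your proof is correct and is exactly the formal unwinding of the definitions that the paper intends — indeed the paper states this lemma without any proof, treating it as immediate from Definition \ref{defi:syntomic_complex_etale_site} and the exactness of sheafification and of $\RGamma(\frakX_{\kappa,\etale},-)$ with respect to mapping fibers. The one point worth making explicit, which you do address, is the identification of $\Fil^r\calf_{n,\etale,\frakX}$ with $\Rup u_{\frakX_n/\Sigma_n\ast}\Fil^r\pazf_n$ via étale functoriality of the crystalline topos; with that in hand everything follows formally.
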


\begin{rem}\label{rem:syntomic_complex_hypercovering}
	The syntomic cohomology with coefficients can also be described using hypercoverings, for example, see \cite[\S 2.6]{tsuji-syntomic-complex} and \cite[\S 2.1]{tsuji-semistable-comparison}.
\end{rem}

\begin{nota}
	In the rest of this article we will denote mod $p^n$ (resp.\ completed) syntomic complex with coefficients in $\pazf$ as $\cals_n(\pazf, r)_\frakX$ (resp.\ $\cals(\pazf, r)_\frakX$).
\end{nota}

\section{\texorpdfstring{$p$}{-}-adic nearby cycles}\label{sec:padic_nearby_cycles}

In this section, we give some global applications of the computations done in previous sections.

\subsection{Fontaine--Laffaille modules}\label{subsec:global_fm_mods}

Let $R$ denote the $\padic$ completion of an \'etale algebra over $O_F[X_1^{\pm 1}, \ldots, X_d^{\pm 1}]$, for some $d \in \NN$, satisfying Assumption \ref{assum:small_algebra}, and let $s \in \NN$ such that $s \leqslant p-2$.
In \S \ref{subsec:fontaine_laffaile_to_wach} we defined the category $\MF_{[0, s], \free}(R, \Phi, \partial)$ of \textit{free relative Fontaine--Laffaille} modules of level $[0, s]$.

Let us now globalise the definition above.
Let $\frakX$ be a smooth ($\padic$ formal) scheme defined over $O_F$.
Consider a covering $\{\frakU_i\}_{i \in I}$ of $\frakX$ with $\frakU_i = \Spec A_i$ (resp.\ $\frakU_i = \Spf A_i$) such that the $\padic$ completions $\widehat{A}_i$ satisfy Assumption \ref{assum:small_algebra}, for each $i \in I$.
We fix lifts of Frobenius modulo $p$ as $\varphi_i : \widehat{A}_i \rightarrow \widehat{A}_i$.

\begin{rem}\label{rem:alpha_phi}
	In \S \ref{subsec:fontaine_laffaile_to_wach} we fixed a lifting $\varphi$ of the absolute Frobenius on $R/p$.
	However, for another lift $\varphi'$ the categories $\MF_{[0, s], \free}(R, \Phi, \partial)$ and $\MF_{[0, s], \free}(R, \Phi\prm, \partial)$ are naturally equivalent (\cite[Theorem 2.3]{faltings-crystalline} and \cite[Remark 33]{tsuji-ainf-genrep}).
	In particular, there is a well-defined isomorphism $\alpha_{\varphi, \varphi\prm} : \varphi^{\ast} M \isomorphic \varphi^{\prime \ast} M$ compatible with connections.
\end{rem}

\begin{defi}\label{defi:mffree}
	Define $\MF_{[0, s], \free}(\frakX, \Phi, \partial)$ to be the category of finite locally free filtered $\pazo_\frakX\textrm{-modules}$ $\pazm$ equipped with a $p\textrm{-adically}$ quasi-nilpotent integrable connection satisfying Griffiths transverality with respect to filtration, and such that there exists a covering $\{\frakU_i\}_{i \in I}$ of $\frakX$ as above with $\pazm_{\frakU_i} \in \MF_{[0, s], \free}(\widehat{A}_i, \Phi, \partial)$ for all $i \in I$ and on $\frakU_{ij}$ the two structures glue well under $\alpha_{\varphi_i, \varphi_j}$.
\end{defi}

\begin{rem}
	Let $\Sigma = \Spec O_F$ (resp.\ $\Sigma = \Spf O_F$), then the category $\MF_{[0, s], \free}(\frakX, \Phi, \partial)$ is a full subcategory of $\CR(\frakX/\Sigma, \Fil, \varphi)$ described in Definition \ref{defi:crystal_frob}.
\end{rem}

\begin{rem}\label{rem:locsys_from_fl}
	To any object of $\MF_{[0, s], \free}(\frakX, \Phi, \partial)$, in \cite[Theorem 2.6*]{faltings-crystalline}, Faltings associated a compatible system of \'etale sheaves on $\textup{Sp}(\widehat{A}_i[1/p])$ (see the functor $T_{\crys}$ in \S \ref{subsec:fontaine_laffaile_to_wach}).
	These sheaves can be expressed in terms of certain finite \'etale coverings of $\textup{Sp}(\widehat{A}_i[1/p])$.
	Extending these by normalisation to $\Spec(\widehat{A}_i)$, the resulting coverings glue to give a covering of the smooth formal $O_F\textrm{-scheme}$ $\frakX'$ associated to $\frakX$.
	For $\frakX$ a smooth $\padic$ formal scheme, note that $\frakX = \frakX'$ and this gives us an \'etale $\ZZ_p\textrm{-local system}$ on the rigid generic fibre $X$ of $\frakX$, which we denote by $\LL$.
	On the other hand, for $\frakX$ a smooth scheme, if $\frakX$ is proper then this covering is finite and algebraic and we obtain an \'etale $\ZZ_p\textrm{-local system}$ $\LL$ on $X = \frakX \otimes_{O_F} F$, or if $\frakX$ is an open subscheme of a proper semistable scheme $\frakY$ over $O_F$ to which $\pazm$ extends, i.e.\ there exists a (log) Fontaine--Laffaille module $\pazn$ over $\frakY$ (in the sense of \cite[Definition 2.3.6 \& Remark 2.3.13]{tsuji-syntomic-complex}) such that $\pazm = \pazn|_{\frakX}$, then the \'etale local system $\LL$ on $X = \frakX \otimes_{O_F} F$ is again well defined (see \cite[p.\ 63 \& Appendix]{tsuji-syntomic-complex}).
	By loc.\ cit., note that in the case of schemes, the preceding assumptions are necessary to obtain the \'etale local system $\LL$ on $X$.
\end{rem}

\subsection{Fontaine--Messing period map}\label{subsec:fm_period_map}

Let $\Sigma = \Spec O_F$ (resp.\ $\Sigma = \Spf O_F$) and $K$ a finite extension of $F$ such that $K \cap F^{\unrami} = F$.
Take $0 \leqslant s \leqslant p-2$ and $r \geqslant s+1$.

\subsubsection{The case of schemes}

Let $\frakX$ be a smooth scheme over $O_F$ with $i : \frakX_{\kappa, \etale} \rightarrow \frakX_{\etale}$ and $j : X_{\etale} \rightarrow \frakX_{\etale}$ the natural morphism of sites.
Take $\pazm$ in $\MF_{[0, s], \free}(\frakX, \Phi, \partial)$ and let $\LL$ denote the associated $\ZZ_p\textrm{-local system}$ on the generic fibre $X$ (note that from Remark \ref{rem:locsys_from_fl}, we need to additionally assume that $\frakX$ is a proper scheme or an open subscheme of a proper scheme to which $\pazm$ extends, however, constructions in this subsection are independent of these assumptions).
From \cite[\S 5.3]{abhinandan-crystalline-wach}, the $\pazo_\frakX\textrm{-module}$ $\pazm$ corresponds to a finite locally free filtered crystal in $\CR(\frakX/\Sigma, \Fil, \varphi)$ equipped with a Frobenius structure and (by abuse of notations) we denote this crystal again by $\pazm$.

To describe the Fontaine--Messing period map one can almost verbatim adapt the methods from \cite[\S 5]{tsuji-syntomic-complex} and \cite[\S 3.1]{tsuji-semistable-comparison}.
One first constructs a local version of the map and then uses hypercoverings to globalise.
Below we will describe the technical inputs needed for the construction of Fontaine--Messing map; for actual construction the reader should refer to loc.\ cit.
We focus on the local setup first, i.e.\ let $\frakX$ be an affine smooth scheme over $O_F$.
Let $\frakY = \frakX \otimes_{O_F} O_K$ and choose an embedding $\frakY \hookrightarrow \frakZ$ such that $\frakZ$ is an affine smooth scheme over $O_F$.
Then $\frakY$ can be covered by affine \'etale $\frakY\textrm{-schemes}$ $\frak\frakU = \Spec A$ with $A = O_K \otimes_{O_F} B$ and $B$ an \'etale algebra over $O_F[X_1^{\pm 1}, \ldots, X_d^{\pm 1}]$ such that its $\padic$ completion $\widehat{B}$ satisfies Assumption \ref{assum:small_algebra}.
Let $Y$ (resp.\ $U$) denote the generic fibre of $\frakY$ (resp.\ $\frakU$), i.e.\ $Y = \frakY \otimes_{O_K} K$ (resp.\ $U = \frakU \otimes_{O_K} K$).

\begin{rem}\label{rem:elkik_henselian}
	Take $A$ as above, let $A^h$ denote the $\padic$ henselisation of $A$, let $\overline{A^h}$ denote the union of finite $A^h\textrm{-subalgebras}$ $S \subset \overline{\textrm{Fr } A^h}$ such that $S[1/p]$ is \'etale over $A^h[1/p]$ and set $G_{A^h} = \Gal\big(\overline{A^h}[1/p] / A^h[1/p]\big)$.
	Then, by Elkik's approximation theorem \cite[Corollary p.\ 579]{elkik-solutions}, we have a natural isomorphism of Galois groups $G_{A^h} \simeq G_{\widehat{A}}$.
	Therefore, any discrete $G_{\widehat{A}}\textrm{-module}$ can be regardeed as a locally constant sheaf on the \'etale site of the generic fibre $U^h = \frakU^h \otimes_{O_K} K$, where $\frakU^h = \Spec A^h$.
\end{rem}

\begin{rem}\label{rem:elkik_syntomic}
	Note that we have henselian versions of the fundamental exact sequences in \eqref{eq:fes_acrys} and \eqref{rem:fes_acrys_rep}, where one replaces $\overline{\widehat{A}}$ by $\overline{A^h}$ and $G_{\widehat{A}}$ with $G_{A^h}$.
	In particular, similar to \eqref{eq:bar_syntomic_complex} one obtains a syntomic complex $\Syn(\overline{A^h}, \pazm_\frakU, r)_n$ of discrete $G_{A^h}\textrm{-modules}$ which we denote as $\overline{\cals}_n(\pazm, r)_\frakU$.
	Note that from Remark \ref{rem:elkik_henselian} the complex of $G_{A^h}\textrm{-modules}$ $\overline{\cals}_n(\pazm, r)_\frakU$ can be regarded as a complex of locally constant sheaves on $U^h_{\etale}$ and we obtain a morphism $\Gamma\big(\frakU, i_{\ast}\cals_n(\pazm, r)_\frakY\big) \rightarrow \Gamma\big(U^h, \overline{\cals}_n(\pazm, r)_\frakU\big)$ and a natural map 
	\begin{equation}\label{eq:rep_etale_system}
		\RGamma(G_{\widehat{A}}, T_{\crys}(\pazm_\frakU)/p^n(r)) \longrightarrow \RGamma_{\etale}(U^h, \LL/p^n(r)_{U}).
	\end{equation}
\end{rem}

Using Remark \ref{rem:elkik_henselian} and Remark \ref{rem:elkik_syntomic} together with the Poincar\'e Lemma \ref{lem:fil_poincare_lem_na}, the fundamental exact sequence (see \eqref{eq:fes_acrys}, \eqref{rem:fes_acrys_rep} and \eqref{eq:poincare_lemma_acrys_rep}) and \eqref{eq:rep_etale_system}, note that from the construction in \cite[\S 5]{tsuji-syntomic-complex} and \cite[\S 3.1]{tsuji-semistable-comparison}, one otains a natural morphism in $D^+(\frakY_{\etale}, \ZZ/p^n)$:
\begin{equation}\label{eq:local_fm_period_map}
	\cals_n(\pazm, r)_\frakY \longrightarrow i^{\ast} \Rup j_{\ast}\LL/p^n(r)\prm_Y.
\end{equation}

Next, let $\frakX$ be a smooth scheme over $O_F$, set $\frakY = \frakX \otimes_{O_F} O_K$ and let $Y$ denote its generic fibre.
To globalise the construction above, one considers an \'etale hypercovering $\frakU^{\bullet}$ of $\frakX$ and chooses a morphism of simplicial schemes $i^{\bullet} : \frakU^{\bullet} \rightarrow \frakZ^{\bullet}$, such that for each $s \in \NN$, the morphism $i^s$ is an immersion of schemes, $\frakZ^s$ is smooth over $O_F$ and there exist compatible liftings of Frobenius $F_{\frakZ^{\bullet}} := \{F_{\frakZ_n^{\bullet}} : \frakZ_n^{\bullet} \rightarrow \frakZ_n^{\bullet}\}$.
Then using the local description above and the theory of hypercoverings, from the construction in \cite[\S 5]{tsuji-syntomic-complex} and \cite[\S 3.1]{tsuji-semistable-comparison}, we obtain a natural morphism in $D^+(\frakY_{\etale}, \ZZ/p^n\ZZ)$ (independent of choices by loc.\ cit.):
\begin{equation*}
	\alpha_{r, n, \frakY}^{\FM} : \cals_n(\pazm, r)_\frakY \longrightarrow i^{\ast} \Rup j_{\ast} \LL/p^n(r)\prm_Y.
\end{equation*}

\subsubsection{The case of formal schemes}

The definition of the Fontaine--Messing period map for $\padic$ formal schemes follows in manner similar to that of schemes, with certain key differences which we point out below.
Let $\frakX$ be a smooth $\padic$ formal scheme over $O_F$ and set $\frakY = \frakX \otimes_{O_F} O_K$.
In this case, an affine \'etale formal scheme over $\frakY$ can be covered by affine formal schemes $\frakU = \Spf S$, with $S = O_K \otimes_{O_F} R$ and $R$ as in Assumption \ref{assum:small_algebra}.
For such local models, we consider the $p\textrm{-adically}$ completed version of Fontaine--Messing period map described in \eqref{eq:local_fm_period_map}.
Finally, to obtain the global version, one proceeds in exactly the same manner as in the case of schemes (with a hypercovering $(\frakU^{\bullet}, \frakZ^{\bullet}, F_{\frakZ^{\bullet}})$, where each $\frakU^s$ is of the form described above).

\begin{rem}\label{rem:fm_map_cyclotomic}
	Note that in the cyclotomic case, i.e.\ $K = F(\zeta_{p^m})$, for $m \in \NN$, the map described in \eqref{eq:local_fm_period_map} coincides with composition of the map $\tilde{\alpha}_{r, n, S}^{\FM}$ described in \S \ref{subsec:fm_maps_comparison} with the quasi-isomorphism $C(G_S, T/p^n(r)') \isomorphic \RGamma_{\etale}(U, \LL/p^n(r)')$ obtained by applying $K(\pi, 1)\textrm{-Lemma}$ for $p\textrm{-coefficients}$ (see \cite[Theorem 4.9]{scholze-rigid} and \cite[\S 5.4.1]{colmez-niziol-nearby-cycles}).
\end{rem}

\subsection{A global result}\label{subsec:global_result}

To state the main global result, let $\frakX$ be a smooth ($\padic$ formal) scheme defined over $O_F$ (for $\frakX$ a scheme, assume that it is proper or an open subscheme of a proper semistable scheme defined over $O_F$).
Let $\pazm$ be an object of the category $\MF_{[0, s], \free}(\frakX, \Phi, \partial)$, i.e.\ a relative Fontaine--Laffaille module of level $[0, s]$ for $0 \leqslant s \leqslant p-2$ (for $\frakX$ an open scheme, further assume that $\pazm$ extends to the compatification of $\frakX$, see Remark \ref{rem:locsys_from_fl}).
Let $\LL$ denote the associated $\ZZ_p\textrm{-local system}$ on the (rigid) generic fibre $X$ of $\frakX$.
Then, we show the following:
\begin{thm}\label{thm:syntomic_nearby_comparison}
	For $r \geqslant s+1$ and $0 \leqslant k \leqslant r-s-1$, the Fontaine--Messing period map,
	\begin{equation}\label{eq:global_fm_period_map}
		\alpha_{r, n, \frakX}^{\FM} : \pazh^k\big(\cals_n(\pazm, r)_\frakX\big) \longrightarrow i^{\ast} \Rup^k j_{\ast} \LL/p^n(r)\prm_X,
	\end{equation}
	is a $p^N\textrm{-isomorphism}$, where $N = N(p, r, s) \in \NN$ depends on $p$, $r$ and $s$ but not on $\frakX$ or $n$.
\end{thm}
\begin{proof}[Proof for schemes]
	By the definition of the Fontaine--Messing period map in \S \ref{subsec:fm_period_map}, we see that it is enough to show the $p\textrm{-power}$ quasi-isomorphism locally (provided the power of $p$ does not depend on the local model).
	Let $A$ be an $O_F\textrm{-algebra}$ such that its $\padic$ completion $\widehat{A}$ satisfies Assumption \ref{assum:small_algebra}, $\frakU = \Spec A$ and $M := \pazm_\frakU$.
	Note that we have $\RGamma_{\syn}(\frakU, \pazm_\frakU, r)_n = \Syn(\widehat{A}, M, r)_n$ and $\RGamma_{\syn}(\frakU, \pazm_\frakU, r) = \Syn(\widehat{A}, M, r)$.
	The Fontaine--Messing period map,
	\begin{equation*}
		\alpha_{r, n, \frakU}^{\FM}: \RGamma_{\syn}(\frakU, \pazm_\frakU, r)_n \longrightarrow \RGamma_{\etale}(U^h, \LL/p^n(r)\prm_{U^h}),
	\end{equation*}
	is the same as the composition of the henselian version of the map $\tilde{\alpha}_{r, n}^{\FM}$ with the natural map in \eqref{eq:rep_etale_system}, $C(G_{A^h}, T/p^n(r)') \rightarrow \RGamma_{\etale}(U^h, \LL/p^n(r)\prm_{U^h})$ (see Remarks \ref{rem:fm_period_map_R} and \ref{rem:fm_map_cyclotomic} for the $p\textrm{-adically}$ completed version).
	Note that henselian version of the map $\tilde{\alpha}_{r, n}^{\FM}$ is obtained by replacing $\overline{\widehat{A}}$ by $\overline{A^h}$ and $G_{\widehat{A}}$ with $G_{A^h}$.
	We set $\Syn(A, M, r) := \RGamma_{\syn}(\frakU, \pazm_\frakU, r)$.
	Let $k \leqslant r-s-1$ and our claim is that the map,
	\begin{equation*}
		\alpha_{r, n, A}^{\FM} : H^k(\Syn(A, M, r)_n) \xrightarrow{\tilde{\alpha}_{r, n}^{\FM}} H^k(G_{A^h}, T/p^n(r)') \longrightarrow H^k(U^h_{\etale}, \LL/p^n(r)\prm_{U^h}),
	\end{equation*}
	is an isomorphism (up to some power of $p$).
	To show the claim, we will pass to the $\padic$ completion of $A$.
	Let $\calu := \textup{Sp}\big(\widehat{A}\big[\frac{1}{p}\big]\big)$ and consider the following commutative diagram:
	\begin{center}
		\begin{tikzcd}
			H^k(\Syn(A, M, r)_n) \arrow[r,"\tilde{\alpha}_{r, n, A}^{\FM}"] \arrow[d, equal] & H^k(G_{A^h}, T/p^n(r)') \arrow[r] \arrow[d, "\wr"] & H^k(U^h_{\etale}, \LL/p^n(r)\prm_{U^h}) \arrow[d, "\wr"] \\
			H^k(\Syn(\widehat{A}, M, r)_n) \arrow[r,"\tilde{\alpha}_{r, n, \widehat{A}}^{\FM}", "\sim"'] & H^k(G_{\widehat{A}}, T/p^n(r)') \arrow[r, "\sim"] & H^k(\calu_{\etale}, \LL/p^n(r)\prm_\calu).
		\end{tikzcd}
	\end{center}
	The middle vertical arrow is an isomorphism because the two Galois groups are equal by Elkik's approximation theorem \cite[Corollary p. 579]{elkik-solutions} (see Remark \ref{rem:elkik_henselian}).
	The right vertical arrow is an isomorphism due to Gabber \cite[Theorem 1]{gabber-affine-pbc}.
	The bottom left horizontal arrow is a $p^N\textrm{-isomorphism}$, for $N = N(p, r, s) \in \NN$, as shown in the case of formal schemes below (for $R = \widehat{A}$), in particular, the top left horizontal arrow is also a $p^N\textrm{-isomorphism}$.
	The bottom right horizontal arrow is an isomorphism by a $K(\pi, 1)\textrm{-Lemma}$ due to Scholze \cite[Theorem 4.9]{scholze-rigid}, and therefore, the top right horizontal arrow is also an isomorphism.
	Hence, it follows that the composition of the top two horizontal arrows, i.e.\ $\alpha_{r, n, A}^{\FM}$ is a $p^N\textrm{-isomorphism}$.
\end{proof}

\begin{proof}[Proof for formal schemes]
	By the definition of the Fontaine--Messing period map in \S \ref{subsec:fm_period_map}, we see that it is enough to show the $p\textrm{-power}$ quasi-isomorphism locally (provided the power of $p$ does not depend on the local model).
	Let $R$ be an $O_F\textrm{-algebra}$ satisfying Assumption \ref{assum:small_algebra}, $\frakU = \Spf R$ and $M := \pazm_\frakU$.
	We have that the Fontaine--Messing period map
	\begin{equation*}
		\alpha_{r, n, R}^{\FM}: H^k(\Syn(R, M, r)_n) \longrightarrow H^k(G_R, T/p^n(r)') \isomorphic H^k(U_{\etale}, \LL/p^n(r)\prm_U),
	\end{equation*}
	is the same as the composition of the map $\tilde{\alpha}_{r, n, R}^{\FM}$ (see Remark \ref{rem:fm_period_map_R} and Remark \ref{rem:fm_map_cyclotomic}) with the natural isomorphism $H^k(G_R, T/p^n(r)') \isomorphic H^k(U_{\etale}, \LL/p^n(r)\prm_U)$ (see the $K(\pi, 1)\textrm{-Lemma}$ of \cite[Theorem 4.9]{scholze-rigid}).

	Finally, to show the isomorphism in degrees $0 \leqslant k \leqslant r-s-1$, we use Corollary \ref{cor:lazard_fmlocal_comparison_R} with Example \ref{exm:choice_odcrist} (iii) for Fontaine--Laffaille modules.
	To compute $N = N(p, r, s) \in \NN$, we combine the constants obtained in the proof of Theorem \ref{thm:syntomic_complex_galois_cohomology}, Corollary \ref{cor:lazard_fmlocal_comparison_R} (i.e.\ Lemma \ref{lem:syn_galois_descent} for $e = p(p-1)$) and Example \ref{exm:choice_odcrist} (iii) to obtain that $N = 32r + 14s + 3p(p-1) + 4$.
	In particular, $N$ does not depend on $n$ or the local model $\frakU$.
	This allows us to conclude.
\end{proof}


\phantomsection
\printbibliography[heading=bibintoc, title={References}]

\Addresses

\end{document}